\newtheorem{theo}{Theorem} 
\newtheorem{lemma}{Lemma}[section]
\newtheorem{prop}[lemma]{Proposition}
\newtheorem{corol}[lemma]{Corollary}
\newtheorem{claim}[lemma]{Claim}
\theoremstyle{remark}
\newtheorem{remark}[lemma]{Remark}
\newtheorem{notation}[lemma]{Notation}
\theoremstyle{definition}
\newtheorem{defi}[lemma]{Definition}
\newcommand{\profiles}{\left( U_L^j,\left\{\lambda_{j,n},x_{j,n},t_{j,n}\right\}_n \right)_{j\geq 1}}
\newcommand{\tprofiles}{\left( \tU_L^j,\left\{\tlambda_{j,n},\tx_{j,n},\ttt_{j,n}\right\}_n \right)_{j\geq 1}}
\newcommand{\tprofilesp}{\left( \tU_L^j,\left\{\tlambda_{j,p},\tx_{j,p},\ttt_{j,p}\right\}_p \right)_{j\geq 1}}
\newcommand{\NN}{\mathbb{N}}
\newcommand{\RR}{\mathbb{R}}
\newcommand{\eps}{\varepsilon}
\newcommand{\EEE}{\mathcal{E}}
\newcommand{\JJJ}{\mathcal{J}}
\newcommand{\KKK}{\mathcal{K}}
\newcommand{\PPP}{\mathcal{P}}
\newcommand{\SSS}{\mathcal{S}}
\newcommand{\tV}{\widetilde{V}}
\newcommand{\ve}{\mathbf{e}}
\newcommand{\vu}{\vec{u}}
\newcommand{\vw}{\vec{w}}
\newcommand{\vU}{\vec{U}}
\newcommand{\vV}{\vec{V}}
\newcommand{\vW}{\vec{W}}
\newcommand{\vS}{\vec{S}}
\newcommand{\vQ}{\vec{Q}}
\newcommand{\vv}{\vec{v}}
\newcommand{\vell}{\boldsymbol{\ell}}
\newcommand{\ttt}{\tilde{t}}
\newcommand{\tlambda}{\widetilde{\lambda}}
\newcommand{\tZ}{\widetilde{Z}}
\newcommand{\tB}{\widetilde{B}}
\newcommand{\tG}{\widetilde{G}}
\newcommand{\tJ}{\widetilde{J}}
\newcommand{\tw}{\widetilde{w}}
\newcommand{\ttau}{\widetilde{\tau}}
\newcommand{\tnu}{\widetilde{\nu}}
\newcommand{\tU}{\widetilde{U}}
\newcommand{\tu}{\widetilde{u}}
\newcommand{\SN}{\frac{2(N+1)}{N-2}}
\newcommand{\tz}{\widetilde{z}}
\newcommand{\tx}{\widetilde{x}}
\newcommand{\loc}{\rm loc}
\newcommand{\hdot}{\dot{H}^1}
\newcommand{\EMPH}[1]{\medskip\noindent\textit{#1}.}
\DeclareMathOperator{\supp}{supp}
\newcommand{\ds}{\displaystyle}
\numberwithin{equation}{section} 
 \title[Profiles for bounded solutions of dispersive equations]{Profiles for bounded solutions of
dispersive equations, with applications to energy-critical wave and
Schr\"odinger equations}
\author[T.~Duyckaerts]{Thomas Duyckaerts$^1$}
\author[C.~Kenig]{Carlos Kenig$^2$}
\author[F.~Merle]{Frank Merle$^3$}
\thanks{$^1$LAGA, Universit\'e Paris 13 (UMR 7539). Partially supported by ERC Grant Dispeq, ERC Avanced Grant  no. 291214, BLOWDISOL and ANR Grant SchEq}
 \thanks{$^2$University of Chicago. Partially supported by NSF Grant DMS-0968472 and by NSF Grant DMS-1265249}
 \thanks{$^3$Cergy-Pontoise (UMR 8088), IHES. Partially supported by ERC Advanced Grant  no. 291214, BLOWDISOL}
\begin{document}

\maketitle

\textbf{Dedicado a nuestro amigo Gustavo Ponce}

\tableofcontents

\section{Introduction}
This article presents a new compactness argument to  describe the
asymptotics of bounded solutions of focusing nonlinear dispersive
equations. We will mainly consider the energy-critical wave equation in space dimension $N\in\{3,4,5\}$,
for which our results are more complete:
\begin{equation}
\label{CP}
\left\{ 
\begin{gathered}
\partial_t^2 u -\Delta u-|u|^{\frac{4}{N-2}}u=0,\quad (t,x)\in I\times \RR^N\\
u_{\restriction t=0}=u_0\in \hdot,\quad \partial_t u_{\restriction t=0}=u_1\in L^2,
\end{gathered}\right.
\end{equation}
where $I$ is an interval ($0\in I$), $u$ is real-valued, $\hdot:=\hdot(\RR^N)$, and $L^2:=L^2(\RR^N)$. 

We will also give a consequence of our method for solutions of the
energy-critical nonlinear Schr\"odinger equation (NLS):
\begin{equation}
\label{NLS}
\left\{
\begin{gathered}
i\partial_tu+\Delta u=-|u|^{\frac{4}{N-2}}u,\\
u_{\restriction t=0}=u_0\in \hdot.
\end{gathered}
\right.
\end{equation}

The equation \eqref{CP} is locally well-posed in $\hdot\times L^2$. If $u$ is a solution, we will denote by $(T_-(u),T_+(u))$ its maximal interval of existence. On $(T_-(u),T_+(u))$, the following two quantities are conserved:
$$ E(u(t),\partial_tu(t))=\frac 12\int |\nabla u|^2dx+\frac{1}{2}\int (\partial_tu)^2dx-\frac{N-2}{2N}\int |u|^{\frac{2N}{N-2}}dx$$
(the energy) and 
$$ P(u,\partial_t u(t))=\int \partial_t u\,\nabla u\,dx.$$
(the momentum). We will denote this quantities by $E[u]$ and $P[u]$ respectively.

In this paper we are interested in solutions of \eqref{CP} that are bounded in $\hdot\times L^2$ for positive time. Examples of these are given by stationary solutions of the equation, i.e. $Q\in \hdot(\RR^N)$ such that
\begin{equation}
 \label{stationary} 
 -\Delta Q=|Q|^{\frac{4}{N-2}}Q.
\end{equation}
We denote by $\Sigma$ the set of nonzero, $\hdot$ solutions of \eqref{stationary}. $\Sigma$ contains 
$$ W=\frac{1}{\left(1+\frac{|x|^2}{N(N-2)}\right)^{\frac{N}{2}-1}}$$
which is the unique (up to scaling and sign-change) radial stationary solution of \eqref{CP}. There also exist elements of $\Sigma$ without spherical symmetry and with arbitrarily large energy: see the article W.~Y.~Ding \cite{Ding86}, for constructions of solutions through variational arguments, and the recent works of 
M.~del Pino, M.~Musso, F.~Pacard and A.~Pistoia (\cite{dPMPP11}, \cite{dPMPP13}) for more explicit constructions.  

Other nonradial bounded solutions of \eqref{CP} are given by travelling waves, which are Lorentz transforms of stationary solution. 
If $Q\in \Sigma$, $\vell\in \RR^N$ with $\ell=|\vell|<1$ (here and in the sequel $|\vell|$ denotes the Euclidean norm of $\vell$), then
\begin{equation}
 \label{defQl}
 Q_{\vell}(t,x)=Q\left(\left(-\frac{t}{\sqrt{1-\ell^2}}+\frac{1}{\ell^2} \left(\frac{1}{\sqrt{1-\ell^2}}-1\right)\vell\cdot x\right)\vell+x\right)
\end{equation} 
is a bounded solution of \eqref{CP} such that
\begin{equation}
\label{travelling_wave}
Q_{\vell}(t,x)=Q_{\vell}(0,x-t\vell).
\end{equation} 
Note that, denoting a vector of $\RR^N$ by $x=(x_1,x')$ where $x_1\in \RR$, $x'\in \RR^{N-1}$ and assuming (after a rotation) that $\vell=\ell(1,0,\ldots,0)$, we can write \eqref{defQl} as 
\begin{equation*}
 Q_{\vell}(t,x)=Q\left(\frac{x_1-t\ell}{\sqrt{1-\ell^2}},x'\right).
\end{equation*} 
In this work, we prove the following result:
\begin{theo}
\label{T:profiles}
Let $u$ be a solution of \eqref{CP} and assume that $u$ does not scatter forward in time and 
 \begin{equation}
  \label{bound_u}
  \sup_{t\in [0,T_+(u))}\|(u(t),\partial_tu(t))\|_{\hdot\times L^2}<\infty.
 \end{equation} 
 Then there exist sequences $\{t_n\}_n$ in $[0,T_+(u))$, $\{\lambda_n\}_n$ in $(0,+\infty)$, an integer $J\geq 1$, and, for all $j\in \{1,\ldots,J\}$, $Q^j\in \Sigma$, $\vell_j\in \RR^N$ with $\left|\vell_j\right|<1$, and a sequence $\{x_{j,n}\}_n$ in $\RR^N$, such that 
 \begin{equation*}
\lim_{n\to\infty} t_n=T_+(u),\quad 1\leq j<k\leq J\Longrightarrow \lim_{n\to\infty}\frac{x_{j,n}-x_{k,n}}{\lambda_{n}}=+\infty,
\end{equation*}
and
\begin{itemize}
 \item for all $T>0$, 
\begin{multline}
\label{CV_THM1}
\lambda_nT+t_n<T_+(u)\text{ for large }n \text{ and }\\
\lim_{n\to\infty}\int_0^T\int_{\RR^N} \bigg|\lambda_{n}^{\frac{N}{2}-1}u\left(t_n+\lambda_{n}t,\lambda_{n}x\right)-\sum_{j=1}^{J}Q_{\vell_j}^{j}\left(t,x-\frac{x_{j,n}}{\lambda_{n}}\right)\bigg|^{\frac{2(N+1)}{N-2}}\,dx\,dt=0.
\end{multline} 
\item for all $R>0$, for all $j\in \{1,\ldots,J\}$, 
\begin{equation}
\label{CV_THM2}
 \lim_{n\to\infty} \int_{|x|\leq R} \bigg|\lambda_{n}^{\frac{N}{2}}\nabla_{t,x}u\left(t_n,x_{j,n}+\lambda_{n}x\right)-\nabla_{t,x} Q^{j}_{\vell_j}(0,x)\bigg|^2\,dx=0,
\end{equation}
where $\nabla_{t,x}u=\left(\partial_tu,\partial_{x_1}u,\ldots,\partial_{x_N}u\right)$.
\end{itemize}
\end{theo}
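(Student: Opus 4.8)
The plan is to convert the failure of scattering into a \emph{rescaled limiting object} whose rigidity forces it to be a finite superposition of Lorentz-boosted stationary solutions, and then to transport this information back to $u$ as $t\to T_+(u)$.

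\smallskip\noindent\textbf{Step 1: normalization.} By the local Cauchy theory and the standard characterization of scattering (through the long-time perturbation lemma), the hypothesis that $u$ does not scatter forward in time together with \eqref{bound_u} yields $\|u\|_{L^{\SN}([0,T_+(u))\times\RR^N)}=+\infty$, and likewise on $[t,T_+(u))$ for every $t<T_+(u)$. Fix $\eps_0>0$ small (below the perturbation threshold). Since $\|\cdot\|_{L^{\SN}_{t,x}}$ is invariant under $w(t,x)\mapsto\lambda^{\frac{N}{2}-1}w(\lambda t,\lambda x)$, I choose $t_n\to T_+(u)$ and $\lambda_n>0$ so that, setting $v_n(t,x):=\lambda_n^{\frac{N}{2}-1}u(t_n+\lambda_n t,\lambda_n x)$, one has $\|v_n\|_{L^{\SN}([0,1]\times\RR^N)}=\eps_0$; with care in the selection (using that a bounded solution cannot blow up at a self-similar or faster rate, so that when $T_+(u)<\infty$ the natural concentration scale is $o(T_+(u)-t_n)$) one also arranges $\lambda_n\,T+t_n<T_+(u)$ for every fixed $T>0$ and $n$ large, i.e.\ the $v_n$ live on arbitrarily long time intervals.

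\smallskip\noindent\textbf{Step 2: the compactness argument.} This is the core. Among all admissible sequences $(t_n,\lambda_n)$ of Step 1 I select one that is \emph{critical}, minimizing a suitable defect functional (for instance $\limsup_n\|(v_n(0),\partial_tv_n(0))\|_{\hdot\times L^2}$, or the energy of the dispersive remainder). Applying a Bahouri--G\'erard profile decomposition to $(v_n(0),\partial_tv_n(0))$ and using finite speed of propagation together with the perturbation theory, criticality rules out (i) any linear (scattering) profile and (ii) any nontrivial dispersive remainder: removing either would produce an admissible sequence with strictly smaller defect, still forcing the space-time norm on $[0,1]$ to equal $\eps_0$. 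A similar re-centering/re-scaling argument shows the $v_n$ cannot develop bad behavior just after $[0,1]$. Hence, after extraction, $v_n$ converges on $[0,T]\times\RR^N$ for every $T>0$ — in $L^{\SN}$ and locally in $\hdot\times L^2$ — to a forward-global, $\hdot\times L^2$-bounded, non-scattering solution $v$ of \eqref{CP} that is \emph{compact modulo the symmetries} of \eqref{CP} and, in particular, radiates no energy to spatial infinity.

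\smallskip\noindent\textbf{Step 3: rigidity and conclusion.} It remains to classify $v$: a forward-global, $\hdot\times L^2$-bounded, non-scattering solution that is compact modulo symmetries and non-radiative must coincide, at $t=0$, with a finite sum $\sum_{j=1}^{J}Q^j_{\vell_j}(0,\cdot-x_j)$, $Q^j\in\Sigma$, $|\vell_j|<1$, the centers $x_j$ becoming pairwise divergent after rescaling. This is where exterior-energy (``channels of energy'') estimates enter: the vanishing exterior energy flux, fed into the linear exterior-energy lower bounds, rigidly constrains $v$, and combined with elliptic analysis of \eqref{stationary} and the action of the Lorentz group — which generates exactly the parameters $\vell_j$ via \eqref{defQl} — one identifies $v$ with the stated superposition, the bubble separation and the bound on $J$ coming from a profile-separation argument and an induction. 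Undoing the rescaling with $x_{j,n}:=\lambda_n x_j$, the $L^{\SN}$-convergence of $v_n$ to $\sum_j Q^j_{\vell_j}$ is exactly \eqref{CV_THM1}, the local energy-space convergence at $t=0$ near each center is \eqref{CV_THM2}, and $\lim_n(x_{j,n}-x_{k,n})/\lambda_n=+\infty$ together with $t_n\to T_+(u)$ were arranged above.

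\smallskip\noindent\textbf{Main obstacle.} The delicate point is the tension between the \emph{coarse} topology in which the limit is extracted (only $L^{\SN}$ locally in time, not uniform energy convergence) — precisely what makes the compactness work — and the \emph{rigidity} one must then squeeze out of it: proving that the compact, non-radiative limit is \emph{exactly} a superposition of Lorentz-boosted solitons, with no exotic compact solution and no residual time dependence, requires the full strength of the exterior-energy estimates in the non-radial regime and a careful treatment of Lorentz boosts (this is also where the restriction $N\in\{3,4,5\}$ makes itself felt). A secondary burden is engineering the selection in Steps 1--2 to control simultaneously the space-time norm, the lifespan of the $v_n$, and the absence of dispersive and scattering components.
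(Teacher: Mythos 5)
Your overall strategy (rescale along a critical sequence of times, extract a limiting object via profile decomposition, classify it by rigidity) is in the right family, but two of its load-bearing steps fail. In Step 2 you extract a \emph{single} nonlinear limit $v$ of the $v_n$, and in Step 3 you identify $v(0)$ with $\sum_{j}Q^j_{\vell_j}(0,\cdot-x_j)$ for \emph{fixed} centers $x_j$, then set $x_{j,n}=\lambda_n x_j$. But then $(x_{j,n}-x_{k,n})/\lambda_n=x_j-x_k$ is constant, so the required divergence of the centers fails unless $J=1$; more fundamentally, a superposition of widely separated bubbles is not a solution of \eqref{CP}, so when $J\geq 2$ the $v_n$ cannot converge to any single solution: the bubbles must be carried by \emph{distinct profiles} whose spatial parameters diverge relative to $\lambda_n$. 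The paper never collapses the decomposition to one limit; it keeps several nonlinear profiles $U^1,\dots,U^{J_m}$ and shows each one separately has the compactness property (Lemma \ref{L:DEM4}). Relatedly, your claim that criticality removes every scattering profile and the dispersive remainder would yield strong $\hdot\times L^2$ convergence of $\vu(t_n)$ to the sum of solitons, which is false in general (there is a nontrivial radiation term, cf.\ \eqref{full_decompo}); this is exactly why \eqref{CV_THM2} is only local. The paper's minimization is correspondingly more structured (first maximize the number of non-scattering profiles, then minimize the sum of their energies, then minimize the number of most concentrated ones), and the local energy convergence is obtained separately from the local energy decay of free waves \eqref{DEM59} plus a Fatou argument and a further time shift (Lemma \ref{L:DEM7}).

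The second gap is the rigidity you invoke in Step 3: that a compact, non-radiative, non-scattering solution \emph{equals} a Lorentz-boosted element of $\Sigma$ is precisely the statement the paper declares to be an open conjecture in the nonradial setting (settled only radially, and in \cite{DuKeMe13Pb} under a nondegeneracy hypothesis), and the nonradial exterior-energy estimates you appeal to are not available. What is actually proved here (Proposition \ref{P:compact}, via a virial/monotonicity identity, not channels of energy) is only convergence to $Q_{\vell}(0)$ \emph{along a sequence of times}. Turning that sequential information into \eqref{CV_THM1}--\eqref{CV_THM2} requires the readjustment of the time sequence through the double profile decomposition and Lemma \ref{L:F1} (Lemmas \ref{L:DEM5}--\ref{L:DEM7}), an ingredient your proposal does not supply.
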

\begin{remark}
 In the theorem, $u$ can be global or blow up in finite time (type II blow-up). Examples of such solutions are known in both cases (see e.g. \cite{KrScTa09}, \cite{DoKr13}). Note however that in all known examples, there is only one stationary profile and (apart from the trivial cases $u=Q_{\vell}$, $Q\in \Sigma$) this profile is always equal to the stationary solution $W$ defined above.
\end{remark}
\begin{remark}
 A large part of the proof of Theorem \ref{T:profiles} is not specific to equation \eqref{CP} and works for general nonlinear dispersive equations. See Section \ref{S:NLS} for an application to the Schr\"odinger equation \eqref{NLS}.
\end{remark}
\begin{remark}
The conclusion of the theorem implies that for $j=1,\ldots, J$,
  \begin{equation*}
\left(\lambda_n^{\frac{N}{2}-1}u\left(t_n,\lambda_n\cdot+x_{j,n}\right),\lambda_n^{\frac{N}{2}}\partial_t u\left(t_n,\lambda_n\cdot+x_{j,n}\right)\right)\xrightharpoonup[n\to \infty]{} (Q_{\vell_j}^j(0),\partial_t Q_{\vell_j}^j(0))
\end{equation*} 
weakly in $\hdot \times L^2$. 
\end{remark}
\begin{remark}
 A result of the type of Theorem \ref{T:profiles} was first proved for wave-maps in a radial setting by D.~Christodoulou, and A.S.~Tahvildar-Zadeh \cite{ChTZ93}, and M.~Struwe \cite{Struwe03b}. The analoguous result for more general wave maps was proved by J.~Sterbenz and D.~Tataru \cite{StTa10a,StTa10b}. A crucial ingredient of the proofs of these results is the monotonicity of the wave-maps energy flux, which is not available for focusing wave equations as \eqref{CP}. 

Note that \eqref{CV_THM2} implies by finite speed of propagation:
\begin{equation*}
\forall T>0,\; \forall R>0,\quad 
  \lim_{n\to\infty} \int_0^{T}\int_{|x|\leq R} \bigg|\lambda_{n}^{\frac{N}{2}}\nabla_{t,x}u\left(t_n+\lambda_nt,x_{j,n}+\lambda_{n}x\right)-\nabla_{t,x} Q^{j}_{\vell_j}(t,x)\bigg|^2\,dx\,dt=0,
\end{equation*} 
which is the exact analog, for equation \eqref{CP} of the $H^1_{\loc}$ convergence result in \cite[Theorem 1.3 (A)]{StTa10b}. 
\end{remark}
\begin{remark}
The norm $S(I)=L^{\frac{2(N+1)}{N-2}}(I\times \RR^N)$ appearing in \eqref{CV_THM1} is a scale-invariant Strichartz norm adapted to equation \eqref{CP}. It follows from the local well-posedness theory that for all compact intervals $I\subset (T_-(u),T_+(u))$, $\|u\|_{S(I)}$ is finite. Furthermore if $\|u\|_{S([0,T_+(u)))}<\infty$, then $T_+(u)=+\infty$ and $u$ scatters forward in time, in $\hdot\times L^2$ to a solution of the linear wave equation. We refer to \cite{KeMe08} for details.
\end{remark}

\begin{remark}
 We assumed $N\leq 5$ to simplify the exposition. The proof can be adapted to the case $N\geq 6$ using the results in \cite{BuCzLiPaZh13}.
\end{remark}

 Theorem \ref{T:profiles} is the first step of the proof of a full decomposition:
\begin{equation}
\label{full_decompo}
u(t_n,x)=\sum_{j=1}^J \frac{1}{\lambda_{j,n}^{\frac{N}{2}-1}}Q^j_{\vell_j}\left(0,\frac{x-x_{j,n}}{\lambda_{j,n}}\right)+v(t_n,x)+o_n(1), 
\end{equation} 
where $v$ is a radiation term (a solution of the linear wave equation), and $o_n(1)$ goes to $0$ in the energy space, for a sequence of times $t_n\to T_+(u)$. We refer to \cite{DuKeMe12b} for the proof of this decomposition in the radial case. See also \cite{Cote13P} for wave maps. 

Our final goal is to prove that any bounded solution $u$ of \eqref{CP} can be written, as $t\to T_+(u)$, as a sum of decoupled solitary waves and a dispersive term (\emph{soliton resolution conjecture for equation \eqref{CP}}). This conjecture was settled in \cite{DuKeMe13} for radial initial data: in this case, there is no space translation and Lorentz invariance, and the only possible profile is (up to sign change and scaling) the explicit solution $W$ defined above. In the nonradial case, the conjecture was proved with an additional smallness assumption (and only for finite-time blow-up solutions) in \cite{DuKeMe12}.

The proof of Theorem \ref{T:profiles} uses the profile decomposition of Bahouri and G\'erard \cite{BaGe99}. The solutions $Q_{\ell_j}^j$ with scaling parameters $\{\lambda_n\}_n$ are, in a certain sense, the most concentrated profiles in a profile decomposition of $\{(u(t_n),\partial_tu(t_n))\}_n$. More precisely, they are the first nonlinear profiles with respect to a total preorder relation on the profiles adapted to equation \eqref{CP} (see Notation \ref{N:preorder} for the precise definition).  
If the full decomposition \eqref{full_decompo} also holds along the time sequence $\{t_n\}_n$, this means exactly that the sequence of scaling parameters $\{\lambda_n\}_n$ appearing in Theorem \ref{T:profiles} is of the same order than the sequence of smallest scaling parameters $\{\lambda_{j,n}\}_n$ of \eqref{full_decompo}, corresponding to the profiles with the fastest rate of concentration. Unfortunately, Theorem \ref{T:profiles} does not give any information on profiles with a slower rate of concentration.

\bigskip

The proof of Theorem \ref{T:profiles} is based on the notion of solutions with the compactness property, introduced in \cite{GlMe95P} for nonlinear Schr\"odinger equations (NLS) and \cite{MaMe00} for Korteweg-de Vries equation. See also \cite{KeMe06} for energy-critical NLS, \cite{Keraani06},  \cite{TaViZh08} and \cite{Dodson12} for mass-critical NLS (and \cite{MeRa04} for a stronger notion of nondispersive solutions)), \cite{KeMe08} for the energy critical wave equation \eqref{CP}.
\begin{defi}
\label{D:compact}
We say that a solution $u$ of \eqref{CP} has the \emph{compactness property} when there exists $\lambda(t)>0$, $x(t)\in \RR^N$, defined for $t\in (T_-(u),T_+(u))$ such that:
$$K=\left\{ \left( \lambda^{\frac{N}{2}-1}(t)u\left(t,\lambda(t)\cdot+x(t)\right), \lambda^{\frac N2}(t)\partial_t u\left(t,\lambda(t)\cdot+x(t)\right)\right),\; t\in (T_-(u),T_+(u))\right\}$$
has compact closure in $\hdot\times L^2$.
 \end{defi}
 Note that the stationary solutions and travelling waves defined above are solutions of \eqref{CP} with the compactness property. 
Theorem \ref{T:profiles} uses the following result, that states that any solution with the compactness property converges strongly to one of these solutions along a sequence of times.
 \begin{prop}
\label{P:compact}
 Let $u$ be a nonzero solution with the compactness property, with maximal time of existence $(T_-,T_+)$. Then
\begin{enumerate}
 \item \label{I:half_global} $T_-=-\infty$ or $T_+=+\infty$.
\item \label{I:theo_virial} there exist two sequences $\{t_n^{\pm}\}_n$ in $(T_-,T_+)$, two elements $Q^{\pm}$ of $\Sigma$ and one vector $\vell$ in the open unit ball of $\RR^N$ such that $\lim_{n\to +\infty}t_n^{\pm}=T_{\pm}$ and
\begin{multline}
 \label{theo_CV}
\lim_{n\to\infty}\left\|\lambda^{\frac{N}{2}-1}\left(t_n^{\pm}\right)u\left(t_n^{\pm},\lambda\left(t_n^{\pm}\right)\cdot +x\left(t_n^{\pm}\right)\right)-Q_{\vell}^{\pm}\left(0\right)\right\|_{\hdot}\\
+\left\|\lambda^{\frac{N}{2}}\left(t_n^{\pm}\right)\partial_t u\left(t_n^{\pm},\lambda\left(t_n^{\pm}\right)\cdot +x\left(t_n^{\pm}\right)\right)-\partial_tQ_{\vell}^{\pm}\left(0\right)\right\|_{L^2}=0.
\end{multline}
\end{enumerate}
\end{prop}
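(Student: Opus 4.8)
The plan is to extract dynamical information from the compactness property via a localized virial/monotonicity identity, which is the standard tool in the ``compactness $\Rightarrow$ soliton'' heuristic. First, for part \eqref{I:half_global}, I would argue by contradiction: if both $T_-$ and $T_+$ were finite, the compactness of $K$ forces the scaling parameter $\lambda(t)$ to stay bounded above and below away from both endpoints (otherwise one could extract a limit that contradicts either the concentration of $\hdot$-mass at a point, via the self-similar region analysis, or finite speed of propagation). But then $u$ would remain in a fixed compact set of the energy space near, say, $T_+ < \infty$, which contradicts the blow-up criterion of the local Cauchy theory (the norm must blow up, or at least the solution must leave every compact set, as $t \to T_+$). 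This is the kind of argument in \cite{KeMe08}, and I would cite the relevant blow-up characterization from there. One subtlety is controlling $x(t)$: one needs that the ``center of mass'' does not escape too fast, which again follows from finite speed of propagation combined with the compactness of $K$.

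For part \eqref{I:theo_virial}, I would work on the half-line on which the solution is global, say $T_+ = +\infty$ (the case $T_- = -\infty$ being symmetric). Introduce the virial functional $\int \phi_R(x)\, \partial_t u\, (x\cdot\nabla u + \tfrac{N-2}{2}u)\,dx$ with $\phi_R$ a suitable cutoff at scale $R$, rescaled by $\lambda(t)$ to respect the compactness. Its time derivative produces, modulo error terms controlled using the compactness of $K$ and the smallness of tails outside $|x - x(t)| \lesssim R\lambda(t)$, the quantity $\int |\partial_t u|^2 - $ (a variational functional vanishing exactly on $\Sigma$ and its Lorentz transforms). Averaging this identity in time over $[t_n, 2t_n]$ (or a dyadic interval) and using boundedness of the virial functional forces the averaged right-hand side to zero along a sequence $t_n \to +\infty$; compactness then upgrades this to a strong convergence of the rescaled solution, along a subsequence, to some limiting object $\vec{V}$ which is a global solution with the compactness property and zero scaling derivative / zero ``momentum-type'' quantity. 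The final step is to identify $\vec{V}$: a solution with the compactness property for which the virial derivative vanishes identically must be, by the rigidity theory (elliptic analysis of \eqref{stationary} after quotienting by the Lorentz group), a travelling wave $Q_{\vell}^+$ with $Q^+ \in \Sigma$, $|\vell| < 1$. That the \emph{same} vector $\vell$ works at both $T_-$ and $T_+$ should follow from conservation of the momentum $P[u]$ together with the relation between $\vell$ and the ratio $P[u]/E[u]$ for travelling waves.

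The main obstacle I anticipate is the rigidity step, i.e. showing that vanishing of the virial derivative, for a solution with the compactness property, forces it to be an exact travelling wave rather than merely asymptotically one. This requires a clean ODE/elliptic argument: from $\partial_t u\,(x\cdot\nabla u + \tfrac{N-2}{2}u)$-type quantities being stationary one extracts first that $u$ is, after a Lorentz boost tuned to kill the momentum, time-independent, and then that the resulting static profile solves \eqref{stationary} in $\hdot$, hence lies in $\Sigma$. Handling the Lorentz boost rigorously in the presence of a moving, scaling frame $(\lambda(t), x(t))$ — and ensuring the boost parameter is genuinely $<1$ in modulus (i.e. that one is not in a degenerate ``null'' regime) — is the technically delicate point, and I would expect to borrow heavily from the channels-of-energy / Lorentz-transform machinery already developed for \eqref{CP} in the cited works \cite{DuKeMe12}, \cite{DuKeMe13}. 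A second, more bookkeeping-type difficulty is making the cutoff errors in the virial identity genuinely small uniformly in time, which is exactly where the compactness of $K$ (giving uniform tail smallness after rescaling) is indispensable.
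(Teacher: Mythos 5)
Your overall strategy (virial monotonicity plus compactness, then elliptic rigidity via a Lorentz boost) is in the right family, and this is indeed how the paper proceeds, but two of your key steps do not close as stated.

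First, your proof of part \eqref{I:half_global} rests on the claim that if $T_+<\infty$ then compactness of $K$ forces $\lambda(t)$ to stay bounded below near $T_+$. This is false — the compactness property combined with finite speed of propagation forces the opposite, $\lambda(t)\leq C(T_+-t)\to 0$ (this is \cite[Lemma 4.7]{KeMe08}), and type II concentration with bounded energy norm is perfectly consistent with the local Cauchy theory. So there is no contradiction with the blow-up criterion to be had along these lines. The paper's actual argument is global and dynamical: one first shows $E[u]>0$ (via convexity of $\int u^2$ and the support confinement \eqref{D5}), then, assuming both $T_\pm$ finite, uses the momentum functional $\Psi(t)=\int x\,e(t,x)\,dx$ to locate the two blow-up points, and the boosted virial functional $\tZ$ of \eqref{D7} to conclude $\partial_tu+\ell\partial_{x_1}u\equiv 0$; differentiating and using the equation yields the elliptic equation \eqref{D12}, whence $u=Q_{\vell}$ is global — contradiction. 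Second, for part \eqref{I:theo_virial} your virial derivative has the form $\int|\partial_tu|^2-K(u)$ with $K$ a variational functional of indefinite sign for large data; a vanishing time average of a sign-indefinite quantity gives no pointwise information along a sequence, which is exactly why such arguments normally require a smallness assumption. The whole point of the paper's choice \eqref{defZ}/\eqref{D27} — the specific $(\ell^2-1)$- and $\ell^2$-weighted combination of the dilation and $x_1$-translation virials in the frame moving with velocity $\ell\ve_1=-P[u]/E[u]$ — is that its derivative is the \emph{perfect square} $\int(\partial_tu+\ell\partial_{x_1}u)^2\geq 0$, so the averaging argument does force this quantity to vanish along a sequence. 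Without that exact algebraic identity your Step ``averaging forces the right-hand side to zero'' does not yield rigidity.

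Two further omissions: the proposition asks for convergence at \emph{both} endpoints, one of which may be finite, so the finite-endpoint case (the paper's Lemma \ref{L:D3}, with the additional exclusion of self-similar concentration and the control $x(t_n)/(1-t_n)\to-\ell\ve_1$) is not symmetric to the global one and must be treated separately; and in the global case the averaging argument only works when $\lambda(t)/t\to 0$, so the regime where \eqref{D22} fails requires the separate renormalization/limiting argument of Lemma \ref{L:D5}, which reduces it to the finite-time case for a limiting solution $w$. Your identification of $\vell$ via $-P[u]/E[u]$ and the final elliptic step (Lemma \ref{L:D2}) are consistent with the paper.
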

\begin{remark}
The vector $\vell$ is given in terms of the momentum and energy of $u$. Indeed, $E[u]>0$ and $\vell=-P[u]/E[u]$ (see Propositions \ref{P:D1} and \ref{P:D1bis} below).
\end{remark}

In general, solutions of nonlinear dispersive equations with the compactness property are expected to be very special solutions like solitons or self-similar solutions. In the case of equation \eqref{CP}, the self-similar behaviour was excluded in \cite{KeMe08}, and we conjecture in view of Proposition \ref{P:compact}, that $0$, and $Q_{\vell}$ with $Q\in \Sigma$ and $|\vell|<1$ are the only solutions of \eqref{CP} with the compactness property. This conjecture was settled in \cite{DuKeMe11a} in the radial case. We shall prove it in a subsequent paper \cite{DuKeMe13Pb} in the nonradial setting with an additional nondegeneracy assumption on the profile $Q^+$ given by Proposition \ref{P:compact}.

\bigskip

Let us say a few words about the proof of our results.

The proof of Proposition \ref{P:compact} in Section \ref{S:virial} is based on a monotonicity formula. The use of monotonicity formulas to prove rigidity results on solutions with the compactness property is by now standard, but usually requires, for focusing equations, a size restriction on the solution. The fact that such a formula works without size restriction seems to be specific to the energy-critical wave equation \eqref{CP}. It is also possible to prove rigidity theorems by the ``channels of energy'' method. This strategy was so far mostly implemented in a radial setting (see \cite{DuKeMe12c,Shen12P,KeLaSc13P}). Let us also mention that the proof of \cite{DuKeMe13Pb} combines the two techniques, using both Proposition \ref{P:compact} and a channel of energy property in a nonradial setting.

Theorem \ref{T:profiles} is a consequence of Proposition \ref{P:compact} and a very general minimality argument based on profile decomposition (see H.~Bahouri and P.~G\'erard \cite{BaGe99} for equation \eqref{CP}). It consists in choosing, among all sequences of times $\{t_n\}_n$ such that $t_n\to T_+(u)$, a sequence minimizing some quantities involving the total energy and the number of nonscattering nonlinear profiles in the profile decomposition of $\{(u(t_n),\partial_tu(t_n))\}_n$. A clever adjustment of the times sequence is then needed in order to get exactly the local strong convergences \eqref{CV_THM1} and \eqref{CV_THM2}.

This construction generalizes some of the proofs in Section 3 of \cite{DuKeMe12}, and may also be seen as an extension of the ̀``compactness'' step of the compactness/rigidity method initiated in \cite{KeMe06}. In fact the main global well-posedness and scattering results in \cite{KeMe08} can be seen to follow directly from \eqref{CV_THM2} in Theorem \ref{T:profiles}, thus bypassing the ``critical element'' construction in \cite{KeMe08}. Similarly, the global well-posedness and scattering results for NLS equation \eqref{NLS} proved in \cite{KeMe06} follows from Theorem \ref{T:NLS} in Section \ref{S:NLS} and the rigidity theorem in \cite{KeMe06}.

Theorem \ref{T:profiles} is proved in Section \ref{S:minimality}. Section \ref{S:profiles} contains preliminaries on profile decompositions, including the introduction of a convenient order for the profiles, and the study of sequences of profile decompositions (see Lemma \ref{L:DPD}). 

Let us emphasize again that the minimality argument mentioned in the previous paragraphs is not specific to equation \eqref{CP}, but works for quite general dispersive equations, implying that for any bounded non-scattering solution of the equation, there exists a sequence of times converging to the final time of existence such that the modulated solution converges (in some weak sense) to solutions with the compactness property. Indeed, ignoring Proposition \ref{P:compact}, the first part of the proof of Theorem \ref{T:profiles} would yield the following weaker result:
\begin{prop}
\label{P:profiles}
Let $u$ be a solution of \eqref{CP} and assume that $u$ does not scatter forward in time and 
 \begin{equation*}
 \sup_{t\in [0,T_+(u))}\|(u(t),\partial_tu(t))\|_{\hdot\times L^2}<\infty.
 \end{equation*} 
 Then there exist sequences $\{t_n\}_n$ in $[0,T_+(u))$, $\{\lambda_n\}_n$ in $(0,+\infty)$, $\{x_{n}\}_n$ in $\RR^N$ and a solution $U$ of \eqref{CP} with the compactness property such that 
   \begin{equation*}
\left(\lambda_n^{\frac{N}{2}-1}u\left(t_n,\lambda_n\cdot+x_{n}\right),\lambda_n^{\frac{N}{2}}\partial_t u\left(t_n,\lambda_n\cdot+x_n\right)\right)\xrightharpoonup[n\to \infty]{} (U(0),\partial_tU(0)).
\end{equation*} 
\end{prop}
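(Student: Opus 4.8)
The plan is to extract the desired times, scales, translations and limiting compact solution via a minimality argument applied to the Bahouri--G\'erard profile decomposition of the bounded sequence $\{(u(t_n),\partial_t u(t_n))\}_n$, as $t_n\to T_+(u)$. First I would set up the framework: since $u$ does not scatter forward in time, by the local theory (see the Strichartz remark above) we have $\|u\|_{S([0,T_+(u)))}=+\infty$. Pick \emph{any} sequence $\tau_m\to T_+(u)$. By boundedness \eqref{bound_u} and the profile decomposition of \cite{BaGe99}, after extraction each sequence $\{(u(\tau_m),\partial_t u(\tau_m))\}_m$ admits a profile decomposition into (nonlinear) profiles $U_L^j$ with parameters $\{\lambda_{j,m},x_{j,m},t_{j,m}\}$ and a remainder small in the dispersive norm. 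The key observation (this is where the theory of \cite{KeMe08} enters) is that because $u$ does not scatter, \emph{at least one nonlinear profile cannot scatter}: indeed if all nonlinear profiles scattered with small Strichartz norm the approximation lemma would force $\|u\|_{S}$ finite on a neighborhood of $\tau_m$ for $m$ large, contradicting the blow-up of the Strichartz norm near $T_+(u)$.

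The heart of the argument is a minimization. Among all sequences $t_n\to T_+(u)$ and all associated profile decompositions, consider the quantity (or pair of quantities) consisting of, say, the sum of the energies of the non-scattering nonlinear profiles, or equivalently the number $J_0\ge 1$ of non-scattering profiles together with the total energy they carry. By the variational/minimality structure — the energies of the profiles sum up (Pythagorean expansion) to at most $\liminf$ of the total conserved energy $E[u]$, which is finite — these quantities are bounded below by a positive number and one can choose a sequence $\{t_n\}$ and a profile decomposition achieving the infimum. I would then argue that along this minimizing sequence there is \emph{exactly one} relevant non-scattering profile "at the top scale": if there were two non-scattering profiles with comparable or well-separated scales, one could perturb the time sequence $t_n$ slightly (replacing $t_n$ by $t_n+\lambda_{1,n}s_n$ for a well-chosen $s_n$) to push one profile's internal time toward its own blow-up time and thereby strictly decrease the minimized quantity — contradicting minimality. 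This "clever adjustment of the time sequence" is exactly the delicate point flagged in the introduction.

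Having isolated a single non-scattering nonlinear profile, call it $U_L^1$ with parameters $\{\lambda_n:=\lambda_{1,n}, x_n:=x_{1,n}, t_{1,n}\}$, I would let $U$ be the nonlinear solution of \eqref{CP} associated to it (the corresponding nonlinear profile, with its internal time parameter absorbed). The construction forces $U$ to be defined and non-scattering in forward time, and — again by the minimality, which prevents $U$ itself from admitting a further nontrivial profile decomposition with a non-scattering piece at a strictly smaller scale — $U$ must have the compactness property in the sense of Definition \ref{D:compact}: any sequence of times for $U$ with a profile decomposition would have to reduce to a single profile equal to $U$ (up to modulation), which is precisely compactness of the modulated flow. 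Finally, the weak convergence
\[
\left(\lambda_n^{\frac N2-1}u(t_n,\lambda_n\cdot+x_n),\ \lambda_n^{\frac N2}\partial_t u(t_n,\lambda_n\cdot+x_n)\right)\xrightharpoonup[n\to\infty]{}(U(0),\partial_t U(0))
\]
is read off directly from the profile decomposition: in a Bahouri--G\'erard decomposition, rescaling and translating by the parameters of the first profile and sending $n\to\infty$ leaves that profile (evaluated at the limit of its internal times) while all other profiles and the remainder converge weakly to zero, by the pairwise orthogonality of the parameters. One may need to pass to a further subsequence so that $t_{1,n}/\lambda_n$ converges in $[-\infty,+\infty]$ and to translate in time $U$ accordingly so that the internal time is $0$.

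The main obstacle I anticipate is the second step: proving that the minimizing time sequence can be chosen so that only one non-scattering profile survives at the finest scale, and more precisely controlling how the profile decomposition transforms under the perturbation $t_n\mapsto t_n+\lambda_n s_n$. This requires a careful continuity/stability analysis of profile decompositions under such time shifts (the content of a lemma like Lemma \ref{L:DPD} on sequences of profile decompositions), together with the long-time perturbation theory for \eqref{CP} to ensure that shifting the time does not create new non-scattering profiles or destroy the control on the remainder. Everything else — the Pythagorean energy expansion, the extraction of weak limits, and the identification of the compactness property — is comparatively routine given the tools cited in the excerpt.
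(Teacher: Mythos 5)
Your overall strategy (minimality over profile decompositions of $\{\vu(t_n)\}_n$, stability of the decomposition under time shifts $t_n\mapsto t_n+\lambda_{1,n}s_p$, weak convergence read off from orthogonality) is the paper's strategy. But there is a genuine gap at the final and most important step, the identification of the compactness property. The minimality argument controls the profile decompositions of $\vu(t_n+\lambda_{1,n}s_p)$ only for $s_p\in[0,T_+(U^1))$, i.e.\ it controls the \emph{forward} evolution of the first nonlinear profile. What it yields is precompactness of the modulated trajectory $\{(\mu^{\frac N2-1}(s)U^1(s,\mu(s)\cdot+y(s)),\dots):\,s\in[0,T_+(U^1))\}$ — the set $K_+^1$ of Lemma \ref{L:DEM4} — not compactness over the full maximal interval $(T_-(U^1),T_+(U^1))$ required by Definition \ref{D:compact}. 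A forward-compact solution need not have the compactness property, so you cannot take $U=U^1$. The paper closes this gap in Lemma \ref{L:DEM4'}: one extracts a strong limit $(V_0^1,V_1^1)$ of the modulated $\vU^1(s_p)$ as $s_p\to T_+(U^1)$, shows via Claim \ref{C:compactness} that the solution $V^1$ with this data \emph{does} have the full compactness property, and then re-applies the time-adjustment lemma (Lemma \ref{L:F1}) to realize $V^1$ as the first profile along the new sequence $\{t_{n_p}+s_p\lambda_{1,n_p}\}_p$. Your proposal skips this passage to the limit entirely; without it the conclusion "$U$ has the compactness property" does not follow from what you have established.

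Two further points. First, you propose to \emph{minimize} the number of non-scattering profiles together with their energy; the paper \emph{maximizes} the number $J_0$ first (obtaining $J_M$) and only then minimizes the energy. The direction matters: in the double profile decomposition, maximality of $J_M$ is what forces each non-scattering profile $U^{p,j}$ to produce exactly \emph{one} non-scattering sub-profile $V^{j,1}$ (if one split into two, the new time sequence would have more than $J_M$ non-scattering profiles, contradicting maximality — whereas it would not contradict minimality). Only then does energy minimality kill the remaining (scattering, hence nonnegative-energy) pieces and give the strong convergence underlying compactness. Second, the energies of non-scattering profiles are \emph{not} bounded below by a positive number for the focusing equation (nonzero solutions with $E\leq 0$ exist and blow up, hence do not scatter); the paper only needs, and proves via Sobolev, the lower bound $\EEE\geq -CJ_MB^{\frac{N}{N-2}}$, which makes the infimum finite. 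Your weak-convergence step and the reduction to $t_{1,n}=0$ (via Proposition \ref{P:Z4} when $-t_{1,n}/\lambda_{1,n}\to-\infty$) are essentially as in the paper.
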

Analogs of this weak version of Theorem \ref{T:profiles} can be proved by the same method for many critical and subcritical dispersive equations. This shows the crucial role played by solutions with the compactness property in the study of bounded solutions for these equations, even when no restriction on the size of the solution is assumed. We refer to Theorem \ref{T:NLS} in Section \ref{S:NLS} for a version of Theorem \ref{T:profiles} for the Schr\"odinger equation \eqref{NLS}. Note that for most focusing equations, including \eqref{NLS}, partial rigidity results such as Proposition \ref{P:compact} above (let alone general classification of solutions with the compactness property) are not known. 



\subsection*{Notations}
If $I$ is an interval, we denote
$$S(I)=L^{\frac{2(N+1)}{N-2}}(I\times \RR^N).$$
If $u$ is a function of $t\in \RR$, $x\in \RR^N$, we let $\vu=(u,\partial_tu)$ and $\nabla_{t,x}u=\left(\partial_tu,\partial_{x_1}u,\ldots,\partial_{x_N}u\right)$. If $(v_0,v_1)\in \hdot\times L^2$, we write $S_L(t)(v_0,v_1)=v(t)$, $\vS_L(t)(v_0,v_1)=\vv(t)$, where $v(t)$ is the solution of the linear wave equation
\begin{equation}
\label{LCP}
\partial_t^2v-\Delta v=0,\quad (v,\partial_tv)_{\restriction t=0}=(v_0,v_1).
\end{equation}
We denote by $o_n(1)$ any sequence $\{t_n\}_n$ of real numbers such that $\lim_n t_n=0$.

The open unit ball of $\RR^N$ for the Euclidean norm $|\cdot|$ is denoted by $B^N$.
\subsection*{Acknowledgment}
The authors would like to thank the anonymous referees for helpful comments and suggestions.
\section{Properties of solutions with the compactness properties}
\label{S:virial}
If $Q\in \Sigma$, we denote:
\begin{equation*}
 Q_{\ell}(t,x)=Q_{\ell \ve_1}(t,x)=Q\left(\frac{x_1-t\ell}{\sqrt{1-\ell^2}},x'\right).
\end{equation*} 
In this section, we prove:
\begin{prop}
\label{P:D1}
Let $u$ be a nonzero solution of \eqref{CP} with the compactness property. Let $T_{\pm}=T_{\pm}(u)$. Then
\begin{gather}
\label{D2}
E[u]>0\\
\label{D3}
T_+=+\infty\text{ or }T_-=-\infty.
\end{gather}
\end{prop}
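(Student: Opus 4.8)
\textbf{Proof proposal for Proposition \ref{P:D1}.}

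The plan is to use a virial-type (Morawetz) identity for solutions with the compactness property, exploiting the fact that the trajectory $(\lambda(t),x(t))$ keeps the solution concentrated at a single bounded scale, so all error terms produced by differentiating the localized virial quantity in time can be absorbed. First I would normalize: for a solution with the compactness property the functions $\lambda(t)$ and $x(t)$ can, after the usual reductions, be taken piecewise (or locally Lipschitz) and one may assume $x(0)=0$; the key geometric fact is that for every $\eps>0$ there is $R=R(\eps)$ such that, uniformly in $t$,
\begin{equation*}
\int_{|x-x(t)|\geq R\lambda(t)} |\nabla u(t)|^2 + |\partial_t u(t)|^2 + |u(t)|^{\frac{2N}{N-2}}\,dx \leq \eps,
\end{equation*}
which is just compactness of $K$ in $\hdot\times L^2$ together with the Sobolev embedding $\hdot\hookrightarrow L^{2N/(N-2)}$.

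Next I would set up the truncated virial functional. For a cutoff $\varphi$ with $\varphi(x)=|x|^2$ near the origin and $\varphi$ supported in $|x|\leq 2$, put $\varphi_R(x)=R^2\varphi(x/R)$ and define
\begin{equation*}
z_R(t) = \int \varphi_R\!\left(\tfrac{x-x(t)}{\lambda(t)}\right) \partial_t u(t,x)\, u(t,x)\, dx,
\qquad
y_R(t) = \int \nabla\varphi_R\!\left(\tfrac{x-x(t)}{\lambda(t)}\right)\cdot \nabla u(t,x)\, \partial_t u(t,x)\, dx,
\end{equation*}
(the precise combination being dictated by the energy-momentum tensor of \eqref{CP}). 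Differentiating $z_R$ in $t$ and using the equation, the leading term is $\int (\partial_t u)^2 - \tfrac{N-2}{2N}\cdot(\text{something})$; combined with the virial term coming from $\Delta$ one gets, after using \eqref{stationary}-type Pohozaev balancing, a quantity comparable to the energy. The scaling/translation of the cutoff along the trajectory produces extra terms with factors $\lambda'(t)$, $x'(t)$ and boundary terms localized in the annulus $R\lambda(t)\leq |x-x(t)|\leq 2R\lambda(t)$, all of which are $\leq \eps$ (plus the modulation terms, which are controlled because $z_R$ itself is bounded by the $\hdot\times L^2$ bound times $R^2\lambda(t)^{?}$ — here one must be a little careful and perhaps work with the self-similar rescaling to kill the $\lambda$-powers, or renormalize $z_R$ by the right power of $\lambda$). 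The outcome I am aiming at is an inequality of the form: for $R$ large and all $t$ in a subinterval,
\begin{equation*}
\frac{d}{dt}\Big(\text{renormalized }z_R(t)\Big) \geq c\, E[u] - \eps,
\end{equation*}
with $c>0$ an absolute constant, while the renormalized $z_R(t)$ stays bounded uniformly in $t$.

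From this the two conclusions follow by the standard contradiction argument. If $E[u]\leq 0$, run the monotonicity on all of $(T_-,T_+)$: if the interval is infinite on one side the bounded monotone quantity must have a derivative tending to $0$ along a sequence, forcing $E[u]\leq \eps$ for every $\eps$, hence $E[u]\leq 0$ is not yet a contradiction — so I would instead argue that $E[u]<0$ is impossible because it would force a definite-sign drift of a bounded quantity over an infinite time (using part (1)), and $E[u]=0$ combined with the conservation of $E$ and the compactness would force $u$ to be, by the rigidity/Pohozaev analysis, identically $0$ (using the sharp Sobolev inequality: $E[u]=0$ and $u\neq 0$ gives $\|\nabla u\|_{L^2}=\|u\|_{L^{2N/(N-2)}}^{\ldots}$ at the optimal constant, i.e.\ $u(t)$ is a rescaled $W$, which is a nonzero-energy object — contradiction). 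This gives \eqref{D2}. For \eqref{D3}, suppose for contradiction $T_-$ and $T_+$ are both finite; then $\lambda(t)\to 0$ at both ends (a known consequence of the compactness property and finite-time blow-up), one integrates the differential inequality over $(T_-,T_+)$ and gets that the renormalized $z_R$ increases by at least $c E[u](T_+-T_-) - \eps(T_+-T_-) > 0$ for small $\eps$, contradicting its uniform boundedness. I expect the main obstacle to be the bookkeeping of the modulation terms $\lambda'(t)$, $x'(t)$ and the choice of the correct power of $\lambda$ in the renormalization of $z_R$ so that the quantity is both uniformly bounded and has the desired monotonicity; this is exactly the point where the energy-criticality of \eqref{CP} is used to make a size-free argument work, and it is where one must be most careful.
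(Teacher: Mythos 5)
Your overall instinct (a virial identity combined with the compactness of $K$) points in the right direction, but the specific inequality you build the argument on cannot hold, and both endpoint cases of your energy argument are flawed. First, a bound of the form $\frac{d}{dt}\big(\text{renormalized } z_R\big)\geq c\,E[u]-\eps$ with $c>0$ is false for this class of solutions: the travelling waves $Q_{\vell}$ themselves have the compactness property, positive energy, and are global, so a uniformly bounded quantity whose derivative is bounded below by $c\,E[Q_{\vell}]-\eps>0$ for all time would already be a contradiction for them. The correct virial derivative in this problem is $\tZ'(t)=\int(\partial_tu+\ell\partial_{x_1}u)^2\,dx$ with $\ell\ve_1=-P[u]/E[u]$ (see \eqref{D7}--\eqref{D8}); it is nonnegative but vanishes identically on travelling waves, so no coercive lower bound by the energy is available. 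Consequently your contradiction for \eqref{D3} (``$z_R$ increases by $cE[u](T_+-T_-)>0$, contradicting boundedness'') is a non-sequitur: a bounded quantity may perfectly well increase by a fixed finite amount over a finite interval. The paper instead computes the limits of $\tZ$ at $T_\pm$ explicitly, using the compact support of $u$ inside the intersection of the two light cones issued from the blow-up points (see \eqref{D5}) together with the center-of-energy identity $\Psi'(t)=\ell E[u]\ve_1$, which yields $\ell(T_+-T_-)\ve_1=x_+-x_-$ and hence equality of the two limits; this forces $\int_{T_-}^{T_+}\int(\partial_tu+\ell\partial_{x_1}u)^2\,dx\,dt=0$, so $u$ solves the boosted elliptic equation, and Lemma \ref{L:D2} identifies $u=Q_\ell$, which is globally defined --- the actual contradiction with $T_\pm$ both finite.

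Second, your proof of \eqref{D2} does not close. For $E[u]=0$: zero energy does not saturate the Sobolev inequality and does not force $u_0$ to be a rescaled $W$; the zero level set of $E$ is large, so the ``sharp constant / Pohozaev'' step is simply incorrect. For $E[u]<0$ you invoke part (1), i.e.\ \eqref{D3}, which is circular here (in the paper \eqref{D3} is proved after \eqref{D2} and uses $E[u]>0$ to define $\ell$); moreover, for $E[u]\leq 0$ and $u\neq 0$ the solution blows up in finite time in both directions by \cite{KeMe08}, so there is no infinite time interval on which to run a drift argument. The paper's proof of \eqref{D2} is different and much softer: $E[u]\leq 0$ forces finite-time blow-up in both directions, the compactness property localizes the support in a double cone shrinking to points at $T_\pm$, the quantity $y(t)=\int u^2$ is convex (because $E\leq 0$) and tends to $0$ at both endpoints by Hardy's inequality, hence $y\equiv 0$ and $u\equiv 0$, a contradiction. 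Note that the compact-support property \eqref{D5} --- which your proposal never uses --- is the essential input that makes the unlocalized virial quantities finite and computable at the endpoints.
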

\begin{prop}
 \label{P:D1bis}
 Let $u$ be as in Proposition \ref{P:D1}.
 Assume (to fix ideas) that $P[u]$ is parallel to $\ve_1=(1,0,\ldots,0)$. Let $\ell$ be such that
\begin{equation}
\label{defl}
 \ell \ve_1=-\frac{P[u]}{E[u]}.
 \end{equation} 
 Then $\ell\in (-1,+1)$ and there exists $Q\in \Sigma$, a sequence $\{t_n\}_n$ in $(T_-,T_+)$ such that $t_n\to T_+$ and
 \begin{multline}
 \label{D4}
  \lim_{n\to\infty} \left(\lambda^{\frac{N}{2}-1}(t_n) u\left(t_n,\lambda(t_n)\cdot+x(t_n)\right), \lambda^{\frac{N}{2}}(t_n) \partial_t u\left(t_n,\lambda(t_n)\cdot+x(t_n)\right)\right)\\
  =\left(Q_{\ell}(0),\partial_tQ_{\ell}(0)\right).
 \end{multline}
 in $\hdot\times L^2$.
\end{prop}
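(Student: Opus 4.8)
\textbf{Proof proposal for Proposition \ref{P:D1bis}.}

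The plan is to exploit the conserved quantities $E[u]$ and $P[u]$ together with Proposition \ref{P:D1} and a Lorentz-boost argument. First I would observe that, by Proposition \ref{P:D1}, $E[u]>0$, so $\ell\in\RR$ is well-defined by \eqref{defl}; the genuine content is to show $|\ell|<1$ and to identify the limiting profile. The natural strategy is to apply the Lorentz transformation that removes the momentum: since $\vell=-P[u]/E[u]$, after boosting $u$ with velocity $\vell$ (assuming $|\ell|<1$, which I will need to justify first), one obtains a solution $\tu$ of \eqref{CP} with the compactness property and with $P[\tu]=0$. For such a zero-momentum solution, I expect the monotonicity/virial argument of Section \ref{S:virial} (the one underlying Proposition \ref{P:D1}, cf.\ Proposition \ref{P:compact}\eqref{I:theo_virial}) to produce a sequence of times $\tilde t_n\to T_+(\tu)$ along which the modulated solution converges \emph{strongly} in $\hdot\times L^2$ to $(\tQ(0),0)$ for some $\tQ\in\Sigma$ — i.e.\ to a genuine stationary solution, because the vanishing of the momentum forces the limiting travelling-wave velocity to be $0$. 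Undoing the Lorentz transform then converts this into the claimed strong convergence \eqref{D4} along the corresponding sequence $t_n\to T_+(u)$, with $Q=\tQ$ and velocity $\ell$.

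More concretely, the steps in order would be: (i) record that $E[u]>0$ from Proposition \ref{P:D1}, so $\ell$ is defined; (ii) show $|\ell|<1$ — here I would use that $u$ has the compactness property, hence (after renormalization) $\vu(t)$ stays in a compact set $K\subset\hdot\times L^2$, and that on such a set the energy density and momentum density satisfy a uniform Cauchy-Schwarz-type bound $|P[u]|\le (1-\delta)E[u]$ coming from $\big(\int \partial_t u\,\partial_{x_1}u\big)^2 \le \int(\partial_t u)^2\int(\partial_{x_1}u)^2 < E[u]^2$ unless the solution is a pure (anti)self-dual object, with the strict inequality made quantitative using compactness and $E[u]>0$; (iii) perform the Lorentz boost $L_{\vell}$ and check it maps solutions with the compactness property to solutions with the compactness property with the same structure, and computes $P[L_{\vell}u]=0$ (this is a standard but slightly delicate computation with the modulation parameters, since the boost mixes space and time); (iv) apply Proposition \ref{P:compact}\eqref{I:theo_virial} (or directly the virial rigidity of this section) to the boosted solution to get strong subsequential convergence to $Q^+_{\vm}(0)$ for some $Q^+\in\Sigma$ and some velocity $\vm$, and then use $P[L_{\vell}u]=0$ to force $\vm=0$; (v) transport the convergence back through $L_{\vell}^{-1}$, absorbing the boost into the profile to land on $Q_\ell$, and track how $(\lambda(t),x(t))$ transform so as to recover exactly the renormalization in \eqref{D4}.

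The main obstacle I anticipate is step (v) — more precisely, reconciling the \emph{a priori} modulation parameters $(\lambda(t),x(t))$ of $u$ with those of the boosted solution $\tu$, and showing that strong $\hdot\times L^2$ convergence of the boosted, renormalized solution along $\tilde t_n$ translates into strong convergence of the original renormalized $u$ along some $t_n\to T_+(u)$ with the specific normalization $\big(\lambda^{\frac N2-1}(t_n)u(t_n,\lambda(t_n)\cdot+x(t_n)),\lambda^{\frac N2}(t_n)\partial_t u(t_n,\lambda(t_n)\cdot+x(t_n))\big)\to (Q_\ell(0),\partial_t Q_\ell(0))$. A Lorentz transform is not an isometry of $\hdot\times L^2$ and does not commute with the scaling used in the compactness property, so one must argue that the image set is still precompact and carefully recompute the modulation. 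A secondary difficulty is the strictness in step (ii): one must rule out the degenerate case $|\ell|=1$, which would correspond to a null concentration; I expect this to follow from $E[u]>0$ together with the fact that, on the compact set $K$, the momentum density cannot saturate the Cauchy–Schwarz bound uniformly unless $\nabla u$ and $\partial_t u$ are pointwise proportional with the extremal ratio, which is incompatible with $u\in\hdot$ being a nonzero solution with finite energy. Once these two points are handled, the rest is bookkeeping built on Proposition \ref{P:D1} and the virial estimate already established in this section.
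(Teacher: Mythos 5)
Your plan has two genuine gaps, and the first one breaks the order of the whole argument. In step (ii) you propose to prove $|\ell|<1$ \emph{before} anything else, via a Cauchy--Schwarz bound of the form $\big(\int\partial_t u\,\partial_{x_1}u\big)^2\le\int(\partial_t u)^2\int(\partial_{x_1}u)^2<E[u]^2$. This inequality is false in the focusing setting: the energy $E[u]=\frac12\int|\nabla u|^2+\frac12\int(\partial_t u)^2-\frac{N-2}{2N}\int|u|^{\frac{2N}{N-2}}$ contains a negative term, so it does not dominate $\frac12\int(\partial_t u)^2+\frac12\int(\partial_{x_1}u)^2$, and compactness of $K$ does not rescue the argument (already among the travelling waves $Q_{\vell}$ one has $P=-\vell E$ with $|\vell|$ arbitrarily close to $1$, so there is no uniform $\delta$ with $|P|\le(1-\delta)E$ on the class of solutions with the compactness property). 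In the paper $|\ell|<1$ is obtained \emph{a posteriori}: the virial identities built from the functionals $\tZ$ in \eqref{D7} and $Z$ in \eqref{defZ} yield $\partial_t U+\ell\partial_{x_1}U=0$ for a limiting object $U$, hence the degenerate elliptic equation $(1-\ell^2)\partial_{x_1}^2f+\sum_{j\geq 2}\partial_{x_j}^2f+|f|^{\frac{4}{N-2}}f=0$, and Lemma \ref{L:D2} then excludes $\ell^2>1$ by finite speed of propagation and $\ell^2=1$ by a Pohozaev identity. So $|\ell|<1$ is a consequence of the rigidity, not an input to it.

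Second, the Lorentz boost in steps (iii)--(v) is not available at the level of generality required. Proposition \ref{P:D1} only guarantees that one of $T_{\pm}$ is infinite, so $u$ may blow up at $T_+<\infty$; defining the boosted solution on an appropriate domain, showing that it is a solution of \eqref{CP} inheriting the compactness property, and relating its modulation parameters to $(\lambda(t),x(t))$ is a substantial piece of work that the paper deliberately avoids. Instead the paper runs a ``boosted virial'' directly on $u$: the functionals $\tZ$ and $Z$ already carry the velocity $\ell$ (through the terms $(x-t\ell\ve_1)\cdot\nabla u\,\partial_tu$ and $-\ell^2\int(x_1-t\ell)\partial_{x_1}u\,\partial_tu$) and their derivative is exactly $\int(\partial_tu+\ell\partial_{x_1}u)^2$, which is the quantity a boost would turn into $\int(\partial_t\tu)^2$. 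Moreover the actual proof requires a three-case analysis (Lemmas \ref{L:D3}, \ref{L:D4}, \ref{L:D5}: $T_+$ finite; $T_+=+\infty$ with $\lambda(t)/t\to 0$; the remaining case, reduced to the first by passing to a limiting solution $w$), together with a control of $x(t)$ in each case via $\Psi(t)=\int xe(t,x)\,dx$ and $\Psi'(t)=\ell E[u]\ve_1$; none of this is bookkeeping and none of it is simplified by a boost. Finally, invoking part \eqref{I:theo_virial} of Proposition \ref{P:compact} in step (iv) would be circular, since that statement is precisely the one being established by Propositions \ref{P:D1} and \ref{P:D1bis}.
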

Note that Proposition \ref{P:compact} follows immediately from Propositions \ref{P:D1} and \ref{P:D1bis}.
We will only sketch most of the proofs, which are very similar to some of the proofs in \cite{DuKeMe12}.
\subsection{Positive energy and infinite interval of existence}
In this subsection we sketch the proof of Proposition \ref{P:D1}.

\EMPH{Sketch of proof of \eqref{D2}}
It is exactly \cite[Lemma 4.4]{DuKeMe12}. The proof is by contradiction. If $E[u]\leq 0$, then by \cite{KeMe08}, $u$ blows up in finite time in both time directions. Using that $u$ has the compactness property, one gets (see e.g. Lemma 4.8 of \cite{KeMe08}) that there exists a unique negative (respectively positive) time blow-up point $x_-$ (respectively $x_+$) in $\RR^N$ such that 
\begin{multline}
 \label{D5}
 \quad\supp (u,\partial_tu)\subset\\
 \Big\{ (t,x) \in (T_-,T_+)\times \RR^N\text{ s.t. } |x-x_+|\leq |T_+-t|\text{ and }|x-x_-|\leq |T_--t|\Big\}.\quad
\end{multline} 
Let
$$ y(t)=\int u^2(t,x)\,dx.$$
Then by explicit computation, using that the energy is nonpositive, we get that $y$ is convex. Furthermore, by Hardy's inequality and the property \eqref{D5} of the support of $u$, $y$ satisfies
$$ \lim_{t \to T_+} y(t)=\lim_{t \to T_-} y(t)=0.$$
Thus $y$ is equal to $0$, a contradiction since by our assumption $u$ is not identically $0$.

\EMPH{Sketch of proof of \eqref{D3}}
We argue by contradiction, assuming that both $T_-$ and $T_+$ are finite. As before, we deduce that there exist two blow-up points, $x_-$ and $x_+$, such that  \eqref{D5} holds.
By \eqref{D2}, the energy of $u$ is positive. Assuming (after a space rotation) that $P[u]$ is parallel to $ \ve_1$, we can define $\ell$ by \eqref{defl}.
Let 
\begin{equation}
\label{def_e}
e(t,x)=\frac{1}{2}|\nabla u(t,x)|^2+\frac{1}{2}(\partial_tu(t,x))^2-\frac{N-2}{2N}|u(t,x)|^{\frac{2N}{N-2}}
\end{equation} 
 be the density of energy, and  
$\Psi(t)=\int xe(t,x)\,dx$. Then (see \cite[Claim 2.12]{DuKeMe12}) $\Psi'(t)=\ell E[u] \ve_1$, and thus, integrating between $T_-$ and $T_+$, $\ell(T_+-T_-)E[u]\ve_1=(x_+-x_-)E[u]$. Since $E[u]\neq 0$, we obtain:
\begin{equation}
 \label{D6}
 \ell(T_+-T_-) \ve_1=x_+-x_-.
\end{equation} 
Now, let
\begin{equation}
 \label{D7}
 \tZ(t)=(\ell^2-1)\int (x-t\ell  \ve_1)\cdot\nabla u\partial_tu\,dx+\frac{N-2}{2} (\ell^2-1)\int u\partial_t u\,dx-\ell^2\int (x_1-t\ell)\partial_{x_1}u\partial_t u\,dx.
\end{equation} 
By Claim 2.12 in \cite{DuKeMe12} and the definition of $\ell$, 
\begin{equation}
 \label{D8}
 \tZ'(t)=\int (\partial_t u+\ell\partial_{x_1}u)^2\,dx.
\end{equation} 
Furthermore,
$$ \lim_{t\to T_{\pm}} \tZ(t)=(\ell^2-1)(x_{\pm}-T_{\pm}\ell \ve_1)\cdot P[u]-\ell^2(x_{1\pm}-T_{\pm}\ell)P_1[u],$$
where $P_1[u]=\int \partial_{x_1}u_0u_1$ (respectively $x_{1\pm}$) is the first coordinate of the momentum (respectively of $x_{\pm}$) in the canonical basis of $\RR^N$. Combining with \eqref{D6}, we get $\lim_{t\to T_+}\tZ(t)=\lim_{t\to T_-}\tZ(t)$. By \eqref{D8},
\begin{equation}
 \label{D10} \int_{T_-}^{T_+} \int (\partial_tu+\ell\partial_{x_1}u)^2\,dx\,dt=0.
\end{equation} 
This implies:
\begin{equation}
 \label{D11}
 \partial_tu+\ell\partial_{x_1}u=0 \text{ in } (T_-,T_+)\times \RR^N.
\end{equation} 
Differentiating with respect to $t$, we get, using also \eqref{CP},
\begin{equation}
 \label{D12}
 \Delta u+|u|^{\frac{4}{N-2}}u-\ell^2\partial_{x_1}^2u=0 \text{ in } (T_-,T_+)\times \RR^N.
\end{equation} 
This shows, using Lemma \ref{L:D2} below, that $\ell\in (-1,+1)$ and $u(t,x)=Q_{\ell}(t,x)$ for some stationary solution $Q$, contradicting the fact that $u$ is not globally defined. It remains to show:
\begin{lemma}
 \label{L:D2}
 Let $f\in \hdot(\RR^N)\setminus\{0\}$ and $\ell\in \RR$. Assume
 $$(1-\ell^2)\partial_{x_1}^2f+ \sum_{j=2}^N\partial_{x_j}^2 f+|f|^{\frac{4}{N-2}}f=0$$
 Then $\ell^2<1$ and there exists a stationary solution $Q$ of \eqref{CP} such that $f(x)=Q_{\ell}(0,x)$.
\end{lemma}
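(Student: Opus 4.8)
The plan is to prove Lemma~\ref{L:D2} by reducing the anisotropic equation for $f$ to the isotropic equation \eqref{stationary} via an explicit change of variables, which is only possible when $\ell^2<1$.

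First I would show that $\ell^2<1$. Suppose instead $\ell^2\ge 1$. If $\ell^2=1$ the equation degenerates to $\sum_{j=2}^N\partial_{x_j}^2 f+|f|^{\frac{4}{N-2}}f=0$, i.e.\ $f$ restricted to each hyperplane $\{x_1=\text{const}\}$ solves the critical elliptic equation in dimension $N-1$; but $f\in\hdot(\RR^N)$ forces $f(\cdot,x')\in L^2_{x_1}$ for a.e.\ $x'$, which is incompatible with a nonzero $x_1$-independent-type solution — more carefully, multiply by $f$, integrate over a slab $|x_1|<R$, and use that the $x_1$-flux terms vanish in the limit (by the $\hdot$ bound and a suitable sequence $R_n\to\infty$) to get $\int\big(\sum_{j=2}^N|\partial_{x_j}f|^2-|f|^{\frac{2N}{N-2}}\big)=0$, while a Pohozaev/scaling identity in the $x'$ variables gives a second relation that is inconsistent for $f\not\equiv0$ in $\hdot(\RR^N)$. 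If $\ell^2>1$ the operator $(1-\ell^2)\partial_{x_1}^2+\sum_{j\ge2}\partial_{x_j}^2$ is of ultrahyperbolic (indefinite) type; testing against $f$ and against $x\cdot\nabla f$ (Pohozaev) on large balls, again controlling boundary terms using $f\in\hdot(\RR^N)$, yields two linear relations between $\int|\partial_{x_1}f|^2$, $\int\sum_{j\ge2}|\partial_{x_j}f|^2$ and $\int|f|^{\frac{2N}{N-2}}$ whose only solution with all quantities $\ge0$ and the $\partial_{x_j}$, $j\ge2$, terms nonnegative is the trivial one, contradicting $f\neq0$. (I expect this ``$\ell^2\ge1$ is impossible'' step to be the main obstacle, since making the boundary terms vanish requires care: one does not a priori know $|u|^{\frac{2N}{N-2}}\in L^1$ or that $f$ decays, only $\nabla f\in L^2$, so I would run the Pohozaev argument with cutoffs $\chi(x/R_n)$ along a sequence $R_n$ chosen so that $\int_{R_n<|x|<2R_n}|\nabla f|^2\to0$.)

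Once $\ell^2<1$, set $\sigma=\sqrt{1-\ell^2}\in(0,1)$ and define $Q(y_1,y')=f(\sigma y_1,y')$, equivalently $f(x_1,x')=Q(x_1/\sigma,x')$. Then $\partial_{x_1}^2 f=\sigma^{-2}(\partial_{y_1}^2Q)(x_1/\sigma,x')$, so the equation $(1-\ell^2)\partial_{x_1}^2f+\sum_{j\ge2}\partial_{x_j}^2f+|f|^{\frac4{N-2}}f=0$ becomes exactly $\partial_{y_1}^2Q+\sum_{j\ge2}\partial_{y_j}^2Q+|Q|^{\frac4{N-2}}Q=0$, i.e.\ $-\Delta Q=|Q|^{\frac4{N-2}}Q$ on $\RR^N$. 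Moreover the change of variables $x\mapsto(x_1/\sigma,x')$ is a linear isomorphism of $\RR^N$ with constant Jacobian, so $Q\in\hdot(\RR^N)$ and $Q\neq0$; hence $Q\in\Sigma$. Finally, comparing with the formula displayed just before Theorem~\ref{T:profiles}, $Q_\ell(0,x)=Q(x_1/\sqrt{1-\ell^2},x')=Q(x_1/\sigma,x')=f(x)$, which is the desired conclusion. This last paragraph is entirely routine; the content of the lemma is the ruling-out of $\ell^2\ge1$ together with the elementary but clean observation that for $\ell^2<1$ the Lorentz-type rescaling in the single direction $x_1$ intertwines the anisotropic equation with \eqref{stationary}.
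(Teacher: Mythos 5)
Your reduction for $\ell^2<1$ (the rescaling $Q(y)=f(\sqrt{1-\ell^2}\,y_1,y')$) and your treatment of $\ell^2=1$ (energy identity plus a Pohozaev identity in the $x'$ variables, which are mutually inconsistent because the exponent $\frac{2N}{N-2}$ is not the critical one for dimension $N-1$) are essentially the paper's argument, which excludes $\ell^2=1$ by a Pohozaev identity and then performs exactly this change of variables. The place where you genuinely diverge from the paper is the case $\ell^2>1$, and that is where there is a gap.

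For $\ell^2>1$ the operator $(1-\ell^2)\partial_{x_1}^2+\sum_{j\geq 2}\partial_{x_j}^2$ is ultrahyperbolic, and the multiplier identities you want (testing against $f$ is fine, but testing against $x_1\partial_{x_1}f$ or $x\cdot\nabla f$ is not) cannot be justified for a solution that is merely in $\hdot$. The obstruction is not the boundary terms you flag, but interior regularity: the equation only tells you that the particular combination $(1-\ell^2)\partial_{x_1}^2f+\Delta' f$ lies in $L^{\frac{2N}{N+2}}$, and since the operator is not elliptic there is no gain putting $f$ in $H^2_{\rm loc}$ (compare $\partial_1^2h-\partial_2^2h=0$, which admits arbitrarily rough $\dot H^1$ solutions). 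Consequently the pairing of $\partial_{x_1}^2f\in H^{-1}_{\rm loc}$ against $x_1\partial_{x_1}f\,\chi(x/R)\in L^2$ is not defined, and the Pohozaev computation cannot even be written down, let alone its boundary terms controlled. (The formal linear algebra does close: one gets $(N-1)(\ell^2-1)\int(\partial_{x_1}f)^2=-\int|\nabla'f|^2$, forcing $f=0$; the issue is purely one of justification, but it is a real one.) The paper avoids this entirely by using finite speed of propagation: if $f$ solves the equation with $\ell^2>1$, then $u(t,x)=f(x_1-\ell t,x')$ is a global finite-energy solution of \eqref{CP} travelling at speed $|\ell|>1$, and one derives a contradiction from the fact that, outside the forward light cone over $\{|x|>R\}$ with $R$ chosen so the exterior data is below the small-data threshold, $u$ must remain small, whereas the translated profile carries its full energy into that exterior region. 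This argument lives entirely at the level of the $\hdot\times L^2$ well-posedness theory and needs no extra regularity; you should substitute it for your Pohozaev argument in the $\ell^2>1$ case.
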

\begin{proof}[Sketch of proof]
 Using finite time of propagation (to exclude the case $\ell^2>1$) or a Pohozaev identity (to exclude the case $\ell^2=1$), it is shown in Steps 1 and 2 of the proof of \cite[Lemma 2.6]{DuKeMe12} that $\ell^2<1$. \footnote{Note that a correction to the part of \cite[Lemma 2.6]{DuKeMe12} that is not used here is contained in \cite{DuKeMe13Pb}. See also the corrected version \cite{DuKeMe13arXiv} on arXiv.}
 
 Let $g(x)=f(\sqrt{1-\ell^2}x_1,x_2,\ldots,x_N)$. Then the assumptions on $f$ imply
 $$\Delta g+|g|^{\frac{4}{N-2}}g=0,\quad g\in \hdot(\RR^N)$$
 and the result follows.
 \end{proof}
 \subsection{Congergence to a solitary wave}
 In this subsection we give a sketch of the proof of Proposition \ref{P:D1bis}. We divide it into three lemmas.
 \begin{lemma}
  \label{L:D3}
  Let $u$ be as in Proposition \ref{P:D1}. Assume furthermore $T_+<\infty$. Then the conclusion of Proposition \ref{P:D1bis} holds. 
 \end{lemma}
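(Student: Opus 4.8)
The plan is to prove Lemma \ref{L:D3} by showing that a \emph{nonzero} solution with the compactness property cannot blow up in finite forward time; the conclusion of Proposition \ref{P:D1bis} then holds vacuously when $T_+<\infty$ (so the genuine convergence statement of Proposition \ref{P:D1bis} only has to be established in the case $T_+=+\infty$). So assume for contradiction that $u\not\equiv 0$ has the compactness property and $T_+<\infty$. By Proposition \ref{P:D1}, $E[u]>0$ and $T_-=-\infty$. Arguing exactly as in the proof of \eqref{D3} (finite speed of propagation together with the compactness property, cf.\ Lemma~4.8 of \cite{KeMe08}), there is a unique forward blow-up point, which we take to be the origin, and
\[
\supp\big(u(t),\partial_t u(t)\big)\subset\overline{B}(0,T_+-t)\qquad\text{for }t\text{ close to }T_+ .
\]
Using the precompactness of $\vec v(t)=\big(\lambda^{\frac N2-1}(t)u(t,\lambda(t)\cdot+x(t)),\,\lambda^{\frac N2}(t)\partial_t u(t,\lambda(t)\cdot+x(t))\big)$ and the fact that $u\not\equiv 0$, one shows that the concentration scale is comparable to the self-similar scale, $c(T_+-t)\leq\lambda(t)\leq C(T_+-t)$, and that $x(t)\to 0$: a sequence of nonzero $\hdot$-functions with bounded norm, supported in balls whose radii are not comparable to their modulation radii, cannot be precompact.

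Introduce the self-similar variable $\sigma=-\log(T_+-t)\to+\infty$ and the renormalisation
\[
\vec w(\sigma)=\big(w_0(\sigma),w_1(\sigma)\big)=\Big((T_+-t)^{\frac N2-1}u\big(t,(T_+-t)\,\cdot\,\big),\ (T_+-t)^{\frac N2}\partial_t u\big(t,(T_+-t)\,\cdot\,\big)\Big).
\]
By the support property, $\vec w(\sigma)$ is supported in the closed unit ball $\overline{B^N}$; by the comparability of the scales and the compactness property of $u$, the family $\{\vec w(\sigma)\}_{\sigma\geq\sigma_0}$ is itself precompact in $\hdot\times L^2$. Moreover, scale invariance gives $\|\nabla w_0(\sigma)\|_{L^2}^2+\|w_1(\sigma)\|_{L^2}^2=\|\nabla u(t)\|_{L^2}^2+\|\partial_t u(t)\|_{L^2}^2\geq 2E[u]>0$, so the family stays bounded away from $0$ in $\hdot\times L^2$. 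A direct computation yields the identity $\partial_\sigma w_0=w_1-(\tfrac N2-1)w_0-y\cdot\nabla w_0$.

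The heart of the matter is a monotonicity formula in the variable $\sigma$, obtained by transporting the virial/energy identity of \cite[Claim~2.12]{DuKeMe12} to self-similar coordinates (no truncation is needed, since $\vec w(\sigma)$ is compactly supported): there is a weighted energy–virial functional $\mathcal{V}(\sigma)$ built from $\vec w(\sigma)$, with a weight $\rho\geq 0$ degenerating at $|y|=1$, which is bounded (by the precompactness of $\{\vec w(\sigma)\}$ and the support in $\overline{B^N}$) and satisfies
\[
\frac{d}{d\sigma}\,\mathcal{V}(\sigma)=\int_{B^N}\big(\partial_\sigma w_0(\sigma,y)\big)^2\,\rho(y)\,dy\ \geq\ 0 .
\]
Since $\sigma$ now ranges over the \emph{infinite} interval $[\sigma_0,+\infty)$, boundedness and monotonicity of $\mathcal{V}$ force $\frac{d}{d\sigma}\mathcal{V}(\sigma_n)\to 0$ along a sequence $\sigma_n\to+\infty$, i.e.\ $\partial_\sigma w_0(\sigma_n)\to 0$ in $L^2_{\loc}(B^N)$.

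Finally, by precompactness one extracts a subsequence with $\vec w(\sigma_n)\to(w_0^\infty,w_1^\infty)$ strongly in $\hdot\times L^2$, supported in $\overline{B^N}$. Passing to the limit in $\partial_\sigma w_0=w_1-(\tfrac N2-1)w_0-y\cdot\nabla w_0$ gives $w_1^\infty=(\tfrac N2-1)w_0^\infty+y\cdot\nabla w_0^\infty$ in $B^N$; that is, $w_0^\infty$ is the profile of a finite-energy self-similar solution of \eqref{CP}. By the exclusion of self-similar behaviour for \eqref{CP} established in \cite{KeMe08}, $w_0^\infty=0$, hence $w_1^\infty=0$. But then $\|\nabla u(t_n)\|_{L^2}^2+\|\partial_t u(t_n)\|_{L^2}^2\to 0$, contradicting $\|\nabla u(t)\|_{L^2}^2+\|\partial_t u(t)\|_{L^2}^2\geq 2E[u]>0$. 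Therefore $T_+<\infty$ is impossible, which proves the lemma. I expect the main obstacle to be the construction and justification of the self-similar monotonicity formula: pinning down the correct weight $\rho$ and quadratic form, and verifying the sign of $\frac{d}{d\sigma}\mathcal{V}$ including any boundary contribution on $|y|=1$ — this is the place where the focusing nonlinearity must be controlled and where the energy-critical structure of \eqref{CP} is used, as in the analogous arguments of \cite{DuKeMe12,KeMe08}. A secondary technical point is establishing the comparability $\lambda(t)\sim T_+-t$, needed so that $\vec w$ inherits the compactness property.
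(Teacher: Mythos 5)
There is a genuine gap, and it is structural: Lemma \ref{L:D3} is not vacuous, and your strategy of deriving a contradiction from $T_+<\infty$ cannot work with the tools at hand. Proposition \ref{P:D1} only gives $T_+=+\infty$ \emph{or} $T_-=-\infty$; ruling out finite forward-time blow-up for a nonzero (nonradial) solution with the compactness property is essentially part of the rigidity conjecture that the paper explicitly leaves open (it is only settled in the radial case, and in \cite{DuKeMe13Pb} under an extra nondegeneracy assumption). The content of Lemma \ref{L:D3} is precisely the convergence \eqref{D4} along a sequence $t_n\to T_+$ \emph{in the finite-$T_+$ case}, and this case is genuinely used later: in Lemma \ref{L:D5} the argument is reduced to a limiting compact solution $w$ with $T_-(w)$ finite, to which the time-reversed Lemma \ref{L:D3} is applied.

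Concretely, the step that fails is the claimed lower bound $\lambda(t)\geq c\,(T_+-t)$. Precompactness of $K$ together with the support property only forces the upper bound $\lambda(t)\leq C(T_+-t)$ (\cite[Lemma 4.7]{KeMe08}); a fixed compactly supported profile viewed at a scale $\lambda(t)\ll T_+-t$ sits well inside the backward light cone and is perfectly compatible with precompactness, since the rescaled support radius $(T_+-t)/\lambda(t)$ then goes to $+\infty$, not to $0$. In fact the paper's proof uses exactly the negation of your bound: the exclusion of self-similar behaviour in \cite[Section 6]{KeMe08} produces a sequence $t_n\to T_+$ with $\lambda(t_n)/(T_+-t_n)\to 0$ (equation \eqref{D13}), and the whole argument is run along that sequence. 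Without $\lambda(t)\sim T_+-t$, your self-similar renormalisation $\vec w(\sigma)$ is not precompact (along the sequence of \eqref{D13} it concentrates and converges weakly to $0$), so the monotonicity functional $\mathcal{V}(\sigma)$ is not bounded by compactness and the limiting self-similar-profile equation cannot be extracted. The paper instead works in the original variables: it first shows $x(t_n)/(1-t_n)\to-\ell\ve_1$ using $\Psi(t)=\int x\,e(t,x)\,dx$ and $\Psi'=\ell E[u]\ve_1$, then uses the virial functional $Z(t)$ of \eqref{defZ} with $Z'(t)=\int(\partial_tu+\ell\partial_{x_1}u)^2\,dx$ and $Z(t_n)/(1-t_n)\to 0$ to get the averaged vanishing \eqref{D18}, and finally extracts a strong limit $U$ satisfying $\partial_tU+\ell\partial_{x_1}U=0$, identified as $Q_\ell$ by the elliptic Lemma \ref{L:D2} — a Lorentz-translated soliton, not a self-similar profile.
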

\begin{proof}[Sketch of proof]
We will assume without loss of generality that $x(t)$ and $\lambda(t)$ are continuous functions of $t$ (see \cite[Remark 5.4]{KeMe06}).

 \EMPH{Step 1}
 Assume to fix ideas $T_+=1$. By \cite[Lemma 4.8]{KeMe08}, there exists $x_+\in \RR^N$ such that
 \begin{equation}
 \label{supp_u+}
  \supp u\subset \{(t,x),\; |x-x_+|\leq |t-1|\}.
 \end{equation} 
As a consequence,  $\lim_{t\to 1}x(t)=x_+$. We will assume $x_+=0$, so that
 \begin{equation}
 \label{CVx+}
 \lim_{t\to 1}x(t)=0.
 \end{equation} 
By \cite[Lemma 4.7]{KeMe08}, there exists $C>0$ such that $\lambda(t)\leq C(1-t)$ for $t$ close to $1$. By \cite[Section 6]{KeMe08}, self-similar blow-up is excluded: there exists a sequence $\{t_n\}$ in $(0,1)$, with $t_n\to 1$ such that
 \begin{equation}
  \label{D13}
  \lim_{n\to\infty} \frac{\lambda(t_n)}{1-t_n}=0.
 \end{equation} 
 \EMPH{Step 2: control of space translation} We prove that for any sequence $t_n\to 1$ such that \eqref{D13} holds, we have
 \begin{equation}
 \label{D14}
 \lim_{n\to\infty}\frac{x(t_n)}{1-t_n}=-\ell \ve_1.
 \end{equation} 
 This is exactly Lemma 4.6 of \cite{DuKeMe12}. Let us give a quick idea of the proof. Let $\Psi(t)$ be defined by
 \begin{equation}
  \label{D15}
  \Psi(t)=\int xe(t,x)\,dx
 \end{equation} 
 Then by explicit computations, equation \eqref{CP}, and the definition of $\ell$, 
 \begin{equation}
  \label{Psi'}
 \Psi'(t)=\ell E[u] \ve_1. 
 \end{equation} 
 On the other hand, by \eqref{D13} and the compactness of $\overline{K}$, 
 \begin{equation}
  \label{D16}
  \lim_{n\to \infty} \frac{1}{1-t_n}\int (x-x(t_n))e(t_n,x)\,dx=0
 \end{equation} 
 i.e.
 \begin{equation}
  \label{D17}
  \lim_{n\to\infty} \left|\frac{1}{1-t_n} \left( \Psi(t_n)-x(t_n)E[u] \right)\right|=0.
 \end{equation} 
 By \eqref{CVx+}, $\lim_{t\to 1}\Psi(t)=0$. Integrating \eqref{Psi'} between $t_n$ and $1$ we get
 $$\Psi(t_n)=-\ell E[u](1-t_n) \ve_1$$
 which concludes, in view of \eqref{D17} (and since $E[u]\neq 0$), the proof of \eqref{D14}.
 
 \EMPH{Step 3: virial argument} We show that for any sequence $\{t_n\}$ in $(0,1)$ with $t_n\to 1$ such that \eqref{D13} holds, we have
 \begin{equation}
  \label{D18}
  \lim_{n\to\infty} \frac{1}{1-t_n}\int_{t_n}^1 \int (\partial_tu(t,x)+\ell\partial_{x_1}u(t,x))^2\,dx\,dt=0.
 \end{equation} 
 This is Lemma 4.7 of \cite{DuKeMe12}. Again, we only give a quick idea of the proof.
 Let
 \begin{multline}
  \label{defZ}
  Z(t)=(\ell^2-1) \int \left(x+\ell (1-t) \ve_1\right)\cdot\nabla u\partial_tu\\
  +\frac{N-2}{2}(\ell^2-1)\int u\partial_t u-\ell^2\int \left(x_1+\ell(1-t)\right)\partial_{x_1}u\partial_t u.
 \end{multline} 
 Then $Z'(t)=\int (\partial_tu+\ell\partial_{x_1}u)^2\,dx$ and \eqref{D18} will follow from
 \begin{equation}
  \label{D19} \lim_{n\to\infty} \frac{Z(t_n)}{1-t_n} =0.
 \end{equation} 
 The property \eqref{D19} follows from \eqref{D13}, \eqref{D14} and the compactness of $\overline{K}$. 
 To prove it, fix a small $\eps>0$, and divide the integrals defining $Z(t)$ into the regions $|x-x(t_n)|\geq A_{\eps}\lambda(t_n)$ and $|x-x(t_n)|\leq A_{\eps}\lambda(t_n)$, where $A_{\eps}$ is a large positive parameter, given by the compactness of $\overline{K}$. We omit the details, and refer to the proof of \cite[Lemma 4.7]{DuKeMe12}
 
\EMPH{Step 4: end of the proof} This step is the same as the proof of \cite[Lemma 4.9]{DuKeMe12}. By the arguments of \cite[Corollary 5.3]{DuKeMe11a}, we deduce from \eqref{D18} that there exists a sequence $\{t'_n\}$ in $(0,1)$, with $t_n'\to 1$ and 
$$\lim_{n\to\infty} \left(  \lambda^{\frac{N}{2}-1}(t_n') u\left(t_n',\lambda(t_n')\cdot+x(t_n')\right), \lambda^{\frac{N}{2}}(t_n) \partial_t u\left(t_n',\lambda(t_n')\cdot+x(t_n')\right)\right)=\left(U_0,U_1\right)$$
in $\hdot\times L^2$, where the solution $U$ of \eqref{CP} with initial data $(U_0,U_1)$ satisfies, for some $T\in (0,T_+(U))$,
\begin{equation}
 \label{D20}
 \partial_t U+\ell\partial_{x_1}U=0 \text{ in }(0,T)\times \RR^N.
\end{equation} 
 Using Lemma \ref{L:D2} as in the end of the proof of Proposition \ref{P:D1} above, we obtain that $\ell^2<1$ and $U(t,x)=Q_{\ell}(t,x)$ for some stationary solution $Q$. The proof is complete.
 \end{proof}
\begin{lemma}
 \label{L:D4} 
 Let $u$ be as in Proposition \ref{P:D1}. Assume furthermore $T_+=+\infty$ and
 \begin{equation}
  \label{D22}
  \lim_{t\to +\infty}\frac{\lambda(t)}{t}=0.
 \end{equation} 
 Then the conclusion of Proposition \ref{P:D1bis} holds. 
\end{lemma}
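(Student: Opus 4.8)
The plan is to follow the proof of Lemma~\ref{L:D3} closely, with the shrinking quantity $1-t$ (as $t\to T_+=1$ there) replaced by $t$ itself (as $t\to T_+=+\infty$ here), and with the compact support property \eqref{supp_u+} — unavailable at $+\infty$ — replaced by the concentration of the solution at the scale $\lambda(t)$ around $x(t)$ coming from the compactness property. Hypothesis \eqref{D22} is exactly what makes $\lambda(t)$ negligible compared with $t$; this is the feature of Lemma~\ref{L:D3} (there: $\lambda(t_n)/(1-t_n)\to0$) that drives the whole argument. As in Lemma~\ref{L:D3}, I would first rotate coordinates so that $P[u]$ is parallel to $\ve_1$, define $\ell$ by \eqref{defl} (legitimate since $E[u]>0$ by Proposition~\ref{P:D1}), and assume $\lambda$ and $x$ continuous.

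\EMPH{Step 1: space translation} I would prove $x(t)/t\to\ell\ve_1$ as $t\to+\infty$, the analog of \eqref{D14} (see \cite[Lemma~4.6]{DuKeMe12}). Taking $\Psi$ as in \eqref{D15} (or a suitably truncated variant, since $\int|x|\,|e(t,x)|\,dx$ need not be finite at $+\infty$), \eqref{Psi'} gives $\Psi(t)=\Psi(0)+t\,\ell E[u]\ve_1$. On the other hand, $\int e(t,x)\,dx=E[u]$ by conservation of the energy, and by the compactness property the energy density concentrates in $\{|x-x(t)|\lesssim\lambda(t)\}$, so $\Psi(t)-x(t)E[u]=O(\lambda(t))=o(t)$ by \eqref{D22}. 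Comparing the two identities and using $E[u]\neq0$ gives the claim; in particular $x(t)=t\ell\ve_1+o(t)$.

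\EMPH{Step 2: virial argument (the crux)} I would use $\tZ$ as in \eqref{D7} — or rather a localized version of it, since without compact support $\tZ$ itself may fail to be well defined at $+\infty$. By \eqref{D8}, $\tZ'(t)=\int(\partial_tu+\ell\partial_{x_1}u)^2\,dx\geq0$, so $\tZ$ is nondecreasing, and the point is to prove $\tZ(t)=o(t)$. Splitting $x-t\ell\ve_1=(x-x(t))+(x(t)-t\ell\ve_1)$ in \eqref{D7}: the $(x-x(t))$-contributions are $O(\lambda(t))=o(t)$ by concentration; the $(x(t)-t\ell\ve_1)$-contributions involve $\int\nabla u\,\partial_tu\,dx=P[u]$, which is conserved, hence are $O(|x(t)-t\ell\ve_1|)=o(t)$ by Step~1; and $\int u\,\partial_tu\,dx=O(\lambda(t))=o(t)$ again by concentration. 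Monotonicity then yields
\[
\frac{1}{t}\int_0^t\int_{\RR^N}\bigl(\partial_tu(s,x)+\ell\,\partial_{x_1}u(s,x)\bigr)^2\,dx\,ds=\frac{\tZ(t)-\tZ(0)}{t}\tend{t}0,
\]
the analog of \eqref{D18} (see \cite[Lemma~4.7]{DuKeMe12}); as there, the far-field tails weighted by $|x-x(t)|$ are handled by splitting the integrals into the regions $|x-x(t)|\geq A_\eps\lambda(t)$ and $|x-x(t)|\leq A_\eps\lambda(t)$. I expect this step to be the main obstacle: unlike in Lemma~\ref{L:D3}, there is no compact support at $+\infty$, so one must introduce the cutoff carefully and check that both the localization errors and the $|x-x(t)|$-weighted tails are still $o(t)$ — this is precisely where \eqref{D22} and the quantitative concentration from the compactness property are genuinely used.

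\EMPH{Step 3: conclusion} From the last display there is a sequence $t_n\to+\infty$ along which $\int(\partial_tu(t_n)+\ell\partial_{x_1}u(t_n))^2\,dx\to0$; refining as in \cite[Corollary~5.3]{DuKeMe11a}, one can even arrange this smallness on a whole range of times near $t_n$. By the compactness property, after passing to a subsequence, the renormalized data $\bigl(\lambda^{\frac{N}{2}-1}(t_n)u(t_n,\lambda(t_n)\cdot+x(t_n)),\,\lambda^{\frac{N}{2}}(t_n)\partial_tu(t_n,\lambda(t_n)\cdot+x(t_n))\bigr)$ converges in $\hdot\times L^2$ to some $(U_0,U_1)$, and — exactly as in Step~4 of Lemma~\ref{L:D3} — the solution $U$ of \eqref{CP} with that data satisfies $\partial_tU+\ell\partial_{x_1}U=0$ on some interval $(0,T)$. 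Differentiating in $t$ and using \eqref{CP} gives $(1-\ell^2)\partial_{x_1}^2U+\sum_{j=2}^N\partial_{x_j}^2U+|U|^{\frac{4}{N-2}}U=0$; since $E[U]=E[u]>0$ (scale invariance of the energy, Proposition~\ref{P:D1}) we have $U\neq0$, so Lemma~\ref{L:D2} gives $\ell^2<1$ and $U=Q_\ell$ for some $Q\in\Sigma$. Hence $(U_0,U_1)=(Q_\ell(0),\partial_tQ_\ell(0))$, which is \eqref{D4}, and the proof is complete.
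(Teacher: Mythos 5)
Your proposal is correct and follows essentially the same route as the paper's (itself only sketched) proof: a truncated center-of-energy identity to get $x(t)=t\ell\ve_1+o(t)$, a localized virial quantity whose time-average of the derivative controls $\int(\partial_tu+\ell\partial_{x_1}u)^2$, and then compactness plus Lemma \ref{L:D2} to identify the limit as $Q_\ell$. The only caveat is that in Step 1 the near/far splitting gives $\Psi(t)-x(t)E[u]=o(t)$ (the far region contributes $\eps\cdot O(t)$, not $O(\lambda(t))$), but you correctly flag the need for the cutoff and the $A_\eps\lambda(t)$ splitting, which is exactly how the paper handles it.
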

\begin{proof}
 Again, we will assume without loss of generality that $x(t)$ and $\lambda(t)$ are continuous functions of $t$. Lemma \ref{L:D4} is exactly Steps 2,3 and 4 of the proof of \cite[Lemma 4.10]{DuKeMe12}. We give a short summary of the arguments.

 \EMPH{Step 1: control of the space translation} We prove
 \begin{equation}
  \label{D23}
  \lim_{t\to +\infty} \frac{\left|x(t)-t\ell  \ve_1\right|}{t}=0.
 \end{equation} 
 Let, for a large parameter $\tau$, 
 $$\Psi_{\tau}(t)=\int x\varphi\left(\frac{x}{\tau}\right) e(t,x)\,dx,$$
 where $\varphi(x)=1$ if $|x|\leq 3$ and $\varphi(x)=0$ if $|x|\geq 4$. Then, by explicit computations and the equation \eqref{CP},
 \begin{equation}
  \label{D24}
  \left|\Psi'_{\tau}(t)-\ell E[u] \ve_1\right|\leq C\int_{|x|\geq 3\tau}\left( |\nabla u|^2+(\partial_t u)^2+ |u|^{\frac{2N}{N-2}}+\frac{1}{|x|^2}u^2\right)\,dx.
 \end{equation} 
 Note also that
 \begin{equation}
  \label{D25}
  \Psi_{\tau}(\tau)-x(\tau)E[u]=\int \left( x\varphi\left(\frac{x}{\tau}  \right) -x(\tau)\right)e(\tau,x)\,dx.
 \end{equation} 
 Using the compactness of $\overline{K}$, the bound $|x(t)|\leq C+|t|$ (which follows easily from finite speed of propagation, see e.g. the proof of (4.9) in \cite{KeMe08}) and \eqref{D22}, we get 
 \begin{align*}
 \lim_{\tau\to+\infty} \frac{1}{\tau}\int_0^{\tau} \int_{|x|\geq 3\tau}\left( |\nabla u|^2+(\partial_t u)^2+ |u|^{\frac{2N}{N-2}}+\frac{1}{|x|^2}u^2\right)\,dx\,dt&=0\\
  \lim_{\tau\to +\infty} \frac{1}{\tau}\int \left( x\varphi\left(\frac{x}{\tau}  \right) -x(\tau)\right)e(\tau,x)\,dx&=0
 \end{align*}
Integrating \eqref{D24} between $0$ and $\tau$ and combining with \eqref{D25} we get \eqref{D23}.

\EMPH{Step 2. Virial argument}
We next prove 
\begin{equation}
 \label{D26}
 \lim_{T\to+\infty} \frac{1}{T}\int_0^T \int (\partial_t u+\ell\partial_{x_1}u)^2\,dx\,dt=0.
\end{equation} 
This is Step 3 of the proof of \cite[Lemma 4.10]{DuKeMe12}. Let $R>0$ be a large parameter. Let 
\begin{multline}
 \label{D27}
 Z_R(t)=(\ell^2-1)\int (x-t\ell  \ve_1)\cdot\nabla u\partial_tu \varphi\left( \frac{x-t\ell \ve_1}{R} \right)+\\
 \frac{N-2}{2}(\ell^2-1)\int u\partial_tu \varphi\left( \frac{x-t\ell \ve_1}{R} \right)-\ell^2 \int (x_1-t\ell)\partial_{x_1}u\partial_t u \varphi\left( \frac{x-t\ell  \ve_1}{R} \right),
\end{multline} 
where $\varphi$ is as in Step 1. Then
$$\left|Z_R'(t)-\int (\partial_t u+\ell\partial_{x_1}u)^2\right|\leq C\int_{|x-t\ell  \ve_1|\geq R} |\nabla u|^2+(\partial_tu)^2+|u|^{\frac{2N}{N-2}}+\frac{1}{|x|^2}u^2,$$
and \eqref{D26} follows from estimates using the compactness of $\overline{K}$, assumption \eqref{D22} and Step 1.

\EMPH{Step 3: end of the proof}
This is Step 4 in the proof of \cite[Lemma 4.10]{DuKeMe12}, and is very similar to the ends of the proofs of Proposition \ref{P:D1} and Lemma \ref{L:D3}. We omit the details.
 \end{proof}
The proof of Proposition \ref{P:D1bis} will be complete once we have proved:
\begin{lemma}
 \label{L:D5}
  Let $u$ be as in Proposition \ref{P:D1}. Assume furthermore $T_+=\infty$ and that \eqref{D22} does not hold. Then the conclusion of Proposition \ref{P:D1bis} is valid. 
 \end{lemma}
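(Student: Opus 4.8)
The plan is to reduce to Lemma~\ref{L:D3}. Write $\ell_0:=\limsup_{t\to+\infty}\lambda(t)/t$, which is positive since \eqref{D22} fails; the first step is to check that $\ell_0<+\infty$. If not, choose $t_n\to+\infty$ with $\lambda(t_n)/t_n\to+\infty$ (so $\lambda(t_n)\to+\infty$) and set $v_n(s,y):=\lambda(t_n)^{\frac{N}{2}-1}u\bigl(t_n+\lambda(t_n)s,x(t_n)+\lambda(t_n)y\bigr)$, a solution of \eqref{CP}. The pair $(v_n(0),\partial_sv_n(0))$ is the element of $K$ at time $t_n$, so along a subsequence it converges in $\hdot\times L^2$ to the data $(w_0,w_1)$ of a solution $w$ of \eqref{CP}, with $\|(w_0,w_1)\|_{\hdot\times L^2}^2\ge 2E[u]>0$ by \eqref{D2} and Sobolev; hence $w(0)=(w_0,w_1)\neq 0$. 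But $s_n:=-t_n/\lambda(t_n)\to0$, and $v_n(s_n)=\bigl(\lambda(t_n)^{\frac{N}{2}-1}u_0(x(t_n)+\lambda(t_n)\cdot),\lambda(t_n)^{\frac{N}{2}}u_1(x(t_n)+\lambda(t_n)\cdot)\bigr)$ is a fixed $\hdot\times L^2$ pair dilated by $\lambda(t_n)\to+\infty$, hence converges weakly to $0$, while by continuous dependence $v_n(s_n)\to w(0)$ in $\hdot\times L^2$ since $s_n\to0$ lies in the lifespan of $w$. This forces $w\equiv0$, absurd; so $\ell_0\in(0,+\infty)$.

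Next I would rescale \emph{at the time scale $t_n$}. Pick $t_n\to+\infty$ with $\lambda(t_n)/t_n\to\ell_0$ and set $w_n(s,y):=t_n^{\frac{N}{2}-1}u\bigl(t_n+t_ns,x(t_n)+t_ny\bigr)$. The pair $(w_n(0),\partial_sw_n(0))$ is the element of $K$ at time $t_n$ with the scaling parameter $\lambda(t_n)$ replaced by $t_n$, i.e.\ dilated by $t_n/\lambda(t_n)\to1/\ell_0\in(0,\infty)$; so a subsequence converges in $\hdot\times L^2$ to some $(w_0,w_1)\neq0$, the data of a solution $w$ of \eqref{CP}. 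As in the proof of Lemma~\ref{L:D3}, and using $\ell_0<\infty$ together with the bound $|x(t)|\le C+|t|$, one checks that $w$ has the compactness property with $\overline{K_w}\subset\overline K$ (for each $s$ in the lifespan of $w$, $w_n(s)\to w(s)$, and $w_n(s)$ is an element of $K$ dilated and translated by parameters which, by compactness of $\overline K$ and $w(s)\neq0$, converge to a positive dilation and a finite translation), and that $E[w]=E[u]>0$, $P[w]=P[u]$, so the vector $\ell$ attached to $w$ coincides with that of $u$. The point of this rescaling is that $w_n(-1,y)=t_n^{\frac{N}{2}-1}u_0(x(t_n)+t_ny)$ is again a fixed pair dilated by $t_n\to+\infty$, hence tends weakly to $0$; since $0$ lies in the lifespan of $w$ and $w(0)\neq0$, the value $s=-1$ does not, so $T_-(w)\in[-1,0)$ is finite. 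By Proposition~\ref{P:D1}, $T_+(w)=+\infty$.

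Then I would apply Lemma~\ref{L:D3} to the time-reversed solution $\check w(s):=w(-s)$, which has the compactness property and $T_+(\check w)=-T_-(w)\in(0,1]$. This provides $Q\in\Sigma$ and $\sigma_k\to T_+(\check w)$ such that the modulation of $\check w(\sigma_k)$ by $(\lambda_{\check w}(\sigma_k),x_{\check w}(\sigma_k))$ converges in $\hdot\times L^2$ to $(Q_{-\ell}(0),\partial_tQ_{-\ell}(0))$, the vector of $\check w$ being $-\ell$ because $P[\check w]=-P[w]$. Undoing the time reversal and using $Q_{-\ell}(0)=Q_\ell(0)$ and $-\partial_tQ_{-\ell}(0)=\partial_tQ_\ell(0)$, the modulation of $w(s_k)$, $s_k:=-\sigma_k\to T_-(w)$, by $(\lambda_w(s_k),x_w(s_k))$ converges to $(Q_\ell(0),\partial_tQ_\ell(0))$. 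Since $w(s_k)=\lim_nw_n(s_k)$ and $1+s_k>0$ (as $s_k>-1$), unwinding $w_n$ shows that, with $\tau_{n,k}:=t_n(1+s_k)\to+\infty$ ($n\to\infty$, $k$ fixed), $\mu_{n,k}:=t_n\lambda_w(s_k)$ and $\xi_{n,k}:=x(t_n)+t_nx_w(s_k)$, the pair $\bigl(\mu_{n,k}^{\frac{N}{2}-1}u(\tau_{n,k},\mu_{n,k}\cdot+\xi_{n,k}),\mu_{n,k}^{\frac{N}{2}}\partial_tu(\tau_{n,k},\mu_{n,k}\cdot+\xi_{n,k})\bigr)$ tends, as $n\to\infty$ and then $k\to\infty$, to $(Q_\ell(0),\partial_tQ_\ell(0))$ in $\hdot\times L^2$. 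A diagonal choice $n=n(k)$ then produces a sequence $\tau_m\to+\infty$ and parameters $\mu_m>0$, $\xi_m\in\RR^N$ along which the corresponding modulation of $u(\tau_m)$ converges to $(Q_\ell(0),\partial_tQ_\ell(0))$.

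Finally I would replace $(\mu_m,\xi_m)$ by the canonical modulation parameters $(\lambda(\tau_m),x(\tau_m))$. The canonical modulation of $u(\tau_m)$ lies in $K$, so a subsequence converges in $\overline K$; comparing the two convergences, the dilation ratio $\mu_m/\lambda(\tau_m)$ and the shift $(\xi_m-x(\tau_m))/\lambda(\tau_m)$ must converge to a finite positive and a finite limit respectively, for otherwise the limit of the canonical modulation would be $0\notin\overline K$ (every element of $K$ has $\hdot\times L^2$-norm squared $\ge 2E[u]>0$). Hence the canonical modulation converges to $(\widetilde Q_\ell(0),\partial_t\widetilde Q_\ell(0))$ for $\widetilde Q\in\Sigma$ a dilate and translate of $Q$, which is exactly the conclusion of Proposition~\ref{P:D1bis}; in particular $\ell\in(-1,1)$. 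I expect this last passage from the auxiliary modulation to the canonical one to be the point requiring the most care; the remainder is bookkeeping built on Lemma~\ref{L:D3} and Proposition~\ref{P:D1}.
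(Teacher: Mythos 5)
Your proposal is correct and follows essentially the same route as the paper: bound $\lambda(t)/t$, pass to a limiting solution $w$ with the compactness property and finite $T_-(w)$, apply Lemma \ref{L:D3} backward in time, transfer back to $u$ by a diagonal argument, and finally adjust to the canonical parameters $\lambda(t'_p), x(t'_p)$ using $0\notin\overline K$. The only differences are cosmetic: you prove boundedness of $\lambda(t)/t$ via weak convergence of spreading rescalings rather than finite speed of propagation, and you rescale by $t_n$ instead of $\lambda(t_n)$, which normalizes the paper's $\tau_0$ to $1$ and merges its Cases 1 and 2 into the single observation that $w_n(-1)\rightharpoonup 0$.
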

\begin{proof}
Let 
 \begin{equation}
\label{defwn}
 w_n(s,y)=\lambda(t_n)^{\frac{N}{2}-1} u\left(t_n+\lambda(t_n)s,\lambda(t_n)y+x(t_n)\right).  
 \end{equation} 
 Then (extracting subsequences if necessary), there exists $(w_0,w_1)\in \hdot\times L^2$ such that
 \begin{equation}
  \label{DD10}
  \lim_{n\to\infty} (w_n(0),\partial_tw_n(0))=(w_0,w_1)\text{ in }\hdot\times L^2.
 \end{equation} 
Since $u$ is not identically $0$, $(w_0,w_1)\neq (0,0)$. 

Note that by finite speed of propagation, $\lambda(t)/t$ is bounded as $t\to+\infty$. Indeed, assume that for a sequence $t_n\to +\infty$, one has $\lambda(t_n)/t_n\to +\infty$. 
%
If $R>0$,
\begin{multline*}
 \int_{|x|\geq |t_n|+R} |\nabla u(t_n,x)|^2+(\partial_tu(t_n,x))^2\,dx=\int_{\left|y+\frac{x(t_n)}{\lambda(t_n)}\right|\geq \frac{|t_n|+R}{\lambda(t_n)}} |\nabla w_{n}(0,y)|^2+(\partial_t w_{n}(0,y))^2\,dy\\
\underset{n\to\infty}{\longrightarrow} \int_{\RR^N} |\nabla w_0(y)|^2+(w_1(y))^2\,dy\neq 0,
\end{multline*}
where we have used that $\frac{|t_n|+R}{\lambda(t_n)}\to 0$ as $n\to \infty$. This contradicts finite speed of propagation, proving as announced that $\lambda(t)/t$ is bounded for large $t$.

 Since \eqref{D22} does not hold, there exists a sequence $t_n\to+\infty$, and $\tau_0\in (0,+\infty)$ such that
 \begin{equation}
 \label{DD9}
 \lim_{n\to\infty} \frac{\lambda(t_n)}{t_n}=\frac{1}{\tau_0}.
 \end{equation}

 Let $w$ be the solution of \eqref{CP} with initial data $(w_0,w_1)$. We distinguish two cases.

 \EMPH{Case 1: $T_-(w)<-\tau_0$}
 Let $s_n=-\frac{t_n}{\lambda(t_n)}$. Then 
 \begin{multline}
 \label{expwn}
 (w_n(s_n,y),\partial_t w_n(s_n,y))\\
 =\left(\lambda(t_n)^{\frac{N}{2}-1} u(0,\lambda(t_n)y+x(t_n)),\lambda(t_n)^{\frac{N}{2}}\partial_tu(0,\lambda(t_n)y+x(t_n))\right).
 \end{multline} 
 By \eqref{DD9}, \eqref{DD10} and the assumption $T_-(w)<-\tau_0$, we obtain
 $$\lim_{n\to\infty} (w_n(s_n), \partial_t w_n(s_n))=(w(-\tau_0),\partial_t w(-\tau_0))\text{ in }\hdot\times L^2.$$
 This shows by \eqref{expwn} that $\lambda(t_n)$ is bounded, contradicting \eqref{DD9}.
 
 \EMPH{Case 2. $T_-(w)\geq -\tau_0$}
 We prove that $w$ has the compactness property, using a by now standard argument (see e.g. \cite[Section 7]{KeMe08}). Indeed, fix $s\in (T_-(w),T_+(w))$. For large $n$, define 
 \begin{align*}
  u_{0n}(y)&=\lambda\left( t_n+\lambda(t_n)s \right)^{\frac{N}{2}-1}u\Big( t_n+\lambda(t_n)s,\lambda\left(  t_n+\lambda(t_n)s\right)y+x\left( t_n+\lambda(t_n)s \right) \Big)\\
  u_{1n}(y)&=\lambda\left( t_n+\lambda(t_n)s \right)^{\frac{N}{2}}\partial_tu\Big( t_n+\lambda(t_n)s,\lambda\left(  t_n+\lambda(t_n)s\right)y+x\left( t_n+\lambda(t_n)s \right) \Big). 
 \end{align*}
(note that $(u_{0n},u_{1n})$ depends also on $s$).
Since $s>T_-(w)\geq -\tau_0$, we get, in view of \eqref{DD9}, that $t_n+\lambda(t_n)s$ is positive for large $n$, which shows that for large $n$, $u_{0n}$ and $u_{1n}$ are well defined, and $(u_{0n},u_{1n})\in K$. Next note that
\begin{align*}
 u_{0n}(y)&=\left(\frac{\lambda\left( t_n+\lambda(t_n)s \right)}{\lambda(t_n)}\right)^{\frac{N}{2}-1}w_{n}\left( s,\frac{\lambda\left(  t_n+\lambda(t_n)s\right)}{\lambda(t_n)}y+\frac{x\left( t_n+\lambda(t_n)s \right) -x(t_n)}{\lambda(t_n)}\right)\\
  u_{1n}(y)&=\left(\frac{\lambda\left( t_n+\lambda(t_n)s \right)}{\lambda(t_n)}\right)^{\frac{N}{2}}\partial_sw_{n}\left(s,\frac{\lambda\left(  t_n+\lambda(t_n)s\right)}{\lambda(t_n)}y+\frac{x\left( t_n+\lambda(t_n)s \right)-x(t_n)}{\lambda(t_n)} \right). 
 \end{align*}
By the continuity of the flow of \eqref{CP}
$$ \lim_{n\to\infty}(w_n(s),\partial_tw_n(s))=(w(s),\partial_tw(s))\text{ in }\hdot\times L^2.$$
Since $(u_{0n},u_{1n})\in K$ for all $n$ and $0\notin K$, we deduce that there exists a constant $C(s)>0$ such that for all $n$
$$\frac{1}{C(s)} \leq \frac{\lambda\left( t_n+\lambda(t_n)s \right)}{\lambda(t_n)}\leq C(s) \text{ and }\left|\frac{x\left( t_n+\lambda(t_n)s \right) -x(t_n)}{\lambda(t_n)}\right|\leq C(s).$$
Extracting subsequences, we can assume 
$$\lim_{n\to\infty}\frac{\lambda\left( t_n+\lambda(t_n)s \right)}{\lambda(t_n)}=\tlambda(s)>0\text{ and }\lim_{n\to\infty}\frac{x\left( t_n+\lambda(t_n)s \right) -x(t_n)}{\lambda(t_n)}=\tx(s)\in \RR^N.$$
Using again that $(u_{0n},u_{1n})\in K$, we deduce
$$ \left(\tlambda^{\frac{N}{2}-1}(s)w(s,\tlambda(s)\cdot+\tx(s)),\tlambda^{\frac{N}{2}}(s)\partial_sw(s,\tlambda(s)\cdot+\tx(s))\right)\in \overline{K}.$$
Since the above construction of $\tlambda(s)$ and $\tx(s)$ works for all $s\in (T_-(w),T_+(w))$, we get  that $w$ has the compactness property. 

We next deduce the desired convergence \eqref{D4}. Since $T_-(w)$ is finite, we deduce from Lemma \ref{L:D3} that $\ell\in (-1,+1)$, and that there exists a stationary solution $Q$ and a sequence $\{\tau_n\}$ in $(T_-(w),T_+(w))$, such that $\tau_n\to T_-(w)$ as $n\to\infty$ and
\begin{multline*}
\lim_{n\to\infty}\left(\tlambda^{\frac{N}{2}-1}(\tau_n)w(\tau_n,\tlambda(\tau_n)\cdot+\tx(\tau_n)),\tlambda^{\frac{N}{2}}(\tau_n)\partial_tw(\tau_n,\tlambda(\tau_n)\cdot+\tx(\tau_n))\right)\\ =(Q_{\ell}(0),\partial_tQ_{\ell}(0))\text{ in }\hdot\times L^2. 
\end{multline*}
(we have used that the momentum and energy of $w$ and of $u$ are the same to get $\ell\ve_1=-P[w]/E[w]=-P [u]/E[u]\in (-1,+1)$).
Let $p\geq 1$ be an integer, and choose an index $n_p$ such that $T_-(w)<\tau_{n_p}<0$ and
\begin{multline}
 \label{DD10'}
\bigg\|
\left(\tlambda^{\frac{N}{2}-1}(\tau_{n_p})w(\tau_{n_p},\tlambda(\tau_{n_p})\cdot+\tx(\tau_{n_p})),\tlambda^{\frac{N}{2}}(\tau_{n_p})\partial_tw(\tau_{n_p},\tlambda(\tau_{n_p})\cdot+\tx(\tau_{n_p}))\right)\\
-(Q_{\ell}(0),\partial_tQ_{\ell}(0))\bigg\|_{\hdot\times L^2}<\frac 1p.
\end{multline} 
Since
\begin{equation}
 \label{DD11}
\lim_{k\to +\infty} \left\| \left(w_k(\tau_{n_p})-w(\tau_{n_p}),\partial_tw_k(\tau_{n_p})-\partial_tw(\tau_{n_p})\right)\right\|_{\hdot\times L^2}=0,
\end{equation} 
we get that for large $k$ (in view of the definition \eqref{defwn} of $w_k$)
\begin{multline}
\label{DD12}
\bigg\| \left(\mu_{k,p}^{\frac{N}{2}-1} u\left(t_k+\lambda(t_k)\tau_{n_p},\mu_{k,p}\cdot +x_{k,p}\right),\mu_{k,p}^{\frac{N}{2}} \partial_t u\left(t_k+\lambda(t_k)\tau_{n_p},\mu_{k,p}\cdot +x_{k,p}\right)\right)\\
-(Q_{\ell}(0),\partial_tQ_{\ell}(0))\bigg\|_{\hdot\times L^2}<\frac{2}{p},
\end{multline} 
where $\mu_{k,p}=\lambda(t_k)\tlambda(\tau_{n_p})$, $x_{k,p}=\lambda(t_{k})\tx(\tau_{n_p})+x(t_k)$. Since $\tau_{n_p}>-\tau_0$, we obtain  by \eqref{DD9} 
$$\lim_{k\to+\infty} t_k+\lambda(t_k)\tau_{n_p}=+\infty.$$
Choose $k_p$ such that
$$ t_p':= t_{k_p}+\lambda(t_{k_p})\tau_{n_p}\geq p,$$
and \eqref{DD12} holds for $k=k_p$. Let $\mu_p'=\mu_{k_p,p}$, $x_p'=x_{k_p,p}$. Then
\begin{equation}
 \label{DD13}
\lim_{p\to+\infty} \left({\mu_p'}^{\frac{N}{2}-1} u\left(t_p',\mu_p'\cdot +x_p'\right),{\mu_p'}^{\frac{N}{2}} \partial_tu(t_p',\mu_p'\cdot+x_p')\right)=\left(Q_{\ell}(0),\partial_tQ_{\ell}(0)\right)\text{ in }\hdot\times L^2.
 \end{equation}
and $\lim_{p\to\infty} t_p'=+\infty$. Using the compactness of $K$, it is easy to deduce that  \eqref{DD13} still holds when $\mu'_p$ and $x_p'$ are replaced with $\lambda(t_p')$ and $x(t_p')$ (extracting subsequences, rescaling and space translating $Q$ if necessary).  
\end{proof}

\section{Profile decomposition}
\label{S:profiles}
In this section we recall a few facts about the profile decomposition of H.~Bahouri and P.~G\'erard. We also put the profiles of this decomposition into an order that is convenient when writing the approximation of a solution of \eqref{CP} as a sum of profiles (Subsections \ref{SS:preorder} and \ref{SS:approx}) and prove a new result concerning sequences of profile decompositions (Subsection \ref{SS:DPD}).
\subsection{Definition}
Let $\big\{(u_{0,n},u_{1,n})\big\}_n$ be a bounded sequence in $\hdot\times L^2$. For $j\geq 1$, consider a solution $U_L^j$ of the linear wave equation \eqref{LCP}, and a sequence $\left\{\lambda_{j,n},x_{j,n},t_{j,n}\right\}_n$  in $(0,+\infty)\times \RR^N\times \RR$. The sequences of parameters $\left\{\lambda_{j,n},x_{j,n},t_{j,n}\right\}_n$, $j\geq 1$ are said to be \emph{orthogonal} if for all $j,k\geq 1$
\begin{equation}
\label{orthogonal}
j\neq k\Longrightarrow
\lim_{n\to\infty}\left|\log\frac{\lambda_{j,n}}{\lambda_{k,n}}\right|+\frac{\left|t_{j,n}-t_{k,n}\right|}{\lambda_{j,n}}+\frac{\left|x_{j,n}-x_{k,n}\right|}{\lambda_{j,n}}=\infty.
\end{equation} 
We say that  $\left( U_L^j,\left\{\lambda_{j,n},x_{j,n},t_{j,n}\right\}_n \right)_{j\geq 1}$ is a \emph{profile decomposition} of the sequence $\big\{(u_{0,n},u_{1,n})\big\}_n$ if \eqref{orthogonal} is satisfied and, denoting by
\begin{equation}
 \label{linear_profile}
 U^j_{L,n}(t,x)=\frac{1}{\lambda_{j,n}^{\frac{N}{2}-1}}U_L^j\left( \frac{t-t_{j,n}}{\lambda_{j,n}},\frac{x-x_{j,n}}{\lambda_{j,n}} \right),
\end{equation} 
and 
\begin{equation}
 \label{def_wnJ}
 w_n^J(t,x)=S_L(t)\left(u_{0,n},u_{1,n}\right)-\sum_{j=1}^JU^j_{L,n}(t,x),
\end{equation} 
the following property holds:
\begin{equation}
 \label{prop_wnJ}
 \lim_{J\to\infty}\limsup_{n\to\infty} \left\|w_n^J\right\|_{S(\RR)}=0.
\end{equation} 
By the paper \cite{BaGe99} of Bahouri and G\'erard\footnote{This article is written in space dimension $3$ but an adaptation of the proof gives the case of general space dimension (see \cite{Bulut10}).}, if $\{(u_{0,n},u_{1,n})\}_n$ is a bounded sequence of $\hdot\times L^2$, there exists a subsequence (that we will also denote by $\{(u_{0,n},u_{1,n})\}_n$) that admits a profile decomposition $\left( U_L^j,\left\{\lambda_{j,n},x_{j,n},t_{j,n}\right\}_n \right)_{j\geq 1}$. We recall the following Pythagorean expansions, valid for all $J\geq 1$:
\begin{gather}
\label{Pyt1}
\lim_{n\to\infty}\left\|u_{0,n}\right\|^2_{\hdot}-\left(\sum_{j=1}^J\left\|U_{L,n}^j(0)\right\|^2_{\hdot}+\left\|w_n^J(0)\right\|^2_{\hdot}\right)=0\\
\label{Pyt2}
\lim_{n\to\infty}\left\|u_{1,n}\right\|^2_{L^2}-\left(\sum_{j=1}^J\left\|\partial_tU_{L,n}^j(0)\right\|^2_{L^2}+\left\|\partial_tw_n^J(0)\right\|^2_{L^2}\right)=0\\
 \label{Pyt3}
 \lim_{n\to\infty}\left\|u_{0,n}\right\|^{\frac{2N}{N-2}}_{L^{\frac{2N}{N-2}}}-\left(\sum_{j=1}^J\left\|U_{L,n}^j(0)\right\|^{\frac{2N}{N-2}}_{L^{\frac{2N}{N-2}}}+\left\|w_n^J(0)\right\|^{\frac{2N}{N-2}}_{L^{\frac{2N}{N-2}}}\right)=0.
\end{gather}
Let us emphasize the fact that \eqref{prop_wnJ} is essential for these Pythagorean expansions to hold.  
\begin{remark}
\label{R:weak}
Profiles can be expressed in terms of weak limits. More precisely, for all $j$,
$$ \vS_L\left(\frac{t_{j,n}}{\lambda_{j,n}}\right) \left(\lambda_{j,n}^{\frac{N}{2}-1} u_{0,n}\left(0,\lambda_{j,n}\cdot+x_{j,n}\right),\lambda_{j,n}^{\frac{N}{2}} u_{1,n}\left(\lambda_{j,n}\cdot+x_{j,n}\right)\right) \xrightharpoonup[n\to\infty]{}\vU_L^j(0)\text{ in }\hdot\times L^2.$$
Indeed this follows from the orthogonality \eqref{orthogonal} of the parameters, and the property:
$$ J\geq j\Longrightarrow \left(\lambda_{j,n}^{\frac{N}{2}-1} w_{n}^J\left(t_{j,n},\lambda_{j,n}\cdot+x_{j,n}\right),\lambda_{j,n}^{\frac{N}{2}} \partial_tw_{n}^J\left(\lambda_{j,n}\cdot+x_{j,n}\right)\right) \xrightharpoonup[n\to\infty]{}0\text{ in }\hdot\times L^2,$$
which is an easy consequence of \eqref{orthogonal} and \eqref{prop_wnJ}.
\end{remark}
It is possible to modify the profiles and parameters of a profile decomposition:
\begin{lemma}
\label{L:modif_profile}
 Let as before $\profiles$ be a profile decomposition of the sequence $\{(u_{0,n},u_{1,n})\}_n$. For all $j\geq 1$, consider sequences $\left\{\tlambda_{j,n},\tx_{j,n},\ttt_{j,n}\right\}_n$  in $(0,+\infty)\times \RR^N\times \RR$ such that for all $j\geq 1$, there exists $(\mu_j,y_j,s_j) \in (0,+\infty)\times \RR^N\times \RR$ such that
 \begin{equation}
  \label{CV_param} \lim_{n\to\infty} \frac{\tlambda_{j,n}}{\lambda_{j,n}}=\mu_j,\quad \lim_{n\to\infty}\frac{\tx_{j,n}-x_{j,n}}{\lambda_{j,n}}=y_j,\quad\lim_{n\to\infty} \frac{\ttt_{j,n}-t_{j,n}}{\lambda_{j,n}}=s_j.
 \end{equation} 
 Let 
 $$\tU^j_L(t,x)=\mu_j^{\frac{N}{2}-1}U^j_L(s_j+\mu_jt,y_j+\mu_jx).$$
 Then $\tprofiles$ is also a profile decomposition for the sequence $\{(u_{0,n},u_{1,n})\}_n$.
\end{lemma}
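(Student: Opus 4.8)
The plan is to verify the three defining properties of a profile decomposition for the modified data: the orthogonality of the new parameters, the convergence of the new linear profiles, and the smallness of the new remainder term in the Strichartz norm $S(\RR)$.

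First I would check the orthogonality condition \eqref{orthogonal} for the sequences $\left\{\tlambda_{j,n},\tx_{j,n},\ttt_{j,n}\right\}_n$. Fix $j\neq k$. Using \eqref{CV_param}, we have $\tlambda_{j,n}\sim \mu_j\lambda_{j,n}$ and $\tlambda_{k,n}\sim \mu_k\lambda_{k,n}$, so $\log\frac{\tlambda_{j,n}}{\tlambda_{k,n}}=\log\frac{\lambda_{j,n}}{\lambda_{k,n}}+\log\frac{\tlambda_{j,n}/\lambda_{j,n}}{\tlambda_{k,n}/\lambda_{k,n}}$, and the second term converges to $\log(\mu_j/\mu_k)$. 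Similarly $\frac{\tx_{j,n}-\tx_{k,n}}{\tlambda_{j,n}}=\frac{1}{\tlambda_{j,n}/\lambda_{j,n}}\left(\frac{\tx_{j,n}-x_{j,n}}{\lambda_{j,n}}-\frac{\tx_{k,n}-x_{k,n}}{\lambda_{j,n}}+\frac{x_{j,n}-x_{k,n}}{\lambda_{j,n}}\right)$; the first bracketed term tends to $y_j$, the third is the original orthogonality quantity, and the middle term is $\frac{\lambda_{k,n}}{\lambda_{j,n}}\cdot\frac{\tx_{k,n}-x_{k,n}}{\lambda_{k,n}}$, which is a bounded sequence times $\frac{\lambda_{k,n}}{\lambda_{j,n}}$. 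Here one uses that if $\frac{\lambda_{k,n}}{\lambda_{j,n}}$ does not go to $0$ along a subsequence, then $\left|\log\frac{\lambda_{j,n}}{\lambda_{k,n}}\right|$ is bounded on that subsequence, forcing $\frac{|x_{j,n}-x_{k,n}|}{\lambda_{j,n}}+\frac{|t_{j,n}-t_{k,n}|}{\lambda_{j,n}}\to\infty$; a short case analysis then shows the new sum still diverges. The time component is handled identically. So orthogonality is preserved.

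Next I would verify the weak-limit characterization, equivalently \eqref{linear_profile}–\eqref{def_wnJ} with the new data. Writing $U^j_{L,n}$ for the original concentrating profile and $\tU^j_{L,n}$ for the one built from $\tU^j_L$ and $\left\{\tlambda_{j,n},\tx_{j,n},\ttt_{j,n}\right\}_n$, a direct computation from the definition $\tU^j_L(t,x)=\mu_j^{\frac{N}{2}-1}U^j_L(s_j+\mu_jt,y_j+\mu_jx)$ together with \eqref{CV_param} shows that $\left\|\tU^j_{L,n}-U^j_{L,n}\right\|$ in the relevant norms tends to $0$ — this is exactly the statement that rescaling/translating the underlying linear solution by parameters converging to $(\mu_j,y_j,s_j)$ and simultaneously adjusting the concentration parameters by the same amounts produces the same concentrating sequence up to $o_n(1)$, by continuity of the linear flow and of the rescaling action in $\hdot\times L^2$ and in $S(\RR)$. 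Hence $w_n^J$ and $\tw_n^J:=S_L(t)(u_{0,n},u_{1,n})-\sum_{j=1}^J\tU^j_{L,n}$ differ by $\sum_{j=1}^J(U^j_{L,n}-\tU^j_{L,n})$, which for fixed $J$ is $o_n(1)$ in $S(\RR)$; therefore $\limsup_{n\to\infty}\|\tw_n^J\|_{S(\RR)}=\limsup_{n\to\infty}\|w_n^J\|_{S(\RR)}$ and \eqref{prop_wnJ} passes over verbatim. (The Pythagorean expansions then hold automatically, being a consequence of the profile-decomposition axioms.)

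The main obstacle, and the point deserving genuine care rather than routine bookkeeping, is the orthogonality step: one must make sure that the ``degenerate'' regime where $\frac{\lambda_{j,n}}{\lambda_{k,n}}$ (or its reciprocal) tends to $0$ is handled correctly, because there the correction terms of the form $\frac{\lambda_{k,n}}{\lambda_{j,n}}\cdot O(1)$ in the rescaled translation/time differences are not obviously negligible — they are controlled precisely by the fact that in that regime the original $\left|\log\frac{\lambda_{j,n}}{\lambda_{k,n}}\right|$ term already diverges. A clean way to organize this is: pass to an arbitrary subsequence, and either $\left|\log\frac{\lambda_{j,n}}{\lambda_{k,n}}\right|\to\infty$ (then the new log term also diverges and we are done) or $\frac{\lambda_{j,n}}{\lambda_{k,n}}$ stays bounded above and below (then all the rescaling factors $\frac{\lambda_{k,n}}{\lambda_{j,n}}$ are $O(1)$, the correction terms are bounded, and the divergence of $\frac{|x_{j,n}-x_{k,n}|}{\lambda_{j,n}}+\frac{|t_{j,n}-t_{k,n}|}{\lambda_{j,n}}$ survives the bounded perturbation). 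Everything else is a continuity-of-the-flow argument that I would state without carrying out the estimates in detail.
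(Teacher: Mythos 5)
Your proposal is correct and follows essentially the same route as the paper: you check orthogonality of the new parameters directly from \eqref{CV_param} and the original orthogonality, and you reduce the remainder condition to showing $\|\vU^{j}_{L,n}(0)-\vec{\tU}\vphantom{U}^j_{L,n}(0)\|_{\hdot\times L^2}\to 0$ for each fixed $j$, which is exactly the paper's key step \eqref{conclu_lemma}. Your careful case analysis in the orthogonality step (splitting according to whether $|\log(\lambda_{j,n}/\lambda_{k,n})|$ diverges) is a correct elaboration of what the paper dismisses as ``follows easily.''
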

\begin{proof}[Sketch of proof]
 The fact that the sequences $\left\{\tlambda_{j,n},\tx_{j,n},\ttt_{j,n}\right\}_n$, $j\geq 1$ are orthogonal follows easily from the orthogonality of the sequences $\left\{\lambda_{j,n},x_{j,n},t_{j,n}\right\}_n$, $j\geq 1$ and \eqref{CV_param}.
 
 Recall from \eqref{linear_profile} the definition of $U^j_{L,n}$, and define similarly $\tU^j_{L,n}$. It is sufficient to show:
 \begin{equation}
 \label{conclu_lemma}
 \forall j\geq 1,\quad \lim_{n\to\infty}\left\| \vU^{j}_{L,n}(0)-\vec{\tU}\vphantom{U}^j_{L,n}(0)\right\|_{\hdot\times L^2}=0. 
 \end{equation} 
 Noting that (by conservation of the linear energy) 
 $$\left\| \vU^{j}_{L,n}(0)-\vec{\tU}\vphantom{U}^j_{L,n}(0)\right\|_{\hdot\times L^2}=\left\| \vU^{j}_{L,n}(t_{j,n})-\vec{\tU}\vphantom{U}^j_{L,n}(t_{j,n})\right\|_{\hdot\times L^2},$$
 \eqref{conclu_lemma}
follows from \eqref{CV_param} and direct computations.
 \end{proof}
We next state a uniqueness result, proving that the transformations of Lemma \ref{L:modif_profile} describe (up to permutations of the indices) all the possible profile decompositions of a given sequence.
\begin{lemma}
\label{L:uniq_profile}
 Let $\profiles$ and $\tprofiles$ be two profile decompositions of the same sequence $\{(u_{0,n},u_{1,n})\}_n$. Assume that each of the sets $$\JJJ=\big\{j\geq 1,\; U_{L}^j\neq 0\big\},\quad \KKK=\big\{k\geq 1,\; \tU_{L}^k\neq 0\big\}$$
 is finite or equal to $\NN\setminus\{0\}$. Then, extracting sequences (in $n$) if necessary there exists a unique one-to-one map 
 $$\varphi:\NN\setminus\{0\}\longrightarrow \NN\setminus\{0\}$$
 with the following property. For all $j\geq 1$, letting $k=\varphi(j)$, then $U^j_L=0$ if and only if $\tU^k_L=0$. Furthermore, if $U^j_L\neq 0$, there exists $(\mu_j,y_j,s_j) \in (0,+\infty)\times \RR^N\times \RR$ such that
\begin{equation}
  \label{CV_param'} \lim_{n\to\infty} \frac{\tlambda_{k,n}}{\lambda_{j,n}}=\mu_j,\quad \lim_{n\to\infty}\frac{\tx_{k,n}-x_{j,n}}{\lambda_{j,n}}=y_j,\quad\lim_{n\to\infty} \frac{\ttt_{k,n}-t_{j,n}}{\lambda_{j,n}}=s_j,
 \end{equation} 
 and 
 \begin{equation}
\label{link_profile}
 \tU^k_L(t,x)=\mu_j^{\frac{N}{2}-1}U^j_L(s_j+\mu_jt,y_j+\mu_jx).  
 \end{equation} 
 \end{lemma}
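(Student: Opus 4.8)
\emph{Strategy.} The plan is to recover $\varphi$ from the weak-limit description of the profiles in Remark~\ref{R:weak}, organised around a dichotomy for pairs of parameter sequences. Given $j,k\geq 1$, after passing to a subsequence in $n$ the quantity $\big|\log(\lambda_{j,n}/\tlambda_{k,n})\big|+|t_{j,n}-\ttt_{k,n}|/\lambda_{j,n}+|x_{j,n}-\tx_{k,n}|/\lambda_{j,n}$ has a limit in $[0,+\infty]$; if that limit is $+\infty$ the two sequences are orthogonal in the sense of \eqref{orthogonal}, while if it is finite a further extraction makes the three ratios $\tlambda_{k,n}/\lambda_{j,n}$, $(\tx_{k,n}-x_{j,n})/\lambda_{j,n}$, $(\ttt_{k,n}-t_{j,n})/\lambda_{j,n}$ converge to finite limits $\mu_j>0$, $y_j$, $s_j$ (the bound on the logarithm gives $\mu_j\in(0,\infty)$, and then the other two quotients are bounded and are extracted in turn), in which case I call the sequences \emph{comparable} and note that \eqref{CV_param'} holds. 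Diagonalising over the countably many pairs $(j,k)$ I fix once and for all a single subsequence in $n$ along which exactly one alternative occurs for each pair; both $\profiles$ and $\tprofiles$ of course remain profile decompositions.

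\emph{Two observations} for a fixed $j$ with $U_L^j\neq 0$. (i) At most one index $k$ can be comparable to the $j$-th sequence of the first decomposition, since two such indices $k\neq k'$ would, on dividing the corresponding limits, contradict the orthogonality of the sequences $\{\tlambda_{k,n},\tx_{k,n},\ttt_{k,n}\}_n$ ($k\geq 1$) among themselves. (ii) Applying to $\vec{\tU}^k_{L,n}(0)$ the rescaling-and-flow operator of Remark~\ref{R:weak} attached to the $j$-th parameters, one gets a sequence in $\hdot\times L^2$ that converges weakly to $0$ when the two parameter sequences are orthogonal (the standard weak-orthogonality computation behind Remark~\ref{R:weak}), and converges \emph{strongly} to $\mu_j^{1-N/2}\,\vec{\tU}^k_L\big(-s_j/\mu_j,(\cdot-y_j)/\mu_j\big)$ when they are comparable (the spatial scalings, the translations and the linear flow acting strongly continuously on $\hdot\times L^2$, and the parameters converging).

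\emph{Main step.} Still with $j$ such that $U_L^j\neq 0$: inserting $(u_{0,n},u_{1,n})=\sum_{k=1}^J\vec{\tU}^k_{L,n}(0)+\vec{\tw}^J_n(0)$ into the quantity whose weak limit is $\vU_L^j(0)$ and using (i)--(ii), all summands $k\leq J$ tend weakly to $0$ except possibly one, present iff some comparable $k_0\leq J$ exists, which tends strongly to a fixed $\vec W\in\hdot\times L^2$; subtracting, the rescaled remainders converge weakly to $\vec V_J:=\vU_L^j(0)-\vec W_J$, and by (i) $\vec W_J$ (hence $\vec V_J$) is independent of $J$ for $J$ large, say equal to $\vec V$. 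I claim $\vec V=0$. Indeed the $\hdot\times L^2$ norm is preserved by the rescaling and by the linear flow, so the rescaled remainders keep the bounded norm of $\vec{\tw}^J_n(0)$, and their linear evolutions are, up to a scaling and a space--time translation, the functions $w_n^J$, whose $S(\RR)$-norm is invariant under these operations; since the linear flow is weak-to-weak continuous into $L^{\frac{2(N+1)}{N-2}}(\RR\times\RR^N)$ and the norm there is weakly lower semicontinuous,
\[
\big\|S_L(\cdot)\vec V\big\|_{S(\RR)}\;\leq\;\liminf_{n\to\infty}\big\|w_n^J\big\|_{S(\RR)}\;\xrightarrow[J\to\infty]{}\;0
\]
by \eqref{prop_wnJ}, whence $S_L(t)\vec V\equiv 0$ and so $\vec V=0$. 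Since $\vU_L^j(0)\neq 0$, a comparable index $k_0=\varphi(j)$ must therefore exist; then $\vec W\neq 0$, so $\tU_L^{k_0}\neq 0$ (that is $\varphi(j)\in\KKK$), and $\vU_L^j(0)=\vec W=\mu_j^{1-N/2}\vec{\tU}_L^{k_0}\big(-s_j/\mu_j,(\cdot-y_j)/\mu_j\big)$, which, a linear solution being determined by its Cauchy data at one time, is exactly \eqref{link_profile} with $k=\varphi(j)$.

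\emph{Conclusion and main difficulty.} Exchanging the roles of the two decompositions, the same argument attaches to each $k\in\KKK$ a unique comparable $\psi(k)\in\JJJ$; comparability being symmetric (invert the ratios), $\psi$ inverts $\varphi$ on $\JJJ$, so $\varphi\colon\JJJ\to\KKK$ is a bijection. As $\JJJ$ and $\KKK$ are either finite of equal cardinality or both equal to $\NN\setminus\{0\}$, their complements in $\NN\setminus\{0\}$ are equipotent, so $\varphi$ extends to a bijection of $\NN\setminus\{0\}$ --- non-canonically on the complement, where both profiles vanish and the required property is vacuous, so that $\varphi$ is genuinely unique only on $\JJJ$. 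The step I expect to be delicate is the vanishing of $\vec V$: the two points to watch are that the weak limit along the $j$-th scale \emph{stabilises} in $J$ (only finitely many --- in fact at most one --- profiles of the other decomposition are comparable to a fixed $j$), and that $\|\vec V_J\|_{\hdot\times L^2}$ cannot be bounded directly by $\|w_n^J\|_{S(\RR)}$, so one must transport the $S(\RR)$-smallness of $w_n^J$ through the linear flow by weak lower semicontinuity and then use that a nonzero linear solution has positive $S(\RR)$-norm.
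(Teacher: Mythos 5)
Your proof is correct, and the key step is carried out by a genuinely different mechanism than the paper's. Where you agree: the dichotomy orthogonal/comparable for each pair $(j,k)$, the uniqueness of a comparable $k$ from the mutual orthogonality of the $\widetilde{\phantom{x}}$-parameters, the identification \eqref{link_profile} via the weak-limit description of Remark \ref{R:weak}, and the symmetry argument producing the bijection $\JJJ\to\KKK$ (with the same caveat, which the paper also makes, that $\varphi$ is only canonical on $\JJJ$ and is extended arbitrarily elsewhere). Where you differ is the existence of a comparable $k$ for each $j\in\JJJ$. The paper argues by contradiction: assuming all $k\in\KKK$ orthogonal to the $j$-th frame, it assembles two profile decompositions of the modified sequence $\{(u_{0,n},u_{1,n})-\vU^j_{L,n}(0)\}_n$ (one obtained by adjoining $-U^j_L$ to the tilde decomposition, one by deleting $U^j_L$ from the first) and plays the Pythagorean expansions \eqref{Pyt1}--\eqref{Pyt2} against each other to force $U^j_L=0$. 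You instead compute the weak limit of the $j$-rescaled sequence directly from the tilde decomposition: the orthogonal terms vanish weakly, a comparable term converges strongly, and the remainder's contribution $\vec V$ is killed by transporting the $S(\RR)$-smallness of $\tw_n^J$ through the linear flow (boundedness of $S_L$ into $S(\RR)$, invariance of the $S(\RR)$-norm under scaling and space-time translation, weak lower semicontinuity) and then using that a linear solution vanishing in $S(\RR)$ is zero. Your route buys existence and \eqref{link_profile} in a single computation and avoids the paper's slightly delicate bookkeeping of re-orthogonalizing parameters when adjoining a profile; the paper's route stays entirely at the level of energy identities and never needs the weak continuity of $S_L$ into the Strichartz space. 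Both are legitimate; your flagged "delicate point" (stabilization of $\vec W_J$ in $J$ and the lower-semicontinuity argument for $\vec V=0$) is handled correctly.
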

\begin{proof}[Sketch of proof]
 Let $j\geq 1$ such that $U_L^j\neq 0$. We first prove that there exists $k\in \KKK$ such that the sequence
 \begin{equation}
 \label{non_ortho}
 \left\{\left|\log \frac{\tlambda_{k,n}}{\lambda_{j,n}}\right|+\frac{\left|t_{j,n}-\ttt_{k,n}\right|}{\lambda_{j,n}}+\frac{\left|x_{j,n}-\tx_{k,n}\right|}{\lambda_{j,n}}\right\}_n
 \end{equation} 
 does not tend to $+\infty$ as $n\to\infty$. If not, we have
 \begin{equation}
  \label{ortho+}
  \forall k\in \KKK,\quad 
  \lim_{n\to\infty} \left|\log \frac{\tlambda_{k,n}}{\lambda_{j,n}}\right|+\frac{\left|t_{j,n}-\ttt_{k,n}\right|}{\lambda_{j,n}}+\frac{\left|x_{j,n}-\tx_{k,n}\right|}{\lambda_{j,n}}=+\infty.
 \end{equation} 
 Since $\left(\tU^k_L,\left\{\tlambda_{k,n},\tx_{k,n},\ttt_{k,n}\right\}_n\right)_{k\geq 1}$
 is a profile decomposition for the sequence $\left\{(u_{0,n},u_{1,n})\right\}_n$, we get immediately
 (changing if necessary one of the sequences of parameters $(\tlambda_{k,n},\tx_{k,n},\ttt_{k,n})_n$, where $k\notin \KKK$ to preserve the orthogonality of the parameters) that
 $$ \left(-U^j_L,\left\{\lambda_{j,n},x_{j,n},t_{j,n}\right\}_n\right)\cup \left(\tU^k_L,\left\{\tlambda_{k,n},\tx_{k,n},\ttt_{k,n}\right\}_n\right)_{k\geq 1}$$
 is a profile decomposition for the sequence $\left\{(u_{0,n},u_{1,n})-\vU^j_{L,n}(0)\right\}_n$. Furthemore,
 $$\left(U^k_L,\left\{\lambda_{k,n},x_{k,n},t_{k,n}\right\}_n\right)_{\substack{k\geq 1\\k\neq j}}$$
 is a profile decomposition for the same sequence $\left\{(u_{0,n},u_{1,n})-\vU^j_{L,n}(0)\right\}_n$.

 Using the Pythagorean expansions \eqref{Pyt1}, \eqref{Pyt2} for the above profile decompositions and for the profile decomposition  $\left(\tU^k_L,\left\{\tlambda_{k,n},\tx_{k,n},\ttt_{k,n}\right\}_n\right)_{k\geq 1}$ of $\left\{(u_{0,n},u_{1,n})\right\}_n$, one can prove that $U^j_L=0$, a contradiction.
 
 Next, we notice that by orthogonality of the parameters $\left\{\tlambda_{\ell,n},\tx_{\ell,n},\ttt_{\ell,n}\right\}_n$, $\ell\geq 1$, the index $k$ such that \eqref{non_ortho} holds is unique. We define $\varphi(j)=k$. Extracting subsequences in $n$, we can assume that \eqref{CV_param'} holds. 
Define $u_{L,n}(t)=S_{L}(t)(u_{0,n},u_{1,n})$ and
$$ u_{L,n}^j(t,x)=\lambda_{j,n}^{\frac{N}{2}-1}u_L\left(\lambda_{j,n}t+t_{j,n},\lambda_{j,n}x+x_{j,n}\right),\quad \tu_{L,n}^k(t,x)=\tlambda_{k,n}^{\frac{N}{2}-1}u_L\left(\tlambda_{k,n}t+\ttt_{k,n},\tlambda_{k,n}x+\tx_{k,n}\right).$$
 Then:
 $$ u_{L,n}^j(0)\xrightharpoonup[n\to\infty]{} \vU_L^j(0),\quad \tu_{L,n}^k(0)\xrightharpoonup[n\to\infty]{} \vec{\tU}\vphantom{\vU}_L^k(0)$$
 weakly in $\hdot\times L^2$. From this and \eqref{CV_param'}, we deduce \eqref{link_profile}. We have constructed a map:
 $\varphi:\JJJ\to\KKK$,
 such that for all $j\in \JJJ$, if $k=\varphi(j)$, the limits in \eqref{CV_param'} exist.
 The same construction gives a map $\psi$ from $\KKK$ to $\JJJ$ with an analogous property, and one sees easily that $\psi$ is the inverse function of $\varphi$, and thus that $\varphi$ is a bijection between $\JJJ$ and $\KKK$. This proves that $\JJJ=\KKK=\NN\setminus\{0\}$, or $\JJJ$ and $\KKK$ are finite, with the same cardinal. In this second case, one extends arbitrarily $\varphi$ to a bijection from $\NN\setminus\{0\}$ to $\NN\setminus \{0\}$. In both cases, the conclusion of the lemma is satisfied.
 \end{proof}

\subsection{Reordering the profiles}
\label{SS:preorder}
Let $\left\{(u_{0,n},u_{1,n})\right\}_n$ be a bounded sequence in $\hdot\times L^2$ with a profile decomposition $\left(U_L^j,\left\{\lambda_{j,n},x_{j,n},t_{j,n}\right\}_n\right)_{j\geq 1}$. We will see in the next subsection that this profile decomposition gives, for large $n$, an approximation of the solution $u_n$ of \eqref{CP} with initial data $\left( u_{0,n},u_{1,n} \right)$ on a time interval $[0,\tau_n]$ which depends on the profiles $U^j_L$ and the sequences of parameters $\{\lambda_{j,n},t_{j,n}\}_n$. In this subsection, we reorder the profiles $U_L^j$ so that one can choose the length $\tau_n$ of the interval of approximation in term of the first profile $U_L^1$ and the associated sequence $\left\{\lambda_{1,n},t_{1,n}\right\}_n$ of parameters. 

The approximation of $u_n$ will be given in term of nonlinear profiles that are defined as follows.
\begin{defi}
 Let $j\geq 1$. A nonlinear profile $U^j$ associated to the linear profile $U^j_L$ and the sequence of parameters $\{\lambda_{j,n},t_{j,n}\}_n$ is a solution $U^j$ of \eqref{CP} such that for large $n$, $-t_{j,n}/\lambda_{j,n}\in I_{\max}(U^j)$ and 
 $$ \lim_{n\to \infty} \left\|\vU^j_L\left(  \frac{-t_{j,n}}{\lambda_{j,n}}\right)-\vU^j\left(  \frac{-t_{j,n}}{\lambda_{j,n}}\right)\right\|_{\hdot\times L^2}=0.$$
\end{defi}
Extracting subsequences, we can always assume that for all $j\geq 1$, the following limit exists:
\begin{equation}
 \label{Z6}
 \lim_{n\to\infty}\frac{-t_{j,n}}{\lambda_{j,n}}=\sigma_j\in [-\infty,+\infty]. 
 \end{equation}
Using the local Cauchy theory of \eqref{CP} if $\sigma_j\in \RR$, and the existence of wave operators for \eqref{CP} if $\sigma_j\in \{-\infty,+\infty\}$, we obtain that for all $j$, there exists a unique nonlinear profile $U^j$ associated to $U^j_L$ and $\{\lambda_{j,n},t_{j,n}\}_n$. If $\sigma_j\in \RR$, then $\sigma_j\in (T_-(U^j),T_+(U^j))$. 
If $\sigma_j=-\infty$, then $T_-(U^j)=-\infty$ and $U^j$ scatters backward in time. Finally, if $\sigma_j=+\infty$, then $T_+(U^j)=+\infty$ and $U^j$ scatters forward in time. 

Denoting by
\begin{equation}
 \label{def_Ujn}
 U_{n}^j(t,x)=\frac{1}{\lambda_{j,n}^{\frac{N}{2}-1}}U^j\left(\frac{t-t_{j,n}}{\lambda_{j,n}},\frac{x-x_{j,n}}{\lambda_{j,n}}\right),
\end{equation} 
we see that the maximal positive time of existence of $U_n^j$ is exactly $\lambda_{j,n}T_+(U^j)+t_{j,n}$ (or $+\infty$ if $T_+(U^j)=+\infty$).

\begin{notation}
\label{N:preorder}
If $j$ and $k$ are indices, we write 
$$ \left(U^{j}_L,\{t_{j,n},\lambda_{j,n}\}_n\right)\preceq \left(U^{k}_L,\{t_{k,n},\lambda_{k,n}\}_n\right)$$
if one of the following holds
\begin{enumerate}
 \item the nonlinear profile $U^k$ scatters forward in time or 
\item the nonlinear profile $U^j$ does not scatter forward in time and
\begin{equation}
 \label{Z7}
 \forall T\in \RR,\quad T<T_+(U^j)\Longrightarrow \lim_{n\to\infty} \frac{\lambda_{j,n}T+t_{j,n}-t_{k,n}}{\lambda_{k,n}}<T_+(U^k).
\end{equation} 
\end{enumerate}
If there is no ambiguity in the choice of the profile decomposition, we will simply write
$$(j)\preceq (k).$$
We write $(j)\backsimeq(k)$ if $(j)\preceq (k)$ and $(k)\preceq (j)$, and $(j)\prec (k)$ if $(k)\preceq (j)$ does not hold. As usual, we will extract subsequences so that the limit appearing in \eqref{Z7} exists for all $j$, $k$ and $T<T_+(U^j)$ (see Claim \ref{C:subsequence} in the appendix).
\end{notation}
Note that if $U^{j_0}$ scatters forward in time, then $(j)\preceq (j_0)$ for all $j\geq 1$. Note also that 
\begin{equation}
 \label{imply_scattering}
 \Big((j)\preceq (k)\text{ and } U^j\text{ scatters forward in time}\Big)\Longrightarrow U^k \text{ scatters forward in time}.
\end{equation} 

If $U^{k}$ does not scatter forward in time and $U^j$ scatters forward in time, then $(j)\preceq (k)$ does not hold. 

The relation $(j)\preceq (k)$ is equivalent to the fact that if for a sequence of positive times $\{\tau_n\}_n$, the sequence $\left\{\|U^j_n\|_{S(0,\tau_n)}\right\}_n$ is bounded, then the sequence $\left\{\|U^k_n\|_{S(0,\tau_n)}\right\}_n$ is also bounded. 
\begin{claim}
\label{C:Z2}
 One can extract subsequences in $n$ so that the binary relation $\preceq$ is a total preorder on the set of indices. In other words
 \begin{enumerate}
  \item \label{I:Z2_1}$\forall j\geq 1$, $(j)\backsimeq (j)$. 
  \item \label{I:Z2_2}$\forall j,k,\ell\geq 1$, $\Big((j)\preceq (k)$ and $(k)\preceq (\ell)\Big)$ $\Longrightarrow$ $(j)\preceq (\ell)$.
 \item \label{I:Z2_3}$\forall j,k\geq 1$, $(j)\preceq (k)$ or $(k)\preceq (j)$. 
\item \label{I:Z2_4}$(j)\prec (k)$ is equivalent to $\Big[(j)\preceq (k)$ and not$\,\big((j)\backsimeq (k)\big)\Big]$. 
 \end{enumerate}
\end{claim}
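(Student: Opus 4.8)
The plan is to establish each of the four properties in Claim \ref{C:Z2} after passing to a subsequence, using the equivalent "boundedness of Strichartz norms" characterization of $\preceq$ stated just before the claim. That reformulation says precisely that $(j)\preceq(k)$ holds if and only if, for every sequence $\{\tau_n\}_n$ of positive times, boundedness of $\{\|U^j_n\|_{S(0,\tau_n)}\}_n$ forces boundedness of $\{\|U^k_n\|_{S(0,\tau_n)}\}_n$. Once one has this equivalence, reflexivity (property (a)) is immediate, and transitivity (property (b)) follows by chaining the implications: if boundedness of $\|U^j_n\|$ on $(0,\tau_n)$ forces boundedness of $\|U^k_n\|$, which in turn forces boundedness of $\|U^\ell_n\|$, then $(j)\preceq(\ell)$. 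So the first two items are essentially formal once the characterization is in hand, and I would either recall its proof or cite it from the discussion preceding the claim.

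For totality (property (c)), I would argue as follows. Fix indices $j,k$. If either $U^j$ or $U^k$ scatters forward in time we are done by the remarks following Notation \ref{N:preorder} (a scattering profile is $\succeq$ everything). So assume neither scatters forward. Then by Claim \ref{C:subsequence} in the appendix, after extracting a subsequence we may assume that for all $T<T_+(U^j)$ the limit $L_{j,k}(T):=\lim_n \frac{\lambda_{j,n}T+t_{j,n}-t_{k,n}}{\lambda_{k,n}}$ exists in $[-\infty,+\infty]$, and symmetrically that $L_{k,j}(S):=\lim_n \frac{\lambda_{k,n}S+t_{k,n}-t_{j,n}}{\lambda_{j,n}}$ exists for all $S<T_+(U^k)$. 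The negation of $(j)\preceq(k)$ means there is some $T<T_+(U^j)$ with $L_{j,k}(T)\geq T_+(U^k)$; I must then show $(k)\preceq(j)$, i.e. $L_{k,j}(S)<T_+(U^j)$ for every $S<T_+(U^k)$. The key algebraic identity is that $\frac{\lambda_{j,n}T+t_{j,n}-t_{k,n}}{\lambda_{k,n}}$ and $\frac{\lambda_{k,n}S+t_{k,n}-t_{j,n}}{\lambda_{j,n}}$ are linked: writing $\rho_n=\lambda_{j,n}/\lambda_{k,n}$ and $\theta_n=(t_{j,n}-t_{k,n})/\lambda_{k,n}$, the first is $\rho_n T+\theta_n$ and the second is $S/\rho_n - \theta_n/\rho_n = (S-(\rho_nT+\theta_n))/\rho_n + T$. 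From $L_{j,k}(T)\geq T_+(U^k)>S$ one gets, for large $n$, $\rho_n T+\theta_n > S$, hence $S-(\rho_nT+\theta_n)<0$, hence $L_{k,j}(S)\le T$ (taking limits, and treating the cases $\rho_n\to 0$, $\rho_n\to\infty$, $\rho_n\to\rho\in(0,\infty)$ separately — in the first case one must be a bit careful since $1/\rho_n\to\infty$, but the numerator is negative, so the quotient stays $\le T$ up to $o(1)$). Since $T<T_+(U^j)$, this gives $L_{k,j}(S)<T_+(U^j)$, as required. Property (d) is then a purely formal consequence of (a)–(c): $(j)\prec(k)$ is defined as "$(k)\preceq(j)$ fails", and by totality this forces $(j)\preceq(k)$, while "$(j)\backsimeq(k)$" is exactly "$(j)\preceq(k)$ and $(k)\preceq(j)$", so the equivalence is immediate.

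The main obstacle I anticipate is the careful case analysis in property (c) when the scaling ratios $\lambda_{j,n}/\lambda_{k,n}$ degenerate to $0$ or $\infty$: the naive "divide by $\rho_n$" manipulation can turn a finite limit into an indeterminate expression, and one has to track signs of the numerators to conclude that the relevant limit lands strictly below $T_+(U^j)$ rather than merely at it. A secondary subtlety is bookkeeping the subsequence extractions: one needs the limits $\sigma_j$ from \eqref{Z6}, the limits in \eqref{Z7} for all pairs and all rational $T<T_+(U^j)$ (a countable family, handled by Claim \ref{C:subsequence}), and possibly the limits of $\lambda_{j,n}/\lambda_{k,n}$ themselves — all of which can be arranged simultaneously by a single diagonal extraction, which I would note once and then suppress. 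The rest is routine.
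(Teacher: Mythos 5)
Your proposal is correct in substance, and for items (c) and (d) it is essentially the paper's own argument; but two points are worth comparing. First, for totality your case analysis on $\rho_n=\lambda_{j,n}/\lambda_{k,n}$ is unnecessary: the paper simply observes that $\tau\leq \frac{\lambda_{j,n}T+t_{j,n}-t_{k,n}}{\lambda_{k,n}}$ for large $n$ rearranges (multiplying by $\lambda_{k,n}>0$ and dividing by $\lambda_{j,n}>0$) directly into $\frac{\lambda_{k,n}\tau+t_{k,n}-t_{j,n}}{\lambda_{j,n}}\leq T<T_+(U^j)$, uniformly in $n$ and with no degeneration issues — your own identity $(S-\theta_n)/\rho_n=(S-(\rho_nT+\theta_n))/\rho_n+T$ with a negative numerator is exactly this observation, so the three subcases add nothing. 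Second, for reflexivity and transitivity you lean on the Strichartz-norm characterization of $\preceq$, which the paper only \emph{states} as a remark after Notation \ref{N:preorder} and never proves; the paper instead argues directly from \eqref{Z7}: reflexivity is tautological, and transitivity is the two-line chaining
$\frac{\lambda_{j,n}T+t_{j,n}-t_{\ell,n}}{\lambda_{\ell,n}}=\frac{\lambda_{k,n}\left(\frac{\lambda_{j,n}T+t_{j,n}-t_{k,n}}{\lambda_{k,n}}\right)+t_{k,n}-t_{\ell,n}}{\lambda_{\ell,n}}\leq\frac{\lambda_{k,n}\tau+t_{k,n}-t_{\ell,n}}{\lambda_{\ell,n}}\leq\tau'$,
together with the observation \eqref{imply_scattering} to dispose of the scattering cases. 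Since proving the Strichartz characterization would require essentially these same limit manipulations, routing through it does not save work and leaves your write-up resting on an unproven remark; I would replace that step by the direct chaining. Your bookkeeping of the subsequence extractions via Claim \ref{C:subsequence} and a diagonal argument matches the paper.
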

(Claim \ref{C:Z2} is proved in Appendix \ref{A:preorder}).
\begin{defi}
\label{D:W0}
 We say that the profile decomposition is \emph{well-ordered} if 
$$\forall j\geq 1,\quad (j)\preceq (j+1).$$
\end{defi}

Note as a consequence of the Pythagorean expansions \eqref{Pyt1}, \eqref{Pyt2} and the boundedness of the sequence $\left\{(u_{0,n},u_{1,n})\right\}_n$ in $\hdot\times L^2$, $\vU^k_L(0)$ is small in $\hdot \times L^2$ for large $k$, and thus, by the small data global well-posedness theory for \eqref{CP}, $\vU^k$ scatters in both time directions. We deduce: 
$$ \exists k_0,\;\forall k\geq k_0,\;\forall j\geq 1,\quad (j)\preceq (k).$$
Combining with Claim \ref{C:Z2}, we get:
\begin{claim}
 \label{C:Z3}
For any sequence $\left\{(u_{0,n},u_{1,n})\right\}_n$, bounded in $\hdot\times L^2$, with a profile decomposition $\left(U^{j}_L,\left\{\lambda_{j,n},x_{j,n},t_{j,n}\right\}_n\right)_j$, there exists a subsequence
and a one-to-one map $\varphi$ of $\NN\setminus\{0\}$ such that $\left(U^{\varphi(j)}_L,\left\{\lambda_{\varphi(j),n},x_{\varphi(j),n},t_{\varphi(j),n}\right\}_n\right)_j$ is a \emph{well-ordered} profile decomposition of $\left\{(u_{0,n},u_{1,n})\right\}_n$.
\end{claim}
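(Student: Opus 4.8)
The plan is to combine the total-preorder structure furnished by Claim \ref{C:Z2} with the observation recorded just above its statement: after the relevant extraction $\preceq$ is a total preorder on $\NN\setminus\{0\}$, and there is an index $k_0\geq 1$ such that $(j)\preceq(k)$ for all $j\geq 1$ and all $k\geq k_0$. In particular, for $k,k'\geq k_0$ one has $(k)\preceq(k')$ and $(k')\preceq(k)$, so every index $\geq k_0$ lies in the top equivalence class of $\preceq$, and the whole ordering problem reduces to sorting the finitely many indices below $k_0$.

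First I would reorder the finite set $F=\{1,\dots,k_0-1\}$. Since $\preceq$ restricted to $F$ is a total preorder on a finite set, its equivalence classes are finite in number and totally ordered, so one can enumerate $F$ as $\{\varphi(1),\dots,\varphi(k_0-1)\}$ with $(\varphi(1))\preceq(\varphi(2))\preceq\dots\preceq(\varphi(k_0-1))$. I would then set $\varphi(j)=j$ for every $j\geq k_0$; this $\varphi$ is a one-to-one map of $\NN\setminus\{0\}$ onto itself, because it merely permutes the finite block $F$ and fixes all larger indices.

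It then remains to check two routine points. First, that $\bigl(U^{\varphi(j)}_L,\{\lambda_{\varphi(j),n},x_{\varphi(j),n},t_{\varphi(j),n}\}_n\bigr)_{j\geq 1}$ is again a profile decomposition of $\{(u_{0,n},u_{1,n})\}_n$: the orthogonality condition \eqref{orthogonal} is symmetric in its two indices and invariant under permuting the index set, and since $\varphi$ maps $\{1,\dots,k_0-1\}$ onto itself and fixes every larger index, one has $\{\varphi(1),\dots,\varphi(J)\}=\{1,\dots,J\}$ for all $J\geq k_0-1$, so the remainder terms $w_n^J$ coincide with those of the original decomposition for such $J$ and \eqref{prop_wnJ} carries over verbatim. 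Second, that the new decomposition is well-ordered: for $1\leq j\leq k_0-2$ this is the monotonicity built into the enumeration of $F$; for $j=k_0-1$ it is $(\varphi(k_0-1))\preceq(k_0)$, valid because $k_0$ is maximal; and for $j\geq k_0$ it is $(j)\preceq(j+1)$, valid because $j+1\geq k_0$.

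The only thing to keep track of — bookkeeping rather than analysis — is that the single diagonal extraction of Claim \ref{C:Z2} should be performed compatibly with the extractions producing the limits $\sigma_j$ of \eqref{Z6} and the limits in \eqref{Z7}, and that passing to this subsequence and then permuting finitely many profiles does not disturb \eqref{prop_wnJ}. I do not expect any genuine obstacle here.
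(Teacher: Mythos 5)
Your proof is correct and follows essentially the same route as the paper, which obtains Claim \ref{C:Z3} directly by combining Claim \ref{C:Z2} with the observation that all profiles with index $\geq k_0$ scatter and are therefore maximal for $\preceq$; your write-up simply makes explicit the finite sorting of $\{1,\dots,k_0-1\}$ and the (correct) remark that \eqref{prop_wnJ} only needs to be checked for large $J$, where the permuted partial sums coincide with the original ones.
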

We conclude this section by stating that the preorder  relation ``$\preceq$'' is invariant by the transformations described in Lemma \ref{L:modif_profile}:
\begin{lemma}
\label{L:intrinsec}
 Let $\profiles$, $\tprofiles$ be as in Lemma \ref{L:modif_profile}. Then, denoting as usual by $U^j$, $\tU^j$ the corresponding nonlinear profiles, we have
 \begin{equation}
 \label{link_NL} 
 \tU^j(t,x)=\mu_j^{\frac{N}{2}-1}U^j(s_j+\mu_jt,y_j+\mu_jx).
 \end{equation} 
 Furthermore,
 $$\forall j\geq 1,\quad \left(U^j_L,\{\lambda_{j,n},t_{j,n}\}_n\right)_j\backsimeq \left(\tU^j_L,\{\tlambda_{j,n},\ttt_{j,n}\}_n\right)_j.$$
 In particular $\profiles$ is well-ordered if and only if $\tprofiles$ is well-ordered.
\end{lemma}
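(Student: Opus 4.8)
The plan is to first establish the transformation rule \eqref{link_NL} for the nonlinear profiles, and then to read off the statement about the preorder $\preceq$ directly from Notation \ref{N:preorder}. For \eqref{link_NL} I would introduce the candidate $V^j(t,x):=\mu_j^{\frac{N}{2}-1}U^j(s_j+\mu_jt,y_j+\mu_jx)$. Since the equation in \eqref{CP} is invariant under the rescaling $u\mapsto\mu^{\frac{N}{2}-1}u(\mu t,\mu x)$ and under translations in $t$ and $x$, $V^j$ is a solution of \eqref{CP} with $I_{\max}(V^j)=\mu_j^{-1}(I_{\max}(U^j)-s_j)$, and, by Lemma \ref{L:modif_profile}, $\tU^j_L(t,x)=\mu_j^{\frac{N}{2}-1}U^j_L(s_j+\mu_jt,y_j+\mu_jx)$ solves the linear wave equation \eqref{LCP}. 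Writing $\TTT_j$ for the linear isometry $(f_0,f_1)\mapsto\big(\mu_j^{\frac{N}{2}-1}f_0(y_j+\mu_j\,\cdot\,),\,\mu_j^{\frac{N}{2}}f_1(y_j+\mu_j\,\cdot\,)\big)$ of $\hdot\times L^2$, one has $\vec{V}^{j}(t)=\TTT_j\big(\vU^j(s_j+\mu_jt)\big)$ and $\vec{\tU}^{j}_L(t)=\TTT_j\big(\vU^j_L(s_j+\mu_jt)\big)$. I would then prove $V^j=\tU^j$ by the uniqueness of the nonlinear profile associated with $\tU^j_L$ and $\{\tlambda_{j,n},\ttt_{j,n}\}_n$.

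To verify the defining property, set $r_n:=s_j+\mu_j(-\ttt_{j,n}/\tlambda_{j,n})$. From $\tlambda_{j,n}/\lambda_{j,n}\to\mu_j$ and $(\ttt_{j,n}-t_{j,n})/\lambda_{j,n}\to s_j$ in \eqref{CV_param}, and $-t_{j,n}/\lambda_{j,n}\to\sigma_j$ in \eqref{Z6}, one checks that $r_n\to\sigma_j$ in $[-\infty,+\infty]$, with moreover $r_n-(-t_{j,n}/\lambda_{j,n})\to0$ whenever $\sigma_j\in\RR$. Since $\TTT_j$ is a linear isometry,
\[
 \big\|\vec{\tU}^{j}_L(-\ttt_{j,n}/\tlambda_{j,n})-\vec{V}^{j}(-\ttt_{j,n}/\tlambda_{j,n})\big\|_{\hdot\times L^2}=\big\|\vU^j_L(r_n)-\vU^j(r_n)\big\|_{\hdot\times L^2}.
\]
If $\sigma_j\in\RR$, then $\sigma_j\in I_{\max}(U^j)$, so for large $n$ both $r_n$ and $-t_{j,n}/\lambda_{j,n}$ lie in a fixed compact subinterval of $I_{\max}(U^j)$, and continuity of the flows of \eqref{CP} and \eqref{LCP} together with the defining convergence $\vU^j_L(-t_{j,n}/\lambda_{j,n})-\vU^j(-t_{j,n}/\lambda_{j,n})\to0$ force the right-hand side to tend to $0$; moreover $-\ttt_{j,n}/\tlambda_{j,n}\in I_{\max}(V^j)$ for large $n$, since $r_n\in I_{\max}(U^j)$. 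If $\sigma_j=\pm\infty$, then $T_\pm(U^j)=\pm\infty$, $U^j$ scatters forward (resp.\ backward) to $U^j_L$, and $r_n\to\pm\infty$, so again the right-hand side tends to $0$ and $-\ttt_{j,n}/\tlambda_{j,n}\in I_{\max}(V^j)$ for large $n$. Hence $V^j$ has the characterizing properties of the nonlinear profile $\tU^j$, so $\tU^j=V^j$, which is \eqref{link_NL}.

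It then remains to derive the order statement. By \eqref{link_NL}, $I_{\max}(\tU^j)=\mu_j^{-1}(I_{\max}(U^j)-s_j)$, hence $T_+(\tU^j)=\mu_j^{-1}(T_+(U^j)-s_j)$ (with the convention $+\infty\mapsto+\infty$), and, the transformation being a time translation composed with a scaling and a space translation, $\tU^j$ scatters forward in time if and only if $U^j$ does. If both scatter forward, the first alternative in Notation \ref{N:preorder} gives $(U^j_L,\{\lambda_{j,n},t_{j,n}\}_n)\preceq(\tU^j_L,\{\tlambda_{j,n},\ttt_{j,n}\}_n)$ and, symmetrically, the reverse. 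If neither does, then using \eqref{CV_param} one computes
\[
 \lim_{n\to\infty}\frac{\lambda_{j,n}T+t_{j,n}-\ttt_{j,n}}{\tlambda_{j,n}}=\frac{T-s_j}{\mu_j},\qquad\lim_{n\to\infty}\frac{\tlambda_{j,n}\widetilde T+\ttt_{j,n}-t_{j,n}}{\lambda_{j,n}}=\mu_j\widetilde T+s_j,
\]
and, since $\mu_j>0$, the inequality $(T-s_j)/\mu_j<T_+(\tU^j)$ holds for every $T<T_+(U^j)$, while $\mu_j\widetilde T+s_j<T_+(U^j)$ holds for every $\widetilde T<T_+(\tU^j)$; by \eqref{Z7} this yields $(U^j_L,\{\lambda_{j,n},t_{j,n}\}_n)\backsimeq(\tU^j_L,\{\tlambda_{j,n},\ttt_{j,n}\}_n)$. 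Finally, as $\preceq$ is transitive (Claim \ref{C:Z2}; transitivity between pairs coming from the two different decompositions is clear, e.g.\ from the $S$-norm characterization recalled after Notation \ref{N:preorder}), the equivalences just obtained imply that $(U^j_L,\{\lambda_{j,n},t_{j,n}\}_n)\preceq(U^{j+1}_L,\{\lambda_{j+1,n},t_{j+1,n}\}_n)$ for all $j$ if and only if $(\tU^j_L,\{\tlambda_{j,n},\ttt_{j,n}\}_n)\preceq(\tU^{j+1}_L,\{\tlambda_{j+1,n},\ttt_{j+1,n}\}_n)$ for all $j$; that is, $\profiles$ is well-ordered if and only if $\tprofiles$ is, by Definition \ref{D:W0}.

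The step I expect to be the main obstacle is \eqref{link_NL}: the nonlinear profile is constructed either by the local Cauchy theory near $\sigma_j$ (if $\sigma_j\in\RR$) or by a wave operator (if $\sigma_j=\pm\infty$), so the verification must split into these two cases — invoking continuity of the nonlinear flow near $\sigma_j$ in the first, and the asymptotic equality of $U^j$ with the linear profile $U^j_L$ in the second — and the shifted parameters $-\ttt_{j,n}/\tlambda_{j,n}$ must be tracked with care (for instance, $r_n-(-t_{j,n}/\lambda_{j,n})$ need not tend to $0$ when $\sigma_j=\pm\infty$, only $r_n\to\pm\infty$). Everything downstream of \eqref{link_NL} is a routine computation with the definition of $\preceq$.
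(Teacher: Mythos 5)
Your proof is correct and follows the same route as the paper's (much terser) argument: identify $\tU^j$ with the rescaled/translated $U^j$ via the uniqueness of the nonlinear profile, deduce $T_+(\tU^j)=\mu_j^{-1}(T_+(U^j)-s_j)$, and check the definition of $\preceq$ using the limits \eqref{CV_param}. The case analysis on $\sigma_j$ and the tracking of $r_n=s_j+\mu_j(-\ttt_{j,n}/\tlambda_{j,n})$ are exactly the details the paper leaves implicit, and they are handled correctly.
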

\begin{proof}[Sketch of proof]
The equality \eqref{link_NL} follows easily from
$$\tU^j_L(t,x)=\mu_j^{\frac N2-1}U^j_L(s_j+\mu_jt,\mu_jx+y_j),$$
and the limits \eqref{CV_param}.
By \eqref{link_NL},
 $T_+\big(\tU^j\big)=\frac{1}{\mu_j}\left(T_+\big(U^j\big)-s_j\right)$, with the convention that $T_+\big(\tU^j\big)=+\infty$ if $T_+\big(U^j\big)=+\infty$. The conclusion of the lemma is then easy to check, using \eqref{CV_param} and the definition of ``$\preceq$''.
\end{proof}

\subsection{Nonlinear approximation}
\label{SS:approx}
We next state the announced nonlinear approximation result. As above $\left\{(u_{0,n},u_{1,n})\right\}_n$ is a bounded sequence of $\hdot\times L^2$, which admits a profile decomposition $\left(U^j_L,\left\{\lambda_{j,n},x_{j,n},t_{j,n}\right\}_n\right)_{j\geq 1}$. We denote by $U^j$ the corresponding nonlinear profiles (see the preceding subsection). We start with the case where all nonlinear profiles scatter forward in time. 
\begin{prop}
\label{P:Z3'}
 Assume that for all $j$, $U^j$ scatters forward in time. Then, for large $n$,  $u_n$ scatters forward in time and,
 letting 
 $$r_n^J(t)=u_n(t)-\sum_{j=1}^JU_n^j(t,x)-w_n^J(t),\quad t\geq 0,$$
 we have
 $$\lim_{J\to\infty}\limsup_{n\to\infty} \left(\left\|r_n^J\right\|_{S(0,+\infty)}+\sup_{t\geq 0} \left\|r_n^J(t)\right\|_{\hdot\times L^2}\right)=0.$$
\end{prop}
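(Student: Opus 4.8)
The plan is to run the standard Bahouri--G\'erard nonlinear-profile construction and close with the long-time perturbation theory of \cite{KeMe08}. After extracting a subsequence so that the limits $\sigma_j=\lim_{n}(-t_{j,n}/\lambda_{j,n})\in[-\infty,+\infty]$ all exist, the nonlinear profiles $U^j$ and their rescalings $U_n^j$ from \eqref{def_Ujn} are well defined, and since $\sigma_j\in(T_-(U^j),+\infty]$ each $U_n^j$ is defined on $[0,+\infty)$ for $n$ large. I would set $\mathcal{U}_n^J:=\sum_{j=1}^JU_n^j+w_n^J$, so that $r_n^J=u_n-\mathcal{U}_n^J$, and prove the proposition by showing that, for $J$ large and then $n$ large, $\mathcal{U}_n^J$ is an approximate solution of \eqref{CP} on $[0,+\infty)$ with $S$-norm bounded independently of $J$, with small equation error, and with initial data close to $\vec u_n(0)$, and then applying the perturbation lemma.

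The first point is a uniform bound on $\|\mathcal{U}_n^J\|_{S(0,+\infty)}$. From the Pythagorean expansions \eqref{Pyt1}--\eqref{Pyt2} and the invariance of the $\hdot\times L^2$ norm under the linear flow (so $\|\vU^j_{L,n}(0)\|_{\hdot\times L^2}=\|\vU^j_L(0)\|_{\hdot\times L^2}$ for every $n$) one gets $\sum_{j\ge1}\|\vU^j_L(0)\|_{\hdot\times L^2}^2\le\limsup_n\|(u_{0,n},u_{1,n})\|_{\hdot\times L^2}^2<\infty$; hence for $j$ past some $j_0$ the profile $U^j$ is a small-data solution with $\|U^j\|_{S(\RR)}\lesssim\|\vU^j_L(0)\|_{\hdot\times L^2}$, while for $j\le j_0$ the relevant norm $\|U^j\|_{S(\sigma_j,+\infty)}$ is finite because $U^j$ scatters forward in time. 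Using $\|U_n^j\|_{S(0,+\infty)}=\|U^j\|_{S(-t_{j,n}/\lambda_{j,n},+\infty)}$ and the orthogonality \eqref{orthogonal}, which forces the $U_n^j$ to be asymptotically disjoint in $S$ (the cross terms in $\|\sum_{j\le J}U_n^j\|_{S(0,+\infty)}^{2(N+1)/(N-2)}$ vanishing as $n\to\infty$), one obtains that $A:=\sup_{J\ge1}\limsup_{n\to\infty}\|\sum_{j=1}^JU_n^j\|_{S(0,+\infty)}$ is finite; since $\limsup_n\|w_n^J\|_{S(\RR)}$ is bounded as well, $\limsup_n\|\mathcal{U}_n^J\|_{S(0,+\infty)}\le A+1$ for all $J$.

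Next comes the error estimate, the data match, and the conclusion. Since each $U_n^j$ solves \eqref{CP} and $w_n^J$ solves \eqref{LCP}, the equation error $e_n^J:=\partial_t^2\mathcal{U}_n^J-\Delta\mathcal{U}_n^J-|\mathcal{U}_n^J|^{4/(N-2)}\mathcal{U}_n^J$ equals $\sum_{j\le J}F(U_n^j)-F\big(\sum_{j\le J}U_n^j+w_n^J\big)$ with $F(v)=|v|^{4/(N-2)}v$. Writing $e_n^J=\big[\sum_{j\le J}F(U_n^j)-F(\sum_{j\le J}U_n^j)\big]+\big[F(\sum_{j\le J}U_n^j)-F(\sum_{j\le J}U_n^j+w_n^J)\big]$, the first bracket is a finite sum of cross terms $|U_n^j|^a|U_n^k|^b$ with $j\ne k$, each of which tends to $0$ in the dual Strichartz norm as $n\to\infty$ by the orthogonality of the parameters (after approximating each $U^j$ by a function with compact support in space--time), and the second bracket is bounded, via H\"older and the pointwise inequality $|F(a+b)-F(a)|\lesssim|b|^{(N+2)/(N-2)}+|b|\,|a|^{4/(N-2)}$, by $C\big(\|w_n^J\|_{S(\RR)}^{(N+2)/(N-2)}+\|w_n^J\|_{S(\RR)}A^{4/(N-2)}\big)$, which tends to $0$ as $J\to\infty$ by \eqref{prop_wnJ}; hence $\lim_{J\to\infty}\limsup_n\|e_n^J\|_{\mathrm{dual}}=0$. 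For the initial data, $\vec u_n(0)-\vec{\mathcal{U}}_n^J(0)=\sum_{j\le J}(\vU^j_{L,n}(0)-\vU^j_n(0))$, and each summand has $\hdot\times L^2$ norm $\|\vU^j_L(-t_{j,n}/\lambda_{j,n})-\vU^j(-t_{j,n}/\lambda_{j,n})\|_{\hdot\times L^2}\to0$ by the definition of the nonlinear profile, so $\|\vec u_n(0)-\vec{\mathcal{U}}_n^J(0)\|_{\hdot\times L^2}\to0$ for each fixed $J$. Finally, given $\eps>0$, I would fix $J$ so that $\limsup_n\|e_n^J\|_{\mathrm{dual}}$ is below the threshold $\delta(A+1,\eps)$ of the perturbation lemma of \cite{KeMe08} for approximate solutions of $S$-norm at most $A+1$; then for $n$ large that lemma yields that $u_n$ is defined on $[0,+\infty)$, scatters forward in time (as $\|u_n\|_{S(0,+\infty)}<\infty$), and $\|r_n^J\|_{S(0,+\infty)}+\sup_{t\ge0}\|r_n^J(t)\|_{\hdot\times L^2}\le\eps$. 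Since $\eps$ was arbitrary, this is the claim.

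I expect the error estimate to be the only place requiring real work: controlling the cross terms $|U_n^j|^a|U_n^k|^b$ via the orthogonality \eqref{orthogonal} (the one point where orthogonality is genuinely exploited, and which needs the usual approximation by compactly supported functions), and arranging the bookkeeping so that the two small quantities $\|w_n^J\|_{S(\RR)}$ and $\sup_{j\ge j_0}\|\vU^j_L(0)\|_{\hdot\times L^2}$, together with the uniform bound $A$, feed correctly into the perturbation lemma with the Strichartz exponents of \cite{KeMe08}; everything else is routine.
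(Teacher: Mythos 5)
Your proposal is correct and is precisely the standard Bahouri--G\'erard argument that the paper itself invokes without proof (it simply cites \cite{BaGe99} and the long-time perturbation theory): approximate solution $\sum_{j\le J}U_n^j+w_n^J$, uniform $S$-bound via the Pythagorean expansions and small-data theory for large $j$, smallness of the equation error from orthogonality of the parameters and \eqref{prop_wnJ}, matching of initial data from the definition of the nonlinear profiles, and closure by the perturbation lemma. No gaps beyond the routine Strichartz/H\"older bookkeeping you already flag.
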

Proposition \ref{P:Z3'} is standard: see for example the main result of \cite{BaGe99}.

In the general case, the approximation of Proposition \ref{P:Z3'} does not hold for all positive times. In this case Proposition \ref{P:Z4} gives an approximation on a time interval that might depend on $n$:
\begin{prop}
\label{P:Z4}
 Assume that the profiles $U^j$ are reordered as in Claim \ref{C:Z3}. Let $T<T_+(U^1)$. Let 
 $$\tau_n=\lambda_{1,n}T+t_{1,n},$$
 and assume that $\tau_n>0$ for large $n$. Then for large $n$, $[0,\tau_n]\subset I_{\max}(u_n)$ and, for all $j$ and large $n$, $[0,\tau_n]\subset I_{\max}(U_n^j)$. Furthermore, letting 
 $$r_n^J(t)=u_n(t)-\sum_{j=1}^JU_n^j(t,x)-w_n^J(t),\quad t\in [0,\tau_n],$$
 we have
 $$\lim_{J\to\infty}\limsup_{n\to\infty} \left(\left\|r_n^J\right\|_{S(0,\tau_n)}+\sup_{t\in [0,\tau_n]} \left\|r_n^J(t)\right\|_{\hdot\times L^2}\right)=0.$$
\end{prop}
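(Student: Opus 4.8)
The plan is to run the standard nonlinear-profile approximation argument of \cite{BaGe99,KeMe08}, now on the $n$-dependent interval $[0,\tau_n]$; the one genuinely new point is that the well-ordering of the profiles is used to control, uniformly in $n$, the Strichartz norms of \emph{all} the nonlinear profiles $U^j_n$ on $[0,\tau_n]$ in terms of the first profile $U^1$. Throughout we extract a subsequence so that \eqref{Z6} holds and the limits in \eqref{Z7} exist.

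\textbf{Step 1: the profiles live on $[0,\tau_n]$.} Since $\tau_n>0$ for large $n$, we have $-t_{1,n}/\lambda_{1,n}<T$; together with $-t_{1,n}/\lambda_{1,n}\in I_{\max}(U^1)$ for large $n$ and $T<T_+(U^1)$, this gives $[-t_{1,n}/\lambda_{1,n},T]\subset I_{\max}(U^1)$, hence $[0,\tau_n]\subset I_{\max}(U^1_n)$, and, by scale invariance of $\|\cdot\|_S$ (and backward scattering of $U^1$ when $\sigma_1=-\infty$), $\sup_n\|U^1_n\|_{S(0,\tau_n)}<\infty$. Because the decomposition is well-ordered, $(1)\preceq(j)$ for every $j$ (Claim \ref{C:Z2}); by the characterization of $\preceq$ recalled after Notation \ref{N:preorder}, $\sup_n\|U^j_n\|_{S(0,\tau_n)}<\infty$ for every $j$, while \eqref{Z7} (if $U^j$ does not scatter forward) or $T_+(U^j)=+\infty$ (if it does) gives $\tau_n<\lambda_{j,n}T_+(U^j)+t_{j,n}$ for large $n$, so that $[0,\tau_n]\subset I_{\max}(U^j_n)$. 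Finally, \eqref{Pyt1}--\eqref{Pyt2} give $\sum_j\|\vU^j_L(0)\|^2_{\hdot\times L^2}<\infty$, so for large $j$ the datum $\vU^j_L(0)$ is below the small-data threshold and $\|U^j_n\|_{S(\RR)}\lesssim\|\vU^j_L(0)\|_{\hdot\times L^2}$, square-summable in $j$ uniformly in $n$.

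\textbf{Step 2: the approximate solution and its error.} Put $\widetilde u_n^J=\sum_{j=1}^JU^j_n+w_n^J$, which is defined on $[0,\tau_n]$ by Step 1. Since each $U^j_n$ solves \eqref{CP} and $w_n^J$ solves the linear wave equation,
\[
\partial_t^2\widetilde u_n^J-\Delta\widetilde u_n^J-\big|\widetilde u_n^J\big|^{\frac{4}{N-2}}\widetilde u_n^J
=\sum_{j=1}^J\big|U^j_n\big|^{\frac{4}{N-2}}U^j_n-\big|\widetilde u_n^J\big|^{\frac{4}{N-2}}\widetilde u_n^J=:e_n^J.
\]
One splits $e_n^J$ into the difference of the diagonal sum and $\big|\sum_{j\le J}U^j_n\big|^{\frac{4}{N-2}}\sum_{j\le J}U^j_n$, which is a finite sum of cross terms each tending to $0$ as $n\to\infty$ for fixed $J$ by the orthogonality \eqref{orthogonal}, plus the remaining terms, which carry a factor $w_n^J$ and are $O(\|w_n^J\|_{S(\RR)})$ after the uniform $S$-bounds of Step 1. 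Combining this with \eqref{prop_wnJ} and the almost-orthogonality of the $S$-norms of the $U^j_n$ (a Keraani-type lemma resting only on \eqref{orthogonal}; see \cite{Keraani06}), one obtains
\[
M:=\sup_J\limsup_{n\to\infty}\big\|\widetilde u_n^J\big\|_{S(0,\tau_n)}<\infty,\qquad
\lim_{J\to\infty}\limsup_{n\to\infty}\big\|e_n^J\big\|_{S'(0,\tau_n)}=0,
\]
where $\|\cdot\|_{S'(I)}$ denotes the dual Strichartz norm of the local Cauchy theory of \eqref{CP}. Moreover $u_n(0)-\widetilde u_n^J(0)=\sum_{j=1}^J\big(U^j_{L,n}(0)-U^j_n(0)\big)\to0$ in $\hdot\times L^2$ as $n\to\infty$, for each fixed $J$, by the definition of the nonlinear profiles and the scale- and translation-invariance of the $\hdot\times L^2$ norm.

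\textbf{Step 3: perturbation, and the main obstacle.} Now apply the long-time perturbation lemma for \eqref{CP} (as in \cite{KeMe08}) with exact solution $u_n$ and approximate solution $\widetilde u_n^J$: fix $J$ large enough that $\limsup_n\|e_n^J\|_{S'(0,\tau_n)}$ lies below the smallness threshold allowed by the bound $M$, and then take $n$ large. The lemma yields $[0,\tau_n]\subset I_{\max}(u_n)$ together with
\[
\big\|u_n-\widetilde u_n^J\big\|_{S(0,\tau_n)}+\sup_{t\in[0,\tau_n]}\big\|u_n(t)-\widetilde u_n^J(t)\big\|_{\hdot\times L^2}\lesssim\big\|u_n(0)-\widetilde u_n^J(0)\big\|_{\hdot\times L^2}+\big\|e_n^J\big\|_{S'(0,\tau_n)}.
\]
Since $r_n^J=u_n-\widetilde u_n^J$, taking $\limsup_{n\to\infty}$ and then $J\to\infty$, and using Step 2, gives the assertion. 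The heart of the matter is Step 1 --- transferring the bound on $\|U^1_n\|_{S(0,\tau_n)}$ to every other profile, which is exactly what the preorder $\preceq$ and Claim \ref{C:Z2} are designed for --- together with the $S$-norm almost-orthogonality in Step 2 that lets one sum the profiles using only \eqref{orthogonal}; granting these, the remainder is a routine application of the stability theory, insensitive to whether $\tau_n$ stays bounded, degenerates to $0$, or tends to $+\infty$.
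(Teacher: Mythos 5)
Your proposal is correct and follows essentially the same route as the paper: the key step is using the well-ordering ($(1)\preceq(j)$ for all $j$) to deduce $[0,\tau_n]\subset I_{\max}(U_n^j)$ and $\limsup_n\|U^j_n\|_{S(0,\tau_n)}<\infty$ for every $j$, after which the conclusion follows from the standard long-time perturbation theory combined with the Pythagorean expansion \eqref{Pyt_S} of the $S$-norms. The paper cites this last step as a known approximation result, whereas you spell out the error-term estimates; the substance is the same.
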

\begin{remark}
 The statement of Proposition \ref{P:Z4} is not empty only when there exists $T<T_+(U^1)$ such that $\lambda_{1,n}T+t_{1,n}>0$ for large $n$. This always holds if $\sigma_1 \in \RR\cup\{-\infty\}$ (where $\sigma_1$ is defined by \eqref{Z6}).  If $ \sigma_1=+\infty$, the relation $(1)\preceq (j)$ for all $j$ implies that all nonlinear profiles $U^j$ scatter forward in time, and we are in the setting of Proposition \ref{P:Z3'}. 
\end{remark}
\begin{remark}
 We have stated Propositions \ref{P:Z3'} and \ref{P:Z4} for positive time. Of course the analogs of these propositions for negative time also hold. However, note that the definition of the order relation on the profile must be adapted. 
\end{remark}
\begin{proof}[Sketch of proof of Proposition \ref{P:Z4}]
 Let $j\geq 2$. By definition of $U^j$, $-t_{j,n}/\lambda_{j,n}\in I_{\max}(U^j)$ for large $n$. Since $(1)\preceq (j)$, we know that $U^j$ scatters forward in time, or 
 $$ \lim_{n\to\infty} \frac{\tau_n-t_{j,n}}{\lambda_{j,n}}<T_+(U^j).$$
 In both cases, $[0,\tau_n]\subset I_{\max}(U_n^j)$ for large $n$ and 
\begin{equation}
\label{S_norm_Uj} 
\limsup_{n\to\infty} \|U_n^j\|_{S(0,\tau_n)}=\limsup_{n\to\infty} \|U^j\|_{S\left(\frac{-t_{j,n}}{\lambda_{j,n}},\frac{\tau_n-t_{j,n}}{\lambda_{j,n}}\right)} <\infty. 
\end{equation} 
 Proposition \ref{P:Z4} then follows by a standard approximation result which is a consequence of the long-time perturbation theory of equation \eqref{CP} (see e.g. \cite[Proposition 2.8]{DuKeMe11a}).
\end{proof}
\begin{remark}
 The proof of Proposition \ref{P:Z4} uses the following Pythagorean expansions of the norms of the profiles in $S$, which follows from the orthogonality \eqref{orthogonal}   of the sequences of parameters (see \cite{BaGe99}):
 \begin{equation}
  \label{Pyt_S}
  \forall J\geq 1,\quad 
  \lim_{n\to\infty} \left(\Big\|\sum_{j=1}^J U_n^j\Big\|_{S(0,\tau_n)}^{\frac{2(N+1)}{N-2}}-\sum_{j=1}^J\left\|U^j_n\right\|_{S(0,\tau_n)}^{\frac{2(N+1)}{N-2}}\right)=0.
 \end{equation} 
\end{remark}
\begin{remark}
\label{R:profiles}
 In the setting of Proposition \ref{P:Z4}, let $\{\tau'_n\}_n$ be a sequence of times such that 
 $$\forall n,\quad \tau'_n\in [0,\tau_n].$$ 
 By Proposition \ref{P:Z4}, $\vu_n(\tau'_n)$ is well defined for large $n$, and $\{\vu_n(\tau'_n)\}_n$ is a bounded sequence of $\hdot\times L^2$. Extracting subsequences, we can get from Proposition \ref{P:Z4} a profile decomposition of this sequence. 

More precisely, note that 
 $$ \vU^j_n(\tau'_n)=\left(\frac{1}{\lambda_{j,n}^{\frac{N}{2}-1}}U^j\left( \frac{\tau'_n-t_{j,n}}{\lambda_{j,n}},\frac{x-x_{j,n}}{\lambda_{j,n}} \right),\frac{1}{\lambda_{j,n}^{\frac{N}{2}}}\partial_t U^j\left( \frac{\tau'_n-t_{j,n}}{\lambda_{j,n}},\frac{x-x_{j,n}}{\lambda_{j,n}} \right)\right)$$
Let $s'_{j,n}=\frac{\tau'_n-t_{j,n}}{\lambda_{j,n}}$.
 Extracting subsequences, we can assume that $s'_{j,n}$ has a limit $\theta_j\in \RR\cup \{\pm\infty\}$ as $n\to+\infty$. Observe that there exists a unique solution $V^j_L$ of the linear wave equation \eqref{LCP} such that 
$$\lim_{n\to\infty} \left\|\vV_L^j\left(s_{j,n}'\right)-\vU^j\left(s_{j,n}'\right)\right\|_{\hdot\times L^2}=0.$$
Indeed, if $\theta_j\in \RR$, it follows from \eqref{S_norm_Uj} that $\theta_j\in I_{\max}(U^j)$, and $V_{L}^j$ is the solution of the linear wave equation with initial data $\vU^j(\theta_j)$ at $t=\theta_j$. On the other hand, if $\theta_j=+\infty$ (respectively $-\infty$), then by \eqref{S_norm_Uj}, $U^j$ scatters forward in time (respectively backward in time), and the existence of $V_{L}^j$ follows.

Letting $t'_{j,n}=-\tau'_n+t_{j,n}$,
it is easy to check that the sequences of parameters $\{\lambda_{j,n},x_{j,n},t_{j,n}'\}_n$, $j\geq 1$, are orthogonal in the sense that \eqref{orthogonal} holds. In view of Proposition \ref{P:Z4}, we deduce that $\left(V^j_{L},\{\lambda_{j,n},x_{j,n},t_{j,n}'\}_n\right)_{j\geq 1}$ is a profile decomposition for the sequence $\vu_n(\tau'_n)$. Note that the nonlinear profiles for this decomposition are by definition exactly the nonlinear profiles $U^j$ of the profile decomposition $\profiles$ of $\{\vu_n(0)\}_n$.
\end{remark}
\subsection{Double profile decomposition}
\label{SS:DPD}
We conclude this section by showing the following technical lemma, which will be very useful in the proof of Theorem \ref{T:profiles}.
\begin{lemma}
\label{L:DPD}
 Let $M>0$. Let $\{m_p\}_p$ be a sequence of natural numbers. For any $p\in \NN$, we consider a sequence $\left\{\left(u_{0,n}^p,u_{1,n}^p\right)\right\}_n$ in $\hdot \times L^2$ such that
\begin{equation}
\label{DPD1}
 \limsup_{n\to \infty}\left\|\left(u_{0,n}^p,u_{1,n}^p\right)\right\|_{\hdot\times L^2}\leq M.
\end{equation}
Assume that for all $p$,  $\left\{\left(u_{0,n}^p,u_{1,n}^p\right)\right\}_n$ has a profile decomposition $\left(U_L^{p,j},\left\{\lambda_{p,j,n},x_{p,j,n},t_{p,j,n}\right\}_n\right)_{j\geq 1}$ and that there exists $(\eta_j)_{j\geq 1}$ such that 
\begin{equation}
 \label{DPD2}
\sum_{j\geq 1}\eta_j<\infty\quad \text{and}\quad \forall p,j,\quad \left\|U^{p,j}_L\right\|^{\SN}_{S(\RR)}\leq \eta_j.
\end{equation}  
Assume also that for all $j$, the sequence $\left\{ \vU_L^{p,j}(0)\right\}_{p\geq 1}$ has a profile decomposition 
$$\left(V_L^{j,k},\left\{ \mu_{j,k,p},y_{j,k,p},s_{j,k,p}\right\}_p\right)_{k\geq 1}.$$
 Then there exists an increasing sequence of integers $\{n_p\}_p$ satisfying
\begin{equation}
\label{prop_np} 
\forall p, \quad n_p\geq m_p\quad \text{and} \quad \lim_{p\to\infty} n_p=+\infty
\end{equation} 
and such that the sequence $\left\{\left(u_{0,n_p}^p,u_{1,n_p}^p\right)\right\}_p$ has the profile decomposition 
$$ \left(V_L^{j,k},\left\{ \nu_{j,k,p,n_p},z_{j,k,p,n_p},\tau_{j,k,p,n_p}\right\}_p\right\}_{j,k\geq 1}$$
with
\begin{equation}
 \label{def_param}
\nu_{j,k,p,n}=\lambda_{p,j,n}\mu_{j,k,p},\quad z_{j,k,p,n}=x_{p,j,n}+\lambda_{p,j,n}y_{j,k,p},\quad \tau_{j,k,p,n}=t_{p,j,n}+\lambda_{p,j,n}s_{j,k,p}.
\end{equation} 
\end{lemma}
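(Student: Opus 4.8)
The plan is to carry out a diagonal extraction that simultaneously realizes the "double" profile decomposition. The two-layer structure is: for each fixed $p$, the sequence $\{(u_{0,n}^p,u_{1,n}^p)\}_n$ decomposes (as $n\to\infty$) into linear profiles $U_L^{p,j}$ with parameters $\{\lambda_{p,j,n},x_{p,j,n},t_{p,j,n}\}_n$; and for each fixed $j$, the sequence $\{\vU_L^{p,j}(0)\}_p$ decomposes (as $p\to\infty$) into linear profiles $V_L^{j,k}$ with parameters $\{\mu_{j,k,p},y_{j,k,p},s_{j,k,p}\}_p$. I want to choose $n_p$ growing fast enough that, reading along the diagonal $p\mapsto (u_{0,n_p}^p,u_{1,n_p}^p)$, the composite objects $V_L^{j,k}$ with the composed parameters \eqref{def_param} form a genuine profile decomposition. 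First I would fix, for each $p$ and each $J$, the error terms: let $w_n^{p,J}$ be the remainder in the profile decomposition of $\{(u_{0,n}^p,u_{1,n}^p)\}_n$, and let $\omega_p^{j,K}$ be the remainder in the profile decomposition of $\{\vU_L^{p,j}(0)\}_p$. The two smallness statements available are $\limsup_n\|w_n^{p,J}\|_{S(\RR)}\to 0$ as $J\to\infty$ (for each fixed $p$) and $\limsup_p\|\omega_p^{j,K}\|_{S(\RR)}\to 0$ as $K\to\infty$ (for each fixed $j$).

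**The core construction:** after rescaling, the linear profile $U_L^{p,j}$ in the evolution of $(u_{0,n}^p,u_{1,n}^p)$ contributes, when $U_L^{p,j}(0)$ is itself expanded in the $V_L^{j,k}$, a family of terms $\tfrac{1}{\lambda_{p,j,n}^{N/2-1}}V_L^{j,k}\big(\tfrac{t-t_{p,j,n}-\lambda_{p,j,n}s_{j,k,p}}{\lambda_{p,j,n}\mu_{j,k,p}},\tfrac{x-x_{p,j,n}-\lambda_{p,j,n}y_{j,k,p}}{\lambda_{p,j,n}\mu_{j,k,p}}\big)$, which is exactly the $V_L^{j,k}$-profile with parameters \eqref{def_param}. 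I would organize the single index set $\{(j,k)\}$ by a diagonal enumeration so that for any truncation level $L$ one only uses finitely many pairs. For fixed $p$ and fixed truncation $L$ one has, from Lemma \ref{L:modif_profile}-type manipulations and the Pythagorean expansions \eqref{Pyt1}–\eqref{Pyt3}, that
$$ S_L(t)(u_{0,n}^p,u_{1,n}^p)-\sum_{(j,k)\in L}V^{j,k}_{L}\text{-profile with params \eqref{def_param}} = w_n^{p,J}+(\text{inner remainders rescaled})+o_n(1), $$
where the inner remainders are the $\omega_p^{j,K}$-terms rescaled by $\lambda_{p,j,n}$. The $S(\RR)$-norm is scale invariant, so the rescaled inner remainders have $S(\RR)$-norm equal (up to $o_n(1)$) to $\|\omega_p^{j,K}\|_{S(\RR)}$. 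I would also need the orthogonality \eqref{orthogonal} of the composed parameters: orthogonality for pairs $(j,k)\ne (j,k')$ with the same $j$ comes from orthogonality of the $\mu_{j,k,p}$; orthogonality for pairs with $j\ne j'$ comes from orthogonality of the $\lambda_{p,j,n}$ and $\lambda_{p,j',n}$, and here the rapid growth of $n_p$ relative to $p$ is what makes the limits in \eqref{orthogonal} (which involve both $p$ and $n_p$) go to $+\infty$.

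**Choosing $n_p$:** for each $p$, pick $J=J(p)\to\infty$ slowly with $\limsup_n\|w_n^{p,J(p)}\|_{S(\RR)}$ small, and for each of the finitely many $j\le J(p)$ pick $K=K(p,j)\to\infty$ with $\|\omega_p^{j,K(p,j)}\|_{S(\RR)}$ small; then choose $n_p\ge\max(m_p,p,n_{p-1}+1)$ large enough that: (i) $\|w_{n_p}^{p,J(p)}\|_{S(\RR)}$ is within $1/p$ of its $\limsup$; (ii) all the "$o_n(1)$" error terms at level $n=n_p$ in the finitely many expansions above are $\le 1/p$; (iii) the orthogonality defects for the finitely many relevant pairs, evaluated at $n=n_p$, exceed $p$. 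This is a finite list of conditions for each $p$, so such $n_p$ exists, and $n_p\to\infty$ by construction. Then for the diagonal sequence, given any $\eps>0$, pick $L$ so large that the tail contributions are controlled: here the hypothesis \eqref{DPD2} that $\sum_j\eta_j<\infty$ with $\|U_L^{p,j}\|_{S(\RR)}^{2(N+1)/(N-2)}\le\eta_j$ uniformly in $p$ is crucial — it gives a $p$-uniform bound on the tail $\sum_{j>J}\|U_L^{p,j}\|_{S(\RR)}$, which is exactly what is needed to make the double limit $\limsup_{L}\limsup_p\|(\text{remainder at level }L)\|_{S(\RR)}=0$ hold rather than just the iterated limits separately.

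**The main obstacle** is the interchange of limits: one has $\limsup_n$-type smallness in the outer decomposition (for fixed $p$) and $\limsup_p$-type smallness in the inner decompositions (for fixed $j$), and one must extract a single diagonal $\{n_p\}$ along which both are simultaneously controlled, uniformly over the growing truncation $L$. This is where \eqref{DPD2} does the essential work: without the summable uniform bound on the outer profiles' Strichartz norms, the number of inner decompositions one must control would not be tamable as $p\to\infty$. Concretely, the delicate point is proving $\lim_{L\to\infty}\limsup_{p\to\infty}\|R_p^L\|_{S(\RR)}=0$ where $R_p^L=S_L(\cdot)(u_{0,n_p}^p,u_{1,n_p}^p)-\sum_{(j,k)\in L}(\text{composed }V^{j,k}\text{-profile})$; everything else (orthogonality of composed parameters, the weak-limit characterization of Remark \ref{R:weak}, and the fact that nonlinear profiles compose correctly as in Lemma \ref{L:intrinsec}) is routine bookkeeping once the diagonal is fixed.
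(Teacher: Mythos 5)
Your proposal is correct and follows essentially the same route as the paper: a formal double expansion of the linear evolution, a $p$-uniform control of the outer remainders via the summable bound \eqref{DPD2}, a diagonal choice of $n_p$ satisfying finitely many smallness and orthogonality conditions at each $p$, and a final verification that the composed parameters are orthogonal and the composed remainder vanishes in $S(\RR)$. The only point you leave implicit — upgrading the remainder smallness from a particular sequence of truncation levels $(J,K_J)$ to all truncations — is exactly what the paper isolates as Claim \ref{C:profile}, and it is handled by the same uniform summability of the profiles' Strichartz norms that you invoke.
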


\begin{proof}
We will use the following claim, proved in appendix \ref{A:profile} (recall \eqref{orthogonal} for the definition of orthogonality and \eqref{linear_profile} for the notation $U^j_{L,n}$).
\begin{claim}
\label{C:profile}
Let $(U^j_L)_{j\geq 1}$ be a family of solutions of the linear wave equation, and $\left\{\lambda_{j,n},x_{j,n},t_{j,n}\right\}_n$, $j\geq 1$, a family of \emph{orthogonal} sequences of parameters. Assume
\begin{equation}
 \label{uniform_UjL}
\sum_{j\geq 1}\left\|U^{j}_L\right\|^{\SN}_{S(\RR)}<\infty.
\end{equation} 
Let $\left\{\left(u_{0,n},u_{1,n}\right)\right\}_{n}$ be a bounded sequence in $\hdot\times L^2$, and $u_{L,n}(t)=S_{L}(t)(u_{0,n},u_{1,n})$. Assume that there exists a sequence $\{J_k\}_k$ of integers such that $\lim_k J_k=+\infty$ and
$$\lim_{k\to\infty} \limsup_{n\to\infty}\left\|u_{L,n}-\sum_{j=1}^{J_k}U^j_{L,n}\right\|_{S(\RR)}=0.$$
Then 
\begin{equation}
\label{profile_UjL}
 \lim_{J\to\infty} \limsup_{n\to\infty}\left\|u_{L,n}-\sum_{j=1}^{J}U^j_{L,n}\right\|_{S(\RR)}=0,
\end{equation} 
i.e. $\left(U^j_L,\left\{\lambda_{j,n},x_{j,n},t_{j,n}\right\}_n\right)_j$ is a profile decomposition for the sequence $\left\{(u_{0,n},u_{1,n})\right\}_n$.
\end{claim} 
 Let $u_{L,n}^p(t,x)=S_L(t)\left(u_{0,n}^p,u_{1,n}^p\right)(x)$. 

\EMPH{Step 1. Formal expansion of $u_{L,n}^p$}

By the definition of a profile decomposition,
\begin{equation}
 \label{DPD3} u_{L,n}^p(t,x)=\sum_{j=1}^J \frac{1}{\lambda_{p,j,n}^{\frac N2-1}} U_L^{p,j}\left(\frac{t-t_{p,j,n}}{\lambda_{p,j,n}},\frac{x-x_{p,j,n}}{\lambda_{p,j,n}}\right)+w_{p,n}^J,
\end{equation} 
where
\begin{equation}
 \label{DPD4}
\lim_{J\to\infty} \limsup_{n\to\infty} \left\|w_{p,n}^J\right\|_{S(\RR)}=0.
\end{equation} 
For $j\geq 1$, we have
\begin{equation}
 \label{DPD5}
U_{L}^{p,j}(s,y)=\sum_{k=1}^K \frac{1}{\mu_{j,k,p}^{\frac{N}{2}-1}}V_L^{j,k}\left(\frac{s-s_{j,k,p}}{\mu_{j,k,p}},\frac{y-y_{j,k,p}}{\mu_{j,k,p}}\right)+W_{j,p}^K,
\end{equation} 
where
\begin{equation}
 \label{DPD6}
\lim_{K\to\infty}\limsup_{p\to\infty}\left\| W_{j,p}^K\right\|_{S(\RR)}=0.
\end{equation} 
Combining \eqref{DPD3} and \eqref{DPD5}, we obtain
\begin{equation}
 \label{DPD7}
u_{L,n}^p(t,x)=\sum_{j=1}^J \left(\sum_{k=1}^K\frac{1}{\nu_{j,k,p,n}^{\frac{N}{2}-1}}V_L^{j,k}\left(\frac{s-\tau_{j,k,p,n}}{\nu_{j,k,p,n}},\frac{y-z_{j,k,p,n}}{\nu_{j,k,p,n}}\right)\right)+A^{J,K}_{p,n}(t,x)+w_{p,n}^J(t,x),
\end{equation} 
where $\nu_{j,k,p,n},z_{j,k,p,n}$, and $\tau_{j,k,p,n}$ are defined in \eqref{def_param} and
\begin{equation}
\label{defApn}
A_{p,n}^{J,K}(t,x)=\sum_{j=1}^J\frac{1}{\lambda_{p,j,n}^{\frac{N}{2}-1}}W_{j,p}^K\left(\frac{t-t_{p,j,n}}{\lambda_{p,j,n}},\frac{x-x_{p,j,n}}{\lambda_{p,j,n}}\right).
\end{equation} 
\EMPH{Step 2} We prove
\begin{equation}
 \label{DPD8}
\forall \eps>0,\; \exists J_{\eps}>0,\; \forall J\geq J_{\eps},\;\forall p,\quad \limsup_{n\to\infty}\left\|w_{p,n}^J\right\|_{S(\RR)}\leq \eps.
\end{equation} 
Indeed, let $\eps>0$ and $J_{\eps}$ such that
\begin{equation}
 \label{DPD9}
\sum_{j\geq J_{\eps}} \eta_j\leq \left(\frac{\eps}{2}\right)^{\SN}.
\end{equation} 
Let $J\geq J_{\eps}$ and $p\geq 1$. By \eqref{DPD4}, there exists $\tJ=\tJ(p)\geq J_{\eps}$ such that
\begin{equation}
 \label{DPD10}
\forall J'\geq \tJ,\quad \limsup_{n\to\infty} \left\|w_{p,n}^{J'}\right\|_{S(\RR)}\leq \frac{\eps}{2}.
\end{equation} 
If $J\geq \tJ$, then by \eqref{DPD10}, $\limsup_{n\to\infty}\left\|w_{p,n}^J\right\|_{S(\RR)}<\eps$. 

Assume $J_{\eps}\leq J<\tJ$. Then 
$$w_{p,n}^J(t,x)=w_{p,n}^{\tJ}(t,x)+\underbrace{\sum_{j=J+1}^{\tJ}\frac{1}{\lambda_{p,j,n}^{\frac N2-1}} U_L^{p,j}\left(\frac{t-t_{p,j,n}}{\lambda_{p,j,n}},\frac{x-x_{p,j,n}}{\lambda_{p,j,n}}\right)}_{(I)_{p,n}}.$$
Using the orthogonality of the sequences of parameters $\left\{\lambda_{p,j,n},x_{p,j,n},t_{p,j,n}\right\}_n$ (at fixed $p$) and \eqref{DPD9}, we obtain
\begin{equation}
 \label{DPD11}
\limsup_{n\to\infty} \left\| (I)_{p,n}\right\|_{S(\RR)}^{\SN}=\sum_{j=J+1}^{\tJ} \left\|U^{p,j}_{L}\right\|_{S(\RR)}^{\SN}\leq\left(\frac{\eps}{2}\right)^{\SN}.
\end{equation}
Combining with \eqref{DPD10}, we deduce  $\limsup_{n\to\infty}\left\|w_{p,n}^J\right\|_{S(\RR)}\leq \eps$.

\EMPH{Step 3. Choice of $n_p$}
Let for $J\geq 1$,
\begin{equation}
 \label{defEpsJ}
\eps_{J}=\max_p\left(\limsup_{n\to\infty} \left\|w_{p,n}^J\right\|_{S(\RR)}\right).
\end{equation} 
By Step 2, $\eps_J<\infty$ for large $J$ and $\lim_J \eps_{J}=0$. In this step we prove that there exists an increasing sequence of integer $\{n_p\}_{p\geq 1}$ such that \eqref{prop_np} holds and, for all $p$, 
\begin{gather}
 \label{DPD12}
\forall J\in \{1,\ldots,p\}, \quad \left\|w_{p,n_p}^J\right\|_{S(\RR)}\leq 2\eps_J\\
\label{DPD13}
\forall (j,k,j',k'),\quad \left(1\leq j,k,j',k'\leq p\text{ and }j\neq j'\right)\Longrightarrow\qquad \qquad \qquad\qquad\\
\notag
\qquad \qquad\qquad \qquad\left|\log\left(\frac{\nu_{j,k,p,n_p}}{\nu_{j',k',p,n_p}}\right)\right|+\frac{\left|\tau_{j,k,p,n_p}-\tau_{j',k',p,n_p}\right|}{\nu_{j,k,p,n_p}}+\frac{\left|z_{j,k,p,n_p}-z_{j',k',p,n_p}\right|}{\nu_{j,k,p,n_p}}\geq p.
\end{gather}
For this, it is sufficient to show, fixing $p$,
\begin{equation}
\label{DPD14}
\forall J\in\{1,\ldots,p\},\quad \limsup_{n\to\infty} \left\|w_{p,n}^J\right\|_{S(\RR)}<2\eps_J,
\end{equation} 
and 
\begin{multline}
 \label{DPD15}
\forall (j,k,j',k'),\quad \left(1\leq j,k,j',k'\leq p\text{ and }j\neq j'\right)\Longrightarrow\\
\lim_{n\to\infty} \left(\left|\log\left(\frac{\nu_{j,k,p,n}}{\nu_{j',k',p,n}}\right)\right|+\frac{\left|\tau_{j,k,p,n}-\tau_{j',k',p,n}\right|}{\nu_{j,k,p,n}}+\frac{\left|z_{j,k,p,n}-z_{j',k',p,n}\right|}{\nu_{j,k,p,n}}\right)=+\infty.
\end{multline}
The property \eqref{DPD14} is given immediately by the definition of $\eps_J$. 

We prove \eqref{DPD15} by contradiction. Assume \eqref{DPD15} does not hold, i.e. that there exists $(j,j',k,k')\in \{1,\ldots,p\}^{4}$ such that $j\neq j'$ and the term in parenthesis in \eqref{DPD15} is, after extraction of subsequences in $n$, bounded by some constant $C_0$. Then, denoting by $C>0$ a large constant that may change from line to line and depends on $j,k,j',k',p$ but not on $n$,
$$C_0\geq \left|\log\left(\frac{\nu_{j,k,p,n}}{\nu_{j',k',p,n}}\right)\right|=\left|\log\frac{\lambda_{p,j,n}}{\lambda_{p,j',n}}+\log\frac{\mu_{j,k,p}}{\mu_{j',k',p}}\right|\geq \left|\log\left(\frac{\lambda_{p,j,n}}{\lambda_{p,j',n}}\right)\right|-C,$$
which proves that the sequence $\left\{\left|\log\left(\frac{\lambda_{p,j,n}}{\lambda_{p,j',n}}\right)\right|\right\}_n$ is bounded.

Similarly, using the boundedness of the sequence $\left\{\left|\log\left(\frac{\lambda_{p,j,n}}{\lambda_{p,j',n}}\right)\right|\right\}_n$, we obtain
\begin{multline*}
 C_0\geq \frac{\left|\tau_{j,k,p,n}-\tau_{j',k',p,n}\right|}{\nu_{j,k,p,n}}=\frac{\left|t_{p,j,n}+s_{j,k,p}\lambda_{p,j,n}-t_{p,j',n}-s_{j',k',p}\lambda_{p,j',n}\right|}{\lambda_{p,j,n}\mu_{j,k,p}}\\
\geq \frac{1}{C} \left|\frac{t_{p,j,n}-t_{p,j',n}}{\lambda_{p,j,n}}\right|-C.
\end{multline*}
Thus the sequence $\left\{\left|\frac{t_{p,j,n}-t_{p,j',n}}{\lambda_{p,j,n}}\right|\right\}_n$ is also bounded. Finally, 
\begin{multline*}
\frac{\left|z_{j,k,p,n}-z_{j',k',p,n}\right|}{\nu_{j,k,p,n}}\geq \frac{1}{\mu_{j,k,p}}\frac{\left|x_{p,j,n}-x_{p,j',n}\right|}{\lambda_{p,j,n}}-\frac{\left|\lambda_{p,j,n}y_{j,k,p}-\lambda_{p,j',n}y_{j',k',p}\right|}{\lambda_{p,j,n}\mu_{j,k,p}}\\
\geq \frac{1}{C}\frac{\left|x_{p,j,n}-x_{p,j',n}\right|}{\lambda_{p,j,n}}-C
\end{multline*}
which proves that $\left\{\frac{\left|x_{p,j,n}-x_{p,j',n}\right|}{\lambda_{p,j,n}}\right\}_n$ is also bounded, contradicting the orthogonality of the sequences of parameters $\left(\left\{\lambda_{p,j,n},x_{p,j,n},t_{p,j,n}\right\}_n\right)$, $j\geq 1$ and concluding the proof of \eqref{DPD15}. 

\EMPH{Step 4: end of the proof} In this step we check that $\left(V_L^{j,k},\left\{\nu_{j,k,p,n_p},z_{j,k,p,n_p},\tau_{j,k,p,n_p}\right\}_p\right)_{j,k\geq 1}$ satisfies the assumptions of Claim \ref{C:profile}.

\EMPH{Orthogonality of the sequences of parameters} Let $(j,k),(j',k')$ be two pairs of indices such that $(j,k)\neq (j',k')$. If $j\neq j'$, then by \eqref{DPD13}, 
$$\lim_{p\to\infty}\left|\log\left(\frac{\nu_{j,k,p,n_p}}{\nu_{j',k',p,n_p}}\right)\right|+\frac{\left|\tau_{j,k,p,n_p}-\tau_{j',k',p,n_p}\right|}{\nu_{j,k,p,n_p}}+\frac{\left|z_{j,k,p,n_p}-z_{j',k',p,n_p}\right|}{\nu_{j,k,p,n_p}}=+\infty.$$
Next assume $j=j'$ (and thus $k\neq k'$). Then by the definition \eqref{def_param} of $\nu_{j,k,p,n}, z_{j,k,p,n}$, $\tau_{j,k,p,n}$
\begin{multline*}
 \left|\log\left(\frac{\nu_{j,k,p,n_p}}{\nu_{j',k',p,n_p}}\right)\right|+\frac{\left|\tau_{j,k,p,n_p}-\tau_{j',k',p,n_p}\right|}{\nu_{j,k,p,n_p}}+\frac{\left|z_{j,k,p,n_p}-z_{j',k',p,n_p}\right|}{\nu_{j,k,p,n_p}}\\
=\left|\log\left(\frac{\mu_{j,k,p}}{\mu_{j,k',p}}\right)\right|+\frac{\left|y_{j,k,p}-y_{j,k',p}\right|}{\mu_{j,k,p}}+\frac{\left|s_{j,k,p}-s_{j,k',p}\right|}{\mu_{j,k,p}}\underset{p\to\infty}{\longrightarrow}+\infty,
\end{multline*}
using the orthogonality of the sequences of parameters $\left(\left\{\mu_{j,k,p},y_{j,k,p},s_{j,k,p}\right\}_p\right)_{k\geq 1}$ (when $j$ is fixed).

\EMPH{Uniform summability of the profiles in $S(\RR)$}

Fix $j\geq 1$. By the orthogonality of the parameters $\left(\left\{\mu_{j,k,p},y_{j,k,p},s_{j,k,p}\right\}_p\right)_{k\geq 1}$, one can prove the Pythagorean-like expansion:
$$\sum_{k=1}^{\infty} \left\|V_L^{j,k}\right\|_{S(\RR)}^{\SN}=\left\|U_L^{p,j}\right\|_{S(\RR)}^{\SN}\leq \eta_j.$$
Since $\sum_j \eta_j<\infty$ we deduce
$$\sum_{j=1}^{\infty}\sum_{k=1}^{\infty} \left\|V_L^{j,k}\right\|_{S(\RR)}^{\SN}<\infty.$$

\EMPH{Convergence to $0$ of the Strichartz norm}
Using the formal expansion \eqref{DPD7}, we prove that there exists a sequence $K_J\to\infty$ such that
\begin{equation}
 \label{DPD16}
\lim_{J\to\infty} \limsup_{p\to\infty}\left\|u_{L,n_p}^p-\sum_{j=1}^J\sum_{k=1}^{K_J}\frac{1}{\nu_{j,k,p,n_p}^{\frac{N}{2}-1}}V_L^{j,k}\left(\frac{\cdot-\tau_{j,k,p,n_p}}{\nu_{j,k,p,n_p}},\frac{\cdot-z_{j,k,p,n_p}}{\nu_{j,k,p,n_p}}\right)\right\|_{S(\RR)}=0.
\end{equation} 
Indeed, recall from \eqref{defApn} the definition of $A_{p,n}^{J,K}$. 
For all $J\geq 1$ choose $K_J$ such that 
$$\limsup_{p\to\infty}\sum_{j=1}^J \left\|W^{K_J}_{j,p}\right\|_{S(\RR)}\leq \frac{1}{J}.$$
Then by the triangle inequality, 
$$ \limsup_{p\to\infty}\left\|A_{p,n_p}^{J,K_J}\right\|_{S(\RR)}\leq \frac{1}{J}$$ 
and combining with \eqref{DPD7} and \eqref{DPD12}, we get (recalling the definition \eqref{defEpsJ} of $\eps_J$):
$$\lim_{p\to\infty} \left\|u_{L,n_p}^p-\sum_{j=1}^J\sum_{k=1}^{K_J}\frac{1}{\nu_{j,k,p,n_p}^{\frac{N}{2}-1}}V_L^{j,k}\left(\frac{\cdot-\tau_{j,k,p,n_p}}{\nu_{j,k,p,n_p}},\frac{\cdot-z_{j,k,p,n_p}}{\nu_{j,k,p,n_p}}\right)\right\|_{S(\RR)}\leq 2\eps_J+\frac{1}{J},$$
hence \eqref{DPD16}. The assumptions of Claim \ref{C:profile} are satisfied, which concludes the proof of Lemma \ref{L:DPD}.
\end{proof}
\begin{remark}
 \label{R:DPD}
In the setting of Lemma \ref{L:DPD}, fix $j\geq 1$. Then, by \eqref{DPD5},
\begin{multline*}
\frac{1}{\lambda_{p,n_p}^{\frac{N}{2}-1}}U_L^{p,j}\left(\frac{t-t_{j,n_p}}{\lambda_{j,n_p}},\frac{x-x_{j,n_p}}{\lambda_{j,n_p}}\right)
\\=
\sum_{k=1}^K\frac{1}{\nu_{j,k,p,n_p}^{\frac{N}{2}-1}}V_L^{j,k}\left(\frac{s-\tau_{j,k,p,n_p}}{\nu_{j,k,p,n_p}},\frac{y-z_{j,k,p,n_p}}{\nu_{j,k,p,n_p}}\right)+ \frac{1}{\lambda_{j,n_p}^{\frac N2 -1} }W_{j,p}^K\left(\frac{t-t_{j,n_p}}{\lambda_{j,n_p}},\frac{x-x_{j,n_p}}{\lambda_{j,n_p}}\right),
\end{multline*}

and \eqref{DPD6} implies that 
$\left( V_L^{j,k},\left\{\nu_{j,k,p,n_p},z_{j,k,p,n_p},\tau_{j,k,p,n_p}\right\}_p \right)_{k\geq 1}$
is a profile decomposition for the sequence $\left\{\frac{1}{\lambda_{p,n_p}^{\frac{N}{2}-1}}U_L^{p,j}\left(\frac{-t_{j,n_p}}{\lambda_{j,n_p}},\frac{x-x_{j,n_p}}{\lambda_{j,n_p}}\right)\right\}_p.$
\end{remark}

\section{Weak and strong local convergence to solitary waves}
\label{S:minimality}
In this section, we prove Theorem \ref{T:profiles}.  
\subsection{Preliminaries}
Let $u$ be as in Theorem \ref{T:profiles}. We denote by $\SSS_0$ the set of sequences $\{t_n\}_n$ in the maximal interval of existence of $u$ such that 
\begin{equation}
\label{lim_T+}
\lim_{n\to\infty}t_n=T_+(u) 
\end{equation} 
and
$\left\{\vu(t_n)\right\}_n$ admits a \emph{well-ordered} (see Definition \ref{D:W0}) profile decomposition $$\profiles$$ 
with nonlinear profiles $U^j$. If $\{t_n\}_n \in \SSS_0$, we denote by $J_0=J_0\left(\{t_n\}_n\right)$ the number of nonlinear profiles $U^j$ that do not scatter forward in time. Note that by Lemma \ref{L:uniq_profile} and \eqref{link_NL} in Lemma \ref{L:intrinsec}, this is independent of the choice of the profile decomposition.  

By Proposition \ref{P:Z3'}, since $u$ does not scatter forward in time, $J_0\geq 1$. By the definition of the order relation $\preceq$, $U^1,\ldots, U^{J_0}$ do not scatter forward in time and for all $j\geq J_0+1$, $U^j$ scatters forward in time. 

According to the small data theory for \eqref{CP}, there exists $\delta_0>0$ such that
$$ \exists n,\quad \left\|U^j\left(\frac{-t_{j,n}}{\lambda_{j,n}}\right)\right\|^2_{\hdot\times L^2}\leq \delta_0\Longrightarrow U^j\text{ scatters in both time directions.}$$
By assumption \eqref{bound_u} of Theorem \ref{T:profiles}, the following supremum is finite:
\begin{equation}
 \label{def_M}
B=\sup_{t\in [0,T_+(u))}\|(u(t),\partial_tu(t))\|_{\hdot\times L^2}^2.
\end{equation} 
Using the Pythagorean expansions \eqref{Pyt1} and \eqref{Pyt2}, we obtain $\delta_0J_0\leq B$, i.e $J_0\left(\{t_n\}_n\right)\leq \delta_0/B$ for any sequence $\{t_n\}_n\in \SSS_0$.
Hence
\begin{equation}
 \label{DEM1}
J_M=\max\Big\{J_0\left(\left\{t_n\right\}_n\right),\;\left\{t_n\right\}_n\in \SSS_0\Big\}
\end{equation} 
is a finite integer, $\geq 1$. We define
\begin{equation}
 \label{DEM2}
\SSS_1=\Big\{\{t_n\}_n \in \SSS_0,\text{ s.t. }J_0\left(\{t_n\}_n\right)=J_M\Big\}.
\end{equation} 
If $\{t_n\}_n\in \SSS_1$, we define:
\begin{equation}
 \label{DEM3}
\EEE\left(\{t_n\}_n\right)=\sum_{j=1}^{J_M} E(U^j).
\end{equation}
By Lemma \ref{L:uniq_profile} and \eqref{link_NL}, $\EEE\left(\{t_n\}_n\right)$ is well defined, i.e. independent of the choice of the profile decomposition of $\{\vu(t_n)\}_n$.
Note that by Sobolev inequality,
\begin{equation*}
\forall n,\quad 
 \EEE\left(\{t_k\}_k\right)\geq -\sum_{j=1}^{J_M} \frac{N-2}{2N}\left\|U^j\left(-t_{j,n}/\lambda_{j,n}\right)\right\|_{L^{\frac{2N}{N-2}}}^{\frac{2N}{N-2}}\\
\geq -C\sum_{j=1}^{J_M} \left\|U^j\left(-t_{j,n}/\lambda_{j,n}\right)\right\|_{\hdot}^{\frac{2N}{N-2}}.
\end{equation*}
Thus, using \eqref{def_M} and the Pythagorean expansion \eqref{Pyt3},
\begin{equation}
 \label{DEM4} \forall \{t_n\}_n\in \SSS_1,\quad \EEE\left(\{t_n\}_n\right)\geq -CJ_MB^{\frac{N}{N-2}}.
\end{equation} 
We let 
\begin{equation}
 \label{DEM5}
\EEE_{m}=\inf \Big\{ \EEE\left(\{t_n\}_n\right),\; \{t_n\}_n\in \SSS_1\Big\} \in \RR.
\end{equation} 
\subsection{Minimization of $\EEE$}
\begin{lemma}
 \label{L:DEM1}
For $p\geq 1$, let $\left\{\tau_{n}^p\right\}_n\in \SSS_1$ such that 
\begin{equation}
 \label{DEM5'}
\lim_{p\to\infty} \EEE\left(\left\{\tau_{n}^p\right\}_n\right)=\EEE_m.
\end{equation} 
Consider, for all $p\geq 1$, a well-ordered profile decomposition $\left(U_L^{p,j},\left\{\lambda_{p,j,n},x_{p,j,n},t_{p,j,n}\right\}_n\right)_{j\geq 1}$ of $\left\{u\left(\tau_n^p\right)\right\}_n$, and denote by $U^{p,j}$, $j\geq 1$, the corresponding nonlinear profiles. Then there exists an increasing sequence $\{n_p\}_p$ of indices such that (after extraction in $p$)
\begin{enumerate}
 \item \label{I:S1} $\ds \left\{\tau_{n_p}^p\right\}_p \in \SSS_1$ and $\EEE\left(\left\{\tau_{n_p}^p\right\}_p\right)=\EEE_m$.
\item \label{I:profile} $\left\{\vu\left(\tau_{n_p}^p\right)\right\}_p$ admits a profile decomposition $\left(\tV_L^j,\left\{\tnu_{j,p},\tz_{j,p},\ttau_{j,p}\right\}_p\right)_{j\geq 1}$ such that, denoting by $\tV^j$ the corresponding nonlinear profiles, $\tV^j$ scatters forward in time if and only if $j\geq 1+J_M$ and 
\begin{multline}
 \label{DEM6}
\forall j\in \{1,\ldots,J_M\},\\
\lim_{p\to\infty} \left\|\frac{1}{\lambda_{p,j,n_p}^{\frac{N}{2}}}\nabla_{t,x}U^{p,j}\left(-\frac{t_{p,j,n_p}}{\lambda_{p,j,n_p}},\frac{\cdot-x_{p,j,n_p}}{\lambda_{p,j,n_p}}\right)-\frac{1}{\tnu_{j,p}^{\frac{N}{2}}}\nabla_{t,x}\tV^{j}\left(-\frac{\ttau_{j,p}}{\tnu_{j,p}},\frac{\cdot-\tz_{j,p}}{\tnu_{j,p}}\right)\right\|_{(L^2)^{N+1}}=0
\end{multline} 
\end{enumerate}
\end{lemma}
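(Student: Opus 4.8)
The plan is to feed the sequences $\big\{\vu(\tau_n^p)\big\}_n$, with their given well-ordered profile decompositions, into the double profile decomposition of Lemma~\ref{L:DPD}, and then to analyze the resulting profile decomposition of $\big\{\vu(\tau_{n_p}^p)\big\}_p$ using the two minimality properties built into the definitions of $J_M$ in \eqref{DEM1} and of $\EEE_m$ in \eqref{DEM5}.

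\EMPH{Applying Lemma~\ref{L:DPD}} Set $M=\sqrt B$ with $B$ from \eqref{def_M}, so \eqref{DPD1} holds by \eqref{bound_u}. For \eqref{DPD2}, reorder (for each $p$) the profiles so that $\big\|\vU_L^{p,j}(0)\big\|_{\hdot\times L^2}$ is nonincreasing in $j$ --- this changes only the indexing, not the set of $J_M$ nonscattering nonlinear profiles. The Pythagorean expansions \eqref{Pyt1}--\eqref{Pyt2} and \eqref{def_M} give $\sum_j\big\|\vU_L^{p,j}(0)\big\|_{\hdot\times L^2}^2\le B$ uniformly in $p$, hence $\big\|\vU_L^{p,j}(0)\big\|_{\hdot\times L^2}^2\le B/j$; combined with the Strichartz bound $\big\|U_L^{p,j}\big\|_{S(\RR)}\le C\big\|\vU_L^{p,j}(0)\big\|_{\hdot\times L^2}$ and $\SN>2$, one may take $\eta_j=C'\min\{1,j^{-(N+1)/(N-2)}\}$, which is summable. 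After a diagonal extraction in $p$, each $\big\{\vU_L^{p,j}(0)\big\}_p$ has a profile decomposition $\big(V_L^{j,k},\{\mu_{j,k,p},y_{j,k,p},s_{j,k,p}\}_p\big)_{k\ge1}$. Now invoke Lemma~\ref{L:DPD} with $m_p$ chosen so large that, for $n\ge m_p$ and $1\le j\le J_M$, one has $|\tau_n^p-T_+(u)|<1/p$ (or $\tau_n^p>p$ if $T_+(u)=+\infty$), $\big\|\vU^{p,j}_{n}(0)-\vU^{p,j}_{L,n}(0)\big\|_{\hdot\times L^2}<1/p$, and $\big|E(U^{p,j})-E\big(\vU^{p,j}_{L,n}(0)\big)\big|<1/p$; these are legitimate because each of these quantities tends to $0$ as $n\to\infty$ at fixed $p$. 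We obtain $\{n_p\}_p$ satisfying \eqref{prop_np} such that $\big\{\vu(\tau_{n_p}^p)\big\}_p$ has the profile decomposition $\big(V_L^{j,k},\{\nu_{j,k,p,n_p},z_{j,k,p,n_p},\tau_{j,k,p,n_p}\}_p\big)_{j,k\ge1}$ with parameters \eqref{def_param}. In particular $\tau_{n_p}^p\to T_+(u)$, and after relabelling $(j,k)$ by a single index and reordering via Claim~\ref{C:Z3} (extracting also so that the required limits of the time parameters exist), $\big\{\tau_{n_p}^p\big\}_p\in\SSS_0$.

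\EMPH{Counting the nonscattering profiles} Let $\tV^{j,k}$ be the nonlinear profile of $V_L^{j,k}$ relative to $\{\nu_{j,k,p,n_p},\tau_{j,k,p,n_p}\}_p$; it depends only on $V_L^{j,k}$ and these parameters. Fix $j\le J_M$. Since $\vU^{p,j}_n(0)-\vU^{p,j}_{L,n}(0)\to0$ in $\hdot\times L^2$ (hence, by Strichartz, in $S(\RR)$ for the associated linear evolutions), Remark~\ref{R:DPD} shows that $\big(V_L^{j,k},\{\nu_{j,k,p,n_p},z_{j,k,p,n_p},\tau_{j,k,p,n_p}\}_p\big)_{k\ge1}$ is a profile decomposition of $\big\{\vU^{p,j}_{n_p}(0)\big\}_p$, with nonlinear profiles $\tV^{j,k}$; and the solution of \eqref{CP} with data $\vU^{p,j}_{n_p}(0)$ is $U^{p,j}_{n_p}$, a rescaling of $U^{p,j}$. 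If every $\tV^{j,k}$ scattered forward in time, Proposition~\ref{P:Z3'} would force $U^{p,j}$ to scatter forward for large $p$ --- impossible, since $j\le J_M$. So at least one $\tV^{j,k}$, say with $k=k(j)$, does not scatter forward, and for distinct $j$ these are distinct nonlinear profiles of $\big\{\vu(\tau_{n_p}^p)\big\}_p$. Hence that decomposition has at least $J_M$ nonscattering nonlinear profiles, and, $\big\{\tau_{n_p}^p\big\}_p$ being in $\SSS_0$, exactly $J_M$ by \eqref{DEM1}. Thus $k(j)$ is unique and $\tV^{j,k}$ scatters forward whenever $k\ne k(j)$ or $j>J_M$; after renumbering, $\tV^j:=\tV^{j,k(j)}$ with parameters $\tnu_{j,p}:=\nu_{j,k(j),p,n_p}$, $\ttau_{j,p}:=\tau_{j,k(j),p,n_p}$, $\tz_{j,p}:=z_{j,k(j),p,n_p}$ ($j=1,\dots,J_M$) are the nonscattering profiles of $\big\{\vu(\tau_{n_p}^p)\big\}_p$, so $\big\{\tau_{n_p}^p\big\}_p\in\SSS_1$ and $\tV^j$ scatters forward iff $j\ge1+J_M$.

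\EMPH{Energy and strong convergence} Because $E$ is scale-/translation-invariant and conserved along \eqref{CP}, $E\big(\vU^{p,j}_{n_p}(0)\big)=E(U^{p,j})$. Applying the Pythagorean expansions \eqref{Pyt1}--\eqref{Pyt3} to the profile decomposition of $\big\{\vU^{p,j}_{n_p}(0)\big\}_p$, and using for each sub-profile that the energy functional of the rescaled linear profile tends to the conserved energy of the associated nonlinear profile (again using the choice of $m_p$ and continuity of $E$), one gets, for every large $K$,
\[
E(U^{p,j})=\sum_{k=1}^{K}E\big(\tV^{j,k}\big)+E\big(\vec{W}^K_{j,p}(0)\big)+o_p(1),\qquad \limsup_{p\to\infty}\big\|W^K_{j,p}\big\|_{S(\RR)}\tend{K}0 .
\]
For $k\ne k(j)$ the profile $\tV^{j,k}$ scatters forward, so $E(\tV^{j,k})\ge0$ with equality only if $\tV^{j,k}=0$ (its energy equals $\tfrac12$ the squared $\hdot\times L^2$-norm of its scattering datum), while $E\big(\vec{W}^K_{j,p}(0)\big)\ge-\tfrac{N-2}{2N}\big\|W^K_{j,p}(0)\big\|_{L^{2N/(N-2)}}^{2N/(N-2)}$, whose $\limsup_p$ tends to $0$ as $K\to\infty$ by the asymptotic negligibility of the remainders of a profile decomposition in $L^{2N/(N-2)}$. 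Hence $\liminf_p E(U^{p,j})\ge E(\tV^j)$, and therefore
\[
\EEE_m=\lim_{p\to\infty}\sum_{j=1}^{J_M}E(U^{p,j})\ \ge\ \sum_{j=1}^{J_M}E(\tV^j)=\EEE\big(\{\tau_{n_p}^p\}_p\big)\ \ge\ \EEE_m ,
\]
so $\EEE\big(\{\tau_{n_p}^p\}_p\big)=\EEE_m$. Equality throughout forces, for each $j\le J_M$, that $\tV^{j,k}=0$ for $k\ne k(j)$ and that $\big\|\vec{W}^K_{j,p}(0)\big\|_{\hdot\times L^2}\to0$ as $p\to\infty$ (the last via the Pythagorean expansion of the $\hdot\times L^2$-norm, since only the sub-profile $\tV^j$ survives and the energies and $L^{2N/(N-2)}$-norms match in the limit). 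Consequently $\vU^{p,j}_{n_p}(0)=\vec{\tV}^j_p(0)+o_p(1)$ in $\hdot\times L^2$, where $\vec{\tV}^j_p(0)$ is the corresponding rescaled datum of $\tV^j$; rewriting this in terms of $\nabla_{t,x}$ is exactly \eqref{DEM6}.

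\EMPH{Main obstacle} The delicate point is the energy bookkeeping above: one must compare the conserved energy of the $p$-dependent solutions $U^{p,j}$ with the energies of their sub-profiles and dispersive remainders, and then control the sign of $E\big(\vec{W}^K_{j,p}(0)\big)$ --- which is \emph{not} a pointwise-in-time consequence of $\|W^K_{j,p}\|_{S(\RR)}\to0$ and rests on the fact that the remainders of a profile decomposition are asymptotically negligible in $L^{2N/(N-2)}$. Granting this, the minimality of $\EEE_m$ does the rest, pinning the sub-decomposition of each nonscattering $U^{p,j}$ to the single profile $\tV^j$ and forcing the strong convergence \eqref{DEM6}.
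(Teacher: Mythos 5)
Your overall strategy coincides with the paper's: feed the decompositions into Lemma \ref{L:DPD}, use the maximality of $J_M$ to show each non-scattering $U^{p,j}$ contributes exactly one non-scattering sub-profile, and then let the minimality of $\EEE_m$ collapse the sub-decomposition of each such $U^{p,j}$ to that single profile, which yields \eqref{DEM6}. Two sub-steps differ from the paper and deserve comment. First, your verification of \eqref{DPD2}: you reorder \emph{all} indices by $\big\|\vU^{p,j}_L(0)\big\|_{\hdot\times L^2}$ and use $\big\|\vU^{p,j}_L(0)\big\|^2\le B/j$; the paper instead reorders only the scattering profiles $j\ge 1+J_M$ by energy and uses $E[U^{p,j}]\le (E[u]-\EEE_m)/(j-J_M)$. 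Your bound is simpler and correct (the full $\hdot\times L^2$ norm of a linear profile is $n$-independent by conservation of the linear energy, so \eqref{Pyt1}--\eqref{Pyt2} do give $\sum_j\|\vU_L^{p,j}(0)\|^2\le B$), but reordering the first $J_M$ indices as well can move the non-scattering profiles away from positions $1,\dots,J_M$, which your later phrase ``Fix $j\le J_M$ \dots impossible, since $j\le J_M$'' silently assumes does not happen. This is easily repaired — reorder only the tail $j\ge 1+J_M$, for which your bound becomes $B/(j-J_M)$ and $\eta_j$ remains summable since $\SN>2$ — but as written it is an inconsistency, and you must in any case undo the permutation at the end so that \eqref{DEM6} is stated with the original parameters.

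Second, your control of the sign of $E\big(\vec W^K_{j,p}(0)\big)$ rests on the remainders of a profile decomposition being asymptotically negligible in $L^{\frac{2N}{N-2}}$ at $t=0$. That is a true and standard consequence of the Bahouri--G\'erard construction (smallness of $w^J_n$ in $L^\infty_t\dot B^{1}_{2,\infty}$-type norms plus refined Sobolev), but it does not follow from the properties \eqref{prop_wnJ} and \eqref{Pyt1}--\eqref{Pyt3} that the paper lists as defining a profile decomposition, so you are importing an unproved external fact. The paper's route avoids this: for $K$ and $p$ large, $\big\|W^K_{j,p}\big\|_{S(\RR)}$ is small, so the nonlinear solution with data $\vec W^K_{j,p}(0)$ scatters forward, and the energy of a forward-scattering solution is nonnegative (it equals half the asymptotic $\hdot\times L^2$ norm squared); the same observation gives $E[V^{j,k}]\ge 0$ for $k\ne k(j)$ and then, once the energies vanish, the strong convergence $\big\|\vec W^K_{j,p}(0)\big\|_{\hdot\times L^2}\to 0$ via the small data theory. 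I recommend substituting that argument for the $L^{\frac{2N}{N-2}}$-negligibility claim; with these two repairs your proof is complete and equivalent to the paper's.
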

\begin{remark}
 \label{R:DEM1}
In \eqref{DEM6}, by definition of $\tV^j$, we can of course replace $\tV^j$ by $\tV_L^j$.
\end{remark}
As an immediate corollary of Lemma \ref{L:DEM1}, we get:
\begin{corol}
 \label{C:DEM1}
 The set
 $$\SSS_2=\Big\{\{t_n\}_n\in \SSS_1,\text{ s.t. } \EEE\left(\{t_n\}_n\right)=\EEE_m\Big\}$$
 is not empty.
\end{corol}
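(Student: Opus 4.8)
The plan: since Corollary \ref{C:DEM1} is an immediate consequence of conclusion \ref{I:S1} of Lemma \ref{L:DEM1} --- the sequence $\{\tau_{n_p}^p\}_p$ produced there lies in $\SSS_2$ --- I describe how I would prove Lemma \ref{L:DEM1}. The idea is to realize the minimizing sequence as a diagonal extraction governed by a \emph{double} profile decomposition (Lemma \ref{L:DPD}), and then let the minimality of $\EEE_m$ force that double decomposition to be essentially trivial. \textbf{Step 1 (construct the double decomposition).} For each $p$ I would first re-order the given well-ordered decomposition of $\{\vu(\tau_n^p)\}_n$ by decreasing $\hdot\times L^2$-norm of $\vU_L^{p,j}(0)$; since $\sum_j\|\vU_L^{p,j}(0)\|^2_{\hdot\times L^2}\leq B$ by \eqref{Pyt1}--\eqref{Pyt2}, this yields $\|\vU_L^{p,j}(0)\|^2_{\hdot\times L^2}\leq B/j$, hence by the Strichartz estimate $\|U_L^{p,j}\|^{\SN}_{S(\RR)}\lesssim (B/j)^{(N+1)/(N-2)}=:\eta_j$, which is summable because $(N+1)/(N-2)>1$; so \eqref{DPD2} holds. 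By Bahouri--G\'erard \cite{BaGe99}, extract in $n$ and diagonalize so that each $\{\vU_L^{p,j}(0)\}_p$ has a profile decomposition $(V_L^{j,k},\{\mu_{j,k,p},y_{j,k,p},s_{j,k,p}\}_p)_k$. Choosing $m_p$ large enough that $n\geq m_p$ forces $\tau_n^p$ within $1/p$ of $T_+(u)$ (or $\tau_n^p>p$ if $T_+=\infty$) and $\|\vU^{p,j}(-t_{p,j,n}/\lambda_{p,j,n})-\vU_L^{p,j}(-t_{p,j,n}/\lambda_{p,j,n})\|_{\hdot\times L^2}<1/p$ for $j\leq J_M$, Lemma \ref{L:DPD} supplies an increasing sequence $n_p\geq m_p$, $n_p\to\infty$, and the double profile decomposition $(V_L^{j,k},\{\nu_{j,k,p,n_p},z_{j,k,p,n_p},\tau_{j,k,p,n_p}\}_p)_{j,k}$ of $\{\vu(\tau_{n_p}^p)\}_p$ with parameters \eqref{def_param}. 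Then $\tau_{n_p}^p\to T_+(u)$; reordering the pairs $(j,k)$ by Claim \ref{C:Z3} I obtain a well-ordered decomposition, so $\{\tau_{n_p}^p\}_p\in\SSS_0$ and hence $J_0(\{\tau_{n_p}^p\}_p)\leq J_M$. (The $\hdot\times L^2$-reordering is $p$-dependent, but the non-scattering profiles sit among boundedly many ``large'' profiles, so after a further extraction their labels are $p$-independent and the identifications below are unambiguous.)

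\textbf{Step 2 (count the non-scattering sub-profiles).} Fix $j\leq J_M$. By Remark \ref{R:DPD}, $(V_L^{j,k},\{\nu_{j,k,p,n_p},z,\tau\}_p)_k$ is a profile decomposition of $\{\vec{g}_p^j\}_p$, where $\vec{g}_p^j$ is the appropriate rescaled translate of $\vU_L^{p,j}(-t_{p,j,n_p}/\lambda_{p,j,n_p})$; by the choice of $m_p$ this is $o_p(1)$-close in $\hdot\times L^2$ to the corresponding rescaling of $\vU^{p,j}$, so by the long-time perturbation theory the nonlinear evolution of $\vec{g}_p^j$ scatters forward in time with bounded $S$-norm if and only if $U^{p,j}$ does, which never happens for $j\leq J_M$. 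If all $V^{j,k}$, $k\geq 1$, scattered forward, then Proposition \ref{P:Z3'} applied to $(V_L^{j,k})_k$ would make the evolution of $\vec{g}_p^j$ scatter forward for large $p$ --- a contradiction. Hence at least one $V^{j,k}$ is non-scattering; the $J_M$ resulting profiles (one per $j\leq J_M$) have pairwise orthogonal parameters, so $J_0(\{\tau_{n_p}^p\}_p)\geq J_M$. With Step 1 this gives $J_0=J_M$, so $\{\tau_{n_p}^p\}_p\in\SSS_1$, and moreover each $j\leq J_M$ yields a \emph{unique} $k=k(j)$ with $V^{j,k(j)}$ non-scattering, while every $V^{j,k}$ with $j>J_M$ scatters forward. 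Set $\tV^j:=V^{j,k(j)}$, $j=1,\dots,J_M$; these are exactly the non-scattering nonlinear profiles of the decomposition of $\{\vu(\tau_{n_p}^p)\}_p$.

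\textbf{Step 3 (energy-minimality forces rigidity, and conclusion).} For fixed $j\leq J_M$ the Pythagorean expansions \eqref{Pyt1}--\eqref{Pyt3} for $(V_L^{j,k})_k$ give $E(\vec{g}_p^j)=\sum_{k=1}^K E(V^{j,k})+\tfrac12\|\vW_p^{j,K}(0)\|^2_{\hdot\times L^2}+o_p(1)+o_K(1)$; since a nontrivial forward-scattering solution has strictly positive energy, $E(V^{j,k})\geq 0$ for $k\neq k(j)$, and since $E(\vec{g}_p^j)=E(U^{p,j})+o_p(1)$ by scale-invariance of $E$ and the choice of $m_p$, letting $K\to\infty$ gives $E(\tV^j)\leq\liminf_p E(U^{p,j})$. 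Summing over $j\leq J_M$, using $\sum_j\liminf_p(\cdot)\leq\liminf_p\sum_j(\cdot)$ together with $\EEE(\{\tau_n^p\}_n)=\sum_{j=1}^{J_M}E(U^{p,j})\to\EEE_m$, I get $\EEE(\{\tau_{n_p}^p\}_p)=\sum_{j=1}^{J_M}E(\tV^j)\leq\EEE_m$; but $\{\tau_{n_p}^p\}_p\in\SSS_1$ forces $\EEE(\{\tau_{n_p}^p\}_p)\geq\EEE_m$, so there is equality --- this is \ref{I:S1}. Equality propagates back through every intermediate inequality: $E(V^{j,k})=0$ for each $j\leq J_M$ and $k\neq k(j)$, so $V^{j,k}\equiv 0$ (there are no such profiles), and $\|\vW_p^{j,k(j)}(0)\|_{\hdot\times L^2}\to 0$. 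Hence $\vec{g}_p^j$ converges strongly, after undoing the rescalings, to a time-translate of $\tV_L^j$, and combining with the definitions of the nonlinear profiles $U^{p,j}$ and $\tV^j$ gives \eqref{DEM6}, which is \ref{I:profile}. Corollary \ref{C:DEM1} then follows at once, as $\{\tau_{n_p}^p\}_p\in\SSS_2$.

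\textbf{Expected main obstacle.} The crux lies in Steps 2--3 and their coordination: orchestrating Lemma \ref{L:DPD}'s diagonal extraction together with a choice of $m_p$ making all the ``at $n=n_p$'' approximations hold simultaneously; exploiting the strict positivity of the energy of a nontrivial scattering solution, which is the rigidity mechanism that kills the extra sub-profiles and the remainder; and keeping the four-index bookkeeping under control --- orthogonality of the parameter families, and the fact that ``$\preceq$'', the forward-scattering status, and each $E(U^{p,j})$ are intrinsic to the sequence rather than to the chosen profile decomposition.
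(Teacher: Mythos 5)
Your proposal is correct and follows essentially the same route as the paper: apply Lemma \ref{L:DPD} to a minimizing sequence, use the maximality of $J_M$ to show exactly one non-scattering sub-profile survives per $j\leq J_M$, and then let the energy minimality together with the nonnegativity of the energy of scattering solutions annihilate the remaining sub-profiles and the remainder. The one place you deviate is the verification of \eqref{DPD2}: you reorder \emph{all} profiles by decreasing $\hdot\times L^2$-norm and use the crude bound $\|\vU_L^{p,j}(0)\|^2\leq B/j$ from \eqref{Pyt1}--\eqref{Pyt2}, whereas the paper reorders only the scattering profiles $j\geq 1+J_M$ by decreasing energy (using $E[U^{p,j}]\leq (E[u]-\EEE_m)/(j-J_M)$ and small-data theory); your variant is simpler but forces the $p$-dependent relabelling of the non-scattering profiles that you dispose of in a parenthetical, while the paper's choice keeps the decomposition well-ordered and the labels $1,\dots,J_M$ fixed, which matters for part (b) of Lemma \ref{L:DEM1} though not for the corollary itself.
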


\begin{proof}
 \EMPH{Step 1. Double profile decomposition}
In this step we place ourselves in the assumptions of Lemma \ref{L:DPD}. We first reorder the profiles $U^{p,j}_L$ so that the energy of the nonlinear profiles is a decreasing function of $j\geq 1+J_M$ when $p$ is fixed, i.e. 
\begin{equation}
 \label{DEM8}
\forall p, \;\forall j\geq 1+J_M,\quad E\left[U^{p,j}\right]\geq E\left[U^{p,j+1}\right].
\end{equation} 
Since all profiles $U^{p,j}$, $j\geq 1+J_M$ scatter forward in time, this does not affect the fact that the profile decompositions $\left(U_L^{p,j},\left\{\lambda_{p,j,n},x_{p,j,n},t_{p,j,n}\right\}_n\right)_{j\geq 1}$ are well-ordered.

By a standard diagonal extraction argument we can assume (extracting subsequences in $p$) that for all $j\geq 1$, $\left\{\vU_L^{p,j}(0)\right\}_p$ admits a profile decomposition $\left(V_L^{j,k},\left\{ \mu_{j,k,p},y_{j,k,p},s_{j,k,p}\right\}_p\right)_{k\geq 1}$.  To check that we are exactly in the setting of Lemma \ref{L:DPD}, we must find a sequence $(\eta_j)_{j\geq 1}$ of positive numbers such that \eqref{DPD2} holds.  

We first note that by \eqref{def_M} and the Pythagorean expansions \eqref{Pyt1}, \eqref{Pyt2}, 
$$\limsup_{n\to\infty} \left\|\vU_L^{p,j}\left(-t_{p,j,n}/\lambda_{p,j,n}\right)\right\|_{\hdot\times L^2}^2\leq B.$$
Thus, by Strichartz inequality,
\begin{equation}
 \label{DEM10} \forall p,j, \quad \left\|U_L^{p,j}\right\|_{S(\RR)}^{\SN}\leq CB^{\frac{N+1}{N-2}}.
\end{equation} 
We next find a bound of $\left\|U_L^{p,j}\right\|_{S(\RR)}$ for large $j$. Fix $j$ and $p$ and assume $j\geq 1+J_M$. Since $U^{p,j}$ scatters forward in time, the energy $E\left[U^{p,j}\right]$ is nonnegative. By the Pythagorean expansions \eqref{Pyt1}, \eqref{Pyt2}, \eqref{Pyt3} for the profile decomposition of the sequence $\left\{\vu\left(\tau_n^p\right)\right\}_{n}$:
\begin{equation*}
\sum_{j=1}^{J_M}E\left[U^{p,j}\right]+\sum_{j\geq 1+J_M}E\left[U^{p,j}\right]\leq E[u],
\end{equation*} 
and thus
\begin{equation}
 \label{DEM11}
\sum_{j\geq 1+J_M} E\left[U^{p,j}\right] \leq E[u]-\EEE_m.
\end{equation} 
Using \eqref{DEM11} and the monotonicity \eqref{DEM8} of the energy sequence we obtain:
\begin{equation}
 \label{DEM12}
\forall p,\; \forall j\geq 1+J_M,\quad E\left[U^{p,j}\right]\leq \frac{E[u]-\EEE_m}{j-J_M}.
\end{equation} 
Let $\tJ\geq 1+J_M$ such that $E\left[U^{p,j}\right]\leq \frac{E[u]-\EEE_m}{\tJ-J_M}\leq \delta_1$, where $\delta_1>0$ is a fixed small parameter. If $j\geq \tJ$, then by  \eqref{DEM12} $E\left[U^{p,j}\right]\leq \delta_1$. Since $U^{p,j}$ scatters forward in time, we deduce 
\begin{equation}
 \label{DEM13}
\lim_{t\to\infty}\left\|\vU^{p,j}(t)\right\|^2_{\hdot\times L^2}=2E\left[U^{p,j}\right]\leq 2\delta_1.
\end{equation}  
By the small data theory, choosing $\delta_1$ small enough, we deduce that for all $j\geq \tJ$, $\vU^{p,j}$ is global and
\begin{equation*}
\forall t\in \RR,\quad \left\|\vU^{p,j}(t)\right\|_{\hdot\times L^2}^2\leq 3E\left[U^{p,j}\right].
\end{equation*} 
Thus for large $j$, 
$\left\|\vU^{p,j}_L(0)\right\|_{\hdot\times L^2}^2\leq 4E\left[U^{p,j}\right]$, and we deduce by Strichartz inequality
\begin{equation}
 \label{DEM15}
\forall j\geq \tJ,\quad \left\|U_L^{p,j}\right\|_{S(\RR)}^{\SN}\leq C E\left[U^{p,j}\right]^{\frac{N+1}{N-2}} \leq C \left(\frac{E[u]-\EEE_m}{j-J_M}\right)^{\frac{N+1}{N-2}}.
\end{equation} 
Letting $\eta_j=CB^{\frac{N+1}{N-2}}$ if $1\leq j\leq \tJ-1$ and $\eta_j=C\left(\frac{E[u]-\EEE_m}{j-J_M}\right)^{\frac{N+1}{N-2}}$ if $j\geq \tJ$, we get by \eqref{DEM10} and \eqref{DEM15} that \eqref{DPD2} is satisfied. We are thus exactly in the setting of Lemma \ref{L:DPD}.

\EMPH{Step 2. Application of Lemma \ref{L:DPD}} For any $p\geq 1$, we choose $m_p\in \NN$ such that
\begin{equation}
 \label{DEM16}
\forall j\in \{1,\ldots, J_M\},\; \forall n\geq m_p,\quad 
\left\|\vU_L^{p,j}\left(-t_{j,n}/\lambda_{j,n}\right)-\vU^{p,j}\left(-t_{j,n}/\lambda_{j,n}\right)\right\|_{\hdot\times L^2}\leq \frac{1}{p}
\end{equation} 
and 
\begin{equation}
 \label{DEM17}
\forall n\geq m_p,\quad \tau_{n}^p\geq T_+(u)-\frac 1p. 
\end{equation} 
By Lemma \ref{L:DPD}, with $\left(u_{0,n}^p,u_{1,n}^p\right)=\vu(\tau_n^p)$ there exists an increasing sequence $\{n_p\}_p$ such that 
\begin{equation}
 \label{DEM18}
\forall p,\quad n_p\geq m_p,
\end{equation} 
and $\left\{\vu\left(\tau_{n_p}^p\right)\right\}_p$ has a profile decomposition 
$$ \left(V_L^{j,k},\left\{ \nu_{j,k,p,n_p},z_{j,k,p,n_p},\tau_{j,k,p,n_p}\right\}_p\right\}_{j,k\geq 1}$$
as in the conclusion of Lemma \ref{L:DPD}. We will denote by $V^{j,k}$ the nonlinear profiles corresponding to this decomposition. By \eqref{DEM17} and \eqref{DEM18}, $\lim_{p\to\infty}\tau_{n_p}^p=T_+(u)$. By Claim \ref{C:Z3}, there exists a subsequence of $\left\{\tau_{n_p}^p\right\}_p$, still denoted by $\left\{\tau_{n_p}^p\right\}_p$, which is in $\SSS_0$. 

\EMPH{Step 3. Analysis of the profiles}
In this step we prove that for all $j\in\{1,\ldots,J_M\}$, there is exactly one $k$ (say $k=1$) such that $V^{j,k}$ does not scatter forward in time, and that for all $j\geq 1+J_M$, for all $k\geq 1$, the profile $V^{j,k}$ scatters forward in time. 
For this we will use in a crucial manner the fact that for all $p$, $J_0\left(\left\{\tau_{n}^p\right\}_n\right)=J_M$.
 
We first fix $j\in \{1,\ldots,J_M\}$. By Remark \ref{R:DPD}, 
\begin{equation}
 \label{DEM20star}
\left( V_L^{j,k},\left\{\nu_{j,k,p,n_p},z_{j,k,p,n_p},\tau_{j,k,p,n_p}\right\}_p \right)_{k\geq 1} 
\end{equation} 
is a profile decomposition for the sequence 
$$\left\{\left(\frac{1}{\lambda_{p,n_p}^{\frac{N}{2}-1}}U_L^{p,j}\left(\frac{-t_{j,n_p}}{\lambda_{j,n_p}},\frac{\cdot-x_{j,n_p}}{\lambda_{j,n_p}}\right), \frac{1}{\lambda_{p,n_p}^{\frac{N}{2}}}\partial_tU_L^{p,j}\left(\frac{-t_{j,n_p}}{\lambda_{j,n_p}},\frac{\cdot-x_{j,n_p}}{\lambda_{j,n_p}}\right)\right)\right\}_p.$$
By \eqref{DEM16}, 
\begin{equation}
 \label{DEM20'}
 \lim_{p\to+\infty} \left\|\vU_L^{p,j}\left(-t_{j,n_p}/\lambda_{j,n_p}\right)-\vU^{p,j}\left(-t_{j,n_p}/\lambda_{j,n_p}\right)\right\|_{\hdot\times L^2}=0,
\end{equation} 
and we deduce that \eqref{DEM20star} is a profile decomposition for the sequence 
$$\left\{\left(\frac{1}{\lambda_{p,n_p}^{\frac{N}{2}-1}}U^{p,j}\left(\frac{-t_{j,n_p}}{\lambda_{j,n_p}},\frac{\cdot-x_{j,n_p}}{\lambda_{j,n_p}}\right), \frac{1}{\lambda_{p,n_p}^{\frac{N}{2}}}\partial_tU^{p,j}\left(\frac{-t_{j,n_p}}{\lambda_{j,n_p}},\frac{\cdot-x_{j,n_p}}{\lambda_{j,n_p}}\right)\right)\right\}_p.$$
Note that the nonlinear profiles corresponding to this profile decomposition are also by definition the nonlinear profiles $\left(V^{j,k}\right)_{k\geq 1}$ defined in Step 2. 

Since $1\leq j\leq J_M$, for all $p$, $U^{p,j}$ does not scatter forward in time. As a consequence, at least one of the solutions $V^{j,k}$, $k\geq 1$, say $V^{j,1}$, does not scatter forward in time. We have identified $J_M$ nonlinear profiles $V^{1,1}$, \ldots,$V^{J_M,1}$ that do not scatter forward in time. Since $\left\{\tau_{n_p}^p\right\}_p\in \SSS_0$, we deduce by the definition of $J_M$ that all other profiles $V^{j,k}$ must scatter forward in time, i.e.
\begin{equation}
 \label{DEM21}
 V^{j,k}\text{ scatters forward in time}\iff (k\geq 2) \text{ or } (j\geq 1+J_M).
\end{equation} 
We deduce in particular $J_0\left(\left\{\tau_{n_p}^p\right\}_p\right)=J_M$ and $\left\{\tau_{n_p}^p\right\}_p\in \SSS_1$.

\EMPH{Step 4} Denote (as in the proof of Lemma \ref{L:DPD}),
\begin{equation}
 \label{DEM22}
 W_p^{j,K}(t,x)=U_L^{p,j}(t,x)-\sum_{k=1}^K \frac{1}{\mu_{j,k,p}^{\frac N2 -1}} V_L^{j,k}\left(\frac{t-s_{j,k,p}}{\mu_{j,k,p}},\frac{x-x_{j,k,p}}{\mu_{j,k,p}}\right).
\end{equation} 
In this step we prove using the assumption \eqref{DEM5'}:
\begin{gather}
 \label{DEM23} 
 \EEE\left(\{\tau_{n_p}^p\}_p\right)=\sum_{j=1}^{J_M}E\left[V^{j,1}\right]=\EEE_{m}\\
 \label{DEM24}
 \forall j\in \left\{1,\ldots,J_M\right\},\; \forall k\geq 2,\quad V_L^{j,k}=0\\
 \label{DEM25}
 \lim_{p\to\infty} \left\|W_p^{j,2}(0)\right\|_{\hdot\times L^2}=0.
\end{gather} Note that by \eqref{DEM24}, $W_p^{j,K}$ is independent of $K\geq 2$, so that \eqref{DEM25} is equivalent to
$$ \forall K\geq 2,\quad \lim_{p\to\infty} \left\|W_p^{j,K}(0)\right\|_{\hdot\times L^2}=0.
 $$
 Since by Step 3, \eqref{DEM20star} is a profile decomposition for the sequence $\left\{\frac{1}{\lambda_{p,n_p}^{\frac{N}{2}-1}}\vU^{p,j}\left(\frac{-t_{j,n_p}}{\lambda_{j,n_p}},\frac{\cdot-x_{j,n_p}}{\lambda_{j,n_p}}\right)\right\}_p$, with nonlinear profiles $\left(V^{j,k}\right)_{k\geq 1}$, the Pythagorean expansions \eqref{Pyt1}, \eqref{Pyt2} and \eqref{Pyt3} yield
 \begin{equation}
  \label{DEM26}
  \forall K\geq 2,\; \forall j=1,\ldots, J_M,\quad \lim_{p\to\infty}\left(E\left[U^{p,j}\right]-\sum_{k=1}^K E\left[V^{j,k}\right]-E\left(\vW_p^{j,K}(0)\right)\right)=0.
 \end{equation} 
 Hence
 \begin{equation}
  \label{DEM27}
  \forall K\geq 2,\; \lim_{p\to\infty}\sum_{j=1}^{J_M}E\left[U^{p,j}\right]-\sum_{j=1}^{J_M}\left(\sum_{k=1}^K E\left[V^{j,k}\right]+E\left(\vW_p^{j,K}(0)\right)\right)=0.
 \end{equation} 
 By Step 3, $\left\{\tau_{n_p}^p\right\}_p\in \SSS_1$ and, by the definition of $\EEE_m$,  $\EEE\left(\left\{\tau_{n_p}^p\right\}_p\right)=\sum_{j=1}^{J_M} E\left[V^{j,1}\right]\geq \EEE_m$. By \eqref{DEM5'}, $\lim_{p} \sum_{j=1}^{J_M}E\left[U^{p,j}\right]=\EEE_m$. Combining with \eqref{DEM27}, we obtain 
 \begin{equation}
  \label{DEM28}
  \limsup_{p\to\infty} \sum_{j=1}^{J_M} \left(\sum_{k=2}^K E\left[V^{j,k}\right]+E\left(\vW_p^{j,K}(0)\right)\right)\leq 0.
 \end{equation} 
 Since $V^{j,k}$ scatters for $k\geq 2$, and the solution with initial data $W_p^{j,K}(0)$  scatters for large $K$ and $p$ we deduce, taking $K$ large,
 \begin{gather}
 \label{DEM29}
  \forall j=1,\ldots, J_M,\; \forall k= 2,\ldots, K,\quad E\left[V^{j,k}\right]=0\\
 \label{DEM30}
 \forall j=1,\ldots, J_M,\quad \lim_{p\to\infty}E\left(\vW_p^{j,K}(0)\right)=0.
 \end{gather}
By \eqref{DEM29}, 
$$\lim_{t\to+\infty} \left\|\vV^{j,k}(t)\right\|^2_{\hdot\times L^2}=2 E\left[V^{j,k}\right]=0$$
and thus $V^{j,k}=0$ by the small data theory for \eqref{CP}. Similarly, by \eqref{DEM30}, and the small data theory, $\lim_{p\to\infty}\left\|\vW_p^{J,K}(0)\right\|_{\hdot\times L^2}=0$. Going back to \eqref{DEM27}, we get $\sum_{j=1}^{J_M}E\left[V^{j,1}\right]=\EEE_m$. Step 4 is complete.

\EMPH{Step 5. End of the proof} We next check that the conclusion of Lemma \ref{L:DEM1} holds. 

Point \eqref{I:S1} follows from Step 3 and \eqref{DEM23}. For $j=1,\ldots, J_M$, we define
$$\tnu_{j,p}=\nu_{j,1,p,n_p},\quad z_{j,p}=z_{j,1,p,n_p},\quad \ttau_{j,p}=\tau_{j,1,p,n_p},$$
where $\nu_{j,k,p,n}$, $z_{j,k,p,n}$ and $\tau_{j,k,p,n}$ are defined by \eqref{def_param}, and $\tV^j_L=V_L^{j,1}$.

Choosing an arbitrary order for the profiles $V^{j,k}$, where $j\geq 1+J_{M}$, $k\geq 1$, we get that $\left\{\vu\left(\tau_{n_p}^p\right)\right\}_p$ as a profile decomposition as in point \eqref{I:profile}. The property \eqref{DEM6} is given by \eqref{DEM24}, \eqref{DEM25} and the definitions of $\nu_{j,1,p,n}$, $z_{j,1,p,n}$ and $\tau_{j,1,p,n}$. The proof is complete.
\end{proof}
\subsection{New minimization procedure and adjustment of the parameters}
If $\{t_n\}_n\in \SSS_2$ (defined in Corollary \ref{C:DEM1}) and $\profiles$ is a well-ordered profile decomposition of $\{\vu(t_n)\}_n$, we denote by 
\begin{equation}
 \label{DEM31}
 J_1\left(\{t_n\}_n\right)=\min\Big\{J\geq 1,\; (J)\prec (J+1)\Big\},
\end{equation} 
where the notation ``$\prec$'' stands for the \emph{strict} preorder relation defined in Subsection \ref{SS:preorder}. Since $U^{J_M}$ does not scatter forward in time and $U^{J_M+1}$ scatters forward in time, we have $(J_M)\prec (1+J_{M})$, and thus $J_1(\{t_n\}_n)$ is an integer such that $1\leq J_{1}\left(\{t_n\}_n\right)\leq J_M$. Note that by Lemmas \ref{L:uniq_profile} and \ref{L:intrinsec},  $J_1(\{t_n\}_n)$ is independent of the choice of the profile decomposition of $\{\vu(t_n)\}_n$.  We let
\begin{equation}
 \label{DEM32}
 J_m=\min\Big\{J_1\left(\{t_n\}_n\right),\; \{t_n\}_n\in \SSS_2\Big\},
\end{equation} 
and
\begin{equation}
\label{DEM33}
 \SSS_3=\Big\{\{t_n\}_n\in \SSS_2\text{ s.t. } J_1\left( \{t_n\}_n \right)=J_m\Big\},
\end{equation} 
which is nonempty since $\SSS_2$ (defined in Corollary \ref{C:DEM1}) is nonempty. In this step, we prove
\begin{lemma}
 \label{L:DEM2}
 Let $u$ be as in Theorem 1. There exists $\{t_n\}_n\in \SSS_3$, and a well-ordered profile decomposition $\profiles$ of $\left\{\vu(t_n)\right\}_n$ such that
 \begin{equation}
 \label{D34}
  \forall j=1,\ldots, J_{m}, \quad T_+(U^j)=T_+(U^1),\text{ and }\forall n,\; t_{j,n}=0,\; \lambda_{j,n}=\lambda_{1,n}.
 \end{equation} 
\end{lemma}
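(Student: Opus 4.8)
The plan is to start from an arbitrary $\{t_n\}_n\in\SSS_3$ together with a well-ordered profile decomposition $\profiles$ of $\{\vu(t_n)\}_n$, with nonlinear profiles $U^j$. Since $J_m=J_1(\{t_n\}_n)$ is the smallest index with $(J)\prec(J+1)$ and the decomposition is well-ordered, Claim~\ref{C:Z2} forces $(1)\backsimeq(2)\backsimeq\cdots\backsimeq(J_m)$; in particular $U^1,\ldots,U^{J_m}$ all fail to scatter forward, being among the first $J_M$ profiles. I would then proceed in two steps: a preliminary time shift arranging that $\sigma_1=\lim_n(-t_{1,n}/\lambda_{1,n})$ is finite, and then one application of Lemma~\ref{L:modif_profile} giving all of the first $J_m$ profiles the common scaling $\lambda_{1,n}$, the time parameter $0$, and the common forward lifespan $T_+(U^1)-\sigma_1$, while keeping the sequence inside $\SSS_3$.

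\EMPH{Step 1}
If $\sigma_1=-\infty$, then $t_{1,n}/\lambda_{1,n}\to+\infty$, and I would replace $\{t_n\}_n$ by $\{t_n+\tau_n'\}_n$ with $\tau_n'=t_{1,n}-c\lambda_{1,n}$ for a large fixed $c>0$. For a suitable $T<T_+(U^1)$ one has $0\le\tau_n'\le\lambda_{1,n}T+t_{1,n}$ and, by Proposition~\ref{P:Z4}, $\lambda_{1,n}T+t_{1,n}<T_+(u)-t_n$ for large $n$ (if $T_+(u)<\infty$ this also forces $\lambda_{1,n}\to0$, so that $t_n+\tau_n'\to T_+(u)$ by squeezing). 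Remark~\ref{R:profiles} then produces a profile decomposition of $\{\vu(t_n+\tau_n')\}_n$ with the \emph{same} nonlinear profiles $U^j$ and parameters $\lambda_{j,n},\,x_{j,n},\,t_{j,n}-\tau_n'$; the new first time parameter has $\sigma_1=-c\in\RR$. Since all parameters are translated by the single sequence $\tau_n'$, the limits defining $\preceq$ in \eqref{Z7} are unchanged, so the new decomposition is well-ordered with the same $J_1$, and the energies $E[U^j]$ are unchanged, whence $\{t_n+\tau_n'\}_n\in\SSS_3$. After relabelling I may assume $\sigma_1\in\RR$.

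\EMPH{Step 2}
Extracting in $n$, set $\rho_j=\lim_n\lambda_{1,n}/\lambda_{j,n}\in[0,+\infty]$. The key point is that for $2\le j\le J_m$, i.e.\ whenever $(1)\backsimeq(j)$, one has $\rho_j\in(0,+\infty)$, $\sigma_j\in\RR$, and moreover
\[
T_+(U^j)-\sigma_j=\rho_j\bigl(T_+(U^1)-\sigma_1\bigr).
\]
Granting $\rho_j,\sigma_j$ finite, the displayed identity follows by writing \eqref{Z7} for $(1)\preceq(j)$ and for $(j)\preceq(1)$ and letting $T\uparrow T_+(U^1)$, resp.\ $S\uparrow T_+(U^j)$, which gives the two opposite inequalities between $T_+(U^1)-\sigma_1$ and $(T_+(U^j)-\sigma_j)/\rho_j$. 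I would then invoke Lemma~\ref{L:modif_profile} with $(\tlambda_{j,n},\tx_{j,n},\ttt_{j,n})=(\lambda_{1,n},x_{j,n},0)$ for $1\le j\le J_m$ (so $\mu_j=\rho_j$, $y_j=0$, $s_j=\sigma_j$, with $\mu_1=1$) and $(\tlambda_{j,n},\tx_{j,n},\ttt_{j,n})=(\lambda_{j,n},x_{j,n},t_{j,n})$ for $j>J_m$. This gives a new profile decomposition of $\{\vu(t_n)\}_n$; by Lemma~\ref{L:intrinsec} it is again well-ordered with the same $J_1$, its nonlinear profiles $\tU^j(t,x)=\mu_j^{\frac{N}{2}-1}U^j(s_j+\mu_jt,y_j+\mu_jx)$ have the same energies, so $\{t_n\}_n$ still lies in $\SSS_3$. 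For $1\le j\le J_m$ we now have $\ttt_{j,n}=0$, $\tlambda_{j,n}=\lambda_{1,n}=\tlambda_{1,n}$, and $T_+(\tU^j)=(T_+(U^j)-\sigma_j)/\rho_j=T_+(U^1)-\sigma_1=T_+(\tU^1)$, which after relabelling is \eqref{D34}.

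\EMPH{Main obstacle}
The heart of the argument is the finiteness of $\rho_j$ (in both directions) and of $\sigma_j$. I would prove it through the reformulation of $\preceq$ in terms of Strichartz norms from Subsection~\ref{SS:preorder}: $(j)\preceq(k)$ holds precisely when, along every sequence of positive times $\tau_n$, boundedness of $\{\|U_n^j\|_{S(0,\tau_n)}\}_n$ implies boundedness of $\{\|U_n^k\|_{S(0,\tau_n)}\}_n$. Since none of $U^1,\ldots,U^{J_m}$ scatters forward, $\|U_n^1\|_{S(0,\tau_n)}=\|U^1\|_{S(-t_{1,n}/\lambda_{1,n},\,(\tau_n-t_{1,n})/\lambda_{1,n})}$ behaves as a clock that blows up exactly as $(\tau_n-t_{1,n})/\lambda_{1,n}$ reaches $T_+(U^1)$, and similarly for $U^j$ (using $T_-(U^j)=-\infty$ and backward scattering when $\sigma_j=-\infty$). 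If $\rho_j=+\infty$ one chooses $\tau_n$ driving $(\tau_n-t_{j,n})/\lambda_{j,n}\uparrow T_+(U^j)$ while $(\tau_n-t_{1,n})/\lambda_{1,n}$ stays bounded away from $T_+(U^1)$, contradicting $(1)\preceq(j)$; if $\rho_j=0$ the symmetric choice contradicts $(j)\preceq(1)$; and if $\rho_j\in(0,\infty)$ but $\sigma_j=-\infty$, taking $\tau_n=t_{j,n}+\lambda_{j,n}S_0$ with $S_0<T_+(U^j)$ keeps $\|U_n^j\|_{S(0,\tau_n)}$ bounded while $\tau_n/\lambda_{1,n}\to+\infty$ forces $\|U_n^1\|_{S(0,\tau_n)}\to+\infty$, again contradicting $(j)\preceq(1)$. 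The delicate points are to dispose of the indeterminate products $\frac{t_{j,n}}{\lambda_{j,n}}\cdot\frac{\lambda_{j,n}}{\lambda_{1,n}}$ by a further extraction, and to separate the subcases $T_+(U^j)<\infty$ and $T_+(U^j)=+\infty$.
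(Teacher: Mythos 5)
Your proposal is correct and follows essentially the same route as the paper: first use Proposition \ref{P:Z4} and Remark \ref{R:profiles} to shift the time sequence so that $\sigma_1$ becomes finite, then exploit $(1)\backsimeq\cdots\backsimeq(J_m)$ to get finiteness of the scale ratios and time parameters together with the lifespan identity, and conclude with Lemma \ref{L:modif_profile}. The only organizational difference is that the paper normalizes each $t_{j,n}$ to be $0$ or to satisfy $-t_{j,n}/\lambda_{j,n}\to-\infty$ and then invokes its Claim \ref{C:fact_preorder}, whereas you re-derive the same facts for general finite $\sigma_j$ via the Strichartz-norm characterization of $\preceq$; the case analysis you flag as delicate does close in every branch, so this is a presentational rather than substantive difference.
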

\begin{proof}
 We let $\{t_n\}_n\in \SSS_3$, and $\profiles$ be a well-ordered profile decomposition of $\{\vu(t_n)\}_n$. As usual, we denote by $U^j$ the associated nonlinear profiles. Since $J_1(\{t_n\}_n)=J_m$, $J_0(\{t_n\}_n)=J_M$, we have
 \begin{gather}
  \label{DEM36}
  (1)\backsimeq \ldots \backsimeq (J_{m}) \prec (1+J_m)\preceq (2+J_m)\preceq \ldots\\
\label{DEM37}
U^j\text{ scatters forward in time }\iff j>J_M.
  \end{gather} 
  Extracting subsequences and time-translating the profiles (see Lemma \ref{L:modif_profile}), we may assume
  \begin{equation}
   \label{DEM38}
   \forall j\in \{1,\ldots, J_M\},\quad \Big( \lim_{n\to \infty}-t_{j,n}/\lambda_{j,n}=-\infty\text{ or } \forall n,\; t_{j,n}=0\Big).
  \end{equation} 
  As a consequence of Lemma \ref{L:intrinsec}, this does not affect the fact that the profile decomposition is well-ordered.
  
We will use the following claim, proved in Appendix \ref{A:preorder}.
\begin{claim}
\label{C:fact_preorder}
 Let $\profiles$ be a profile decomposition, and $j\neq k$ two indices such that $(j)\backsimeq(k)$. Then we cannot have 
 \begin{equation}
  \label{DEM40}
  \forall n,\quad t_{j,n}=0\text{ and }\lim_{n\to\infty} -t_{k,n}/\lambda_{k,n}=-\infty.
 \end{equation} 
 Moreover, if for all $n$, $t_{j,n}=t_{k,n}=0$, then (extracting subsequences if necessary) there exists $c\in (0,+\infty)$ such that
 \begin{gather}
  \label{DEM42}
  \lim_{n\to\infty} \lambda_{j,n}/\lambda_{k,n}=c\\
  \label{DEM43}
  T_+(U^k)=T_+(U^j)=+\infty\text{ or } \Big(T_+(U^j)<\infty\text{ and }T_{+}(U^k)=cT_+(U^j)\Big).
 \end{gather}
\end{claim}
We distinguish two cases.

  \EMPH{Case 1} We assume
\begin{equation}
 \label{DEM39}
 \forall n,\quad t_{1,n}=0.
\end{equation} 
By Claim \ref{C:fact_preorder},
\begin{equation}
 \label{DEM50}
 \forall n,\quad t_{1,n}=t_{2,n}=\ldots=t_{J_m,n}=0.
\end{equation} 
For $j=1,\ldots, J_m$ we let, after extraction, 
\begin{equation}
 \label{DEM51}
 c_j=\lim_{n\to \infty} \lambda_{j,n}/\lambda_{1,n},
\end{equation} 
and note that by the second part of Claim \ref{C:fact_preorder}, $c_j\in(0,\infty)$, and one of the following holds:
$$ T_+(U^1)=T_{+}(U^2)=\ldots=T_+(U^{J_m})=+\infty,$$
or, for $j=1,\ldots, J_{m}$, $T_+(U^j)$ is finite and $T_+(U^j)=\frac{1}{c_j}T_{+}(U^1)$.
Rescaling the profiles $U^2_L$,\ldots,$U^{J_m}_L$, we can assume 
$c_1=c_2=\ldots=c_{J_m}=1$, and $\lambda_{1,n}=\lambda_{2,n}=\ldots=\lambda_{J_m,n}$ for all $n$, and thus by Claim \ref{C:fact_preorder}, $T_+(U^1)=T_+(U^2)=\ldots=T_+(U^{J_m})$. The proof is complete in this case.

\EMPH{Case 2} We assume
\begin{equation}
 \label{DEM52}
 \lim_{n\to\infty} \frac{-t_{1,n}}{\lambda_{1,n}}=-\infty.
\end{equation} 
Assume (time translating $U^1$ if necessary) that $T_+(U^1)$ is positive. We apply Proposition \ref{P:Z4} with $T=0$. Thus $\tau_n=t_{1,n}$. Let $t_n'=t_n+t_{1,n}$. By Proposition \ref{P:Z4} and Remark \ref{R:profiles}, $t_n'\in I_{\max}(u)$ for large $n$ and $\left\{u(t_n')\right\}_n$ has a profile decomposition
$\left(V_{L}^j,\left\{\lambda_{j,n},x_{j,n},t_{j,n}'\right\}_n\right)_{j\geq 1}$, where $t_{j,n}'=t_{j,n}-t_{1,n}$, and the nonlinear profiles associated to this profile decomposition are exactly the nonlinear profiles $U^j$ associated to the profile decomposition of $\left\{u(t_n)\right\}_n$. Note also that since $t_{1,n}>0$ for large $n$, $\lim_n t_n'=T_+(u)$. Since $t_{j,n}'-t_{k,n}'=t_{j,n}-t_{k,n}$, it is easy to check that $\{t_n'\}_n\in \SSS_3$. Since $t_{1,n}'=0$ for all $n$, we are reduced to Case 1 above. The proof of Lemma \ref{L:DEM2} is complete.
 
\end{proof}
\subsection{A forward in time compactness property for the first profiles}
We next prove:
\begin{lemma}
 \label{L:DEM4}
 Let $u$ be as in Theorem \ref{T:profiles}, and $\{t_n\}_n\in \SSS_3$ the sequence given by Lemma \ref{L:DEM2}. Then, for all $j\in \{1,\ldots,J_{m}\}$, there exists $\mu_j(t)>0$, $y_j(t)\in \RR^N$, defined for $t\in [0,T_+(U^1))=[0,T_+(U^j))$ and such that
$$K_{+}^j=\left\{\left(\mu_j^{\frac N2-1}(t)U^j(t,\mu_{j}(t)\cdot+y_j(t)),\mu_j^{\frac N2}(t)\partial_tU^j(t,\mu_{j}(t)\cdot+y_j(t))\right),\; t\in \left[0,T_+(U^j)\right)\right\}$$
 has compact closure in $\hdot\times L^2$. 
 \end{lemma}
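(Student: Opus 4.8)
The plan is to argue by contradiction using the minimality of $(J_m, \EEE_m, J_M)$ built into the definition of $\SSS_3$. Fix $j \in \{1,\dots,J_m\}$; by Lemma \ref{L:DEM2} we may assume $t_{j,n}=0$ and $\lambda_{j,n}=\lambda_{1,n}$ for all $n$, and $T_+(U^j)=T_+(U^1)$. Suppose, for contradiction, that $U^j$ does not have the compactness property forward in time. Then there is a sequence $\{s_m\}_m$ in $[0,T_+(U^j))$, with $s_m \to \bar s \in [0,T_+(U^j)]$, such that the rescaled/recentered family
$$\left(\mu^{\frac N2 -1}(s_m) U^j(s_m,\mu(s_m)\cdot+y(s_m)),\ \mu^{\frac N2}(s_m)\partial_t U^j(s_m,\mu(s_m)\cdot+y(s_m))\right)$$
has no convergent subsequence for \emph{any} choice of $\mu>0$, $y\in\RR^N$. (Here one must first rule out $\bar s = T_+(U^j)$: if $T_+(U^j)<\infty$ this would contradict the fact established in Proposition \ref{P:Z4}/Remark \ref{R:profiles} and the finite-time blow-up analysis of \cite{KeMe08} that $U^j$ cannot concentrate at a self-similar rate without the compactness profile persisting; if $T_+(U^j)=+\infty$ one uses that along $s_m\to\infty$ a non-compact family would supply a nontrivial asymptotic linear piece, contradicting that $U^j$ is a nonlinear profile. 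This is where I expect to borrow the "$\lambda(t)/t$ bounded" type arguments from Section \ref{S:virial}.)

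The key step is then a \emph{re-minimization at the time} $\tau_n := t_n + \lambda_{1,n} s_m$ (for a well-chosen $m=m(n)$), exactly as in the proof of Lemma \ref{L:DEM1}, but now exploiting the non-compactness. By Proposition \ref{P:Z4} applied with $T = s_m < T_+(U^1)$, for large $n$ we have $[0,\lambda_{1,n}s_m + t_{1,n}]\subset I_{\max}(u_n)$ and $\{\vu(\tau_n)\}_n$ admits a profile decomposition whose nonlinear profiles are the time-translates $U^j(s_m+\cdot)$ (Remark \ref{R:profiles}). In particular $\{\tau_n\}_n \in \SSS_0$ with $J_0 = J_M$, so $\{\tau_n\}_n\in\SSS_1$, and the energies of the nonlinear profiles are unchanged by time translation, so $\EEE(\{\tau_n\}_n) = \EEE_m$, i.e. $\{\tau_n\}_n\in\SSS_2$. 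Now I iterate a double-profile-decomposition step (Lemma \ref{L:DPD}): since $U^j(s_m)$ is \emph{not} in a compact set, after extracting a profile decomposition of $\{\vU^j(s_m)\}_m$ (viewed as the relevant sequence in $p$), the first profile $V^{j,1}$ carries strictly less energy than $U^j$, OR the decomposition splits $U^j(s_m)$ into two or more decoupled pieces each of which individually fails to scatter — and in either case one produces a new time sequence lying in $\SSS_2$ but with a \emph{strictly smaller} value of $J_1$, contradicting $J_1 = J_m$ on $\SSS_3$. The precise bookkeeping mirrors Steps 1–5 of the proof of Lemma \ref{L:DEM1}: reorder the new profiles, verify the uniform Strichartz-summability hypothesis \eqref{DPD2} via \eqref{DEM11}–\eqref{DEM15}, apply Lemma \ref{L:DPD} to get the diagonal subsequence $n_p$, and then use the Pythagorean expansions \eqref{Pyt1}–\eqref{Pyt3} together with $\EEE_m$-minimality and $J_m$-minimality to force the second profile to vanish — which, running the argument in reverse, says precisely that $U^j(s_m)$ \emph{does} converge strongly along a subsequence to a single rescaled profile, the desired compactness.

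The main obstacle, and the place requiring genuine care, is the logical interplay between the two minimized quantities $\EEE_m$ and $J_m$: a failure of compactness of $U^j$ could a priori manifest either as an \emph{energy drop} (handled by $\EEE_m$-minimality, as in Step 4 of Lemma \ref{L:DEM1}) or as a \emph{splitting into equally-concentrated decoupled solitary waves with no energy drop} — and it is exactly the latter scenario that is killed by $J_1 = J_m$ being \emph{minimal}, since such a splitting would separate index $j$ from its $\backsimeq$-equivalence class and lower $J_1$. One must check that the new time sequence $\{\tau_{n_p}^p\}_p$ genuinely lands in $\SSS_3$ (not merely $\SSS_2$) before the contradiction bites, which forces a somewhat delicate case analysis on whether the splitting occurs among the first $J_m$ profiles or later; the tools are Claim \ref{C:fact_preorder} and Lemma \ref{L:intrinsec} (invariance of the preorder), together with the observation that time-translation within $[0,T_+(U^1))$ does not change $T_+(U^j)$ relative positions. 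Once the scenario is correctly isolated, each individual estimate is routine, of the type already carried out in Section \ref{S:minimality}.
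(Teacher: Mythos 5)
Your overall strategy --- shift the time sequence to $\tau_n^p=t_n+\lambda_{1,n}s_p$, observe via Proposition \ref{P:Z4} and Remark \ref{R:profiles} that the nonlinear profiles (hence $J_0$, $J_1$ and $\EEE$) are unchanged, and then use the double-profile-decomposition minimization to force $\vU^j(s_p)$ to converge, after rescaling and translation, to a single bubble --- is exactly the paper's. But the paper's execution is much shorter than what you propose: since $\EEE(\{\tau_n^p\}_n)=\EEE_m$ exactly, the family $\{\tau_n^p\}_n$ is already a minimizing family in the sense of Lemma \ref{L:DEM1}, so one simply \emph{applies} that lemma and reads off \eqref{DEM6}, using $-t_{p,1,n_p}/\lambda_{1,n_p}=s_p$; there is no need to redo its Steps 1--5, and redoing them is where your errors appear. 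Also, your parenthetical about ``ruling out $\bar s=T_+(U^j)$'' is backwards: sequences $s_m\to T_+(U^j)$ are the only nontrivial case (for $\bar s<T_+(U^j)$ continuity of the flow gives convergence with $\mu=1$, $y=0$), so they must be handled rather than excluded, and the tools you invoke there (self-similar exclusion, ``a nontrivial asymptotic linear piece contradicting that $U^j$ is a nonlinear profile'') are not meaningful --- nonlinear profiles may well have asymptotic linear parts.

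Two further points are genuinely wrong or missing. First, attributing the ``splitting into two or more non-scattering pieces'' contradiction to the minimality of $J_1=J_m$ is incorrect: if $\vU^j(s_p)$ split into two non-scattering profiles, the new time sequence would have $J_0\geq J_M+1$, contradicting the \emph{maximality} of $J_M$ over $\SSS_0$ (this is Step 3 of the proof of Lemma \ref{L:DEM1}); such a sequence would not even lie in $\SSS_1$, so comparing its $J_1$ with $J_m$ is undefined. The quantity $J_m$ plays no role in this lemma --- it is needed later, in Lemma \ref{L:F1}, to preserve the equivalence class $(1)\backsimeq\ldots\backsimeq(J_m)$ under time shifts. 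The correct dichotomy is: an extra non-scattering piece contradicts $J_M$; an extra scattering piece or a non-vanishing remainder carries strictly positive energy and contradicts $\EEE_m$. Second, even after the extra profiles are forced to vanish, the conclusion is convergence of $\nabla_{t,x}U^j(s_p)$ to $\tnu_p^{-N/2}\nabla_{t,x}\tV_L\left(-\ttau_p/\tnu_p,(\cdot-\tz_p)/\tnu_p\right)$, i.e.\ to a \emph{moving} point on the linear trajectory of a single profile; to land in a fixed compact set one must still show that $\{-\ttau_p/\tnu_p\}_p$ is bounded. The paper does this by noting that $-\ttau_p/\tnu_p\to+\infty$ would force $U^j$ to scatter forward in time, while $-\ttau_p/\tnu_p\to-\infty$ would force $\|U^j\|_{S(-\infty,s_p)}\to 0$, both impossible. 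This last step is absent from your proposal.
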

\begin{proof}
 Without loss of generality, we check this property for $j=1$. By a standard lifting lemma, it is sufficient to prove that for all sequence of times $\{s_p\}_p$ such that $0\leq s_p<T_+(U^1)$ and $\lim_p s_p=T_+(U^1)$, there exists a subsequence of $\{s_p\}_p$ and sequences $\{\nu_p\}_p$, $\{z_p\}_p$ such that
\begin{equation}
 \label{COM40}
\lim_{p\to\infty} \left\|\left(\nu_p^{\frac N2-1}U^1(s_p,\nu_p\cdot+z_p),\nu_p^{\frac N2}\partial_t U^1(s_p,\nu_p\cdot+z_p)\right)-(V_0,V_1)\right\|_{\hdot\times L^2}=0.
\end{equation} 
Let $\{s_p\}_p$ be such a sequence, and $\tau_n^p=t_n+s_p\lambda_{1,n}$. Fixing $p$, we see by Proposition \ref{P:Z4} and Remark \ref{R:profiles} that $\tau_n^p<T_+(u)$ for large $n$, and that the sequence $\left\{\vu(\tau_n^p)\right\}_n$ has a profile decomposition $\left(U_L^{p,j},\left\{\lambda_{j,n},x_{j,n},t_{p,j,n}\right\}_n\right)_j$, where $t_{p,j,n}=t_{j,n}-s_p\lambda_{1,n}$ and the nonlinear profiles $U^{p,j}$ for this profile decomposition satisfy
\begin{equation}
 \label{COM41}
\forall p,j\quad U^{p,j}=U^j,
\end{equation} 
where the $U^j$s are the nonlinear profiles of the profile decomposition $\profiles$ of $\{\vu(t_n)\}_n$ given by Lemma \ref{L:DEM2}. 

As a consequence, the profile decomposition $\left(U_L^{p,j},\left\{\lambda_{j,n},x_{j,n},t_{p,j,n}\right\}_n\right)_j$ satisfies
$$(1)\backsimeq \ldots \backsimeq (J_m)\prec (1+J_m)\preceq \ldots.$$
Moreover, $J_1\left(\{\tau_n^p\}_n\right)=J_1\left(\{t_n\}_n\right)=J_m$,
$J_0\left(\{\tau_n^p\}_n\right)=J_0\left(\{t_n\}_n\right)=J_M$, and $\EEE\left(\left\{\tau_{n}^p\right\}_n\right)=\EEE_m$. By Lemma \ref{L:DEM1}, there exists an increasing sequence $n_p$ of indices, a sequence of parameters $\{\tnu_p,\tz_p,\ttau_p\}_p$, and a solution $\tV_L$ of the linear wave equation such that (see \eqref{DEM6})
\begin{equation}
 \label{COM42}
\lim_{p\to\infty}\left\|\frac{1}{\lambda_{1,n_p}^{\frac N2}}\nabla_{t,x}U^1\left(\frac{-t_{p,1,n_p}}{\lambda_{1,n_p}},\frac{\cdot-x_{1,n_p}}{\lambda_{1,n_p}}\right)-\frac{1}{\tnu_p^{\frac N2}}\nabla_{t,x}\tV_L\left(\frac{-\ttau_p}{\tnu_p},\frac{\cdot-\tz_p}{\tnu_p}\right)\right\|_{(L^2)^{N+1}}=0
\end{equation} 
Noting that $\frac{t_{p,1,n_p}}{\lambda_{1,n_p}}=\frac{t_{1,n_p}-s_p\lambda_{1,n_p}}{\lambda_{1,n_p}}=-s_p$, we rewrite \eqref{COM42} as:
\begin{equation}
 \label{COM43}
\lim_{p\to\infty}\left\|\frac{1}{\lambda_{1,n_p}^{\frac N2}}\nabla_{t,x}U^1\left(s_p,\frac{\cdot-x_{1,n_p}}{\lambda_{1,n_p}}\right)-\frac{1}{\tnu_p^{\frac N2}}\nabla_{t,x}\tV_L\left(\frac{-\ttau_p}{\tnu_p},\frac{\cdot-\tz_p}{\tnu_p}\right)\right\|_{(L^2)^{N+1}}=0.
\end{equation} 
To conclude the proof of the lemma, we must show that the sequence $\left\{-\ttau_p/\tnu_p\right\}_p$ is bounded. If, after extraction, $\lim_p -\ttau_p/\tnu_p=+\infty$, then $U^1$ scatters forward in time, a contradiction. If $\lim_p -\ttau_p/\tnu_p=-\infty$, then $U^1$ scatters backward in time and for large $p$,
$$ \left\|U^1\right\|_{S(-\infty,s_p)}\leq 2\left\|\tV_L\right\|_{S(-\infty,-\ttau_p/\tnu_p)}\underset{p\to\infty}{\longrightarrow}0,$$
a contradiction. The proof is complete. 
\end{proof}
\subsection{Adjusting the sequence of times}
The end of the proof of Theorem \ref{T:profiles} consists in changing the sequence $\{t_n\}_n$ given by Lemma \ref{L:DEM4} to improve the properties of the profiles $U^1,\ldots,U^{J_m}$ and strengthen the convergence of $\vu(t_n)$ to this profiles. For this we start by proving a general technical lemma.
\subsubsection{Extraction of profiles along a new sequence of times}
\begin{lemma}
\label{L:F1}
Let $\{m_p'\}$ be a sequence of positive numbers.
Let $u$ be as in Theorem \ref{T:profiles}. Let $\{t_n\}_n\in \SSS_3$ and assume that $\left\{\vu(t_n)\right\}_n$ has a well-ordered profile decomposition $\profiles$ with nonlinear profiles $U^j$. Assume 
$$\forall j\geq 1+J_m, \quad \lim_n -t_{j,n}/\lambda_{j,n}=\pm \infty \text{ or }\forall n,\; t_{j,n}=0,$$
and
\begin{equation}
 \label{F1}
\forall j=1,\ldots, J_m,\quad \lambda_{j,n}=\lambda_{1,n},\quad t_{j,n}=0,\quad T_+(U^j)=T_+(U^1). 
\end{equation} 
Let $\{s_p\}_p$ be an increasing sequence in $[0,T_+(U^1))$ such that 
$$\lim_{p\to\infty} s_p=T_+(U^1)$$
and for all $j\in\{1,\ldots, J_m\}$, there exists $\left(V_0^j,V_1^j\right)\in \hdot\times L^2$, and sequences $\{y_{j,p}\}_p$, $\{\mu_{j,p}\}_p$ such that
\begin{equation}
 \label{F2}
\lim_{p\to\infty}\left\|\vU^j(s_p)-\left(\frac{1}{\mu_{j,p}^{\frac N2-1}}V_0^j\left(\frac{\cdot-y_{j,p}}{\mu_{j,p}}\right),\frac{1}{\mu_{j,p}^{\frac N2}}V_1^j\left(\frac{\cdot-y_{j,p}}{\mu_{j,p}}\right)\right)\right\|_{\hdot\times L^2}=0.
\end{equation} 
Assume that the following limits exist for all $j\geq 1$:
\begin{equation}
 \label{F3}
\theta_{p}^j=\lim_{n\to\infty} \frac{-t_{j,n}+s_p\lambda_{1,n}}{\lambda_{j,n}}\in \RR\cup\{\pm\infty\},\quad \theta^j=\lim_{p\to\infty}\theta_p^j\in \RR\cup\{\pm\infty\}.
\end{equation}
Then there exists an increasing sequence of indices $\{n_p\}_p$ such that $n_p\geq m_p'$ for all $p$ and
\begin{gather}
 \label{F4}
\forall p,\quad t_{n_p}+s_p\lambda_{1,n_p}\in \left[0,T_+(u)\right)\\
\label{F5}
\forall j,\quad \lim_{p\to\infty} \frac{-t_{j,n_p}+s_p\lambda_{1,n_p}}{\lambda_{j,n_p}}=\theta^j,
\end{gather}
and $\left\{\vu(t_{n_p}+\lambda_{1,n_p}s_p)\right\}_p$ has a profile decomposition $\left(\tU^j_L,\left\{\tlambda_{j,p},\tx_{j,p},\ttt_{j,p}\right\}_p\right)_{j\geq 1}$ with nonlinear profiles $\tU^j$ where:
\begin{itemize}
 \item if $1\leq j\leq J_m$:
$$ \ttt_{j,p}=0,\quad \tlambda_{j,p}=\lambda_{1,n_p}\mu_{j,p},\quad \tx_{j,p}=x_{j,n_p}+\lambda_{1,n_p}y_{j,p},\quad \vec{\tU}\vphantom{\tU}^j_L(0)=\left(V_0^j,V_1^j\right).$$
\item if $j>J_m$, 
$$\ttt_{j,p}=t_{j,n_p}-\lambda_{1,n_p}s_p, \quad \tlambda_{j,p}=\lambda_{j,n_p},\quad \tx_{j,p}=x_{j,n_p},\quad \tU^j=U^j.$$
\end{itemize}
Furthermore the profile decomposition $\left(\tU^j_L,\left\{\tlambda_{j,p},\tx_{j,p},\ttt_{j,p}\right\}_p\right)_{j\geq 1}$ is well-ordered. More precisely, it satisfies:
\begin{equation}
\label{t_order}
(1)\backsimeq \ldots \backsimeq (J_m)\prec  (1+J_m) \preceq (2+J_m)\preceq \ldots 
\end{equation} 
Finally $\left\{t_{n_p}+\lambda_{1,n_p}s_p\right\}_p\in \SSS_3$.
\end{lemma}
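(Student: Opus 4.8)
The plan is to deduce this lemma from the double profile decomposition lemma (Lemma~\ref{L:DPD}), applied to the doubly-indexed family $\bigl\{\vu\bigl(t_n+s_p\lambda_{1,n}\bigr)\bigr\}_{n,p}$, and then to read the conclusion off the resulting decomposition.

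For each fixed $p$ I would first produce the $n$-decomposition of the shifted sequence: since $s_p<T_+(U^1)$ and $t_{1,n}=0$, Proposition~\ref{P:Z4} (with any $T\in(s_p,T_+(U^1))$, so that $\tau_n=\lambda_{1,n}T$ and $s_p\lambda_{1,n}\in[0,\tau_n]$ for $n$ large) together with Remark~\ref{R:profiles} shows that $\bigl\{\vu\bigl(t_n+s_p\lambda_{1,n}\bigr)\bigr\}_n$ has, for $n$ large, a profile decomposition $\bigl(U_L^{p,j},\{\lambda_{j,n},x_{j,n},t_{j,n}-s_p\lambda_{1,n}\}_n\bigr)_{j\ge1}$ with the \emph{same} nonlinear profiles $U^j$, where $U_L^{p,j}$ solves~\eqref{LCP} and agrees with $U^j$ at the limiting times $\theta_p^j$ of~\eqref{F3}. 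Since the time parameters are translated by the common amount $-s_p\lambda_{1,n}$, the preorder $\preceq$ is unaffected, so this decomposition is again well-ordered with $(1)\backsimeq\cdots\backsimeq(J_m)\prec(1+J_m)\preceq\cdots$, and $J_0=J_M$, $J_1=J_m$, $\EEE=\EEE_m$ (all intrinsic, by Lemmas~\ref{L:uniq_profile} and~\ref{L:intrinsec}).

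Next I would check that Lemma~\ref{L:DPD} applies. The uniform bound~\eqref{DPD1} holds with $M=\sqrt B$ by~\eqref{bound_u} and~\eqref{def_M}. For the summability~\eqref{DPD2} I would reorder the profiles of index $>J_M$ (all forward-scattering, so well-orderedness is unaffected) so that $j\mapsto E[U^j]$ is nonincreasing; since $\{t_n\}_n\in\SSS_2$, \eqref{Pyt1}--\eqref{Pyt3} give $\sum_{j>J_M}E[U^j]\le E[u]-\EEE_m$, hence $E[U^j]\le(E[u]-\EEE_m)/(j-J_M)$ for $j>J_M$, and then — exactly as in Step~1 of the proof of Lemma~\ref{L:DEM1}, using the small-data theory, Strichartz and the Pythagorean bound $\|\vU_L^{p,j}(0)\|_{\hdot\times L^2}^2\le B$ — one gets $(\eta_j)$ with $\sum_j\eta_j<\infty$ and $\|U_L^{p,j}\|_{S(\RR)}^{\SN}\le\eta_j$ for all $p,j$. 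I would then specify, for each $j$, the profile decomposition of $\bigl\{\vU_L^{p,j}(0)\bigr\}_p$: for $1\le j\le J_m$, since $\theta_p^j=s_p$ and $\vU_L^{p,j}(0)=\vS_L(-s_p)\vU^j(s_p)$, hypothesis~\eqref{F2} and the commutation of $\vS_L$ with scalings/translations give $\vU_L^{p,j}(0)=\mu_{j,p}^{-(\frac N2-1)}\bigl[\vS_L(-s_p/\mu_{j,p})(V_0^j,V_1^j)\bigr]\!\bigl(\tfrac{\,\cdot\,-y_{j,p}}{\mu_{j,p}}\bigr)+o_p(1)$ in $\hdot\times L^2$, i.e.\ a single-profile decomposition with linear profile $S_L(\cdot)(V_0^j,V_1^j)$ and parameters $\mu_{j,p},y_{j,p},s_p$; for $j>J_m$, using $\theta_p^j\to\theta^j$, continuity of the flows, Claim~\ref{C:fact_preorder} with $(1)\prec(j)$, the hypothesis that $-t_{j,n}/\lambda_{j,n}\to\pm\infty$ or $t_{j,n}=0$, and $\|\vU_L^{p,j}(0)\|^2\le B$ (which excludes a type~I blow-up), one checks that $\bigl\{\vU_L^{p,j}(0)\bigr\}_p$ converges strongly, so its decomposition is the single profile with $\mu=1,\ y=0,\ s=0$. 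Applying Lemma~\ref{L:DPD} with $m_p$ taken large enough to force, for $n\ge m_p$: $m_p\ge m_p'$; $t_n+s_p\lambda_{1,n}\in[0,T_+(u))$; $\bigl|(-t_{j,n}+s_p\lambda_{1,n})/\lambda_{j,n}-\theta_p^j\bigr|<1/p$ for $j\le p$; and $\|\vU_L^{p,j}(s'_{j,n})-\vU^j(s'_{j,n})\|_{\hdot\times L^2}<1/p$ for $j\le p$ (cf.~\eqref{DEM16}, with $s'_{j,n}:=(-t_{j,n}+s_p\lambda_{1,n})/\lambda_{j,n}$), produces $\{n_p\}_p$. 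Then the parameter formulas~\eqref{def_param} reduce to the ones in the statement, \eqref{F4} and~\eqref{F5} come from the requirements on $m_p$, and by Remark~\ref{R:DPD} together with the last requirement the $j$-th nonlinear profile is $U^j$ for $j>J_m$ and the solution with data $(V_0^j,V_1^j)$ for $j\le J_m$ (so $\ttt_{j,p}=0$, with linear profile having data $(V_0^j,V_1^j)$).

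Finally I would verify well-ordering of the new decomposition and membership in $\SSS_3$. For $1\le j\le J_m$, the solution $V^j$ with data $(V_0^j,V_1^j)$ cannot scatter forward: otherwise~\eqref{F2} and long-time perturbation theory would make $U^j$ have finite $S$-norm up to $T_+(U^j)$, contradicting either that $U^j$ does not scatter forward or the finite-time blow-up criterion. Hence exactly $J_M$ of the new profiles fail to scatter forward (the $V^j$ with $j\le J_m$, and $U^{1+J_m},\dots,U^{J_M}$), so $J_0=J_M$; and since energy is conserved and modulation-invariant, $E[V^j]=E[U^j]$ and thus $\EEE=\sum_{j\le J_M}E[U^j]=\EEE_m$, so the new sequence lies in $\SSS_2$. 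Moreover $T_+(V^j)=\lim_p\bigl(T_+(U^1)-s_p\bigr)/\mu_{j,p}$ for $j\le J_m$, so~\eqref{F1} (equality of the $T_+(U^j)$) forces $(1)\backsimeq\cdots\backsimeq(J_m)$ for the new decomposition, and the remaining relations $(J_m)\prec(1+J_m)\preceq(2+J_m)\preceq\cdots$ follow from the corresponding ones for the time-translated original decomposition by a computation with the parameter formulas and the definition of $\preceq$, using~\eqref{F1}, \eqref{F3}, the hypotheses on the $t_{j,n}$, and Claim~\ref{C:fact_preorder}; this gives~\eqref{t_order} and $J_1=J_m$, hence $\bigl\{t_{n_p}+\lambda_{1,n_p}s_p\bigr\}_p\in\SSS_3$. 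I expect this last step — the preservation of well-ordering, where~\eqref{F1} is used essentially — to be the main obstacle; the rest is bookkeeping once Lemma~\ref{L:DPD} is in hand.
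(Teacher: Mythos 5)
Your overall route is the same as the paper's (Proposition \ref{P:Z4} and Remark \ref{R:profiles} to get the $n$-decomposition of $\{\vu(t_n+s_p\lambda_{1,n})\}_n$ for each fixed $p$, identification of the single-profile decompositions of $\{\vU_L^{p,j}(0)\}_p$, then Lemma \ref{L:DPD} for the diagonal extraction), but there is a genuine gap precisely at the step you defer to ``a computation with the parameter formulas'': the strict relations $(j)\prec(k)$ for $j\le J_m$ and $k\in\{1+J_m,\ldots,J_M\}$ in the new decomposition. To get $(j)\prec(k)$ you must exhibit $T_k<T_+(U^k)$ with $\lim_p \frac{\tlambda_{k,p}T_k+\ttt_{k,p}-\ttt_{j,p}}{\tlambda_{j,p}}\ge T_+(\tU^j)$, and this quantity equals $\frac{1}{\mu_{j,p}}\bigl(\frac{\lambda_{k,n_p}T_k+t_{k,n_p}}{\lambda_{1,n_p}}-s_p\bigr)$. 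The relation $(1)\prec(k)$ for the original decomposition only tells you that, for each \emph{fixed} $p$, the inner bracket tends to $+\infty$ (resp.\ to something $\ge T_+(U^1)-s_p$) as $n\to\infty$; this convergence is not uniform in $p$, and $s_p\to T_+(U^1)$. Along a diagonal $n=n_p$ chosen only to satisfy the four conditions in your list, the bracket can perfectly well tend to $-\infty$ (e.g.\ $\frac{\lambda_{k,n}T_k+t_{k,n}}{\lambda_{1,n}}\sim\log n$, $s_p=p$, $n_p=2^p$), in which case $(\tilde k)\preceq(\tilde j)$ holds, \eqref{t_order} fails, and the new time sequence need not lie in $\SSS_3$. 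The cure is not post hoc: the lower bounds \eqref{F14} (case $T_+(U^1)=+\infty$) and \eqref{F15} (case $T_+(U^1)<\infty$) must be added to the requirements defining $m_p$ \emph{before} Lemma \ref{L:DPD} is invoked; Step 5 of the paper's proof then combines them with \eqref{F18} to obtain $(j)\prec(k)$.

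A secondary issue: you conclude $(1)\backsimeq\cdots\backsimeq(J_m)$ directly from the asserted identity $T_+(\tU^j)=\lim_p\bigl(T_+(U^1)-s_p\bigr)/\mu_{j,p}$. The paper only establishes the one-sided bound $\liminf_p\bigl(T_+(U^1)-s_p\bigr)/\mu_{j,p}\ge T_+(\tU^j)$ (this is \eqref{F18}, proved via \eqref{F2} and continuity of the flow), and equality is not obvious. The paper sidesteps this entirely: from $(j)\prec(k)$ for $j\le J_m<k$ it gets $J_1\bigl(\{t_{n_p}+s_p\lambda_{1,n_p}\}_p\bigr)\le J_m$, and the \emph{minimality} of $J_m$ over $\SSS_2$ then forces $J_1=J_m$, which is exactly the equivalence of the first $J_m$ indices. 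You should adopt that argument rather than prove the equivalences by hand. The remaining ingredients of your proposal (the summability check for \eqref{DPD2}, the identification of the limits of $\vU_L^{p,j}(0)$, the non-scattering of the $V^j$) are correct and match the paper up to minor variations.
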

\begin{remark}
 Lemma \ref{L:F1} allows to modify the times  $t_n$, adding times of slightly greater order than $\lambda_{1,n}$. This time scale is chosen large enough to affect the first profiles $U^j$, $j\leq J_{m}$, but is too small to affect the other profiles $U^j$, $j\geq 1+J_m$, as can be seen in the definitions of $\tU^j$.
\end{remark}

\begin{remark}
 By a diagonal extraction argument, we can always assume  \eqref{F3}.
\end{remark}
\begin{remark}
The assumption $J_1\left(\{t_{n}\}_n\right)=J_m$ is crucial to prove 
 the equivalence $(1)\backsimeq \ldots \backsimeq (J_m)$ for the new profile decomposition. Note that this equivalence implies that if $1\leq j,k\leq J_m$, there exists a constant $C>0$ such that $C^{-1}\leq \mu_{j,p}/\mu_{k,p}\leq C$ for all $p$. 
\end{remark}

\begin{proof}
\EMPH{Step 1} Let $j\in \{1+J_m,\ldots,J_M\}$. We prove:
\begin{gather}
 \label{F7}
\left(\forall n,\; t_{j,n}=0\right)\Longrightarrow \theta^j\in \left(T_-(U^j),T_+(U^j)\right)\\
\label{F8}
\lim_{n\to\infty} -t_{j,n}/\lambda_{j,n}=-\infty\Longrightarrow \forall p,\; \theta_p^j =-\infty\text{ and }\theta^j=-\infty.
\end{gather}
Note that the case $\lim_{n} -t_{j,n}/\lambda_{j,n}=+\infty$ is excluded since $U^j$ does not scatter forward in time.

First assume $t_{j,n}=0$ for all $n$. Since $(1)\prec (j)$, it follows easily from the definition of the relation ``$\prec$'' that one of the following holds (after extraction of subsequences in $n$):
\begin{equation*}
\lim_{n\to\infty}\frac{\lambda_{1,n}}{\lambda_{j,n}}=0\text{ or } \left(\lim_{n\to\infty} \frac{\lambda_{1,n}}{\lambda_{j,n}}=c\in (0,+\infty),\;T_+(U^1)<\infty\text{ and }cT_+(U^1)<T_+(U^j)\right).
\end{equation*} 
In the first case, we obtain $\theta_p^j=0$ for all $p$, and thus $\theta^j=0\in (T_-(U^j),T_+(U^j))$. In the second case, we have $\theta_p^j=cs_p$, and thus $\theta^j =cT_{+}(U^1)\in (0,T_+(U^j))$. Hence \eqref{F7}.

We next assume $\lim_{n} -t_{j,n}/\lambda_{j,n}=-\infty$. Since $(1)\prec (j)$, there exists $\theta<T_+(U^j)$ such that
\begin{equation*}
\lim_{n\to\infty} \frac{\lambda_{j,n}\theta+t_{j,n}}{\lambda_{1,n}}\geq T_+(U^1),
\end{equation*} 
(with the convention that this limit must be $+\infty$ if $T_+(U^1)=+\infty$). Since $\lim_{n} \lambda_{j,n}/t_{j,n}=0$, we deduce 
\begin{equation}
 \label{F9}
\lim_{n\to\infty} \frac{t_{j,n}}{\lambda_{1,n}}\geq T_+(U^1).
\end{equation} 
We fix $p$ and let $T$ such that $s_p<T<T_+(U^1)$. Then for large $n$
\begin{equation*}
 \frac{s_p\lambda_{1,n}-t_{j,n}}{\lambda_{j,n}}=\frac{s_p}{T}\frac{\lambda_{1,n}}{\lambda_{j,n}}\left(T-t_{j,n}/\lambda_{1,n}\right)+\left(\frac{s_p}{T}-1\right)\frac{t_{j,n}}{\lambda_{j,n}}\leq \left(\frac{s_p}{T}-1\right)\frac{t_{j,n}}{\lambda_{j,n}},
\end{equation*} 
where the last inequality follows from  \eqref{F9}. Using that $\lim_n -t_{j,n}/\lambda_{j,n}=-\infty$, we get 
as announced
$$\theta^j_p=\lim_{n\to\infty}\frac{s_p\lambda_{1,n}-t_{j,n}}{\lambda_{j,n}}=-\infty,$$
and thus $\theta^j=-\infty$. 

\EMPH{Step 2} By Proposition \ref{P:Z4} and Remark \ref{R:profiles}, $t_n+\lambda_{1,n}s_p<T_+(u)$ and $\frac{\lambda_{1,n}s_p-t_{j,n}}{\lambda_{j,n}}<T_+(U^j)$ for large $n$, and $\left\{\vu(t_n+\lambda_{1,n}s_p)\right\}_n$ has a profile decomposition $\left(U^{p,j}_L,\left\{\lambda_{j,n},x_{j,n},t_{p,j,n}\right\}_n\right)_j$, where $t_{p,j,n}=t_{j,n}-\lambda_{1,n}s_p$, and $U_L^{p,j}$ is the only solution of the wave equation such that
\begin{equation}
 \label{F10}
\lim_{n\to\infty}
\left\|\vU_L^{p,j}\left(\frac{\lambda_{1,n}s_p-t_{j,n}}{\lambda_{j,n}}\right)-\vU^{j}\left(\frac{\lambda_{1,n}s_p-t_{j,n}}{\lambda_{j,n}}\right)\right\|_{\hdot\times L^2}=0.
\end{equation} 
In this step, we prove:
\begin{enumerate}
 \item \label{prof_1} If $1\leq j\leq J_m$, then
\begin{equation}
 \label{F11}
\lim_{p\to\infty}\left\|\vU_L^{p,j}(s_p)-\left(\frac{1}{\mu_{j,p}^{\frac{N}{2}-1}}V_0^j\left(\frac{\cdot-y_{j,p}}{\mu_{j,p}}\right),\frac{1}{\mu_{j,p}^{\frac{N}{2}}}V_1^j\left(\frac{\cdot-y_{j,p}}{\mu_{j,p}}\right)\right)\right\|_{\hdot\times L^2}=0.
\end{equation} 
In other words, $\left\{\vU_L^{p,j}(0)\right\}_p$ has a profile decomposition with only one nonzero profile, $S_L(t)(V_0,V_1)$ and parameters $\{\mu_{j,p},y_{j,p},s_p\}_p$.
\item \label{prof_2} If $j\geq 1+J_m$ and $\theta_j\in\RR$, then $\vU_L^{p,j}(0)$ converges, as $p\to\infty$ to $\vS_L(-\theta^j)\vU^j(\theta^j)$. We define $\tU^j_L(t)=S_L(t-\theta^j)\vU^j(\theta_j)$.
\item \label{prof_3}If $j\geq 1+J_m$ and $\theta_j=+\infty$ (respectively $\theta_j=-\infty$), then $U^j$ scatters forward (respectively backward) in time, and $\vU_L^{p,j}(0)$ converges, as $p\to\infty$ to  $\tU_L^j(0)$, where $\tU_L^j$ is the unique solution of the linear wave equation such that
$$ \lim_{t\to+\infty } \left\|\tU^j_L(t)-U^j(t)\right\|_{\hdot\times L^2}=0$$
$\Big($respectively 
$$ \lim_{t\to-\infty } \left\|\tU^j_L(t)-U^j(t)\right\|_{\hdot\times L^2}=0\Big).$$
\end{enumerate}
We first assume $1\leq j\leq J_m$ and prove \eqref{prof_1}. In this case, \eqref{F10} means $\vU_L^{p,j}(s_p)=\vU^j(s_p)$ and \eqref{F11} follows from \eqref{F2}.

We next prove \eqref{prof_2}. Assume $j\geq 1+J_m$ and $\theta^j\in \RR$. Then $\theta^j\in \left(T_-(U^j),T_+(U^j)\right)$: it follows from Step 1 if $j\leq J_M$ or from the fact that $U^j$ scatters if $j\geq J_M+1$. By \eqref{F10}, $\tU_L^j(0)=\vS_L(-\theta^j_p)\vU^j(\theta^j_p)$ for all $p$. Passing to the limit, we get \eqref{prof_2}.

It remains to prove \eqref{prof_3}. First assume $j\geq J_{m}+1$ and $\theta^j=+\infty$. By Step 1, we must have $j\geq 1+J_M$. Since by the assumptions of Lemma \ref{L:F1}, $\left\{\theta^j_p\right\}_p$ is a nondecreasing sequence, one of the following holds:
\begin{gather}
 \label{F12}
\forall p,\quad \theta_p^j\in \RR\text{ and }\lim_{p\to\infty}\theta_p^j=+\infty\\
\label{F13} \exists p_0,\;\forall p\geq p_0,\quad \theta_p^j=+\infty.
\end{gather}
First assume \eqref{F12}. By \eqref{F10}, we obtain again  $\vU_L^{p,j}(0)=\vS_L(-\theta^j_p)\vU^j(\theta^j_p)$ for all $p$. Letting $p\to\infty$, we deduce the desired conclusion. Next assume \eqref{F13}. Then by \eqref{F10}, $U_L^{p,j}=\tU^j_L$, where $\lim_{t\to+\infty} \left\|\vU_L^j(t)-\vec{\tU}\vphantom{\tU}^j_L(t)\right\|_{\hdot\times L^2}=0$, which implies the announced result. 

It remains to treat the case when $j\geq 1+J_m$ and $\theta^j=-\infty$. In this case $\theta^j_p=-\infty$ for all $p$, and the proof is the same as when \eqref{F13} holds. 

\EMPH{Step 3}
In this step we choose the sequence $\{m_p\}_p$ of integers appearing in the assumptions of Lemma \ref{L:DPD}. 
We distinguish two cases.

\EMPH{Case 1} $T_+(U^1)=+\infty$. In this case, if $k\in \{1+J_m,\ldots,J_M\}$ (and thus $(1)\prec (k)$) there exists $T_k<T_+(U^k)$ such that $\lim_n \frac{\lambda_{k,n}T_k+t_{k,n}}{\lambda_{1,n}}=+\infty$. For all $p$, we let $m_p\geq m_p'$ such that
\begin{equation}
 \label{F14}
\forall n\geq m_p,\; \forall j\in \{1,\ldots,J_m\},\; \forall k\in \{1+J_m,\ldots,J_M\},\quad
\frac{1}{\mu_{j,p}}\left(\frac{\lambda_{k,n}T_{k}+t_{k,n}}{\lambda_{1,n}}\right)-\frac{s_p}{\mu_{j,p}}\geq p
\end{equation} 
and
\begin{equation}
 \label{F14'}
\forall j\in \{1,\ldots,p\},\; \forall n\geq m_p,\quad
\begin{cases}
 \left| \frac{-t_{j,n}+s_p\lambda_{1,n}}{\lambda_{j,n}}-\theta_p^j\right| \leq \frac{1}{p}&\text{ if } \theta_p^j\in \RR\\
\frac{-t_{j,n}+s_p\lambda_{1,n}}{\lambda_{j,n}}\geq  p&\text{ if }\theta_p^j=+\infty\\
\frac{-t_{j,n}+s_p\lambda_{1,n}}{\lambda_{j,n}}\leq  -p&\text{ if }\theta_p^j=-\infty.
\end{cases}
\end{equation} 

\EMPH{Case 2} $T_+(U^1)\in \RR$. In this case, for all $k\in \{1+J_m,\ldots, J_M\}$, there exists $T_k<T_+(U^k)$ such that $\lim_n \frac{\lambda_{k,n}T_k+t_{k,n}}{\lambda_{1,n}}\geq T_+(U^1)$. We choose $m_p\geq m_p'$ such that
\begin{multline}
 \label{F15} 
\forall n\geq m_p,\; \forall j\in \{1,\ldots,J_m\},\; \forall k\in \{1+J_m,\ldots,J_M\},\quad
\frac{1}{\mu_{j,p}}\left(\frac{\lambda_{k,n}T_{k}+t_{k,n}}{\lambda_{1,n}}\right)-\frac{s_p}{\mu_{j,p}}
\\ \geq \frac{T_+(U^1)-s_p}{\mu_{j,p}}-\frac{1}{p}
\end{multline}
and \eqref{F14'} hold. 

\EMPH{Step 4} We next use Lemma \ref{L:DPD}. Note that the profile decomposition of each sequence $\left\{U_L^{p,j}(0)\right\}_p$ has only one profile, given by Step 2. Note also that for large $j$, the small data theory ensures $\|U_L^{p,j}\|_{S(\RR)}\leq 2\|U^j\|_{S(\RR)}$, and (by the Pythagorean expansion \eqref{Pyt_S}) $\sum_j \|U^j\|_{S(\RR)}^{\frac{2(N+1)}{N-2}}<\infty$. This proves that the technical assumption \eqref{DPD2} of the Lemma is satisfied. 

As a consequence of Lemma \ref{L:DPD}, we obtain an increasing sequence of indices $\{n_p\}_p$, with $n_p\geq m_p$ and such that $\left\{\vu(t_{n_p}+\lambda_{1,n_p}s_p)\right\}_p$ has a profile decomposition $\left(\tU^j_L,\left\{\tlambda_{j,p},\tx_{j,p},\ttt_{j,p}\right\}_p\right)_{j\geq 1}$ with nonlinear profiles $\tU^j$, where, by Step 2,
\begin{itemize}
 \item if $1\leq j\leq J_m$, $\ttt_{j,p}=t_{j,n_p}-s_p\lambda_{1,n_p}+s_p\lambda_{1,n_p}=0$, $\tx_{j,p}=x_{j,n_p}+\lambda_{j,n_p}y_{j,p}$, $\tlambda_{j,p}=\lambda_{1,n_p}\mu_{j,p}$, $\tU_L^j(0)=\left(V_0^j,V_1^j\right)$.
\item if $j\geq 1+J_m$, $\tlambda_{j,p}=\lambda_{j,n_p}$, $\ttt_{j,p}=t_{j,n_p}-\lambda_{1,n_p}s_p$ and $\tx_{j,p}=x_{j,n_p}$, and $\tU_L^j$ is defined in Step 2, \eqref{prof_2} and \eqref{prof_3}. In this case, by \eqref{F14'}, we see that \eqref{F5} holds and
$$\lim_{p\to\infty}  \frac{-\ttt_{j,p}}{\tlambda_{j,p}}=\lim_{p\to\infty}\theta_{p}^j=\theta^j.$$
Combining with the definition of $\tU^j_L$ and $\tU^j$, we obtain immediately $\tU^j=U^j$.
\end{itemize}

\EMPH{Step 5} We prove
\begin{equation}
 \label{F16}
\forall j\in \{1,\ldots, J_m\},\; \forall k\in \{1+J_m,\ldots, J_M\},\quad \left(\tU^j_L,\left\{\tlambda_{j,p},\ttt_{j,p}\right\}_p\right)\prec \left(\tU^k_L,\left\{\tlambda_{k,p},\ttt_{k,p}\right\}_p\right).
\end{equation} 
We let $j\in \{1,\ldots, J_m\}$, $k\in \{1+J_m,\ldots, J_M\}$, and 
recall that the nonlinear profile $\tU^j$ is the solution of \eqref{CP} with data $(V^j_0,V^j_1)$ at $t=0$. Since $U^j$ does not scatter forward in time, we deduce by \eqref{F11} that $\tU^j$ does not scatter forward in time. 

We recall from Step 3 the definition of $T_k$, and we note that $T_k<T_+(U^k)=T_+(\tU^k)$ since $U^k=\tU^k$. Furthermore
\begin{equation}
 \label{F17}
\frac{\tlambda_{k,p}T_k+\ttt_{k,p}}{\tlambda_{j,p}}=\frac{\lambda_{k,n_p}T_k+t_{k,n_p}-\lambda_{1,n_p}s_p}{\mu_{j,p}\lambda_{1,n_p}}=\frac{1}{\mu_{j,p}}\left(\frac{\lambda_{k,n_p}T_k+t_{k,n_p}}{\lambda_{1,n_p}}-s_p\right).
\end{equation} 
\EMPH{First case} $T_+(U^1)=+\infty$. By \eqref{F14} and \eqref{F17},
$$\frac{\tlambda_{k,p}T_k+\ttt_{k,p}}{\tlambda_{j,p}}\geq p\underset{p\rightarrow \infty}{\longrightarrow}+\infty,$$
which proves \eqref{F16} in this case.

\EMPH{Second case} $T_+(U^1)\in \RR$. By \eqref{F15} and \eqref{F17},
$$\frac{\tlambda_{k,p}T_k+\ttt_{k,p}}{\tlambda_{j,p}}\geq \frac{T_+(U^1)-s_p}{\mu_{j,p}}-\frac 1p.$$
We claim 
\begin{equation}
\label{F18}
 \liminf_{p\to\infty} \frac{T_+(U^1)-s_p}{\mu_{j,p}}\geq T_+(\tU^j),
\end{equation} 
which will prove \eqref{F16} in this case also. Let $T<T_+(\tU^j)$. Then  by \eqref{F11} and a standard continuity property of the flow of \eqref{CP}, $s_p+\mu_{j,p}T<T_+(U^1)$ for large $p$, i.e. $\frac{T_+(U^1)-s_p}{\mu_{j,p}}>T$ for large $p$. Hence $\liminf_{p\to\infty} \frac{T_+(U^1)-s_p}{\mu_{j,p}}\geq T$, which yields \eqref{F18} since $T<T_+(U^j)$ is arbitrary.

\EMPH{Step 6. End of the proof}
To conclude the proof of Lemma \ref{L:F1}, we must check that the profile decomposition $\left(\tU^j_L,\left\{\tlambda_{j,p},\tx_{j,p},\ttt_{j,p}\right\}_p\right)_{j\geq 1}$, satisfies \eqref{t_order}.

Since $\tU^j=U^j$ if $j\geq 1+J_{m}$, we know that $\tU^j$ scatters forward in time if and only if $j\geq 1+J_M$. Thus if $k\geq 1+J_{M}$ we have $(j)\preceq (k)$.  If $1+J_m\leq j\leq k\leq J_M$, then $\ttt_{j,p}-\ttt_{k,p}=t_{j,n_p}-t_{k,n_p}$, $\tU^j=U^j$ and $\tU^k=U^k$ by Step 4, and again $(j)\preceq (k)$. By Step 5, if $j\leq J_m$ and $k\geq J_{m}+1$, $(j)\prec (k)$. We deduce $\left\{t_{n_p}+s_p\lambda_{1,n_p}\right\}_p\in \SSS_2$ and $J_{1}\left(\left\{t_{n_p}+s_p\lambda_{1,n_p}\right\}_p\right)\leq J_m$. By the definition of $J_m$, we must have $J_{1}\left(\left\{t_{n_p}+s_p\lambda_{1,n_p}\right\}_p\right)= J_m$, which proves
$$ (1)\backsimeq \ldots \backsimeq (J_m),$$
concluding the proof of \eqref{t_order}.
\end{proof}
\subsubsection{Compactness property for the first profiles}
\begin{lemma}
 \label{L:DEM4'}
 Let $u$ be as in Theorem \ref{T:profiles}. There exists $\{t_n\}_n\in \SSS_3$, such that $\{\vu(t_n)\}_n$ has a well-ordered profile decomposition $\profiles$, such that \eqref{D34} holds, and, for all $j\in \{1,\ldots,J_{m}\}$, $U^j$ has the compactness property.
\end{lemma}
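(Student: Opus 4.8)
The plan is to start from the sequence $\{t_n\}_n\in\SSS_3$ and the well-ordered profile decomposition $\profiles$ produced by Lemma~\ref{L:DEM2} (for which \eqref{D34} holds) and, by Lemma~\ref{L:DEM4}, whose first profiles $U^1,\dots,U^{J_m}$ already have the \emph{forward}-in-time compactness property $K_+^j$. The idea is to use Lemma~\ref{L:F1} to replace $\{t_n\}_n$ by a new admissible sequence whose first profiles are, up to symmetries, ``frozen at the endpoint'' of the compact set $\overline{K_+^j}$, and then to check that these new profiles enjoy the \emph{full} compactness property.

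Concretely, first I would fix an increasing sequence $\{s_p\}_p$ in $[0,T_+(U^1))$ with $s_p\to T_+(U^1)$. After the preliminary normalizations of Lemma~\ref{L:DEM2} (time-translating the profiles with index $>J_m$) and a diagonal extraction in $p$, all hypotheses of Lemma~\ref{L:F1} are met: \eqref{F1} is \eqref{D34}, and \eqref{F2} holds with some $(V_0^j,V_1^j)\in\hdot\times L^2$ and parameters $\{\mu_{j,p}\}_p,\{y_{j,p}\}_p$ precisely because $U^j$ has the forward compactness property $K_+^j$; moreover $(V_0^j,V_1^j)\neq0$ since $0\notin\overline{K_+^j}$ (as $U^j$ does not scatter forward in time). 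Lemma~\ref{L:F1} then yields an increasing sequence $\{n_p\}_p$ such that $\{t_{n_p}+\lambda_{1,n_p}s_p\}_p\in\SSS_3$ has a well-ordered profile decomposition $\tprofilesp$ whose nonlinear profiles $\tU^j$ satisfy, for $1\le j\le J_m$, $\ttt_{j,p}=0$, $\tlambda_{j,p}=\lambda_{1,n_p}\mu_{j,p}$, $\vec{\tU}^j(0)=\vec{\tU}^j_L(0)=(V_0^j,V_1^j)$, and such that \eqref{t_order} holds. Since $(1)\backsimeq\dots\backsimeq(J_m)$, the ratios $\mu_{j,p}/\mu_{1,p}$ are bounded; after one more extraction in $p$ and a rescaling of the profiles $\tU^j_L$ (Lemma~\ref{L:modif_profile}) I may assume $\tlambda_{j,p}=\tlambda_{1,p}$ for $1\le j\le J_m$, hence $T_+(\tU^j)=T_+(\tU^1)$ by Claim~\ref{C:fact_preorder}. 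Thus the new sequence lies in $\SSS_3$ and satisfies \eqref{D34}, and it remains to prove that each $\tU^j$, $1\le j\le J_m$, has the compactness property.

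The forward half is then for free: the proof of Lemma~\ref{L:DEM4} uses only that the sequence belongs to $\SSS_3$ and satisfies \eqref{D34}, so it applies to $\{t_{n_p}+\lambda_{1,n_p}s_p\}_p$ and gives that the trajectory of $\tU^j$ over $[0,T_+(\tU^j))$ is relatively compact modulo scaling and translation. For the backward half I would argue by rescaling: the functions $v_p(t,x)=\mu_{j,p}^{\frac N2-1}U^j(s_p+\mu_{j,p}t,\mu_{j,p}x+y_{j,p})$ solve \eqref{CP} by scaling invariance, and by \eqref{F2} one has $\vec v_p(0)\to\vec{\tU}^j(0)$ strongly in $\hdot\times L^2$; hence, by continuity of the flow, $\vec v_p(t)\to\vec{\tU}^j(t)$ strongly for each $t\in I_{\max}(\tU^j)$. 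For $t\le0$, once $s_p+\mu_{j,p}t\in[0,T_+(U^j))$ for large $p$, the vector $\vec v_p(t)$ equals $\vec U^j(s_p+\mu_{j,p}t)$ up to a scaling and a translation, hence, by the forward compactness of $U^j$, lies modulo scaling and translation in the fixed compact set $\overline{K_+^j}$; letting $p\to\infty$, the same holds for $\vec{\tU}^j(t)$, so the backward trajectory of $\tU^j$ is relatively compact modulo scaling and translation. Combined with the forward half, this gives the compactness property for $\tU^j$ and finishes the proof.

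The main obstacle is precisely the backward half, namely controlling the range of $t<0$ for which $s_p+\mu_{j,p}t$ stays in $[0,T_+(U^j))$. When $u$, hence $U^1$, blows up in finite time this is clean: the concentration scale satisfies $\mu_{j,p}\to0$ while $s_p\to T_+(U^j)<\infty$, so $T_-(\tU^j)=-\infty$ and $s_p+\mu_{j,p}t\to T_+(U^j)^-$ for each fixed $t<0$, and the rescaling argument covers all of $(-\infty,0]$. In the global case one must instead use finite speed of propagation (as in the proof of Lemma~\ref{L:D5}) to see that $L:=\lim_p\mu_{j,p}/s_p\in[0,+\infty)$; when $L=0$ or $T_-(U^j)$ is finite this again covers exactly $(T_-(\tU^j),0]$, while the degenerate configuration $L>0$ with $T_-(U^j)=-\infty$ must be excluded by a separate short argument (using the non-scattering of $U^j$ and the impossibility of compactness modulo symmetries for free waves, which prevents the backward trajectory of $\tU^j$ from escaping $\overline{K_+^j}$). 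This case distinction, together with the bookkeeping needed to feed the data into Lemma~\ref{L:F1}, is where the real work lies.
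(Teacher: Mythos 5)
Your overall strategy is the paper's: start from the sequence and forward-compact profiles of Lemma \ref{L:DEM4}, evaluate at times $s_p\to T_+(U^1)$, extract limits $(V_0^j,V_1^j)$ from $\overline{K}_+^j$, and feed these into Lemma \ref{L:F1} to produce a new sequence in $\SSS_3$ whose first profiles have data $(V_0^j,V_1^j)$, then rescale to recover \eqref{D34}. The one place where you diverge is the final step, and it is also where your argument is incomplete: the paper does not re-derive compactness of the new profiles by hand, but invokes Claim \ref{C:compactness} (Appendix \ref{A:compactness}), which says precisely that if $U^j$ has the forward compactness property and $\vU^j(s_p)$ converges modulo symmetries to $(V_0^j,V_1^j)$, then the solution $V^j$ with that data has the \emph{full} compactness property. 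Step 1 of the proof of that claim is exactly the point you label ``the main obstacle'': one shows that for every $s\in(T_-(V^j),T_+(V^j))$ one has $s_p+\mu_{j,p}s\geq 0$ for large $p$, by contradiction — if not, $-s_p/\mu_{j,p}$ converges to some $\theta\in[s,0]\subset I_{\max}(V^j)$, continuity of the flow forces $\mu_{j,p}^{\frac N2}\nabla_{t,x}U^j(0,\mu_{j,p}\cdot+y_{j,p})\to\nabla_{t,x}V^j(\theta)\neq 0$, hence $\mu_{j,p}$ is bounded above and below, which is incompatible with $s_p+\mu_{j,p}s<0$ when $T_+(U^j)=+\infty$ (so $s_p\to\infty$) and with $\mu_{j,p}\to 0$ when $T_+(U^j)<\infty$. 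So your ``degenerate configuration'' ($L>0$ with infinite backward interval) is excluded by continuity of the flow plus the nonvanishing of $\vU^j(0)$ alone; neither non-scattering of $U^j$ nor any statement about compactness of free waves is needed, and your sketch of that exclusion would not go through as stated. With Claim \ref{C:compactness} in hand your forward half is also subsumed (its Step 2 treats $s\geq 0$ and $s<0$ uniformly), so there is no need to re-run the argument of Lemma \ref{L:DEM4} on the new sequence.
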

\begin{proof}
Let $\{t_n\}_n\in \SSS_3$ and the profile decomposition $\profiles$ of $\{\vu(t_n)\}_n$ be given by Lemma \ref{L:DEM4}. We let $\{s_p\}_p$ be a sequence converging to $T_+(U^1)$. Using the compactness of $\overline{K}_+^j$, $j=1,\ldots, J_m$, we can extract subsequence a subsequence of $\{s_p\}_p$ (still denoted by $\{s_p\}_p$) so that for all $j\in \{1,\ldots,J_m\}$, there exists $(V_0^j,V_1^j)\in \hdot\times L^2$ with
\begin{equation*}
\lim_{p\to\infty} \left\|\nabla_{t,x}U^j(s_p)-\frac{1}{\mu_j^{\frac N2}(s_p)} \left(\nabla V_0^j,V_1^j\right)\left( \frac{\cdot-y_j(s_p)}{\mu_j(s_p)} \right)\right\|_{(L^2)^{N+1})}=0.
\end{equation*} 
Let $V^j$ be the solution of \eqref{CP} with initial data $(V_0^j,V_1^j)$. By Claim \ref{C:compactness} in Appendix \ref{A:compactness}, $V^j$ has the compactness property. By Lemma \ref{L:F1}, extracting subsequences in $p$, we can find a sequence $\{n_p\}_p$ of indices such that $\left\{t_{n_p}+s_p\lambda_{1,n_p}\right\}_p\in \SSS_3$, and $\left\{\vu(t_{n_p}+s_p\lambda_{1,n_p})\right\}_p$ has a profile decomposition $\tprofilesp$ such that 
\begin{gather*}
j=1,\ldots, J_m\Longrightarrow \tU^j=V^j ,\quad \forall p,\; \ttt_{j,p}=0\\
(1)\backsimeq\ldots \backsimeq (J_m)\prec (1+J_m)\preceq (2+J_m)\preceq\ldots.
\end{gather*} 
As in the proof of Lemma \ref{L:DEM2}, we can rescale the profiles $\tU^j$, $j=1,\ldots, J_m$ so that \eqref{D34} holds, which concludes the proof of Lemma \ref{L:DEM4'}.
\end{proof}
\subsubsection{Weak convergence to the stationary solutions}
\begin{lemma}
 \label{L:DEM5}
 Let $u$ be as in Theorem \ref{T:profiles}. There exists $\{t_n\}_n\in \SSS_3$, such that $\{\vu(t_n)\}_n$ has a well-ordered profile decomposition $\profiles$, such that \eqref{D34} holds, and, for all $j\in \{1,\ldots,J_{m}\}$, there exists $\vell_j\in B^N$, $Q^j\in \Sigma$ such that $\vU^j_L(0)=\vQ^j_{\vell_j}(0)$. In particular, $T_+(U^j)=+\infty$, $j=1,\ldots, J_m$.
\end{lemma}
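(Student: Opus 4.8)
The plan is to start from the sequence $\{t_n\}_n\in\SSS_3$ and the profile decomposition furnished by Lemma \ref{L:DEM4'}, and to upgrade the profiles $U^1,\dots,U^{J_m}$ to solitary waves \emph{one at a time}, using Proposition \ref{P:compact} (to produce a solitary profile) together with Lemma \ref{L:F1} (to absorb it into a new admissible time sequence). Precisely, I will prove by finite induction on $i\in\{0,1,\dots,J_m\}$ the statement $(\mathcal H_i)$: \emph{there exist $\{t_n\}_n\in\SSS_3$ and a well-ordered profile decomposition $\profiles$ of $\{\vu(t_n)\}_n$, with nonlinear profiles $U^j$, such that \eqref{D34} holds, $U^j=Q^j_{\vell_j}$ for some $Q^j\in\Sigma$ and $\vell_j\in B^N$ when $1\le j\le i$, and $U^j$ has the compactness property when $i<j\le J_m$.} The case $i=0$ is exactly Lemma \ref{L:DEM4'}, and $(\mathcal H_{J_m})$ is the assertion of the lemma (solitary waves being global, $T_+(U^j)=+\infty$ follows). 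So it suffices to establish $(\mathcal H_i)\Rightarrow(\mathcal H_{i+1})$.

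For the inductive step, start from $\{t_n\}_n$, $\profiles$ and $U^j$ as in $(\mathcal H_i)$. After time-translating the profiles $U^j$ with $j\ge 1+J_m$ (Lemma \ref{L:modif_profile}, which by Lemma \ref{L:intrinsec} keeps the decomposition well-ordered and changes nothing for $j\le J_m$), we may assume, as in \eqref{DEM38}, that for each $j\ge 1+J_m$ either $-t_{j,n}/\lambda_{j,n}\to\pm\infty$ or $t_{j,n}=0$ for all $n$. Since $U^{i+1}$ has the compactness property, Proposition \ref{P:compact} provides an increasing sequence $\{s_p\}_p$ in $[0,T_+(U^{i+1}))=[0,T_+(U^1))$ with $s_p\to T_+(U^1)$, an element $Q^{i+1}\in\Sigma$, a vector $\vell_{i+1}\in B^N$, and sequences $\{\mu_{i+1,p}\}_p$, $\{y_{i+1,p}\}_p$ such that assumption \eqref{F2} of Lemma \ref{L:F1} holds for $j=i+1$ with $(V_0^{i+1},V_1^{i+1})=\vQ^{i+1}_{\vell_{i+1}}(0)$. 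For $1\le j\le i$ the profile $U^j=Q^j_{\vell_j}$ is a solitary wave, so by \eqref{travelling_wave} one has $\vU^j(s_p)=\bigl(Q^j_{\vell_j}(0,\cdot-s_p\vell_j),\partial_tQ^j_{\vell_j}(0,\cdot-s_p\vell_j)\bigr)$, and \eqref{F2} holds trivially for such $j$ with $\mu_{j,p}=1$, $y_{j,p}=s_p\vell_j$, $(V_0^j,V_1^j)=\vQ^j_{\vell_j}(0)$. For $i+1<j\le J_m$, the compactness of $\overline{K_+^j}$ (Lemma \ref{L:DEM4}) lets us pass to a subsequence of $\{s_p\}_p$, common to all these $j$, along which the modulated states converge in $\hdot\times L^2$ to some $(V_0^j,V_1^j)$, so that \eqref{F2} again holds. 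A further diagonal extraction secures the limits \eqref{F3}. Now apply Lemma \ref{L:F1}: we obtain $\{n_p\}_p$ with $\{t_{n_p}+\lambda_{1,n_p}s_p\}_p\in\SSS_3$ and a well-ordered profile decomposition satisfying \eqref{t_order}, whose nonlinear profiles are: for $1\le j\le J_m$, the solution $\tU^j$ of \eqref{CP} with data $(V_0^j,V_1^j)$ at $t=0$ (together with $\ttt_{j,p}=0$, $\tlambda_{j,p}=\lambda_{1,n_p}\mu_{j,p}$), and $\tU^j=U^j$ for $j>J_m$. By uniqueness for \eqref{CP}, $\tU^j=Q^j_{\vell_j}$ is a solitary wave for $1\le j\le i+1$, while $\tU^j$ has the compactness property for $i+1<j\le J_m$ by Claim \ref{C:compactness}.

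It remains to restore \eqref{D34}. Since the new decomposition satisfies $(1)\backsimeq\dots\backsimeq(J_m)$ with $\ttt_{j,p}=0$ for $j\le J_m$, Claim \ref{C:fact_preorder} gives, after extraction, $\tlambda_{j,p}/\tlambda_{1,p}\to c_j\in(0,\infty)$ and either all $T_+(\tU^j)=+\infty$ or $T_+(\tU^j)=c_jT_+(\tU^1)<\infty$; as $\tU^1=Q^1_{\vell_1}$ is global, the first alternative holds. Arguing then exactly as in Case 1 of the proof of Lemma \ref{L:DEM2}, after rescaling the profiles $\tU^j_L$ ($2\le j\le J_m$) by the factors $c_j$ and replacing $\tlambda_{j,p}$ by $\tlambda_{1,p}$ (Lemma \ref{L:modif_profile}), we may assume \eqref{D34}; this rescaling replaces each $Q^j$ ($j\le i+1$) by a rescaled element of $\Sigma$ with unchanged $\vell_j$, preserves the compactness property of $\tU^j$ ($i+1<j\le J_m$), well-orderedness (Lemma \ref{L:intrinsec}) and membership in $\SSS_3$. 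This yields $(\mathcal H_{i+1})$, and applying the step $J_m$ times proves the lemma. The main point — and the reason the successive applications of Lemma \ref{L:F1} do not interfere — is that a solitary wave has the (trivial) compactness property with a single-point orbit: by \eqref{travelling_wave} the appropriately modulated state of an already-upgraded profile is \emph{constant} along any time sequence, hence is reproduced unchanged (up to the usual scaling) by the next application of Lemma \ref{L:F1}; everything else is the bookkeeping already carried out in Lemmas \ref{L:DEM2} and \ref{L:DEM4'}.
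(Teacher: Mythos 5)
Your proposal is correct and follows essentially the same route as the paper: the paper treats the first profile via Proposition \ref{P:compact} and Lemma \ref{L:F1} and then says ``iterating this process,'' noting exactly your key point that already-converted solitary profiles are preserved thanks to \eqref{travelling_wave}. Your write-up merely makes the induction, the preservation of the compactness property for the not-yet-converted profiles (via Claim \ref{C:compactness}), and the restoration of \eqref{D34} explicit.
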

\begin{proof}
We start by proving that there exists a sequence $\{t_n\}_n\in \SSS_3$ such that $\{\vu(t_n)\}_n$ has a profile decomposition $\profiles$, such that \eqref{D34} holds, and  there exists $\vell_1\in B^N$, $Q^1\in \Sigma$ such that $\vU^1_L(0)=\vQ^1_{\vell_1}(0)$.

Let $\{t_n\}_n\in \SSS_3$ and the profile decomposition $\profiles$ of $\{\vu(t_n)\}_n$ be given by Lemma \ref{L:DEM4'}. 

Since $U^1$ has the compactness property, there exists, by Proposition \ref{P:compact}, a sequence $\{s_p\}_p$ in $[0,T_+(U^1))$, $Q^1\in \Sigma$ and $\vell_1\in B^N$ such that 
$$\lim_{p\to\infty} s_p=T_+(U^1).$$
and
\begin{equation*}
\lim_{p\to\infty} \left\|\nabla_{t,x}U^1(s_p)-\frac{1}{\mu_1^{\frac N2}(s_p)} \nabla_{t,x}Q^1_{\vell_1}\left(0, \frac{\cdot-y_1(s_p)}{\mu_1(s_p)} \right)\right\|_{(L^2)^{N+1}}=0,
\end{equation*} 
where $y_1(t)$, $\mu_1(t)$ are the parameters appearing in the definition of the compactness property. Extracting again subsequences in $p$, we can assume that for all $j=2,\ldots, J_m$, there exists $(V^j_0,V^j_1)$ such that 
\begin{equation*}
\lim_{p\to\infty} \left\|\nabla_{t,x}U^j(s_p)-\frac{1}{\mu_j^{\frac N2}(s_p)} \left(\nabla V_0^j,V_1^j\right)\left( \frac{\cdot-y_j(s_p)}{\mu_j(s_p)} \right)\right\|_{(L^2)^{N+1}}=0.
\end{equation*} 
Using Lemma \ref{L:F1} as in the proof of Lemma \ref{L:DEM4'}, we get exactly the property announced in the beginning of this proof. Iterating this process, we deduce the conclusion of Lemma \ref{L:DEM5}. Note that when iterating the process to show the case $j=2,\ldots, m$, the property $\vU^k_L(0)=\vQ_{\vell_k}^k(0)$ for $k\in\{1,\ldots j-1\}$ is not lost (see \eqref{travelling_wave}) .
\end{proof}
\subsubsection{Strong convergence in Strichartz norm}
\begin{lemma}
 \label{L:DEM6}
Let $u$ be as in Theorem \ref{T:profiles}. There exists $\{t_n\}_n\in \SSS_3$ satisfying the conclusion of Lemma \ref{L:DEM5} and such that for all $T>0$, $t_n+\lambda_{1,n}T<T_+(u)$ for large $n$ and
\begin{equation}
\label{CV_S}
\lim_{n\to\infty}\int_0^T\int_{\RR^N} \left|\lambda_{1,n}^{\frac{N}{2}-1}u\left(t_n+\lambda_{1,n}t,\lambda_{1,n}x\right)-\sum_{j=1}^{J_m}Q_{\vell_j}^{j}\left(t,x-\frac{x_{j,n}}{\lambda_{1,n}}\right)\right|^{\frac{2(N+1)}{N-2}}\,dx\,dt=0.
\end{equation} 
\end{lemma}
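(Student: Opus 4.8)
The plan is to run the nonlinear approximation of Proposition~\ref{P:Z4} along the sequence produced by Lemma~\ref{L:DEM5}, and then \emph{change} that sequence with Lemma~\ref{L:F1} in order to kill the Strichartz contribution of the forward‑scattering profiles. Concretely, start from $\{t_n\}_n\in\SSS_3$ and a well-ordered profile decomposition $\profiles$ as in Lemma~\ref{L:DEM5}: $U^j=Q^j_{\vell_j}$ with $T_+(U^j)=T_+(U^1)=+\infty$, $t_{j,n}=0$ and $\lambda_{j,n}=\lambda_{1,n}$ for $1\le j\le J_m$ (this is \eqref{D34}); $U^j$ does not scatter forward and $(1)\prec(j)$ for $J_m<j\le J_M$; $U^j$ scatters forward for $j>J_M$. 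Time-translating the profiles $U^j$, $j>J_M$, with finite $\sigma_j$ via Lemma~\ref{L:modif_profile} (replacing the corresponding $t_{j,n}$ by the constant sequence $0$), which by Lemma~\ref{L:intrinsec} preserves well-orderedness, I may assume in addition that for every $j\ge 1+J_m$ either $\lim_n -t_{j,n}/\lambda_{j,n}=\pm\infty$ or $t_{j,n}=0$ for all $n$. By scale-invariance of $S$, for a fixed $T>0$ the display \eqref{CV_S} is equivalent to $\lim_n\|u_n-\sum_{j=1}^{J_m}U^j_n\|_{S(0,\lambda_{1,n}T)}=0$, where $u_n$ solves \eqref{CP} with data $\vu(t_n)$ and $\lambda_{1,n}^{\frac N2-1}U^j_n(\lambda_{1,n}t,\lambda_{1,n}x)=Q^j_{\vell_j}(t,x-x_{j,n}/\lambda_{1,n})$ for $j\le J_m$. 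Since $0<\lambda_{1,n}T<\lambda_{1,n}T_+(U^1)$, Proposition~\ref{P:Z4} gives $u_n=\sum_{j=1}^JU^j_n+w^J_n+r^J_n$ on $[0,\lambda_{1,n}T]$ with $\lim_J\limsup_n\|r^J_n\|_{S(0,\lambda_{1,n}T)}=0$; using $\|w^J_n\|_{S(0,\lambda_{1,n}T)}\le\|w^J_n\|_{S(\RR)}$ and the Pythagorean expansion \eqref{Pyt_S} (which, by orthogonality of the parameters, also holds for the block $\{J_m+1,\dots,J\}$), the whole statement reduces to proving $\lim_n\|U^j_n\|_{S(0,\lambda_{1,n}T)}=0$ for each fixed $j>J_m$.

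For $J_m<j\le J_M$ this is handled directly. Write $\|U^j_n\|_{S(0,\lambda_{1,n}T)}=\|U^j\|_{S(a_n,b_n)}$ with $a_n=-t_{j,n}/\lambda_{j,n}$ and $b_n=a_n+T\lambda_{1,n}/\lambda_{j,n}$. Since $(1)\prec(j)$ and $T_+(U^1)=+\infty$, there is $T_j<T_+(U^j)$ with $(\lambda_{j,n}T_j+t_{j,n})/\lambda_{1,n}\to+\infty$; combined with the normalisation $t_{j,n}=0$ or $a_n\to-\infty$, a short computation shows that either $\lambda_{1,n}/\lambda_{j,n}\to0$ and $(a_n,b_n)$ collapses to a point of $I_{\max}(U^j)$, or $b_n\to-\infty$ while $U^j$ scatters backward in time; in either case $\|U^j\|_{S(a_n,b_n)}\to0$.

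The profiles $j>J_M$ are the obstruction: in the decomposition coming from Lemma~\ref{L:DEM5}, $\|U^j_n\|_{S(0,\lambda_{1,n}T)}$ need not vanish (e.g. if $\lambda_{j,n}=\lambda_{1,n}$, $x_{j,n}=t_{j,n}=0$), so the time origin must be moved. Pick an increasing $\{s_p\}_p$ with $s_p\to+\infty=T_+(U^1)$ and apply Lemma~\ref{L:F1} with the exact identifications $(V_0^j,V_1^j)=\vQ^j_{\vell_j}(0)$, $\mu_{j,p}=1$, $y_{j,p}=s_p\vell_j$ for $j\le J_m$ — legitimate since $\vQ^j_{\vell_j}(s_p)=\vQ^j_{\vell_j}(0,\cdot-s_p\vell_j)$ by \eqref{travelling_wave} — after arranging \eqref{F3} by a diagonal extraction. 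Lemma~\ref{L:F1} yields indices $\{n_p\}_p$ such that $\{t'_p\}_p:=\{t_{n_p}+\lambda_{1,n_p}s_p\}_p\in\SSS_3$, with a new well-ordered profile decomposition whose first $J_m$ nonlinear profiles are again $Q^j_{\vell_j}$ and satisfy \eqref{D34} (so the conclusion of Lemma~\ref{L:DEM5} persists with the same solitons), while for $j>J_m$ the profile is unchanged ($\tU^j=U^j$) but carries the shifted parameter $-\ttt_{j,p}/\tlambda_{j,p}\to\theta^j$. Because $s_p\to+\infty$, one checks case by case that for every $j>J_m$ one of the following occurs: $\theta^j=+\infty$ with $U^j$ scattering forward (the case $\theta^j=+\infty$ for $J_m<j\le J_M$ is excluded by $(1)\prec(j)$ as in the previous paragraph); or $\theta^j=-\infty$, which forces $\sigma_j=-\infty$ and hence backward scattering of $U^j$; or $\theta^j\in\RR$ together with $\lambda_{1,n_p}/\lambda_{j,n_p}\to0$. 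Since $\mu_{j,p}=1$ we have $\tlambda_{1,p}=\lambda_{1,n_p}$, so the rescaling relevant to \eqref{CV_S} is still $(t,x)\mapsto(\lambda_{1,n_p}t,\lambda_{1,n_p}x)$, and rerunning the argument of the second paragraph now gives $\lim_p\|\tU^j_p\|_{S(0,\lambda_{1,n_p}T)}=0$ for every $j>J_m$.

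Assembling: with $\{t'_p\}_p$ (relabelled $\{t_n\}_n$) the reduction step applies, and for each fixed $J$ the finite sum $\sum_{j=J_m+1}^J\tU^j_p$ has $S(0,\lambda_{1,n_p}T)$-norm tending to $0$ (each term does, and one invokes \eqref{Pyt_S}); combined with $\lim_J\limsup_p\|w^J_p\|_{S(\RR)}=0$ and $\lim_J\limsup_p\|r^J_p\|_{S(0,\lambda_{1,n_p}T)}=0$ this yields $\lim_p\|u_p-\sum_{j=1}^{J_m}\tU^j_p\|_{S(0,\lambda_{1,n_p}T)}=0$ for every $T>0$, which is \eqref{CV_S} after rescaling; and $\{t'_p\}_p\in\SSS_3$ with \eqref{D34}, so it meets the conclusion of Lemma~\ref{L:DEM5}. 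The hard part is the third paragraph: the forward-scattering profiles $j>J_M$ genuinely obstruct \eqref{CV_S} along the sequence from Lemma~\ref{L:DEM5}, and because $T_+(U^1)=+\infty$ forbids the usual trick of letting $T\uparrow T_+(U^1)$, one must translate the time origin along $t_n+\lambda_{1,n}s_n$ with $s_n\to\infty$; keeping the soliton profiles and membership in $\SSS_3$ under this move is exactly what Lemma~\ref{L:F1} is designed for, and the case analysis that every non-soliton profile is thereby pushed into its scattering regime (or has its time interval collapse) is where the defining properties $J_1(\{t_n\}_n)=J_m$ (i.e. $(1)\prec(1+J_m)$) and $T_+(U^1)=+\infty$ enter.
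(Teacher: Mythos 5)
Your strategy is the paper's: reduce, via Proposition \ref{P:Z4} and the Pythagorean expansion \eqref{Pyt_S}, to showing $\lim_n\|U^j_n\|_{S(0,\lambda_{1,n}T)}=0$ for each fixed $j>J_m$; dispose of $J_m<j\le J_M$ using the quantitative content of $(1)\prec(j)$ together with $T_+(U^1)=+\infty$; and shift the time origin by $\lambda_{1,n}s_p$ with $s_p\to\infty$ through Lemma \ref{L:F1} to push the remaining profiles into a harmless regime. The case analysis on $\theta^j$ is also the paper's (its sets $G_1,G_2,G_3,B_1,B_2$), and your identification of the admissible data $(V^j_0,V^j_1)=\vQ^j_{\vell_j}(0)$, $\mu_{j,p}=1$, $y_{j,p}=s_p\vell_j$ is exactly right.

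There is, however, a genuine gap in the case $\theta^j=-\infty$ (i.e.\ $\sigma_j=-\infty$ and $t_{j,n}/\lambda_{1,n}\to+\infty$, which forces $j>J_M$). Writing $\|\tU^j_p\|_{S(0,\tlambda_{1,p}T)}=\|U^j\|_{S(a_p,b_p)}$ with $a_p=-\ttt_{j,p}/\tlambda_{j,p}$ and $b_p=a_p+T\,\tlambda_{1,p}/\tlambda_{j,p}$, backward scattering of $U^j$ plus $a_p\to-\infty$ is not enough: you also need $b_p\to-\infty$, and since $b_p=\frac{\ttt_{j,p}}{\tlambda_{j,p}}\bigl(T\tlambda_{1,p}/\ttt_{j,p}-1\bigr)$ this amounts to $\ttt_{j,p}/\tlambda_{1,p}=t_{j,n_p}/\lambda_{1,n_p}-s_p\to+\infty$. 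That is a diagonal statement: knowing $t_{j,n}/\lambda_{1,n}\to\infty$ for each fixed $j$ gives nothing unless $n_p$ is forced to be large relative to $s_p$ (if $t_{j,n_p}/\lambda_{1,n_p}\in[s_p,s_p+T]$ the interval $(a_p,b_p)$ can contain a fixed neighborhood of $0$ and the norm does not vanish). ``Rerunning the argument of the second paragraph'' does not supply this, because the inequality used there --- existence of $T_j<T_+(U^j)$ with $(\lambda_{j,n}T_j+t_{j,n})/\lambda_{1,n}\to+\infty$ --- is extracted from the failure of \eqref{Z7} for the pair $(j,1)$, which is available only when $U^j$ does not scatter forward, i.e.\ $j\le J_M$; for $j>J_M$ the relation $(1)\prec(j)$ holds for the trivial reason that $U^j$ scatters and carries no such quantitative information. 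The paper closes exactly this point by using the free sequence $\{m_p'\}$ in Lemma \ref{L:F1}: it prescribes $m_p'$ so that $t_{j,n}/\lambda_{1,n}\ge p^2$ for $n\ge m_p'$ and all relevant $j\le p$ (condition \eqref{COM49}), whence $\ttt_{j,p}/\tlambda_{1,p}\ge p^2-p\to\infty$. Your proof never invokes the $m_p'$-freedom, so this case is not actually covered; the remainder of the argument is sound.
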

\begin{proof}
 \EMPH{Step 1}
We prove that there exists a sequence $\{\ttt_n\}\in \SSS_3$, such that $\left\{\vu(\ttt_n)\right\}_n$ has a profile decomposition $\tprofiles$ satisfying the conclusion of Lemma \ref{L:DEM5} and such that for all $j\geq J_{M}+1$, one of the following holds:
\begin{gather}
\label{COM46}
\lim_{n\to\infty} \frac{-\ttt_{j,n}}{\tlambda_{j,n}}=\lim_{n\to\infty} \frac{-\ttt_{j,n}}{\tlambda_{1,n}}=-\infty\\
\label{COM47}
\lim_{n\to\infty} \frac{-\ttt_{j,n}}{\tlambda_{j,n}}\in \RR\text{ and }\lim_{n\to\infty} \frac{\tlambda_{1,n}}{\tlambda_{j,n}}=0\quad \text{or}\\
\label{COM48}
\lim_{n\to\infty} \frac{-\ttt_{j,n}}{\tlambda_{j,n}}=+\infty.
\end{gather}  
For this, we let $\{t_n\}_n\in \SSS_3$ and $\profiles$ the well-ordered profile decomposition of $\{\vu(t_n)\}_n$  given by Lemma \ref{L:DEM5}. We write $\{j\in \NN,\; j\geq 1+J_M\}$ as the disjoint union $G_1\cup G_2 \cup G_3 \cup B_1\cup B_2$, where 
\begin{align*}
 j\in G_1 &\iff \lim_{n\to\infty} \frac{-t_{j,n}}{\lambda_{j,n}}=\lim_{n\to\infty} \frac{-t_{j,n}}{\lambda_{1,n}}=-\infty \\
j\in G_2&\iff \lim_{n\to\infty} \frac{-t_{j,n}}{\lambda_{j,n}}\in \RR\text{ and }\lim_{n\to\infty} \frac{\lambda_{1,n}}{\lambda_{j,n}}=0 
\\
j\in G_3&\iff \lim_{n\to\infty} \frac{-t_{j,n}}{\lambda_{j,n}}=+\infty\\
j\in B_1&\iff \lim_{n\to\infty}\frac{-t_{j,n}}{\lambda_{j,n}}=-\infty \text{ and }\lim_{n\to \infty}\frac{t_{j,n}}{\lambda_{1,n}}\in [0,+\infty)\\
j\in B_2&\iff \lim_{n\to\infty}\frac{-t_{j,n}}{\lambda_{j,n}}\in \RR \text{ and }\lim_{n\to \infty}\frac{\lambda_{1,n}}{\lambda_{j,n}}\in (0,+\infty].
\end{align*}
Extracting subsequences, we can always assume that the limits appearing in the preceding definitions exist. If $\lim_n -t_{j,n}/\lambda_{j,n}=-\infty$, then $t_{j,n}$ must be positive for large $n$, which shows that $\lim_{n} t_{j,n}/\lambda_{1,n}\in [0,+\infty]$, and thus $j\in G_1\cup B_1$. This proves (after extraction) that the equality
$$\{j\in \NN,\; j\geq 1+J_M\}=G_1\cup G_2 \cup G_3 \cup B_1\cup B_2$$
holds.

If $p\geq 1$, we let $m_p'$ be an integer such that
\begin{equation}
 \label{COM49}
\left(j\in G_1,\; j\leq p\text{ and } n\geq m_p'\right)\Longrightarrow \frac{t_{j,n}}{\lambda_{1,n}}\geq p^2.
\end{equation} 
We will use Lemma \ref{L:F1} with $s_p=p$. In particular,
\begin{equation}
\label{thetapj}
\theta_p^j=\lim_{n\to\infty} \frac{-t_{j,n}+p\lambda_{1,n}}{\lambda_{j,n}} 
\end{equation} 
Let $j=1,\ldots, J_{m}$ and $\ell_j=|\vell_j|$. By \eqref{defQl},
$$U^j(t,x)=Q_{\vell_j}^j(t,x)=Q^j_{\vell_j}(0,x-t\vell_j).$$
Thus \eqref{F2} holds with $s_p=p$, $(V_0^j,V_1^j)=\vQ^j_{\vell}(0)$, $\mu_{j,p}=1$, $y_{j,p}=p\vell_j$. Furthermore using \eqref{thetapj}, the following facts are easy to check:
\begin{align*}
j\in G_1&\Longrightarrow \forall p,\; \theta_p^j=\theta^j=-\infty,& j\in G_2 &\Longrightarrow \forall p,\; \theta_p^j=\theta^j=\lim_{n\to\infty} \frac{-t_{j,n}}{\lambda_{j,n}}\in \RR\\
j\in G_3&\Longrightarrow \forall p,\; \theta_p^j=\theta^j=+\infty, & j\in B_1&\Longrightarrow \exists p_j\text{ s.t. }\forall p\geq p_j,\; \theta_p^j=\theta^j=+\infty\\
j\in B_2&\Longrightarrow \theta^j=+\infty.&&
\end{align*} 
By Lemma \ref{L:F1}, there exists an increasing sequence $\{n_p\}_p$ of indices such that $n_p\geq m_p$ and 
$\left\{\vu(t_{n_p}+p\lambda_{1,n_p})\right\}_p$ has a profile decomposition $\left(\tU_L^j,\left\{\tlambda_{j,p},\tx_{j,p},\ttt_{j,p}\right\}_p\right)$, with the property that 
$$ \forall j\geq 1+J_{m},\quad \lim_{p\to\infty}\frac{-\ttt_{j,p}}{\tlambda_{j,p}}=\theta^j,\quad \tlambda_{j,p}=\lambda_{j,n_p},\quad \tx_{j,p}=x_{j,n_p},\quad \ttt_{j,p}=t_{j,n_p}-\lambda_{1,n_p}p.$$
Denote by $\tG_1,\tG_2,\tG_3,\tB_1,\tB_2$ the analogs of $G_1,G_2,G_2,B_1,B_2$ for this profile decomposition. We will prove that $\tB_1$ and $\tB_2$ are empty, i.e. that for all $j\geq 1+J_M$, \eqref{COM46}, \eqref{COM47} or \eqref{COM48} holds. We let $j\geq 1+J_M$ and distinguish three cases.

If $j\in G_1$. Then $\lim_{p\to\infty}\frac{-\ttt_{j,p}}{\tlambda_{j,p}}=\theta^j=-\infty$, and, by \eqref{COM49},
$$ \frac{\ttt_{j,p}}{\tlambda_{1,p}}=\frac{t_{j,n_p}-p\lambda_{1,n_p}}{\lambda_{1,n_p}}=\frac{t_{j,n_p}}{\lambda_{1,n_p}}-p\geq p^2-p\underset{p\to \infty}{\longrightarrow}+\infty.$$
Thus $j\in \tG_1$. 

If $j\in G_2$. Then $\lim_{p} \frac{-\ttt_{j,p}}{\tlambda_{1,p}}=\theta_j \in \RR$ and $\lim_{p}\frac{\tlambda_{1,p}}{\tlambda_{j,p}}=\lim_{p}\frac{\lambda_{1,n_p}}{\lambda_{j,n_p}}=0$. Thus $j\in \tG_2$. 

If $j\in G_3\cup B_1\cup B_2$, then $\lim_{p} \frac{-\ttt_{j,p}}{\tlambda_{1,p}}=\theta^j=+\infty$ and thus $j\in \tG_3$. 

Hence $\tB_1=\tB_2=\emptyset$ which proves the announced result.

\EMPH{Step 2} We prove that the sequence $\{\ttt_n\}_n$ constructed in the preceding step satisfies the conclusion of the lemma. To lighten notations, we will drop the tildas, denoting this sequence $\{t_n\}_n$ and the profile decomposition $\profiles$. We note that by Lemma \ref{L:F1}, 
$$ \forall j\in \{1,\ldots,J_{1,m}\},\; \forall n,\quad  t_{j,n}=0\text{ and }(1)\backsimeq (j).$$
We can thus rescale the profiles $U^j_L$, $j=1,\ldots, J_m$ as in the proof of Lemma \ref{L:DEM2}, so that \eqref{D34} holds. We let $T>0$. By Proposition \ref{P:Z4},
\begin{multline*}
 \lambda_{1,n}^{\frac{N}{2}-1}u\left(t_n+\lambda_{1,n}t,\lambda_{1,n}x\right)=\sum_{j=1}^{J_m} Q_{\vell_j}^j \left(t,x-\frac{x_{j,n}}{\lambda_{1,n}}\right)+\sum_{j=1+J_m}^J\lambda_{1,n}^{\frac{N}{2}-1} U_n^j(\lambda_{1,n}t,\lambda_{1,n}x)+\tw_n^J,
\end{multline*}
where 
$$ \lim_{J\to\infty}\lim_{n\to\infty} \left\|\tw_n^J\right\|_{S(0,T)}=0.$$
We are thus reduced to prove that for all $j\geq 1+J_m$, 
\begin{equation}
 \label{COM50}
\lim_{n\to\infty}\int_0^T \int_{\RR^N}\left|\lambda_{1,n}^{\frac{N}{2}-1}U_n^j\left(\lambda_{1,n}t,\lambda_{1,n}x\right)\right|^{\frac{2(N+1)}{N-2}}\,dx\,dt=0.
\end{equation} 
Note that the integral term in \eqref{COM50} equals
\begin{equation}
 \label{COM51}
\int_{-\frac{t_{j,n}}{\lambda_{j,n}}}^{\frac{\lambda_{1,n}T-t_{j,n}}{\lambda_{j,n}}} \int_{\RR^N} \left|U^j(t,x)\right|^{\frac{2(N+1)}{N-2}} \,dx\,dt.
\end{equation} 
We distinguish two cases.

\EMPH{Case 1} $1+J_m\leq j\leq J_M$. Since $(1)\prec (j)$ and $T_+(U^1)=+\infty$, there exists $\theta<T_+(U^j)$ such that
\begin{equation}
 \label{COM52}
\lim_{n\to\infty} \frac{\lambda_{j,n}\theta+t_{j,n}}{\lambda_{1,n}}=+\infty.
\end{equation} 
If $\lim_n\lambda_{j,n}/\lambda_{1,n}=+\infty$, then the size of the time interval in \eqref{COM51} goes to $0$ as $n$ goes to infinity, and we obtain that \eqref{COM50} holds. If not, then (extracting in $n$ if necessary), we get that $\lambda_{j,n}/\lambda_{1,n}$ is bounded, and \eqref{COM52} implies 
\begin{equation*}
 \forall \theta'\in\RR,\quad 
\lim_{n\to\infty} \frac{\lambda_{j,n}\theta'+t_{j,n}}{\lambda_{1,n}}=+\infty.
\end{equation*} 
Fixing $\theta'\in (T_-(U^j),T_+(U^j))$, we deduce that for large $n$
$$ \frac{\lambda_{1,n}T-t_{j,n}}{\lambda_{j,n}}\leq \frac{\lambda_{1,n}\left(\frac{\lambda_{j,n}\theta'+t_{j,n}}{\lambda_{1,n}}\right)-t_{j,n}}{\lambda_{j,n}}=\theta'.$$
Thus for large $n$,
$$\int_{-\frac{t_{j,n}}{\lambda_{j,n}}}^{\frac{\lambda_{1,n}T-t_{j,n}}{\lambda_{j,n}}} \int_{\RR^N} \left|U^j(t,x)\right|^{\frac{2(N+1)}{N-2}} \,dx\,dt
\leq \int_{-\frac{t_{j,n}}{\lambda_{j,n}}}^{\theta'} \int_{\RR^N} \left|U^j(t,x)\right|^{\frac{2(N+1)}{N-2}} \,dx\,dt,$$
which proves, since $\theta'$ is arbitrary in $(T_-(U^j),T_+(U^j))$, that \eqref{COM50} holds. 

\EMPH{Case 2} $j\geq 1+J_M$. Then (with the notations of Step 1), $j\in G_1\cup G_2\cup G_3$. Using:
\begin{align*}
j\in G_1&\Longrightarrow \lim_{n\to\infty}\frac{\lambda_{1,n}T-t_{j,n}}{\lambda_{j,n}}=-\infty\\
j\in G_2&\Longrightarrow \lim_{n\to\infty}\frac{\lambda_{1,n}}{\lambda_{j,n}}=0\\
j\in G_3&\Longrightarrow \lim_{n\to\infty}\frac{-t_{j,n}}{\lambda_{j,n}}=+\infty,
\end{align*}
we obtain that the integral \eqref{COM51} goes to $0$ as $n$ goes to infinity, i.e. that \eqref{COM50} holds. The proof is complete.
\end{proof}
\begin{remark}
 In Step 1 of the proof of Lemma \ref{L:DEM6}, we have used in a crucial manner that the profiles $U^j=Q_{\ell_j}^j$, $j=1,\ldots,J_m$ are globally defined for positive time. 
\end{remark}

\subsubsection{Strong local convergence in the energy space}
We conclude the proof of Theorem \ref{T:profiles} by the following Lemma:
\begin{lemma}
 \label{L:DEM7}
Let $u$ be as in Theorem \ref{T:profiles}. There exists $\{t_n\}_n\in \SSS_3$ satisfying the conclusion of Lemma \ref{L:DEM6} and such that furthermore
$$ \forall j\in \{1,\ldots,J_m\},\; \forall R>0,\quad \lim_{n\to\infty} \int_{|x|\leq R} \left|\lambda_{1,n}^{\frac{N}{2}}\nabla_{t,x}u(t_n,\lambda_{1,n}x+x_{j,n})-\nabla_{t,x}Q^j_{\vell_j}(0,x)\right|^2\,dx=0.$$
\end{lemma}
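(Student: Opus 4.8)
The plan is to prove that the sequence $\{t_n\}_n\in\SSS_3$ and the well-ordered profile decomposition $\profiles$ furnished by Lemma \ref{L:DEM6} \emph{already} satisfy the stronger conclusion, with no further adjustment of the times. Recall that for $j\le J_m$ one has $U^j=Q^j_{\vell_j}$, $\lambda_{j,n}=\lambda_{1,n}$, $t_{j,n}=0$, $T_+(U^j)=+\infty$, that \eqref{CV_S} holds, and that the profiles with index $j>J_M$ have been sorted into $G_1\cup G_2\cup G_3$ as in Step~1 of the proof of Lemma \ref{L:DEM6}. Fix $j_0\in\{1,\dots,J_m\}$, $R>0$, $T>0$, and set
\[
v_n(t,x)=\lambda_{1,n}^{\frac N2-1}u\left(t_n+\lambda_{1,n}t,\lambda_{1,n}x\right)-\sum_{j=1}^{J_m}Q^j_{\vell_j}\left(t,x-\frac{x_{j,n}}{\lambda_{1,n}}\right),\qquad t\in[0,T].
\]
By \eqref{CV_S}, $\|v_n\|_{S(0,T)}\to0$. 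Each translate $Q^j_{\vell_j}(\cdot,\cdot-x_{j,n}/\lambda_{1,n})$ solves \eqref{CP} with $S(0,T)$ norm bounded uniformly in $n$; the points $x_{j,n}$, $j\le J_m$, are pairwise separated at scale $\lambda_{1,n}$, which is forced by \eqref{orthogonal} because the scaling parameters all equal $\lambda_{1,n}$; and each $Q^j_{\vell_j}$ decays in space uniformly on $[0,T]$. Consequently $v_n$ solves \eqref{CP} with a forcing term whose dual Strichartz norm on $[0,T]\times\RR^N$ tends to $0$: the difference of the two nonlinearities is bounded by $\|v_n\|_{S(0,T)}$ times bounded factors, and the cross terms in the difference $\bigl|\sum_j Q^j_n\bigr|^{\frac4{N-2}}\sum_j Q^j_n-\sum_j|Q^j_n|^{\frac4{N-2}}Q^j_n$ vanish by the separation. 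The perturbative and Duhamel estimates for \eqref{CP} then give $\sup_{t\in[0,T]}\bigl\|(v_n,\partial_tv_n)(t)-\vS_L(t)(v_n(0),\partial_tv_n(0))\bigr\|_{\hdot\times L^2}\to0$, hence $\bigl\|S_L(\cdot)(v_n(0),\partial_tv_n(0))\bigr\|_{S(0,T)}\to0$.

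Next I would record that the pair $(g_{0,n},g_{1,n})$, where $g_{0,n}=v_n(0,\cdot+x_{j_0,n}/\lambda_{1,n})$ and $g_{1,n}=\partial_tv_n(0,\cdot+x_{j_0,n}/\lambda_{1,n})$, is \emph{weakly null} in $\hdot\times L^2$: by Remark \ref{R:weak}, Lemma \ref{L:DEM5} and $t_{j_0,n}=0$, $\lambda_{j_0,n}=\lambda_{1,n}$, the modulated data $\bigl(\lambda_{1,n}^{\frac N2-1}u(t_n,\lambda_{1,n}\cdot+x_{j_0,n}),\lambda_{1,n}^{\frac N2}\partial_tu(t_n,\lambda_{1,n}\cdot+x_{j_0,n})\bigr)$ converges weakly to $\vQ^{j_0}_{\vell_{j_0}}(0)$, while for $j\le J_m$ with $j\neq j_0$ the wave $Q^j_{\vell_j}$ appears in the $j_0$-frame translated by $(x_{j_0,n}-x_{j,n})/\lambda_{1,n}$, whose norm tends to $\infty$, and hence goes weakly to $0$; subtracting gives $(g_{0,n},g_{1,n})\rightharpoonup 0$. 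Since the space $S$ is invariant under space--time translations and parabolic dilations, the first step also gives $\|S_L(\cdot)(g_{0,n},g_{1,n})\|_{S(0,T)}\to0$. The proof is thereby reduced to the following \emph{free-wave fact}, which I expect to be the main obstacle: \emph{if $\{(h_{0,n},h_{1,n})\}_n$ is bounded in $\hdot\times L^2$ and $\|S_L(\cdot)(h_{0,n},h_{1,n})\|_{S(0,T)}\to0$ for some $T>0$, then $\|h_{1,n}\|_{L^2(B_R)}+\|\nabla h_{0,n}\|_{L^2(B_R)}\to0$ for every $R>0$.}

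To establish this fact I would take a profile decomposition $\bigl(H^k_L,\{\mu_{k,n},z_{k,n},\theta_{k,n}\}_n\bigr)_{k\ge1}$ of the sequence $\{(h_{0,n},h_{1,n})\}_n$. By the (almost) orthogonality of the Strichartz norm in a profile decomposition, the hypothesis forces $\|H^k_L\|_{S\left(-\theta_{k,n}/\mu_{k,n},\,(T-\theta_{k,n})/\mu_{k,n}\right)}\to0$ for every $k$ with $H^k_L\neq0$; since a nonzero free wave has nonzero Strichartz norm on every time half-line, this is possible only if $\mu_{k,n}\to+\infty$ or $|\theta_{k,n}|/\mu_{k,n}\to+\infty$, i.e.\ the profile is infinitely concentrated or lives at times escaping $[0,T]$. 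In either case the $L^2(B_R)$ energy of $(\partial_t,\nabla)$ of the rescaled and translated data of $H^k_L$ at $t=0$ tends to $0$: if $\mu_{k,n}\to\infty$ the corresponding ball, in the profile's own variables, has radius $R/\mu_{k,n}\to0$, while if $\mu_{k,n}$ stays bounded the evaluation time $-\theta_{k,n}/\mu_{k,n}$ diverges; in both cases one concludes from the equi-absolute continuity of the family $\{|\nabla_{t,x}H^k_L(s,\cdot)|^2:s\in\RR\}$, which follows by approximating $H^k_L$ in $\hdot\times L^2$ by smooth compactly supported data and using the pointwise dispersive decay of $\nabla_{t,x}$ of a smooth free wave. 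For the remainder $w_n^K$, which is itself a free wave with $\lim_K\limsup_n\|w_n^K\|_{S(\RR)}=0$ and trivial profile decomposition, the analysis of the defect measures of $|\nabla_{t,x}w_n^K(0)|^2$ on $\overline{B_R}$ gives $\lim_K\limsup_n\bigl(\|\partial_tw_n^K(0)\|_{L^2(B_R)}+\|\nabla w_n^K(0)\|_{L^2(B_R)}\bigr)=0$: a surviving atomic part would be a concentration bubble and a surviving absolutely continuous part a bounded-frequency oscillation, each with Strichartz norm bounded below, contradicting the smallness of $\|w_n^K\|_{S(\RR)}$, whereas a ``spreading'' part is killed by the vanishing amplitude it must carry. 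Summing the finitely many profile contributions for fixed $K$ with the remainder contribution and letting $K\to\infty$ proves the fact; applied to $(h_{0,n},h_{1,n})=(g_{0,n},g_{1,n})$ it gives $\|g_{1,n}\|_{L^2(B_R)}+\|\nabla g_{0,n}\|_{L^2(B_R)}\to0$.

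Finally, for $j\le J_m$ with $j\neq j_0$, the function $\nabla_{t,x}Q^j_{\vell_j}\bigl(0,\cdot+(x_{j_0,n}-x_{j,n})/\lambda_{1,n}\bigr)$ is a fixed $L^2$ function translated to infinity, so its $L^2(B_R)$ norm tends to $0$; unwinding the definition of $v_n$ at $t=0$ and using the triangle inequality,
\begin{multline*}
\left\|\lambda_{1,n}^{\frac N2}\nabla_{t,x}u\left(t_n,\lambda_{1,n}\cdot+x_{j_0,n}\right)-\nabla_{t,x}Q^{j_0}_{\vell_{j_0}}(0)\right\|_{L^2(B_R)}\\
\le\|g_{1,n}\|_{L^2(B_R)}+\|\nabla g_{0,n}\|_{L^2(B_R)}+\sum_{\substack{j\le J_m\\ j\neq j_0}}\left\|\nabla_{t,x}Q^j_{\vell_j}\left(0,\cdot+\tfrac{x_{j_0,n}-x_{j,n}}{\lambda_{1,n}}\right)\right\|_{L^2(B_R)}\underset{n\to\infty}{\longrightarrow}0,
\end{multline*}
which is exactly the asserted convergence (it coincides with \eqref{CV_THM2} for $J=J_m$ and $\lambda_n=\lambda_{1,n}$). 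Since \eqref{CV_S} is \eqref{CV_THM1} and the separation $\left|x_{j,n}-x_{k,n}\right|/\lambda_{1,n}\to+\infty$ was noted above, this completes the proof of Theorem \ref{T:profiles}.
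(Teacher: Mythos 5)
There is a genuine gap: the ``free-wave fact'' to which you reduce the lemma is false, and it is false precisely at the point you flag as the main obstacle. Take $h_{0,n}(x)=n^{-1}\chi(x)\cos(nx_1)$ with $\chi\in C_0^\infty$, $\chi\not\equiv 0$ on $B_R$, and $h_{1,n}=0$. This sequence is bounded in $\hdot\times L^2$, all its profiles vanish (at the only admissible scale $\lambda_n\sim 1/n$ the rescaled data have amplitude $O(n^{-N/2})$ and converge weakly to $0$), and a stationary-phase computation shows $\|S_L(\cdot)(h_{0,n},h_{1,n})\|_{S(\RR)}\to 0$; yet $\int_{B_R}|\nabla h_{0,n}|^2\to\frac12\int_{B_R}|\chi|^2>0$. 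In your defect-measure analysis of the remainder $w_n^K(0)$, the surviving absolutely continuous part need not be a ``bounded-frequency oscillation with Strichartz norm bounded below'': an unbounded-frequency oscillation with no spatial concentration carries a nontrivial absolutely continuous defect measure while its Strichartz norm vanishes. This is exactly the well-known fact that vanishing of all profiles (equivalently of the $S(\RR)$ norm of a free wave) gives weak, but not local strong, convergence of the data in the energy space. Consequently your strategy of showing that the sequence from Lemma \ref{L:DEM6} \emph{already} satisfies the conclusion cannot work as written; the remainder and the profiles with $j>J_m$ may carry nonvanishing energy in a fixed ball around $x_{j_0,n}/\lambda_{1,n}$ at $t=0$.

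The paper's proof handles this by \emph{not} evaluating at $t=0$. It introduces $v_n$ as you do, shows $v_n\approx v_{L,n}:=S_L(\cdot)\vv_n(0)$ with $\|v_{L,n}\|_{S(0,T_n)}\to 0$ for some $T_n\to\infty$, and then invokes the local energy decay estimate \eqref{DEM59} (the time-integrated energy of a free wave in a moving ball $|x-\vell t|\le R$ is controlled by the initial energy). Since the time integral over $(0,\infty)$ of the localized energy of $v_{L,n}$ near the moving centers $x_{j,n}/\lambda_{1,n}+\vell_j t$ is uniformly bounded, Fatou's lemma produces times $s_p$ at which the localized energy is $\le 1/p$; Lemma \ref{L:F1} then replaces $t_n$ by $t_{n_p}+s_p\lambda_{1,n_p}$, staying in $\SSS_3$ and preserving the conclusion of Lemma \ref{L:DEM6}. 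This is why the lemma is phrased as ``there exists $\{t_n\}_n\in\SSS_3$'' rather than asserting the property for the sequence already in hand. To repair your argument you would need to incorporate this additional time adjustment (or some substitute for the missing local strong convergence at $t=0$); the rest of your reductions (the perturbative approximation of $v_n$ by $v_{L,n}$, the vanishing of cross terms by separation, and the disposal of the translated solitons $Q^j_{\vell_j}$, $j\neq j_0$) are sound.
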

\begin{proof}
 \EMPH{Step 1} Let $\{t_n\}_n$ be given by Lemma \ref{L:DEM6}. 
 Let, 
 \begin{equation}
  \label{DEM53}
  v_n(t,x)=\lambda_{1,n}^{\frac{N}{2}-1}u(t_n+\lambda_{1,n}t,\lambda_{1,n}x)-\sum_{j=1}^{J_{m}} Q_{\vell_j}^j\left(t,x-\frac{x_{j,n}}{\lambda_{1,n}}\right).
 \end{equation} 
 By Proposition \ref{P:Z4}, if $T$ is fixed, then $v_n(t,x)$ is defined for large $n$ and $t\in [0,T]$. 
 
 If $\lim_{n\to\infty}\|\vv_n(0,x)\|_{\hdot\times L^2}=0$ then we are done. Thus we can assume (extracting subsequences if necessary) that there exists $\eps>0$ such that
 \begin{equation}
  \label{DEM53'}
  \forall n,\quad \|\vv_n(0,x)\|_{\hdot\times L^2}\geq \eps.
 \end{equation} 
 By \eqref{CV_S} in Lemma \ref{L:DEM6}, for all $T>0$, 
 \begin{equation}
 \label{DEM54}
 \limsup_{n\to\infty} t_n+\lambda_{1,n}T<T_+(u)\text{ and }
  \lim_{n\to\infty}\int_0^T\int_{\RR^N} \left|v_n(t,x)\right|^{\frac{2(N+1)}{N-2}}\,dx\,dt=0.
 \end{equation} 
 We next notice that for all $n\geq 1$, there exists $T_n>0$ such that $t_n+\lambda_{1,n}T_n<T_+(u)$ and
 \begin{equation}
 \label{DEM55}
 \int_0^{T_n}\int_{\RR^N} \left|v_n(t,x)\right|^{\frac{2(N+1)}{N-2}}\,dx\,dt=\frac{1}{T_n}.
 \end{equation} 
 Indeed $T\mapsto \int_0^{T}\int \left|v_n(t,x)\right|^{\frac{2(N+1)}{N-2}}\,dx\,dt-\frac 1T$ is continuous on $\left(0,(T_+(u)-t_n)/\lambda_{1,n}\right)$, goes to $-\infty$ as $T\to 0$ and has a limit $L_n$ in $(0,+\infty]$ as $T\to (T_+(u)-t_n)/\lambda_{1,n}$. Note that the case $L_n=0$ is excluded by \eqref{DEM53'}. Obviously, by \eqref{DEM54} and \eqref{DEM55},
 \begin{equation}
  \label{DEM56}
  \lim_{n\to\infty} T_n=+\infty,\quad \lim_{n\to\infty} \int_0^{T_n} \int_{\RR^N} \left|v_n(t,x)\right|^{\frac{2(N+1)}{N-2}}\,dx\,dt=0.
 \end{equation} 
 We let $v_{L,n}(t,x)=S_L(t)\left(\vv_{n}(0,x)\right)$. In view of \eqref{DEM56}, by the Cauchy theory for \eqref{CP},
 \begin{equation}
  \label{DEM57}
  \sup_{t\in [0,T_n]} \left\|\vv_n(t,x)-\vv_{L,n}(t,x)\right\|_{\hdot\times L^2}+\int_0^{T_n} \int_{\RR^N} \left| v_{L,n}(t,x)\right|^{\frac{2(N+1)}{N-2}}\,dx\,dt\underset{n\to\infty}{\longrightarrow}0.
 \end{equation} 
 
 \EMPH{Step 2} We prove (after extraction of subsequences in $n$) that there exists a sequence $\{s_p\}_p$ in $(0,+\infty)$ such that
 \begin{equation}
  \label{DEM58}
  \forall p\geq 1,\quad \lim_{n\to\infty} \sum_{j=1}^{J_m} \int_{\left|x-x_{j,n}/\lambda_{j,n}\right|\leq p} \left|\nabla_{t,x} v_n(s_p,x)\right|^2\,dx\leq \frac{1}{p}.
 \end{equation} 
 We fix $p$. In view of \eqref{DEM57}, it is sufficient to prove \eqref{DEM58} with $v_n$ replace by $v_{L,n}$. Recall that for all $R>0$, there exists a constant $C_R$ such that any finite-energy solution $w$ of the linear wave equation satisfies (see e.g. \cite{KePoVe95}), 
 \begin{equation*}
  \int_{-\infty}^{+\infty} \int_{|x|\leq R}|\nabla_{t,x}w(t,x)|^2\,dx\,dt\leq C_{R}\left\|\vw(0)\right\|_{\hdot\times L^2}^2,
 \end{equation*} 
Fixing $\vell\in B^N$, and applying the preceding inequality to the Lorentz tranform of $w$
$$w_{-\vell}(t,x)=w\left(\frac{t+\vell \cdot x}{\sqrt{1-\ell^2}},\Big[\frac{t}{\sqrt{1-\ell^2}}+\frac{1}{\ell^2} \Big(\frac{1}{\sqrt{1-\ell^2}}-1\Big)\vell\cdot x\Big]\vell+x\right)$$
we obtain that there exists a constant $C_{R,\vell}$ such that:
 \begin{equation}
  \label{DEM59}
  \int_{-\infty}^{+\infty} \int_{|x-\vell t|\leq R}|\nabla_{t,x}w(t,x)|^2\,dx\,dt\leq C_{R,{\vell}}\left\|\vw(0)\right\|_{\hdot\times L^2}^2.
 \end{equation} 
 As a consequence,
 \begin{equation}
  \label{DEM60}
\int_0^{+\infty} \sum_{j=1}^{J_m} \int_{\left|x-x_{j,n}/\lambda_{1,n}-\vell_jt\right|\leq p} \left|\nabla_{t,x} v_{L,n}(t,x)\right|^2\,dx\,dt\leq C\|v_{L,n}(0,x)\|^2_{\hdot\times L^2},
 \end{equation}
where $C=\sum_{j=1}^{J_m}C_{p,\vell_j}$.
By \eqref{def_M} and the Pythagorean expansions \eqref{Pyt1}, \eqref{Pyt2}, 
$$\sup_n\left\|\vv_{L,n}(0)\right\|^2_{\hdot\times L^2}\leq B<\infty.$$
Coming back to \eqref{DEM60}, we get 
 \begin{equation*}
\int_0^{+\infty} \sum_{j=1}^{J_m} \int_{\left|x-x_{j,n}/\lambda_{1,n}-\vell_jt\right|\leq p} \left|\nabla_{t,x} v_{L,n}(t,x)\right|^2\,dx\,dt\leq CB.
 \end{equation*}
By Fatou's Lemma,
 \begin{equation*}
\int_0^{+\infty} \liminf_{n\to\infty}\left(\sum_{j=1}^{J_m} \int_{\left|x-x_{j,n}/\lambda_{1,n}-\vell_jt\right|\leq p} \left|\nabla_{t,x} v_{L,n}(t,x)\right|^2\,dx\right)dt\leq C B.
 \end{equation*}
As a consequence, there exists $s_p>0$ such that 
$$\liminf_{n\to\infty}\sum_{j=1}^{J_m} \int_{\left|x-x_{j,n}/\lambda_{1,n}-\vell_js_p\right|\leq p} \left|\nabla_{t,x} v_{L,n}(s_p,x)\right|^2\,dx\leq \frac{1}{p}.$$
Extracting a subsequence in $n$, we can assume
\begin{equation}
  \label{DEM61}
\lim_{n\to\infty}\sum_{j=1}^{J_m} \int_{\left|x-x_{j,n}/\lambda_{1,n}-\vell_js_p\right|\leq p} \left|\nabla_{t,x} v_{L,n}(s_p,x)\right|^2\,dx\leq \frac{1}{p}. 
\end{equation} 
To get that there exist subsequences such that \eqref{DEM61} for all $p$ (and thus such that \eqref{DEM58} holds), one needs to use a classical diagonal extraction argument. We omit the details. 
 
\EMPH{Step 3} By Lemma \ref{L:DEM6} and Step 2, for all $p$, there exists $m_p'\in \NN$ such that
\begin{gather}
 \label{DEM62}
\forall n\geq m_p',\quad  \sum_{j=1}^{J_m} \int_{\left|x-x_{j,n}/\lambda_{1,n}-\vell_js_p\right|\leq p} \left|\nabla_{t,x} v_{L,n}(s_p,x)\right|^2\,dx\leq \frac{2}{p}\quad \text{and }\\
\label{DEM63}
\forall n\geq m_p',\quad  \int_0^{s_p+p}\int_{\RR^N} \left|\lambda_{1,n}^{\frac{N}{2}-1}u\left(t_n+\lambda_{1,n}t,\lambda_{1,n}x\right)-\sum_{j=1}^{J_m}Q_{\vell_j}^{j}\left(t,x-\frac{x_{j,n}}{\lambda_{1,n}}\right)\right|^{\frac{2(N+1)}{N-2}}\,dx\,dt\leq \frac{1}{p}.
\end{gather}
We note also that the nonlinear profiles $U^j$ of the profile decomposition $\profiles$ of $\{\vu(t_n)\}_n$ satisfy $U^j=Q^j_{\vell_j}$ for $j=1,\ldots, J_m$, and thus
$$ U^j(s_p,x)=Q^j_{\vell_j}(s_p,x)=Q^j_{\vell_j}(0,x-s_p\vell_j).$$
Thus all the assumptions of Lemma \ref{L:F1} are satisfied. There exists an increasing sequence of indices $\{n_p\}_p$ such that $n_p\geq m_p'$ and $\left\{\vu(t_{n_p}+s_p\lambda_{1,n_p})\right\}_p$ has a profile decomposition $$\tprofilesp$$
 satisfying \eqref{t_order}, and such that, if $j=1,\ldots, J_m$, 
 $$\tU^j=Q^j_{\vell_j}\text{ and } \forall p,\quad \tx_{j,p}=x_{j,n_p}+\lambda_{1,n_p}s_p\vell_j,\;\ttt_{j,p}=0, \; \tlambda_{j,p}=\lambda_{j,n_p}. $$
Note that $\frac{\tx_{j,p}}{\lambda_{1,n_p}}=\frac{x_{j,n_p}}{\lambda_{1,n_p}}+s_p\ell_j$,
 and also that $\tlambda_{j,p}=\tlambda_{1,p}=\lambda_{1,n_p}$ for $j=1,\ldots, J_m$. Finally, by \eqref{DEM62} and \eqref{DEM63}, the new times sequence $\{t_{n_p}+s_p\lambda_{1,n_p}\}_p$ satisfy all the conclusions of Lemma \ref{L:DEM7}. The proof of Theorem \ref{T:profiles} is complete. 
\end{proof}

\section{Nonlinear Schr\"odinger equations}
\label{S:NLS}
In this section we give an application of our strategy to nonlinear Schr\"odinger equations (NLS). For brevity, we will only consider the energy-critical equation. Let us mention however that results analogous to Theorem \ref{T:profiles} and Theorem \ref{T:NLS} below are also available in subscritical contexts, for example for the focusing mass-supercritical, energy-subcritical NLS in all dimensions, when the initial data is taken in $H^1$. We refer to \cite{HoRo07,DuHoRo08,FaXiCa11,Guevara12P,AkNa13} and references therein for profile decompositions, concentration/compactness arguments and rigidity theorems in this setting. Note that in a subcritical context, this type of arguments are simpler since there is no scaling parameters in the profile decompositions. 

Let again $N\in \{3,4,5\}$ and recall the critical nonlinear Schr\"odinger equation on $\RR^N$
\begin{equation}
\label{NLS'}
\left\{
\begin{gathered}
i\partial_tu+\Delta u=-|u|^{\frac{4}{N-2}}u,\\
u_{\restriction t=0}=u_0\in \hdot(\RR^N).
\end{gathered}
\right.
\end{equation} 
We recall that this equation is locally well-posed in $\hdot(\RR^N)$. The equation is invariant by scaling:
\begin{equation}
 \label{NLS_scaling}
u(t,x)\text{ solution }\Longrightarrow \lambda^{\frac{N}{2}-1}u(\lambda^2t,\lambda x)\text{ solution.}
\end{equation} 
Furthermore, denoting by $(T_-(u),T_+(u))$ the maximal interval of existence of $u$, and, if $I$ is an interval, $S(I)=L^{\frac{2(N+2)}{N-2}}\left(I\times\RR^N\right)$, we have the following scattering/blow-up criterion:
if $\|u\|_{S([0,T_+(u)))}$ is finite, then $T_+(u)=+\infty$
and $u$ scatters forward in time to a solution of the linear Schr\"odinger equation. We refer to \cite{CaWe90}, and to \cite{KeMe06} and references therein for details on the well-posedness theory. 

We say that a solution $u$ of \eqref{NLS'} has the \emph{compactness property} if there exist $\lambda(t)>0$, $x(t)\in \RR^N$, defined for $t\in (T_-(u),T_+(u))$ and such that
$$ K=\left\{ \lambda(t)^{\frac{N}{2}-1}u(t,\lambda(t)\cdot+x(t)),\; t\in (T_-(u), T_+(u))\right\}$$
has compact closure in $\hdot\times L^2$. We note from case 1 of the proof of Proposition 5.3 in \cite{KeMe06} that a solution of \eqref{NLS'} with the compactness property is always global. However, no analog of Proposition \ref{P:compact} is known for equation \eqref{NLS'}. We claim:
\begin{theo}
 \label{T:NLS}
Let $u$ be a solution of \eqref{NLS'}. Assume that $u$ does not scatter forward in time and 
$$ \sup_{t\in [0,T_+(u))} \|u(t)\|_{\hdot}<\infty.$$
Then there exists a sequence of times $\{t_{n}\}_n$ in $[0,T_+(u))$, an integer $J\geq 1$, solutions $U^1,\ldots, U^J$ of \eqref{NLS'} with the compactness property, a sequence $\{\lambda_n\}_n$ in $(0,+\infty)$, $J$ sequences $\{x_{j,n}\}_{n}$, $j=1,\ldots, J$ in $\RR^N$ such that:
\begin{equation*}
 \lim_{n\to\infty} t_n=T_+(u),\quad
1\leq j<k\leq J\Longrightarrow \lim_{n\to\infty}\frac{x_{j,n}-x_{k,n}}{\lambda_{n}}=+\infty,
\end{equation*}
and
\begin{itemize}
\item $\ds \forall j\in \{1,\ldots, J\}$, $\lambda_{n}^{\frac{N}{2}-1}u\left(t_n,\lambda_{n}\cdot+x_{j,n}\right)\xrightharpoonup[n\to \infty]{} U^j(0)$ weakly in $\hdot$.
 \item for all $T>0$, $\lambda_n^2T+t_n<T_+(u)$ for large $n$ and 
\begin{equation}
\label{NLS_CV1}
\lim_{n\to\infty}\int_0^T\int_{\RR^N} \left|\lambda_{n}^{\frac{N}{2}-1}u\left(t_n+\lambda_{n}^2t,\lambda_{n}x\right)-\sum_{j=1}^{J}U^{j}\left(t,x-\frac{x_{j,n}}{\lambda_{n}}\right)\right|^{\frac{2(N+2)}{N-2}}\,dx\,dt=0.
\end{equation} 
\item Furthermore, if $u$ is radial, then $J=1$, $U^1$ is radial, $x_{1,n}=0$ for all $n$, and for all $R>0$, 
\begin{equation}
\label{NLS_CV2}
\lim_{n\to\infty} \int_{|x|\leq R} \left|\lambda_{n}^{\frac{N}{2}}\nabla u\left(t_n,\lambda_{n}x\right)-\nabla U^{1}(0,x)\right|^2\,dx=0.
\end{equation}
\end{itemize}
\end{theo}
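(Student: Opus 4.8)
The proof runs parallel to that of Theorem~\ref{T:profiles}, under the dictionary: replace $\hdot\times L^2$ by $\hdot$, $\nabla_{t,x}$ by $\nabla$, the scaling $t\mapsto\lambda t$ by $t\mapsto\lambda^{2}t$, the Strichartz exponent $\tfrac{2(N+1)}{N-2}$ by $\tfrac{2(N+2)}{N-2}$, and the solitary waves $Q_{\vell}$, $Q\in\Sigma$, by arbitrary solutions of \eqref{NLS'} with the compactness property. The essential difference is that no analogue of Proposition~\ref{P:compact} is available for \eqref{NLS'}, so the first profiles are identified only as solutions with the compactness property, not as explicit solitary waves; accordingly the strong local convergence is obtained only under the extra radial assumption. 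I would first record that the Bahouri--G\'erard profile decomposition and its Pythagorean expansions hold for \eqref{NLS'} (see \cite{Keraani06,KeMe06}), and that the nonlinear approximation of Subsection~\ref{SS:approx} follows from the long-time perturbation theory for \eqref{NLS'}. In particular Lemma~\ref{L:DPD} applies verbatim, as its proof only uses orthogonality of parameters and summability of the Strichartz norms of the linear profiles. One then introduces the order relation $\preceq$ adapted to \eqref{NLS'} (replacing $Q_{\vell}$ by a general nonlinear profile in \eqref{Z7}) and defines the sets $\SSS_{0}\supset\SSS_{1}\supset\SSS_{2}\supset\SSS_{3}$, the integers $J_{M}\geq 1$ and $J_{m}\in\{1,\dots,J_{M}\}$, and the minimal energy $\EEE_{m}$ exactly as in Subsection~\ref{SS:preorder} and at the start of Section~\ref{S:minimality}.

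Next I would carry out the minimality argument of Section~\ref{S:minimality}, whose steps transcribe word for word. Minimising $\EEE$ over $\SSS_{1}$ by means of Lemma~\ref{L:DPD}, and then minimising $J_{1}$ over $\SSS_{2}$, produces a time sequence $\{t_{n}\}_{n}\in\SSS_{3}$ with a well-ordered profile decomposition $\profiles$ such that $\lambda_{j,n}=\lambda_{1,n}$, $t_{j,n}=0$ and $T_{+}(U^{j})=T_{+}(U^{1})$ for $j=1,\dots,J_{m}$ (the analogue of Lemma~\ref{L:DEM2}), and such that $U^{1},\dots,U^{J_{m}}$ have the compactness property (the analogues of Lemmas~\ref{L:DEM4} and~\ref{L:DEM4'}, using the analogue of Claim~\ref{C:compactness}). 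At this point I would use the fact, recalled just before Theorem~\ref{T:NLS}, that a solution of \eqref{NLS'} with the compactness property is global; hence $T_{+}(U^{j})=+\infty$ for $j=1,\dots,J_{m}$, which is precisely the hypothesis needed to run the analogue of Lemma~\ref{L:DEM6}. This yields, after one further adjustment of the time sequence through Lemma~\ref{L:F1}, the strong Strichartz convergence \eqref{NLS_CV1} with $J=J_{m}$, $\lambda_{n}=\lambda_{1,n}$ and $U^{j}$ the first nonlinear profiles, together with $\lim_{n}|x_{j,n}-x_{k,n}|/\lambda_{n}=+\infty$ for $j\neq k$; the weak convergence $\lambda_{n}^{\frac{N}{2}-1}u(t_{n},\lambda_{n}\cdot+x_{j,n})\rightharpoonup U^{j}(0)$ in $\hdot$ then follows from \eqref{NLS_CV1}, the orthogonality of the parameters and Remark~\ref{R:weak}, exactly as for \eqref{CP}.

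For the radial part, when $u$ is radial every $\vec u(t_{n})$ is radial, and the whole construction may be performed within radially symmetric profile decompositions: each nonzero profile is then either radial with $x_{j,n}=0$, or satisfies $\lim_{n}|x_{j,n}|/\lambda_{j,n}=+\infty$. Arguing as in \cite[Section~5]{KeMe06}, and using the uniform $\hdot$ bound, one shows that the non-scattering profiles of the finest scale cannot be of the second type, so $U^{1},\dots,U^{J_{m}}$ are radial and centred at the origin; since two distinct profiles sharing the scale $\lambda_{1,n}$, the time shift $t_{j,n}=0$ and the centre $x_{j,n}=0$ would violate orthogonality, this forces $J_{m}=1$, i.e.\ $J=1$, $U^{1}$ radial, $x_{1,n}=0$. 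Finally \eqref{NLS_CV2} is obtained by the analogue of Lemma~\ref{L:DEM7}: the only new ingredient is to replace the exterior/channels-of-energy estimate for the linear wave equation by an appropriate local smoothing (Kato-type) or Morawetz-type estimate for the free Schr\"odinger flow on radial functions (in the spirit of \cite{KePoVe95}), providing a uniform bound for the global-in-time local $\hdot$ energy of the linearised dispersive remainder; the Fatou argument producing a good time $s_{p}$, followed by the time shift supplied by Lemma~\ref{L:F1}, then goes through unchanged.

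The two points requiring genuine work beyond mechanical transcription are: first, the radial reduction to a single profile, where the absence of finite speed of propagation and of Lorentz invariance forces one to argue directly on the profile decomposition rather than on supports; and second, the identification of the correct smoothing estimate replacing the wave-equation exterior energy bound in \eqref{NLS_CV2}. I expect the latter to be the main obstacle, since the naive global-in-time local $\hdot$ bound for $e^{it\Delta}$ fails without the radial restriction, and it is there that radiality is genuinely used.
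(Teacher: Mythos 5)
Your proposal is correct and follows essentially the same route as the paper, which itself only sketches this proof as a transcription of the proof of Theorem \ref{T:profiles} with Keraani's profile decomposition, the $\lambda^2$ time scaling in the preorder relation \eqref{Z7}, the omission of Lemma \ref{L:DEM5}, the globality of solutions of \eqref{NLS'} with the compactness property, and the local smoothing estimate $\int_{\RR}\int_{|x|\leq R}|\nabla e^{it\Delta}u_0|^2\,dx\,dt\leq C_R\|u_0\|_{\hdot}^2$ in place of \eqref{DEM59}. The one slight discrepancy is in your closing remark: the paper invokes this local smoothing bound with no radial restriction, radiality being used instead to force $x_{j,n}=0$ so that a fixed-ball estimate suffices, the genuine obstruction to \eqref{NLS_CV2} in the nonradial case being the absence of control on the spatial centers $x(t)$ of solutions with the compactness property rather than the failure of the linear estimate.
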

We omit the proof, which is very close to the proof of Theorem \ref{T:profiles}, replacing the profile decomposition of H.~Bahouri and P.~G\'erard by the profile decomposition of S.~Keraani \cite{Keraani01} adapted to the energy-critical NLS. Note that since no analog of Proposition \ref{P:compact} is available for NLS, one must skip Lemma \ref{L:DEM5} in this proof. Note also that in view of the scaling \eqref{NLS_scaling} of the equation, the condition \eqref{Z7} in the definition of the pre-order relation of \S \ref{SS:preorder} becomes:
\begin{equation*}
 \forall T\in \RR,\quad T<T_+(U^j)\Longrightarrow \lim_{n\to\infty} \frac{\lambda_{j,n}^2T+t_{j,n}-t_{k,n}}{\lambda_{k,n}^2}<T_+(U^k).
\end{equation*} 
Let us mention that to prove \eqref{NLS_CV1}, we need, as in the proof of Lemma \ref{L:DEM6}, that the solution $U^j$ with the compactness property is global, a fact that is known for \eqref{NLS'} as mentioned above. To prove \eqref{NLS_CV2}, one must replace the inequality \eqref{DEM59} used in the proof of Lemma \ref{L:DEM7} by:
$$ \int_{-\infty}^{+\infty} \int_{|x|\leq R} \left|\left(\nabla e^{it\Delta} u_0\right)(x)\right|^2 \,dx\,dt\leq C_R\|u_0\|_{\hdot}^2,$$
which follows immediately from the local smoothing effect for the linear Schr\"odinger equation (see \cite{CoSa89,Sj87,Ve88}). The analog of \eqref{NLS_CV2} in the nonradial case would require a control of the space translation parameters of the solutions with the compactness property, which is not known for equation \eqref{NLS'}, except in the case $N\geq 5$ (see \cite{KiVi10}). 

\appendix

\section{Total preorder on the profiles' indices}
\label{A:preorder}
In this Appendix we prove Claims \ref{C:Z2} and Claim \ref{C:fact_preorder}. We start by proving that we can extract sequences, so that the limit appearing in \eqref{Z7} always exists.
\begin{claim}
 \label{C:subsequence}
Let $\{\lambda_{j,n},t_{j,n}\}_n$, $j\geq 1$ be sequences in $(0,+\infty)\times \RR$. Then we can extract subsequences (in $n$) such that for all $j,k\geq 1$, for all $T\in \RR$, the limit
\begin{equation}
 \label{the_limit}
\lim_{n\to\infty} \frac{\lambda_{j,n}T+t_{j,n}-t_{k,n}}{\lambda_{k,n}}
\end{equation} 
exists in $\RR\cup \{\pm\infty\}$.
 \end{claim}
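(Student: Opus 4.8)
The plan is to reduce the statement, via the affine identity
\[
\frac{\lambda_{j,n}T+t_{j,n}-t_{k,n}}{\lambda_{k,n}}=b_{jk,n}\,T+a_{jk,n},\qquad
b_{jk,n}:=\frac{\lambda_{j,n}}{\lambda_{k,n}},\quad a_{jk,n}:=\frac{t_{j,n}-t_{k,n}}{\lambda_{k,n}},
\]
to an extraction statement about the sequences $\{b_{jk,n}\}_n$ and $\{a_{jk,n}\}_n$. Since there are only countably many ordered pairs $(j,k)$, I would build the desired subsequence by a standard nested diagonal argument, handling one pair $(j,k)$ at a time and passing to a further subsequence at each step; it therefore suffices to explain how to extract, for a single fixed pair, so that $\{b_{jk,n}T+a_{jk,n}\}_n$ converges in $[-\infty,+\infty]$ for \emph{every} real $T$ simultaneously. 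First I would extract, using sequential compactness of $[0,+\infty]$, so that $b_{jk,n}\to\beta\in[0,+\infty]$.

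If $\beta<+\infty$, I then extract so that $a_{jk,n}\to\alpha\in[-\infty,+\infty]$; since $b_{jk,n}T$ stays bounded for each fixed $T$, one gets $b_{jk,n}T+a_{jk,n}\to\beta T+\alpha$ for all $T$, with the convention $\beta T+(\pm\infty)=\pm\infty$, and this pair is settled. If $\beta=+\infty$, I would instead use the factorization $b_{jk,n}T+a_{jk,n}=b_{jk,n}\bigl(T+c_{jk,n}\bigr)$ with $c_{jk,n}:=(t_{j,n}-t_{k,n})/\lambda_{j,n}=a_{jk,n}/b_{jk,n}$, extract so that $c_{jk,n}\to\gamma\in[-\infty,+\infty]$, and observe: if $\gamma=\pm\infty$ then $T+c_{jk,n}\to\pm\infty$ and the product tends to $\pm\infty$ for every $T$; if $\gamma\in\RR$ and $T\neq-\gamma$ then $T+c_{jk,n}\to T+\gamma\neq0$, so the product tends to $+\infty$ or $-\infty$ according to the sign of $T+\gamma$. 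This handles every $T$ except, when $\gamma$ is finite, the single value $T=-\gamma$, at which one faces an $\infty\cdot0$ indeterminacy; there I would simply extract once more so that the real sequence $b_{jk,n}(-\gamma)+a_{jk,n}=\bigl(\lambda_{j,n}(-\gamma)+t_{j,n}-t_{k,n}\bigr)/\lambda_{k,n}$ converges in $[-\infty,+\infty]$, which is possible by compactness. After these finitely many extractions the pair $(j,k)$ is settled, and the diagonal argument over all pairs produces the required subsequence.

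The only genuinely delicate point — and the one I expect to be the main obstacle in a careless write-up — is precisely that ``$\eqref{the_limit}$ exists for all $T\in\RR$'' is an \emph{uncountable} family of conditions, so one cannot apply a diagonal extraction to it directly. The argument above shows that, once $\lim_n b_{jk,n}$ and the appropriate one of $\lim_n a_{jk,n}$ or $\lim_n c_{jk,n}$ have been fixed, all but at most one value of $T$ are automatically taken care of, so the whole family collapses to countably many honest conditions (one per pair, plus the exceptional value $T=-\gamma$). Everything else is routine: sequential compactness of $[-\infty,+\infty]$ together with the usual diagonal extraction over a countable index set.
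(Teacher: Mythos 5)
Your proof is correct and takes essentially the same route as the paper's: a diagonal extraction over the countably many pairs $(j,k)$, followed for each fixed pair by extracting the limits of $\lambda_{j,n}/\lambda_{k,n}$, $(t_{j,n}-t_{k,n})/\lambda_{k,n}$ and $(t_{j,n}-t_{k,n})/\lambda_{j,n}$ (your $\beta,\alpha,\gamma$ are the paper's $\ell_1,\ell_2,\ell_3$) and observing that the affine dependence on $T$ collapses the uncountable family of conditions to countably many. You are in fact slightly more careful than the paper at the exceptional value $T=-\gamma$ when $\beta=+\infty$ and $\gamma\in\RR$: there the product $b_{jk,n}(T+c_{jk,n})$ is a genuine $\infty\cdot 0$ indeterminacy not resolved by the three limits already extracted (e.g.\ $\lambda_{k,n}=1$, $\lambda_{j,n}=n$, $t_{k,n}=0$, $t_{j,n}=n\gamma+(-1)^n$), so your one additional extraction at that single value of $T$ is genuinely needed and repairs a small lacuna in the paper's own sketch.
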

 \begin{proof}
 By a standard diagonal extraction argument it is sufficient to fix $j$ and $k$ and to prove that the limit \eqref{the_limit} exists for all $T$. We extract sequences, so that
 $$ \ell_1=\lim_{n\to \infty}\frac{\lambda_{j,n}}{\lambda_{k,n}},\;\ell_2=\lim_{n\to\infty}\frac{t_{j,n}-t_{k,n}}{\lambda_{k,n}}\text{ and }\ell_3=\lim_{n\to \infty}\frac{t_{j,n}-t_{k,n}}{\lambda_{j,n}}$$ 
 exist in $\RR\cup\{\pm \infty\}$. If $\ell_1$ or $\ell_2$ is finite, the limit \eqref{the_limit} exists trivially for all $T$. Assume that $$\ell_1=\ell_2=+\infty$$
 (the case $\ell_1=+\infty$, $\ell_2=-\infty$ will follow, replacing $t_{j,n}$ and $t_{k,n}$ by $-t_{j,n}$ and $-t_{k,n}$). Then, as $n\to\infty$,
 $$ \frac{\lambda_{j,n}T+t_{j,n}-t_{k,n}}{\lambda_{k,n}}\sim\begin{cases}
 \frac{t_{j,n}-t_{k,n}}{\lambda_{k,n}}&\text{ if }T=0\text{ or }\ell_3=\infty\\
 \frac{\lambda_{j,n}}{\lambda_{k,n}}(T+\ell_3)&\text{ if }\ell_3\in [0,\infty)\text{ and }T\neq 0.
 \end{cases}$$
 and the limit \eqref{the_limit} always exists in this case also.
\end{proof}
\begin{proof}[Proof of Claim \ref{C:Z2}]
\EMPH{Proof of \eqref{I:Z2_1}}
If $U^j$ scatters, we obtain immediately that $(j)\backsimeq (j)$. If $U^j$ does not scatter, \eqref{Z7} with $k=j$ means
$$ T<T_+(U^j)\Longrightarrow T<T_+(U^j)$$
which is tautological. Thus again $(j)\backsimeq (j)$.

\EMPH{Proof of \eqref{I:Z2_2}}
Assume $(j)\preceq (k)$ and $(k)\preceq (\ell)$. Let $T<T_+(U^j)$. 

First assume that $U^{\ell}$ scatters forward in time. Then $(j)\preceq (\ell)$ follows immediately from the definition.

Next assume that $U^{\ell}$ does not scatter forward in time. Then by \eqref{imply_scattering} $U^{k}$ and $U^{j}$ do not scatter forward in time. Since $(j)\preceq (k)$, there exists $\tau<T_+(U^k)$ such that for large $n$,
\begin{equation}
 \label{G2}
 \frac{\lambda_{j,n}T+t_{j,n}-t_{k,n}}{\lambda_{k,n}}\leq \tau.
\end{equation} 
Since $(k)\preceq (\ell)$ and $\tau<T_+(U^k)$, there exists $\tau'<T_+(U^{\ell})$ such that for large $n$
\begin{equation}
 \label{G3}
 \frac{\lambda_{k,n}\tau+t_{k,n}-t_{\ell,n}}{\lambda_{\ell,n}}\leq \tau'. 
\end{equation} 
Using successively \eqref{G2} and \eqref{G3}, we get that for large $n$
\begin{equation*}
 \frac{\lambda_{j,n}T+t_{j,n}-t_{\ell,n}}{\lambda_{\ell,n}}=\frac{\left(\frac{\lambda_{j,n}T+t_{j,n}-t_{k,n}}{\lambda_{k,n}}\right)\lambda_{k,n} +t_{k,n}-t_{\ell,n}}{\lambda_{\ell,n}}\leq \frac{\tau\lambda_{k,n}+t_{k,n}-t_{\ell,n}}{\lambda_{\ell,n}}\leq \tau'.
\end{equation*}
This proves $(j)\preceq (\ell)$. 

\EMPH{Proof of \eqref{I:Z2_3}}

We assume that $(j)\preceq (k)$ does not hold, and prove $(k)\preceq (j)$. 

The profile $U^k$ does not scatter for positive time, and there exists $T<T_+(U^j)$ such that
\begin{equation}
 \label{G4}
 \lim_{n\to \infty} \frac{\lambda_{j,n}T+t_{j,n}-t_{k,n}}{\lambda_{k,n}}\geq T_+(U^k).
\end{equation} 
Let $\tau<T_+(U^k)$. By \eqref{G4}, for large $n$, $\tau\leq \frac{\lambda_{j,n}T+t_{j,n}-t_{k,n}}{\lambda_{k,n}}$, i.e
$$ \frac{\lambda_{k,n}\tau+t_{k,n}-t_{j,n}}{\lambda_{j,n}}\leq T<T_+(U^j)$$ 
for large $n$, which proves $(k)\preceq (j)$, concluding the proof of Claim \ref{C:Z2}.

\EMPH{Proof of \eqref{I:Z2_4}}
It is a general properties of preorder relations, that follows easily from \eqref{I:Z2_3}.
\end{proof}

\begin{proof}[Proof of Claim \ref{C:fact_preorder}]
We first assume \eqref{DEM40}, and get a contradiction. Since $(j)\backsimeq (k)$, and $t_{j,n}=0$, we have $T_+(U^j)>0$ and
\begin{gather}
 \label{DEM44}
 T<T_+(U^k)\Longrightarrow \lim_{n\to\infty}\frac{\lambda_{k,n}T+t_{k,n}}{\lambda_{j,n}}<T_+(U^j)\\
 \label{DEM45}
 T<T_+(U^j)\Longrightarrow \lim_{n\to\infty}\frac{\lambda_{j,n}T-t_{k,n}}{\lambda_{k,n}}<T_+(U^k) 
\end{gather}
Fix $T<T_+(U^k)$. By \eqref{DEM40},
$$\lim_{k\to\infty} \frac{T+t_{k,n}/\lambda_{k,n}}{t_{k,n}/\lambda_{k,n}}=1.$$
Combining with \eqref{DEM44} we get
\begin{equation*}
\lim_{n\to\infty}\frac{t_{k,n}}{\lambda_{j,n}}=\lim_{n\to\infty}\frac{\lambda_{k,n}T+t_{k,n}}{\lambda_{j,n}}\times \frac{t_{k,n}/\lambda_{k,n}}{T+t_{k,n}/\lambda_{k,n}} <T_+(U^j).
\end{equation*}
Let $\theta\in \RR$ such that
\begin{equation}
 \label{DEM46}
 \lim_{n\to\infty} \frac{t_{k,n}}{\lambda_{j,n}} <\theta<T_+(U^j).
\end{equation} 
We have
\begin{equation}
 \label{DEM47}
 \lim_{n\to\infty} \frac{\lambda_{k,n}}{\lambda_{j,n}}=\lim_{n\to\infty}\frac{\lambda_{k,n}}{t_{k,n}}\times \frac{t_{k,n}}{\lambda_{j,n}}=0.
\end{equation}
Combining \eqref{DEM46} and \eqref{DEM47}, we get
\begin{equation*}
\lim_{n\to\infty} \frac{\lambda_{j,n}\theta-t_{k,n}}{\lambda_{k,n}}=\lim_{n\to\infty}\frac{\lambda_{j,n}\left(\theta-t_{k,n}/\lambda_{j,n}\right)}{\lambda_{k,n}}=+\infty,
 \end{equation*} 
 which contradicts \eqref{DEM45}, proving as announced that \eqref{DEM40} cannot hold.
 
We next assume that $t_{j,n}=t_{k,n}=0$ for all $n$ and prove that there exists $c\in (0,+\infty)$ such that \eqref{DEM42} and \eqref{DEM43} hold. By our assumptions, $T_+(U^j)>0$, $T_+(U^k)>0$ and
\begin{gather}
 \label{DEM48}
 T<T_+(U^k)\Longrightarrow \lim_{n\to\infty} \frac{\lambda_{k,n} T}{\lambda_{j,n}}<T_+(U^j)\\
\label{DEM49}
 T<T_+(U^j)\Longrightarrow \lim_{n\to\infty} \frac{\lambda_{j,n} T}{\lambda_{k,n}}<T_+(U^k).
\end{gather}
Letting $c=\lim_{n}\lambda_{j,n}/\lambda_{k,n}\in [0,+\infty]$, we see immediately that $c>0$ by \eqref{DEM48} and $c<\infty$ by \eqref{DEM49}. Furthermore, if $T_+(U^k)<\infty$, then \eqref{DEM48} implies $c^{-1}T_+(U^k)\leq T_+(U^j)$ and \eqref{DEM49} implies $cT_+(U^j)\leq T_+(U^k)$, which concludes the proof.
\end{proof}

\section{A sufficient condition for profile decomposition}
\label{A:profile}
In this appendix we prove Claim \ref{C:profile}. Let $\eps>0$. Let $\tJ>0$  such that
\begin{equation}
\label{AP3}
\sum_{j\geq \tJ} \left\|U_L^j\right\|^{\SN}_{S(\RR)}\leq \left(\frac{\eps}{2}\right)^{\SN}. 
\end{equation} 
Let $k$ such that $J_k\geq \tJ$ and
\begin{equation}
 \label{AP4} \limsup_{n\to\infty} \left\|u_{L,n}-\sum_{j=1}^{J_k} U_{L,n}^j\right\|_{S(\RR)}\leq \frac{\eps}{2}.
\end{equation} 
Let $J\geq J_k$, and $w_n^J=u_{L,n}-\sum_{j=1}^J U_{L,n}^j$. Then 
$$w_n^J=u_{L,n}-\sum_{j=1}^{J_k} U_{L,n}^j-\sum_{j=J_k+1}^J U_{L,n}^j$$
where, using the orthogonality of the sequences,
$$\limsup_{n\to\infty}\left\|\sum_{j=J_{k+1}}^J U_{L,n}^j\right\|^{\SN}_{S(\RR)}=\sum_{j=J_k+1}^J \left\|U_L^j\right\|^{\SN}_{S(\RR)}\leq \left(\frac 12 \eps\right)^{\SN}.$$
To obtain the last inequality, we have used \eqref{AP3} and the fact that $J_k$ is greater than $\tJ$. Combining with \eqref{AP4}, we deduce
$$\forall J\geq J_k,\quad \limsup_{n\to\infty}\left\|w_n^J\right\|_{S(\RR)}\leq \eps$$
which concludes the proof of the claim. \qed

\section{Preservation of the compactness property}
\label{A:compactness}
In this appendix, we prove:
\begin{claim}
\label{C:compactness}
 Let $u$ be a solution of \eqref{CP} such that there exists $\lambda(t)>0$, $x(t)\in \RR^N$, defined for $t\in [0,T_+(u))$ such that
$$K_+=\left\{ \left( \lambda^{\frac{N}{2}-1}(t)u\left(t,\lambda(t)\cdot+x(t)\right), \lambda^{\frac N2}(t)\partial_t u\left(t,\lambda(t)\cdot+x(t)\right)\right),\; t\in [0,T_+(u))\right\}$$
has compact closure in $\hdot\times L^2$. Let $\{t_n\}_n$ be a sequence of times in $[0,T_+(u))$ such that $\lim_nt_n=T_+(u)$ and assume that there exists $(v_0,v_1)\in \hdot\times L^2$ such that 
\begin{equation}
 \label{conv_to_v}
\lim_{n\to\infty} \left\|\lambda(t_n)^{\frac{N}{2}}\nabla_{t,x}u\left(t_n,\lambda(t_n)\cdot+x(t_n)\right)-(v_1,\nabla v_0)\right\|_{(L^2)^{N+1}}=0.
\end{equation} 
Let $v$ be the solution of \eqref{CP} with initial data $(v_0,v_1)$ at $t=0$. Then $v$ has the compactness property.
\end{claim}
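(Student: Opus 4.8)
The plan is to mimic Case~2 of the proof of Lemma~\ref{L:D5}. We may assume $u\not\equiv 0$ (otherwise $(v_0,v_1)=(0,0)$ by \eqref{conv_to_v} and $v\equiv 0$ trivially has the compactness property); then, as for any nonzero solution with the compactness property, $\inf_{t\in[0,T_+(u))}\|\vu(t)\|_{\hdot\times L^2}>0$, so $v\not\equiv 0$ and $\vv(s)\neq 0$ for all $s\in(T_-(v),T_+(v))$. First I would introduce the rescaled solutions $u_n(t,x)=\lambda(t_n)^{\frac N2-1}u(t_n+\lambda(t_n)t,\lambda(t_n)x+x(t_n))$: by scaling and translation invariance these solve \eqref{CP}, and $\vu_n(0)\to(v_0,v_1)$ in $\hdot\times L^2$ by \eqref{conv_to_v}. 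By the local Cauchy theory and continuous dependence on the data, for each fixed $s\in(T_-(v),T_+(v))$ one has $s\in I_{\max}(u_n)$ for large $n$ and $\vu_n(s)\to\vv(s)$ in $\hdot\times L^2$.

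Fix such an $s$ and set $\tau_n=t_n+\lambda(t_n)s$, so $\tau_n\in(T_-(u),T_+(u))$ for large $n$. The crux is to show $\tau_n\to T_+(u)$, so that $\tau_n\in[0,T_+(u))$ for large $n$ and $K_+$ is available. Assume, after extraction, that $\tau_n\to\tau_*<T_+(u)$. A direct change of variables gives $\|\vu_n(s)\|_{\hdot\times L^2}=\|\vu(\tau_n)\|_{\hdot\times L^2}$, so $\tau_*$ is not a blow-up time and $u(\tau_*)\neq 0$; by continuity of the flow, $u_n(s,\cdot)=\lambda(t_n)^{\frac N2-1}u(\tau_*,\lambda(t_n)\cdot+x(t_n))+o(1)$ in $\hdot$. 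Since the left-hand side converges strongly to $v(s)\neq 0$, the usual scale- and translation-separation principle in $\hdot$ (a fixed nonzero profile, acted on by dilations and translations, can converge strongly only if the dilation parameter stays in a compact subset of $(0,\infty)$ and the translation stays bounded) forces $\lambda(t_n)\to\lambda_*\in(0,\infty)$ and $x(t_n)\to x_*$ along a subsequence. Then $t_n=\tau_n-\lambda(t_n)s$ has a finite limit, hence $T_+(u)<\infty$; but for $T_+(u)<\infty$ the compactness of $\overline{K_+}$ forces $\lambda(t)\lesssim T_+(u)-t$ (cf. \cite[Lemma~4.7]{KeMe08}), so $\lambda(t_n)\to 0$, a contradiction. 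Therefore $\tau_n\to T_+(u)$.

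With this in hand I would finish as in Lemma~\ref{L:D5}. Put $(g_n,h_n)=\big(\lambda(\tau_n)^{\frac N2-1}u(\tau_n,\lambda(\tau_n)\cdot+x(\tau_n)),\,\lambda(\tau_n)^{\frac N2}\partial_tu(\tau_n,\lambda(\tau_n)\cdot+x(\tau_n))\big)\in K_+$, and $\alpha_n=\lambda(t_n)/\lambda(\tau_n)$, $\beta_n=(x(t_n)-x(\tau_n))/\lambda(\tau_n)$, so that $\vu_n(s)=\big(\alpha_n^{\frac N2-1}g_n(\alpha_n\cdot+\beta_n),\,\alpha_n^{\frac N2}h_n(\alpha_n\cdot+\beta_n)\big)$. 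By compactness of $\overline{K_+}$, after extraction $(g_n,h_n)\to(g_*,h_*)\in\overline{K_+}$; since dilations and translations act isometrically on $\hdot\times L^2$, also $\big(\alpha_n^{\frac N2-1}g_*(\alpha_n\cdot+\beta_n),\alpha_n^{\frac N2}h_*(\alpha_n\cdot+\beta_n)\big)\to\vv(s)$, and $\|(g_*,h_*)\|=\lim\|\vu(\tau_n)\|=\lim\|\vu_n(s)\|=\|\vv(s)\|>0$. Applying the scale/translation rigidity once more gives $\alpha_n\to\alpha(s)\in(0,\infty)$ and $\beta_n\to\beta(s)\in\RR^N$ along a subsequence, and passing to the limit identifies $(g_*,h_*)$ with $\big(\mu(s)^{\frac N2-1}v(s,\mu(s)\cdot+y(s)),\mu(s)^{\frac N2}\partial_tv(s,\mu(s)\cdot+y(s))\big)$, where $\mu(s)=1/\alpha(s)$, $y(s)=-\beta(s)/\alpha(s)$. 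Hence for every $s\in(T_-(v),T_+(v))$ this modulated state belongs to the fixed compact set $\overline{K_+}$, so the orbit it describes is relatively compact; choosing $\mu(\cdot),y(\cdot)$ measurable and then continuous (as in \cite[Remark~5.4]{KeMe06}) shows that $v$ has the compactness property.

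The main obstacle is the step $\tau_n\to T_+(u)$: unlike Case~2 of Lemma~\ref{L:D5}, where the hypothesis $T_-(w)\geq-\tau_0$ makes the analogous positivity automatic, here $s$ ranges over all of $(T_-(v),T_+(v))$ and nothing a priori prevents $\tau_n=t_n+\lambda(t_n)s$ from drifting backwards, so one genuinely needs the concentration-compactness argument above, together with the bound $\lambda(t)\lesssim T_+(u)-t$ near a finite blow-up time. Everything else is the routine "rigidity of rescaled limits" manipulation.
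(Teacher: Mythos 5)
Your Step~2 (the identification of the modulated limit of $\vu(\tau_n)$ with a modulation of $\vv(s)$, using that $\inf_t\|\vu(t)\|_{\hdot\times L^2}>0$ to pin the ratios $\alpha_n,\beta_n$ in a compact set) is exactly the paper's Step~2 and is fine. The gap is in your treatment of the positivity of $\tau_n=t_n+\lambda(t_n)s$. First, you aim at the stronger statement $\tau_n\to T_+(u)$, whereas all that is needed is $\tau_n\geq 0$ for large $n$, since $K_+$ is defined for every $t\in[0,T_+(u))$, not only for $t$ near $T_+(u)$. More seriously, your contradiction argument assumes that the extracted limit $\tau_*$ lies in the open interval $(T_-(u),T_+(u))$: you invoke continuity of the flow at $\tau_*$ and assert that ``$\|\vu(\tau_n)\|$ bounded $\Rightarrow$ $\tau_*$ is not a blow-up time.'' For the energy-critical equation this implication is false — type~II blow-up, which is the central object of this paper, keeps the $\hdot\times L^2$ norm bounded — and nothing in the hypotheses controls $u$ for $t<0$: the compactness is only assumed on $[0,T_+(u))$, so $T_-(u)$ may be finite with $\tau_*=T_-(u)$, or one may have $\tau_n\to-\infty$. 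In these cases ($s<0$, $T_+(u)=+\infty$, $\lambda(t_n)\sim t_n/|s|\to\infty$) your argument gives no contradiction, and they are not obviously vacuous.

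The paper's Step~1 avoids this entirely by contradicting $\tau_n<0$ directly at the \emph{fixed} time $t=0$: if $\tau_n<0$ along a subsequence, then $s_n:=-t_n/\lambda(t_n)$ satisfies $s<s_n\leq 0$, so after extraction $s_n\to\theta\in[s,0]\subset I_{\max}(v)$, and by continuous dependence $\vu_n(s_n)\to\vv(\theta)$ strongly. But $\vu_n(s_n)$ is the $(\lambda(t_n),x(t_n))$-rescaling of the fixed nonzero datum $\vu(0)$, so the scale/translation rigidity you already use forces $\lambda(t_n)$ into a compact subset of $(0,\infty)$; this contradicts $\lambda(t_n)>t_n/|s|\to\infty$ when $T_+(u)=+\infty$, and contradicts $\lambda(t_n)\leq C(T_+(u)-t_n)\to 0$ when $T_+(u)<\infty$. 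Since the evaluation point $0$ always lies in $I_{\max}(u)$, no case analysis on $\tau_*$ is needed. You should replace your limiting-time argument by this one (or supply a genuine proof excluding $\tau_*\in\{T_-(u),-\infty\}$); the rest of your write-up then goes through.
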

\begin{proof}
This is classical. We give a proof for the sake of completeness.

If $(u_0,u_1)=(0,0)$, the conclusion is obvious. We assume $(u_0,u_1)\neq (0,0)$.
 Let $s\in (T_-(v),T_+(v))$.  

\EMPH{Step 1} We prove by contradiction that $t_n+\lambda(t_n)s\geq 0$ for large $n$. If not, extracting subsequences, we can assume 
\begin{equation}
 \label{Ap1}
\forall n,\quad t_n+\lambda(t_n)s<0.
\end{equation} 
Let $s_n=-t_n/\lambda(t_n)$. Then by \eqref{Ap1} and since $t_n$ is positive, $s\leq s_n\leq 0$. Extracting subsequences again, we can assume
$$\lim_{n\to\infty} s_n =\theta\in [s,0]\subset(T_-(v),T_+(v)).$$
By \eqref{conv_to_v} and a standard continuity property of the flow of \eqref{CP},
$$ \left\|\lambda(t_n)^{\frac{N}{2}}\nabla_{t,x}u\left(0,\lambda(t_n)\cdot+x(t_n)\right)-\nabla_{t,x} v(\theta)\right\|_{(L^2)^{N+1}}=0.$$
Since $(u_0,u_1) \neq 0$, this proves that there exists a constant $C>0$ such that $C^{-1}\leq \lambda(t_n)\leq C$ for all $n$, contradicting \eqref{Ap1} if $T_+(u)=+\infty$, or the fact that $\lambda(t_n)$ must go to $0$ if $T_+(u)$ is finite.

\EMPH{Step 2} We prove that there exists $\mu(s)>0$, $y(s)\in \RR^N$ such that
\begin{equation}
\label{Ap2} 
\left(\mu^{\frac{N}{2}-1}(s)v\left(s,\mu(s)\cdot+y(s)\right), \mu^{\frac N2}(s)\partial_t v\left(s,\mu(s)\cdot+y(s)\right)\right)\in \overline{K}_+.
\end{equation} 
Indeed, let $\lambda_n=\lambda\left(t_n+\lambda(t_n)s\right)$, $x_n=x\left(t_n+\lambda(t_n)s\right)$ and
$$(u_{0,n},u_{1,n})(x)=\left( \lambda_n^{\frac{N}{2}-1}u\left(t_n+\lambda(t_n)s,\lambda_nx+x_n\right), \lambda_n^{\frac N2}\partial_t u\left(t_n+\lambda(t_n)s,\lambda_nx+x_n\right)\right).$$
By the definition of $K_+$ and Step 1, $(u_{0,n},u_{1,n})\in K_+$ for large $n$. By \eqref{conv_to_v} and a standard continuity property of the flow of \eqref{CP}, 
$$ \lim_{n\to\infty} \left\|\lambda(t_n)^{\frac{N}{2}}\nabla_{t,x}u\left(t_n+\lambda(t_n)s,\lambda(t_n)\cdot+x(t_n)\right)-\nabla_{t,x}v(s)\right\|_{(L^2)^{N+1}}=0.$$
Combining, we get that there exists $\mu_n$, $y_n$ such that 
$$ \lim_{n\to\infty} \left\| \left(\frac{1}{\mu_n^{\frac{N}{2}-1}}u_{0,n}\left(\frac{\cdot-y_n}{\mu_n}\right),\frac{1}{\mu_n^{\frac{N}{2}}}u_{1,n}\left(\frac{\cdot-y_n}{\mu_n}\right)\right)-\vv(s)\right\|_{\hdot\times L^2}.$$
Since $(u_0,u_1)\neq (0,0)$, 
$$\inf_{t\in I_{\max}(u)}\|\vu(t)\|_{\hdot\times L^2}>0$$ 
which shows that there exist a constant $C>0$ such that 
$$ \forall n,\quad C^{-1}\leq \mu_n\leq C,\; |y_n|\leq C.$$
Extracting sequences, we obtain that $\{(\mu_n,y_n)\}_n$ has a limit $(\mu(s),y(s))\in (0,+\infty)\times \RR^N$ such that \eqref{Ap2} holds. The proof is complete.
\end{proof}

\bibliographystyle{acm}
\bibliography{toto}

\end{document}